\documentclass[a4paper,11pt]{amsart}
\usepackage{amsmath,amssymb,a4wide}
\usepackage{verbatim}
\newtheorem{theorem}{Theorem}[section]
\newtheorem{lemma}[theorem]{Lemma}
\newtheorem{proposition}[theorem]{Proposition}
\newtheorem{corollary}[theorem]{Corollary}
\newtheorem{claim}[theorem]{Claim}
\theoremstyle{definition}

\theoremstyle{remark}
\newtheorem{remark}{Remark}[section]
\numberwithin{equation}{section}
\DeclareMathOperator{\sech}{sech}
\DeclareMathOperator{\diag}{diag}

\DeclareMathOperator{\supp}{supp}
\newcommand{\R}{\mathbb{R}}
\newcommand{\C}{\mathbb{C}}
\newcommand{\N}{\mathbb{N}}
\newcommand{\Z}{\mathbb{Z}}
\newcommand{\eps}{\epsilon}
\renewcommand{\a}{\alpha} 
\newcommand{\tc}{\tilde{c}}
\newcommand{\tk}{\tilde{k}}

\newcommand{\tx}{\tilde{x}}
\newcommand{\ty}{\tilde{y}}
\newcommand{\tpsi}{\tilde{\psi}}
\newcommand{\la}{\langle}
\newcommand{\ra}{\rangle}
\newcommand{\pd}{\partial}

\newcommand{\mL}{\mathcal{L}}
\newcommand{\mF}{\mathcal{F}}
\newcommand{\mN}{\mathcal{N}}
\newcommand{\bb}{\mathbf{b}}
\newcommand{\bd}{\mathbf{d}}
\newcommand{\tb}{\tilde{b}}
\newcommand{\tbd}{\mathbf{\tilde{d}}}
\newcommand{\bM}{\mathbb{M}}
\newcommand{\bE}{\mathbf{E}}

\newcommand{\bW}{\mathbf{W}}
\newcommand{\bX}{\mathbf{X}}

\newcommand{\wP}{\widetilde{P}}
\newcommand{\ta}{\tilde{a}}
\newcommand{\wU}{\widetilde{U}}

\newcommand{\wS}{\widetilde{S}}

\newcommand{\oc}[1]{\overset{\circ}{#1}}
\newcommand{\obu}[1]{\overset{\bullet}{#1}}

\begin{document}
\title{Stability of Benney-Luke line solitary waves in 2D}
\author{Tetsu Mizumachi}
\address[T.~M.]{Division of Mathematical and
    Information Sciences, Hiroshima University, 739-8521, Japan.}
\email{tetsum@hiroshima-u.ac.jp}
\author{Yusuke Shimabukuro}
\address[Y.~S.]{Mathematical Engineering Group, Linea Co., Ltd.,
Tokyo, Japan.}
\email{yshimabukuro@lineainc.co.jp}
\keywords{line solitary waves, transverse stability, long wave model}
\subjclass[2010]{Primary 35B35, 37K45;\\Secondary 35Q35}
\begin{abstract} 
The $2$D Benney-Luke equation is an isotropic model 
which describes long water waves of small amplitude in $3$D whereas 
the KP-II equation is a unidirectional model for long waves with
slow variation in the transverse direction.
In the case where the surface tension is weak or negligible,
linearly stability of small line solitary waves of the $2$D Benney-Luke
equation was proved by Mizumachi and Shimabukuro [Nonlinearity, 30 (2017),
3419--3465]. In this paper, we prove nonlinear stability of the
line solitary waves by adopting the argument by Mizumachi
([Mem. Amer. Math. Soc. no. 1125],
[Proc. Roy. Soc. Edinburgh Sect. A., 148 (2018), 149--198] and
[https://arxiv.org/abs/1808.00809])
which prove nonlinear stability of $1$-line solitons
for the KP-II equation.
\end{abstract}
\maketitle

\section{Introduction}
\label{sec:intro}
In this paper, we study nonlinear transverse stability of line solitary waves
for the Benney-Luke equation
\begin{equation}
  \label{eq:BL}
\pd_t^2\phi-\Delta\phi+a\Delta^2\phi-b\Delta\pd_t^2\phi
+(\pd_t\phi)(\Delta\phi)+\pd_t(|\nabla\phi|^2)=0
\quad\text{on $\R\times\R^2$.}
\end{equation}

The Benney-Luke equation is an approximation model of small amplitude
long water waves with finite depth originally derived by Benney and
Luke \cite{BL64} as a model for 3D water waves and its mathematically
rigorous derivation from the water wave equation was given
by \cite{Kano-Nishida}.
Here
$\phi=\phi(t,x,y)$ corresponds to a velocity potential of water waves.
We remark that \eqref{eq:BL} is an isotropic model for the propagation of
water waves whereas KdV, BBM and KP equations are unidirectional
models.  See e.g. \cite{BCG13,BCL05,BCS02} for the other bidirectional
models of 2D and 3D water waves. Since the Benney-Luke equation is isotropic
as the water wave equation, it could be more useful to describe nonlinear
interactions of waves at a high angle than the KP equations.
\par
The parameters $a$, $b$ are positive and satisfy $a-b=\hat{\tau}-1/3$, where
$\hat{\tau}$ is the inverse Bond number.
In this paper, we will assume $0<a<b$, which corresponds to the case where
the surface tension is weak or negligible.
\par

If we think of waves propagating in one direction, slowly evolving in
time and having weak transverse variation, then the Benney-Luke equation
can be formally reduced to the KP-II equation if $0<a<b$ and
to the KP-I equation if $a>b>0$. 
More precisely, the Benney-Luke equation \eqref{eq:BL} is reduced to
$$2f_{\tx\tilde{t}}+(b-a)f_{\tx\tx\tx\tx}+3f_{\tx}f_{\tx\tx}+f_{\ty\ty}=0$$
in the coordinate $\tilde{t}=\eps^3t$, $\tx=\eps(x-t)$ and
$\ty=\eps^2 y$ by taking terms only of order $\eps^5$,
where $\phi(t,x,y)=\eps f(\tilde{t},\tx,\ty)$.
See e.g. \cite{MilKel96} for the details.
On the other hand, the Benney-Luke equation \eqref{eq:BL} can realize the
finite time dynamics of the KP-II equation as in the case of $2$-dimensional
Boussinesq equations (see \cite{GaSc01} and \cite{Paumond}).
\par

The solution $\phi(t)$ of the Benney-Luke equation \eqref{eq:BL} formally
satisfies the energy conservation law
\begin{equation}
  \label{eq:conserve}
E(\phi(t),\pd_t\phi(t))=E(\phi_0,\phi_1)
\quad\text{for $t\in\R$,}
\end{equation}
where 
$$
E(f,g):=\frac12\int_{\R^2} \left\{|\nabla f|^2+a(\Delta
  f)^2+g^2+b|\nabla g|^2\right\}\, dxdy\,,$$
and \eqref{eq:BL} is globally well-posed in the energy class
$\bE:=(\dot{H}^2(\R^2)\cap \dot{H}^1(\R^2)) \times H^1(\R^2)$ (see
\cite{Q13}).  The Benney Luke equation \eqref{eq:BL} has a 3-parameter
family of line solitary wave solutions
\begin{equation}
\label{eq:lsw}
\phi(t,x,y)=\varphi_c(x\cos\theta+y\sin\theta-ct+\gamma)\,,
\quad \pm c>1\,,\quad \gamma\in\R\,,\quad \theta\in[0,2\pi)\,,
\end{equation}
where 
$$\varphi_c(x)=\beta(c)\left\{\tanh\Bigl(\frac{\a_c}{2}x\Bigr)-1\right\}\,,
\quad \a_c=\sqrt{\frac{c^2-1}{bc^2-a}}\,,\quad
\beta(c)=\frac{2(c^2-1)}{c\a_c}\,,
$$
and $$q_c(x):=\varphi_c'(x)=\frac{c^2-1}{c}\sech^2\bigl(\frac{\a_cx}{2}\bigr)$$
is a solution of
\begin{equation}
  \label{eq:qc}
  (bc^2-a)q_c''-(c^2-1)q_c+\frac{3c}{2}q_c^2=0\,.
\end{equation}
Stability of solitary waves to the $1$-dimensional Benney-Luke equation
was studied by \cite{Q03} for the strong surface tension case $a>b>0$
by using the variational argument (\cite{CL,GSS1}) which was originated by
\cite{Ben72,Bo75} and by \cite{MPQ13} for the weak surface tension case
$b>a>0$ by adopting the semigroup approach of \cite{PW94}.
\par
If $a>b>0$, then \eqref{eq:BL} has a stable ground state for $c$
satisfying $0<c^2<1$ (\cite{PQ99,Q05}). 
Note that for the water wave equation with strong surface tension,
orbital stability of solitary waves conditional on global solvability
has been proved by Mielke \cite{Mie02} and Buffoni \cite{Buffoni} by the variational argument. 
See also \cite{Marics} for the algebraic decay property of the ground state.
In view of \cite{RT1,RT2},
line solitary waves for the $2$-dimensional Benney-Luke equation
are expected to be unstable in this parameter regime.
\par
On the other hand if $0<a<b$ and $c:=\sqrt{1+\eps^2}$ is close to $1$
(the sonic speed), then $\varphi_c(x-ct)$ is expected to be
transversally stable because $q_c(x)$ is similar to a KdV $1$-soliton and
line solitons of the KP-II equation is transversally stable
(\cite{KP,Miz15,Miz18,MT11}).
\par

The dispersion relation for the linearized equation of \eqref{eq:BL} around $0$ is 
\begin{equation}
  \label{eq:drbl}
\omega^2=(\xi^2+\eta^2)\frac{1+a(\xi^2+\eta^2)}{1+b(\xi^2+\eta^2)}  
\end{equation}
for a plane wave solution $\phi(t,x,y)=e^{i(x\xi+y\eta-\omega t)}$.
If $b>a>0$, then $|\nabla\omega|\le 1$,
$\nabla\omega(\xi,\eta)\parallel(\xi,\eta)$ and line solitary waves
travel faster than the maximum group velocity of linear waves.  Using
this property and transverse linear stability of $1$-line solitons for
the KP-II equation (\cite{Burtsev85,Miz15}), transverse linear
stability of small line solitary waves of \eqref{eq:BL} was proved by
\cite{Miz-Sh17}.  The difference between the linear stability result
for solitary waves of the $1$-dimensional Benney-Luke equation
(\cite[Lemma~2.1 and Theorems~2.2 and 2.3]{MPQ13}) is that in the
$1$-dimensional case $\lambda=0$ is an isolated eigenvalue in
exponentially weighted spaces whereas $\lambda=0$ is not an isolated
eigenvalue of the linearized operator around line solitary waves
because line solitary waves do not decay in the transverse direction.
In \cite{Miz-Sh17}, we investigate the spectrum of the linearized
operator in a weighted space $L^2(\R^2;e^{2\a x}dxdy)$ with $\a>0$ and
find a curve of continuous spectrum
$\{\lambda \mid \pm i\lambda_{1,c}+\lambda_{2,c}\eta^2+O(\eta^3)\,,
\eta\in[-\eta_*,\eta_*]\}$, where $\lambda_{1,c}$, $\lambda_{2,c}$,
$\eta_*$ are positive constants.  We remark that the continuous
eigenmodes found in \cite{Miz-Sh17} grow exponentially as
$x\to-\infty$ and cannot be recognized as continuous eigenmodes in the
$L^2$-framework.  These resonant continuous eigenmodes have to do with
modulations of line solitary waves.  Indeed, we find in
\cite{Miz-Sh17} that linear evolution of those resonant continuous
eigenmodes can be approximately described by solutions of
$1$-dimensional dissipative linear wave equation on the time variable
$t$ and the transverse variable $y$ and that it illustrates phase shifts of
modulating line solitary waves.
\par
In this paper, we show that motion of the local amplitude $c(t,y)$ and the local phase shift $\gamma(t,y)$ 
of a modulating line solitary wave $\varphi_{c(t,y)}(x-\gamma(t,y))$ is described by $1$-dimensional a system of nonlinear dissipative
wave equations and prove nonlinear transverse stability of line solitary waves.
The nonlinear stability of line solitary waves in the entire domain has been proved only for the KP-II equation (\cite{Miz15,Miz18,Miz19})
and it is interesting to give its mathematical proof for non-integrable system such as the Benny-Luke equations.
Since the arguments in \cite{Miz15,Miz18,Miz19} are based on PDE methods and integrability of the KP-II equation is used only to prove linear stability of
line solitary waves, we are able to extend the arguments for the Benney-Luke equation.
\par

Our plan of the present paper is as follows.  In
Section~\ref{subsec:linear stability}, we recall the linear stability
property of line solitary waves in \cite{Miz-Sh17}
(Theorem~\ref{thm:linear-stability}) and introduce our main result
(Theorem~\ref{thm:main}) in Section~\ref{subsec:main}.  In
Section~\ref{sec:decay-est}, we introduce $L^2-\mF^{-1}L^\infty$ estimates
of solutions for the linearized modulation equations as well as
a substitute of d'Alembert's formula to prove the $L^\infty$-bound of the phase shifts of line solitary waves.
We also investigate the large time behavior of the solutions
to the linearized equation.
In Section~\ref{sec:decomposition}, we decompose a solution around line
solitary waves into a sum of the modulating line solitary wave, a
small freely propagating solution of \eqref{eq:BL}, exponentially
localized remainder part and an auxiliary function.  We will impose
the secular term condition to the exponentially localized part to make
use of linear stability property of line solitary waves.  We split
small solutions of \eqref{eq:BL} from solutions around line solitary
waves so that the remainder part is exponentially localized because as
in \cite{Miz18}, resonant continuous eigenmodes of the adjoint
linearized operator grow exponentially as $x\to\infty$ and the secular
term condition makes sense only for exponentially localized
perturbations. Since $\varphi_c(-\infty)$ differs as $c$ varies,
we need a correction term to keep the remainder terms in the energy class.
In Section~\ref{sec:modulation},
we compute time derivative of the secular term condition
and derive a system of PDEs that describe the motion of the local amplitude
$c(t,y)$ and the local phase shift $\gamma(t,y)$ whose linearized
equation is
$$\left\{\begin{aligned}
& \gamma_t\simeq \lambda_{2,c_0}\gamma_{yy}+c-c_0\,,
\\ & c_t\simeq \lambda_{1,c_0}^2\gamma_{yy}+\lambda_{2,c_0}c_{yy}\,.
\end{aligned}\right.$$
The modulation equation derived from the secular term condition
has a critical nonlinear term whose $L^1(\R_y)$-norm decays like $t^{-1}$
and a term coming from the freely propagating remainder part
whose $L^1(\R_y)$-norm is expected to grow as $t\to\infty$.
These terms are harmful to estimate modulation parameters $c(t,y)$ and
$\gamma(t,y)$.
As in \cite{Miz18,Miz19}, we use a change of variables to transform these terms into harmless forms.
In Section~\ref{sec:apriori}, we prove decay estimates of the local amplitude
$c(t,y)$ and the local orientation $\gamma_y(t,y)$ of the modulating
line solitary wave $\varphi_{c(t,y)}(x-c_0t-\gamma(t,y))$ by applying
linear estimates obtained in Section~\ref{sec:decay-est},
where $c_0$ is the amplitude of the unperturbed line solitary wave.
In Section~\ref{sec:energy}, we estimate a \textit{renormalized energy}
of perturbations which we find by removing infinite energy parts
from solutions around line solitary wave and making use of the
orthogonality condition imposed on the exponentially localized part 
in Section~\ref{subsec:orth}. 
In Section~\ref{sec:virial}, we prove virial identities for \eqref{eq:BL}
and prove decay estimates for localized energies of small solutions to
\eqref{eq:BL}. 
In Section~\ref{sec:U2}, we estimate an exponentially localized norm
of perturbations by using linear stability property of line solitary waves
(Theorem~\ref{thm:linear-stability}). 
If we linearize \eqref{eq:BL} around a modulating line solitary wave
and use the moving coordinate $z=x-c_0t-\gamma(t,y)$, we have
a space-time dependent advection term. Since \eqref{eq:BL}
is a $2$-dimensional wave equation with dispersion in the low frequency regime,
the smoothing effect of \eqref{eq:BL} is not strong enough to treat
the advection term as a remainder part.
To avoid the appearance of the advection term,
we prove that $\|\gamma(t,\cdot)\|_{L^\infty(\R_y)}$ remains small for polynomially localized perturbations
to line solitary waves by using an estimate similar to d'Alembert's formula following the idea of \cite{Miz19}.
In our paper, it remains open whether the phase shift $\gamma(t,y)$
can grow as $t$, $y\to\pm\infty$ for perturbations in the energy class.
See \cite{Marel-Merle05} for the growth of phase shifts of gKdV solitary waves.
In Section~\ref{sec:phase}, we show that the large time behavior
of $c(t,y)$ and $\gamma_y(t,y)$ can be expressed by a linear combination
of self-similar solutions of the Burgers' equation with spatial phase shift
$\pm\lambda_{1,c_0}t$ and prove transverse stability of line solitary
waves for polynomially localized perturbations.
\par
Finally, let us introduce several notations. 
We denote by $\sigma(T)$ the spectrum of the operator $T$.
For Banach spaces $V$ and $W$, let $B(V,W)$ be the space of all
linear continuous operators from $V$ to $W$ and
$\|T\|_{B(V,W)}=\sup_{\|u\|_V=1}\|Tu\|_W$ for $A\in B(V,W)$.
We abbreviate $B(V,V)$ as $B(V)$.
For $f\in \mathcal{S}(\R^n)$ and $m\in \mathcal{S}'(\R^n)$, let 
\begin{gather*}
(\mathcal{F}f)(\xi)=\hat{f}(\xi)
=(2\pi)^{-n/2}\int_{\R^n}f(x)e^{-ix\xi}\,dx\,,\\
(\mathcal{F}^{-1}f)(x)=\check{f}(x)=\hat{f}(-x)\,,
\end{gather*}
and $(m(D)f)(x)=(2\pi)^{-n/2}(\check{m}*f)(x)$.
We denote $\la f,g\ra$ by
$$\la f,g\ra=\sum_{j=1}^m \int_\R f_j(x)\overline{g_j(x)}\,dx$$
for $\C^m$-valued functions $f=(f_1,\cdots,f_m)$ and
$g=(g_1,\cdots,g_m)$.

Let $L^2_\a(\R^2)=L^2(\R^2;e^{2\a x}dxdy)$, $L^2_\a(\R)=L^2(\R;e^{2\a x}\,dx)$.
For $k\ge1$, let $H^k_\a(\R^2)$ and $H^k_\a(\R)$ be Hilbert spaces with the norms
\begin{gather*}
\|u\|_{H^k_\a(\R^2)}=\left(\|\pd_x^ku\|_{L^2_\a(\R^2)}^2
+\|\pd_y^ku\|_{L^2_\a(\R^2)}^2+\| u\|_{L^2_\a(\R^2)}^2\right)^{1/2}\,,
\\
\|u\|_{H^k_\a(\R)}=\left(\|\pd_x^ku\|_{L^2_\a(\R)}^2+\|u\|_{L^2_\a(\R)}^2\right)^{1/2}\,,
\end{gather*}
and let $\bX_k= H^{k+1}_\a(\R^2)\times H^k_\a(\R^2)$ with the norm
$$\|(u_1,u_2)\|_{\bX_k}
=\sqrt{\|\nabla u_1\|_{H^k_\a(\R^2)}^2+\|u_2\|_{H^k_\a(\R^2)}^2}\,,$$
and let $\bX=\bX_0$.
\par
The symbol $\la x\ra$ denotes $\sqrt{1+x^2}$ for $x\in\R$.
Let $\mathbf{1}_A$ be the characteristic function of the set $A$.
\par
We use $a\lesssim b$ and $a=O(b)$ to mean that there exists a
positive constant such that $a\le Cb$.
Let $a\wedge b=\min\{a,b\}$ and $a\vee b=\max(a,b)$. 
\bigskip

\section{Statement of results}
\label{sec:stability}
\subsection{Linear stability of line solitary waves}
\label{subsec:linear stability}
To begin with, we recall the linear stability of line solitary waves for \eqref{eq:BL} (\cite{Miz-Sh17}).
Let $\phi_1=\phi$, $\phi_2=\pd_t\phi$, $A=I-a\Delta$ and $B=I-b\Delta$.
Then the Benney-Luke equation \eqref{eq:BL} can be rewritten as a system
\begin{gather}
  \label{eq:BL1}
\pd_t\Phi=L\Phi+N(\Phi)\,,
\end{gather}
$$
\Phi=\begin{pmatrix}  \phi_1 \\ \phi_2\end{pmatrix}\,,
\quad L=
\begin{pmatrix} 0 & I \\ B^{-1}A\Delta & 0 \end{pmatrix}\,,
\quad
N(\Phi)=-B^{-1} 
\begin{pmatrix}
  0 \\ \phi_2\Delta\phi_1+2\nabla\phi_1\cdot\nabla\phi_2
\end{pmatrix}\,.$$
Since \eqref{eq:BL} is isotropic and translation invariant,
we may assume $\theta=\gamma=0$ in \eqref{eq:lsw} without loss of generality.
Let $r_c(x)=-cq_c(x)$ and $\Phi_c=(\varphi_c, r_c)^T$.
By \eqref{eq:qc}, 
\begin{equation}
  \label{eq:stationary}
  (c\pd_x+L)\Phi_c+N(\Phi_c)=0\,,
\end{equation}
and $\Phi_c(x-ct)$ is a planar traveling wave solution  of \eqref{eq:BL1}.
Linearizing \eqref{eq:BL1} around $\Phi_c$, we have
in the moving coordinate $z=x-ct$,
\begin{equation}
  \label{eq:linear}
\pd_t \Phi =\mL_c \Phi\,,
\end{equation}
where $\mL_c=c\pd_z+L+V_c$ and
$$V_c=-B^{-1}\begin{pmatrix}  0 & 0 \\ v_{1,c} & v_{2,c}\end{pmatrix}\,,
\quad v_{1,c}=2r_c'(z)\pd_z+r_c(z)\Delta\,,\quad
v_{2,c}=2q_c(z)\pd_z+q_c'(z)\,.$$
Let 
$A(\eta)=1+a\eta^2-a\pd_z^2$,  $B(\eta)=1+b\eta^2-b\pd_z^2$ and
\begin{gather*}
L(\eta)=
\begin{pmatrix}
0 & 1 \\ B(\eta)^{-1}A(\eta)(\pd_x^2-\eta^2) & 0  
\end{pmatrix}\,,\quad
V_c(\eta)=-B(\eta)^{-1}
\begin{pmatrix}
  0 & 0 \\ v_{1,c}(\eta) & v_{2,c}(\eta)
\end{pmatrix}\,,
\\
v_{1,c}(\eta)=2r_c'\pd_x+r_c(\pd_x^2-\eta^2)\,,\quad
v_{2,c}(\eta)=2q_c\pd_z+q_c'\,.
\end{gather*}
Then 
$\mL_c(\eta)=e^{-iy\eta}\mL_c(\,\cdot\,)
=c\pd_x+L(\eta)+V_c(\eta)$.
We expand $L(\eta)$ and $\mL_c(\eta)$  as
\begin{align*}
& L(\eta)=L_0+\eta^2L_1(\eta)\,,\quad
L_1(0)=B_0^{-1}(I-A_0-B_0^{-1}A_0)E_{21}\,, \\ &
\mL_c(\eta)=\mL_c(0)+\eta^2\mL_{1,c}(\eta)\,,
\\ &
\mL_{1,c}(0)=L_1(0)+B_0^{-1}r_cE_{21}+bB_0^{-2}\{v_{1,c}(0)E_{21}+v_{2,c}(0)E_2\}\,,
\end{align*}
where $L_0=L(0)$, $A_0=A(0)$, $B_0=B(0)$ and
$E_{ij}$ ($i$, $j=1$, $2$) is a $2\times2$ matrix whose entry in row $i$
and column $j$ equals to $1$ and the other entries are zero.
We will abbreviate $E_{ii}$ as $E_i$.
\par

By Theorem~2.1 in \cite{Miz-Sh17}, there exist an $\eta_0>0$,
$\lambda_c(\eta)\in C^\infty([-\eta_0,\eta_0])$ and
\begin{gather*}
\zeta_c(\cdot,\eta)\in
C^\infty([-\eta_0,\eta_0];H^k_\a(\R)\times H^{k-1}_\a(\R))\,,\\
\zeta_c^*(\cdot,\eta)\in
C^\infty([-\eta_0,\eta_0];H^k_{-\a}(\R)\times H^{k-1}_{-\a}(\R))  
\end{gather*}
such that for $\eta\in[-\eta_0,\eta_0]$ and $z\in\R$,
\begin{gather}
\notag \mL_c(\eta) \zeta_c(z,\eta)=\lambda_c(\eta)\zeta_c(z,\eta)\,,
\quad 
\mL_c(\eta)^* \zeta_c^*(z,\eta)=\lambda_c(-\eta)\zeta_c^*(z,\eta)\,,
\\  \label{eq:lambda-asymp}
\lambda_c(\eta)=i\lambda_{1,c}\eta -\lambda_{2,c}\eta^2+O(\eta^3)\,,
\\  \label{eq:zeta-asymp}
\zeta_c(\cdot,\eta)=\zeta_{1,c}+\{i\lambda_{1,c}\eta+O(\eta^2)\}\zeta_{2,c}
+z_c(\eta)\,,
\quad z_c(\eta)\perp \zeta_{1,c}^*, \zeta_{2,c}^*\,,
\\  \label{eq:zeta*-asymp}
\zeta_c^*(\cdot,\eta)=\zeta_{2,c}^*-\{i\lambda_{1,c}\eta+O(\eta^2)\}\zeta_{1,c}^*
+z_c^*(\eta)\,,
\quad z_c^*(\eta)\perp \zeta_{1,c}, \zeta_{2,c}\,,
\\  \label{eq:zeta-asymp-remainder}
\|z_c(\eta)\|_{H^k_\a(\R)\times H^{k-1}_\a(\R)}
+\|z_c^*(\eta)\|_{H^k_{-\a}(\R)\times H^{k-1}_{-\a}(\R)}=O(\eta^2)\,,
\\  \label{eq:zeta-parity}
\overline{\lambda_c(\eta)}=\lambda_c(-\eta)\,,\quad
\overline{\zeta_c(z,\eta)}=\zeta_c(z,-\eta)\,,\quad
\overline{\zeta_c^*(z,\eta)}=\zeta_c^*(z,-\eta)\,,
\end{gather}
where $\varphi_c^*(x)=\varphi_c(x)+2\beta(c)$ and 
\begin{gather}
\label{eq:zetas}
\zeta_{1,c}=\pd_x\Phi_c=
\begin{pmatrix}  q_c\\ r_c'\end{pmatrix}\,,
\quad
\zeta_{2,c}=-\pd_c\Phi_c=
-\begin{pmatrix} \pd_c\varphi_c \\ \pd_cr_c \end{pmatrix}\,,
\\ \label{eq:zeta*s}
\zeta_{1,c}^*=c
\begin{pmatrix}
-B_0\pd_cr_c-2q_c\pd_cq_c-q_c'\pd_c\varphi_c^*
\\ B_0\pd_c\varphi_c^*
\end{pmatrix}
\,,\quad
\zeta_{2,c}^*=\begin{pmatrix}  A_0q_c'\\ -B_0r_c\end{pmatrix}\,,
\\ \label{eq:lambda1}
\lambda_{1,c}=\sqrt{
\frac{\la \mL_{1,c}(0)\zeta_{1,c},\zeta_{2,c}^*\ra}
{-\la\zeta_{2,c},\zeta_{2,c}^*\ra}}>0\,,
\\ \label{eq:lambda2}
\lambda_{2,c}=
\frac{\la\mL_{1,c}(0)\zeta_{1,c},\zeta_{2,c}^*\ra \la \zeta_{2,c},\zeta_{1,c}^*\ra
-\sum_{j=1,2}\la\mL_{1,c}(0)\zeta_{j,c},\zeta_{j,c}^*\ra
\la \zeta_{3-j,c},\zeta_{3-j,c}^*\ra}
{2\la \zeta_{1,c},\zeta_{1,c}^*\ra \la \zeta_{2,c},\zeta_{2,c}^*\ra}>0\,.
\end{gather}
We remark that $\mL_c(0)$ is the same with the linearized operator of
$1$-dimensional Benney-Luke equation around solitary waves and that
$\mL_c(0)\zeta_{1,c}=0$, $\mL_c(0)\zeta_{2,c}=\zeta_{1,c}$.
\par
By \cite[(6.4)--(6.6), (6.9)--(6.11)]{Miz-Sh17},
\begin{gather}
  \label{eq:zeta1-zeta1*}
\la \zeta_{1,c},\zeta_{2,c}^*\ra =0\,,\quad
\la \zeta_{1,c},\zeta_{1,c}^*\ra = \la \zeta_{2,c},\zeta_{2,c}^*\ra
=\frac{1}{2}\frac{d}{dc}E(q_c,r_c)=:\beta_1(c) >0\,,
\\  \label{eq:zeta2-zeta1*}
\la \zeta_{2,c},\zeta_{1,c}^*\ra=:\beta_2(c) >0\,,
\\ \label{eq:L1czs}
\la \mL_{1,c}(0)\zeta_{1,c},\zeta_{1,c}^*\ra
=\la \mL_{1,c}(0)\zeta_{2,c},\zeta_{2,c}^*\ra<0\,,
\quad
\la \mL_{1,c}(0)\zeta_{1,c},\zeta_{2,c}^*\ra<0\,.
\end{gather}
\par

Now we introduce spectral projections associated with continuous eigenvalues
$\{\lambda(\eta)\}_{-\eta_0\le \eta\le \eta_0}$.
As in \cite[Section~8]{Miz-Sh17}, let 
$\kappa_c(\eta)=\frac12 \Im\la g(\cdot,\eta,c),g^*(\cdot,\eta,c)\ra$ and
\begin{gather}
g(z,\eta,c)=\left(1
+i\frac{\Re \la \zeta_c(\cdot,\eta),\zeta_c^*(\cdot,\eta)\ra}
{\Im \la \zeta_c(\cdot,\eta),\zeta_c^*(\cdot,\eta)\ra}
\right) \zeta_c(z,\eta)\,,
\quad g^*(z,\eta,c)=\zeta_c^*(z,\eta)\,,
\\
\label{eq:def-gk}
g_1(z,\eta,c)=\frac{1}{\beta_1(c)}\Re g(z,\eta,c)\,,
\quad
g_2(z,\eta,c)=\frac{1}{\kappa_c(\eta)}\Im g(z,\eta,c)\,,
\\
\label{eq:def-gk*}
g_1^*(z,\eta,c)=-\frac{\beta_1(c)}{\kappa_c(\eta)}\Im g^*(z,\eta,c)\,,
\quad g_2^*(z,\eta)=\Re g^*(z,\eta,c)\,.
\end{gather}
Note that  $g_k(z,\eta,c)$ and $g_k^*(z,\eta,c)$ are $\R^2$-valued functions 
that are even in $\eta$ and that
\begin{gather}
\mL_c(\eta)g_1(z,\eta,c)=Re\lambda_c(\eta)g_1(z,\eta,c)
-\frac{\kappa_c(\eta)}{\beta_1(c)}\Im\lambda_c(\eta)g_2(z,\eta,c)\,,
\\
\mL_c(\eta)g_2(z,\eta,c)=
\frac{\beta_1(c)}{\kappa_c(\eta)}\Im\lambda_c(\eta)g_1(z,\eta,c)
+Re\lambda_c(\eta)g_2(z,\eta,c)\,,
\\ \label{eq:gevp1}
\mL_c^*(\eta)g_1^*(z,\eta,c)=
\frac{\kappa_c(\eta)}{\beta_1(c)}\Im\lambda_c(\eta)g_1^*(z,\eta,c)
+Re\lambda_c(\eta)g_2^*(z,\eta,c)\,,
\\  \label{eq:gevp2}
\mL_c^*(\eta)g_2^*(z,\eta,c)=Re\lambda_c(\eta)g_1^*(z,\eta,c)
+\frac{\beta_1(c)}{\kappa_c(\eta)}\Im\lambda_c(\eta)g_2^*(z,\eta,c)\,.
\end{gather}

Let $\a\in(0,\a_c)$ and $\eta_0$ be a small positive number
such that for $k=1$, $2$,
$$
\sup_{\eta\in[-\eta_0,\eta_0]}
\left(
\|g_k(\cdot,\eta,c)\|_{L^2_\a(\R)}+\|g_k^*(\cdot,\eta,c)\|_{L^2_{-\a}(\R)}
\right)<\infty\,.$$ 
Then for $\eta\in[-\eta_0,\eta_0]$,
\begin{gather}
\la g_j(\cdot,\eta,c),g_k^*(\cdot,\eta,c)\ra=\delta_{jk}\,, \notag
\\ \notag
\|g_1(\cdot,\eta,c)-\beta_1(c)^{-1}\zeta_{1,c}\|_{L^2_\a(\R;\R^2)}=O(\eta^2)\,,
\\ \notag
\|g_2(\cdot,\eta,c)+\beta_1(c)^{-2}\beta_2(c)\zeta_{1,c}
-\beta_1(c)^{-1}\zeta_{2,c}\|_{L^2_\a(\R;\R^2)}=O(\eta^2)\,,
\\ \label{eq:gk*-approx}
\|g_1^*(\cdot,\eta,c)-\zeta_{1,c}^*\|_{L^2_{-\a}(\R;\R^2)}
+\|g_2^*(\cdot,\eta,c)-\zeta_{2,c}^*\|_{L^2_{-\a}(\R;\R^2)}=O(\eta^2)\,.
\end{gather}
Let $\mathcal{P}_c(\eta_0)$ and $\mathcal{Q}_c(\eta_0):\bX\to \bX$ 
be projections defined by
\begin{gather}
\label{def-P-1}
\mathcal{P}_c(\eta_0)U(x,y)=\sum_{k=1,\,2}
\int_{-\eta_0}^{\eta_0}c_k(\eta)g_k(x,\eta,c)e^{iy\eta}\,d\eta
\,,\quad \mathcal{Q}_c(\eta_0)=I-\mathcal{P}_c(\eta_0)\,,
 \\
\label{def-P-2}
 c_k(\eta)=\frac{1}{\sqrt{2\pi}}\int_\R
(\mF_yU)(x,\eta)\cdot g_k^*(x,\eta,c)\,dx\,.
\end{gather}
Then $\mathcal{P}_c(\eta_0)$ is a spectral projection for $\mL_c$
corresponding to a family of continuous eigenvalues
$\{\lambda_c(\eta)\}_{-\eta_0\le\eta\le \eta_0}$.
Let $\mathbf{Z}=\mathcal{Q}_c\bX$ and $\mL_c|_{\mathbf{Z}}$ be the restriction
of the operator $\mL_c$ to $\mathbf{Z}$.
Assuming spectral stability of $\mL_c|_{\mathbf{Z}}$, we have exponential
stability of $e^{t\mL_c}\mathcal{Q}_c$.
\begin{theorem}\emph{(\cite[Theorem~2.2]{Miz-Sh17})}
  \label{thm:linear-stability}
Let $0<a<b$, $c>1$ and $\a\in(0,\a_c)$. Consider the operator $\mL_c$
in the space $\bX$. Assume that there exist positive
constants $\beta$ and $\eta_0$ such that
\begin{equation}
  \label{ass:S}
\sigma(\mL|_{\mathbf{Z}})  \subset \{\lambda\mid \Re\lambda\le -\beta\}\,.
\tag{S}
\end{equation}
Then for any $\beta'<\beta$, there exists a positive constant $C$ such that
\begin{equation}
  \label{eq:decay}
\|e^{t\mL_c}\mathcal{Q}_c(\eta_0)\|_{B(\bX)}
\le Ce^{-\beta' t}\quad\text{for any  $t\ge0$.}  
\end{equation}
\end{theorem}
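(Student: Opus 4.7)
The plan is to derive the semigroup decay from the spectral assumption~(S) via the Gearhart--Pr\"uss theorem in the Hilbert space $\bX$. Since $\mathbf Z=\mathcal{Q}_c(\eta_0)\bX$ is an $\mL_c$-invariant closed subspace, it suffices to establish the uniform resolvent bound $\sup_{\Re\lambda\ge-\beta'}\|(\lambda-\mL_c|_{\mathbf Z})^{-1}\|_{B(\mathbf Z)}<\infty$, which together with~(S) is equivalent to exponential stability of $e^{t\mL_c|_{\mathbf Z}}$ with rate $\beta'$.

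The key reduction is to exploit the $y$-translation invariance of $\mL_c$. Taking the Fourier transform in $y$, the operator decomposes as multiplication-in-$\eta$ by the 1D family $\mL_c(\eta)$ acting on $z$-functions in a weighted Sobolev space, and $\mathcal{Q}_c(\eta_0)$ decomposes fiberwise as the identity for $|\eta|>\eta_0$ and as $Q_c(\eta):=I-\sum_{k=1,2}\la\cdot,g_k^*(\cdot,\eta,c)\ra g_k(\cdot,\eta,c)$ for $|\eta|\le\eta_0$. By Plancherel in $y$, the resolvent bound on $\mathbf Z$ reduces to a uniform-in-$\eta$ resolvent bound for the restriction of $\mL_c(\eta)$ to the range of $Q_c(\eta)$.

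For each fixed $\eta$ the weighted space works in our favor: conjugation by $e^{\a z}$ shifts the essential spectrum of the free part $c\pd_z+L(\eta)$ into a half-plane $\{\Re\lambda\le-\delta(\a)\}$, and the exponentially localized potential $V_c(\eta)$ is a relatively compact perturbation producing only a discrete family of eigenvalues in $\{\Re\lambda>-\delta(\a)\}$. For $|\eta|\le\eta_0$ the continuous eigenvalue $\lambda_c(\eta)$ is precisely what the fiber projection removes, and (S) places the remaining spectrum in $\{\Re\lambda\le-\beta\}$. Continuous dependence of $\mL_c(\eta)$ on $\eta$ together with compactness then yields a uniform resolvent bound on any compact $\eta$-interval $[-N,N]$.

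The main obstacle is controlling the resolvent uniformly as $|\eta|\to\infty$. The crucial observation is that $B(\eta)^{-1}$ has operator norm $O(\eta^{-2})$ on the weighted space, so $V_c(\eta)$ becomes small and $\mL_c(\eta)$ is a small perturbation of $c\pd_z+L(\eta)$. A direct Fourier-symbol computation on $L^2_\a(\R_z)$ yields a uniform-in-$\eta$ resolvent bound for $c\pd_z+L(\eta)$ on $\{\Re\lambda\ge-\beta'\}$ (using $\a\in(0,\a_c)$), and a Neumann series argument transfers this to $\mL_c(\eta)$ for $|\eta|\ge N$ with $N$ large. Splicing the three regimes $|\eta|\le\eta_0$, $\eta_0\le|\eta|\le N$, and $|\eta|\ge N$ gives the required uniform resolvent bound, and Gearhart--Pr\"uss delivers $\|e^{t\mL_c}\mathcal{Q}_c(\eta_0)\|_{B(\bX)}\le Ce^{-\beta' t}$.
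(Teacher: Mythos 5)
This theorem is not proved in the present paper at all: it is quoted verbatim from \cite[Theorem~2.2]{Miz-Sh17}, so there is no in-paper argument to compare against. Your overall strategy --- Fourier transform in $y$, fiberwise analysis of $\mL_c(\eta)$ on the weighted space, removal of the resonant modes by $Q_c(\eta)$ for $|\eta|\le\eta_0$, and the Gearhart--Pr\"uss theorem --- is essentially the route taken in the cited reference (and in Mizumachi's KP-II memoir on which it is modeled), so the architecture is sound.

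There is, however, one genuine gap in the sketch as written. Gearhart--Pr\"uss requires the resolvent bound to be uniform over the \emph{entire} half-plane $\{\Re\lambda\ge-\beta'\}$, and your argument for the intermediate regime $\eta_0\le|\eta|\le N$ invokes only ``continuous dependence on $\eta$ together with compactness.'' Compactness in $(\eta,\lambda)$ controls bounded sets of $\lambda$ only; it says nothing about $|\Im\lambda|\to\infty$, which is exactly the regime where Gearhart--Pr\"uss has content. You must supply a separate high-frequency argument, e.g.\ writing $(\lambda-\mL_c(\eta))^{-1}=(\lambda-c\pd_z-L(\eta))^{-1}\bigl(I-V_c(\eta)(\lambda-c\pd_z-L(\eta))^{-1}\bigr)^{-1}$ and showing that $V_c(\eta)(\lambda-c\pd_z-L(\eta))^{-1}$ tends to $0$ as $|\Im\lambda|\to\infty$ uniformly for $\eta$ in the compact range (using the relative compactness / smoothing of $V_c(\eta)$ against the free resolvent), before closing the Neumann series. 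A secondary point: your claim that $V_c(\eta)$ ``becomes small'' for $|\eta|\ge N$ is not immediate from $\|B(\eta)^{-1}\|=O(\eta^{-2})$, because $v_{1,c}(\eta)$ contains the term $-r_c\eta^2$, which exactly cancels two of those powers; smallness only emerges after you trade one power of $\eta$ against the $|\eta|\,\|u_1\|_{L^2_\a}$ piece of the fiber norm of $\bX$ and commute a $\pd_z$ through $r_c$, so that step needs to be written out rather than asserted.
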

\begin{remark}
\label{rem:small}
  If $c>1$ is sufficiently close to $1$, then the assumption
  \eqref{ass:S} is valid and the spectrum of $\mL_c$ near $0$ is
  similar to that of the linearized KP-II operator around a line
  soliton solution. See \cite[Theorem~2.4]{Miz-Sh17}.
\end{remark}

\subsection{Main Result}
\label{subsec:main}
Now let us introduce our main result.
\begin{theorem}
  \label{thm:main}
Let $0<a<b$ and $c_0>1$. Assume \eqref{ass:S} for $c=c_0$.
Suppose that  $\Phi(t,x,y)$ is a solution of \eqref{eq:BL1}
satisfying
\begin{equation}
  \label{eq:init-cond}
\Phi(0,x,y)=\Phi_{c_0}(x)+U_0(x,y)\,,\quad U_0(x,y)=(u_{01}(x,y),u_{02}(x,y))^T\,.  
\end{equation}
Then there exist positive constants $C$ and $\eps_0$ such that if
\begin{equation}
  \label{eq:init-size}
\eps:=\|(1+x^2+y^2)\nabla u_{01}\|_{H^1(\R^2)}
+\|(1+x^2+y^2) u_{02}\|_{H^1(\R^2)}<\eps_0\,,
\end{equation}
then there exist $C^1$-functions $c(t,y)$ and $\gamma(t,y)$ such that
for every $t\ge0$ and $k\ge0$,
\begin{align}
\label{OS}
 & \|\Phi(t,x,y)-\Phi_{c(t,y)}(x-c_0t-\gamma(t,y))\|_\bE\le C\eps\,,
 \\  & \label{phase1}
\sum_{j=0,1}\la t\ra^{(2j+1)/4}\left\{\left\|\pd_y^j\left(c(t,\cdot)-c_0\right)\right\|_{H^k(\R)}+\left\|\pd_y^{j+1}\gamma(t,\cdot)\right\|_{H^k(\R)}\right\}
\le C\eps\,,
\\ & \|\gamma(t,\cdot)\|_{L^\infty(\R)}\le C\eps\,,
\end{align}
and for any $R>0$,
\begin{equation}
\label{AS}
\lim_{t\to\infty}
\left\|\Phi(t,x+c_0t+\gamma(t,y),y)-\Phi_{c_0}(x)\right\|_{\bE((x>-R)\times\R_y)}
=0\,.
\end{equation}
Moreover, there exists a $\gamma_\infty\in\R$ such that 
for any $\delta>0$,
\begin{equation}
  \label{eq:phase-lim}
  \begin{cases}
\lim_{t\to\infty}\left\|\gamma(t,\cdot)-\gamma_\infty
\right\|_{L^\infty(|y|\le (\lambda_{1,c_0}-\delta)t)}=0\,,
\\    
\lim_{t\to\infty}\left\|\gamma(t,\cdot)
\right\|_{L^\infty(|y|\ge (\lambda_{1,c_0}+\delta)t)}=0\,.
  \end{cases}
\end{equation}
\end{theorem}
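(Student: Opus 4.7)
The plan is to follow the modulation-analysis strategy developed by Mizumachi for the KP--II equation and adapt it to the bidirectional, non-integrable Benney--Luke setting, using the linear decay \eqref{eq:decay} as the spectral input. First I would decompose a solution near $\Phi_{c_0}$ as
$$
\Phi(t,x,y)=\Phi_{c(t,y)}(z)+\wU(t,z,y)+\wP(t,x,y)+\Psi_{\mathrm{aux}}(t,z,y),\qquad z=x-c_0t-\gamma(t,y),
$$
where $\wP$ solves the linearization of \eqref{eq:BL1} at $0$ with the leading (low-frequency, polynomially localized) part of the data, $\wU\in\bX$ is the exponentially localized remainder, and $\Psi_{\mathrm{aux}}$ is a smooth correction restoring membership in the weighted energy class (needed because $\varphi_c(-\infty)=-2\beta(c)$ varies with $c$). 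The modulation parameters $c(t,y),\gamma(t,y)$ are determined by imposing the secular-term condition $\mathcal{P}_{c_0}(\eta_0)\wU\equiv 0$, so that the exponential decay of $e^{t\mL_{c_0}}\mathcal{Q}_{c_0}(\eta_0)$ from Theorem~\ref{thm:linear-stability} can be invoked on $\wU$.

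Differentiating the secular-term condition in $t$ and pairing \eqref{eq:BL1} against $g_k^*(\cdot,\eta,c_0)e^{iy\eta}$ yields a system of PDEs for $(c,\gamma)$ whose linearization is the dissipative wave system displayed in the introduction. Its fundamental solution is analyzed by the $L^2$--$\mF^{-1}L^\infty$ and d'Alembert-type estimates of Section~\ref{sec:decay-est}, producing $\la t\ra^{-(2j+1)/4}$ decay of $\pd_y^j(c-c_0)$ and $\pd_y^{j+1}\gamma$ together with a uniform bound on $\gamma$ itself. The nonlinear right-hand side contains a critical quadratic term in $(c-c_0,\gamma_y)$ whose $L^1_y$-norm decays only like $t^{-1}$, and a source driven by $\wP$ that is not absolutely integrable in $y$; following the KP--II approach, I would introduce a change of unknowns that puts the critical term in divergence form and replaces the offending $\wP$-source by a harmless one, so that the linear decay survives the iteration.

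Next I would close a bootstrap on $[0,T]$ for the composite norm
$$
M(T)=\sup_{t\le T}\sum_{j=0,1}\la t\ra^{(2j+1)/4}\Bigl(\|\pd_y^j(c(t)-c_0)\|_{H^k}+\|\pd_y^{j+1}\gamma(t)\|_{H^k}\Bigr)+\sup_{t\le T}\Bigl(\|\gamma(t)\|_{L^\infty_y}+\|\wU(t)\|_{\bX}\Bigr).
$$
Three ingredients feed it. First, the renormalized energy of Section~\ref{sec:energy} controls the $\bE$-norm of $\wU+\wP$ after subtracting the infinite-energy contributions absorbed into $\Psi_{\mathrm{aux}}$, exploiting the secular orthogonality to get coercivity modulo the zero modes $\zeta_{1,c_0},\zeta_{2,c_0}$. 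Second, the virial identities of Section~\ref{sec:virial} give spacetime integrability of the local energy of $\wP$ on any half-space $x>-R-c_0t-\gamma$, supplying the source bounds required both in the modulation equations and in the Duhamel formula for $\wU$. Third, applying Theorem~\ref{thm:linear-stability} to $\wU$ yields $\|\wU(t)\|_\bX\lesssim\int_0^t e^{-\beta'(t-s)}\|\mathrm{forcing}(s)\|_\bX\,ds$ where the forcing is quadratic in $\wU$, in the modulation parameters, and in $\wP$, and is thus controlled by the previous two estimates. Combining gives $M(T)\lesssim\eps+M(T)^2$, hence $M(T)\lesssim\eps$ uniformly, yielding \eqref{OS} and \eqref{phase1}.

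Finally, \eqref{AS} and \eqref{eq:phase-lim} follow from the observation that within the bootstrap $\pd_y^j(c-c_0)$ and $\pd_y^{j+1}\gamma$ asymptotically match a superposition of two Burgers self-similar profiles travelling with speeds $\pm\lambda_{1,c_0}$, up to a remainder with faster decay (Section~\ref{sec:phase}); integrating in $y$ then produces a single constant $\gamma_\infty$ inside the cone $|y|<(\lambda_{1,c_0}-\delta)t$ and decay to $0$ outside $|y|>(\lambda_{1,c_0}+\delta)t$. I expect the hardest step to be the propagation of $\|\gamma\|_{L^\infty_y}$: since \eqref{eq:BL} is only weakly dispersive at low frequency and the moving coordinate $z=x-c_0t-\gamma(t,y)$ introduces a space--time-dependent advection $\gamma_t\,\pd_z\wU$ in the equation for $\wU$, one cannot afford $\gamma$ to grow. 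Pinning this $L^\infty$-bound via the d'Alembert-type identity while simultaneously taming the critical nonlinearity in the modulation equations is the technical crux; unlike in the KP--II case, the bidirectional nature of \eqref{eq:BL} forces us to carry $\wP$ as a genuine two-way linear wave throughout, which is precisely why the energy class decomposition above and the virial estimates are essential.
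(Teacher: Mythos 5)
Your overall architecture is the right one and matches the paper's: secular-term condition on an exponentially localized remainder, a dissipative-wave modulation system handled with the $L^2$--$\mF^{-1}L^\infty$ and d'Alembert-type estimates, a divergence-form change of variables for the critical terms, a renormalized energy, virial estimates, the exponential linear stability of Theorem~\ref{thm:linear-stability}, and Burgers asymptotics for \eqref{eq:phase-lim}. But there is one genuine gap in your setup: you take the freely propagating component $\wP$ to solve the \emph{linearization of \eqref{eq:BL1} at $0$}, with only part of the data. In the paper the analogous object $U_1$ solves the \emph{full nonlinear} equation \eqref{eq:U1} with the \emph{entire} perturbation $U_0$ as data, and this choice is not cosmetic. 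The remainder must live in $\bX=H^1_\a(\R^2)\times L^2_\a(\R^2)$, whose weight $e^{2\a x}$ grows as $x\to+\infty$; this is forced on you because the adjoint resonant modes $g_k^*$ grow exponentially as $x\to\infty$, so the secular-term condition only makes sense for exponentially localized remainders. With your $\wP$, the equation for the remainder contains the source $N(\wP)$ with no factor of the remainder and no localization: since $\wP$ is only polynomially localized, $e^{\a x}N(\wP)\notin L^2$, the Gronwall argument that propagates exponential localization (Lemma~\ref{lem:Phi-U1}) breaks down, and the Duhamel formula based on \eqref{eq:decay} has an infinite forcing. The paper's choice makes $N(U_1)$ cancel exactly, so that every source term in \eqref{eq:W} carries either a factor of $W$ or the exponentially localized potential $V_{c_0}$; moreover $U_2(0)=0$, whereas splitting off only the ``leading part'' of the data would leave the remainder with non-localized initial data.

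A second, related point: the quantitative input you need from the free wave is the decay of $\|U_1(t)\|_{\bW(t)}$ (the energy density weighted along the moving solitary wave), which the paper obtains from the virial identities of Section~\ref{sec:virial}; these rest on the exact flux identity \eqref{eq:energy-flux} for the \emph{nonlinear} flow and on the conservation law \eqref{eq:energyU1}. If you insist on a linear $\wP$ you must both re-derive these monotonicity estimates and find somewhere else to put $N(\wP)$, and there is no admissible function space for it in this framework. Everything else in your outline (the bootstrap closing as a small linear-plus-quadratic system among $\bM_{c,\gamma}$, $\bM_1$, $\bM_2$, $\bM_U$, $\bM_\infty$ rather than literally $M\lesssim\eps+M^2$, and the orthogonality being taken against $g_k^*(\cdot,\eta,c(t,y))$ rather than $c_0$) is a matter of bookkeeping, not substance.
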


In the case where $\gamma_\infty\ne0$ in \eqref{eq:phase-lim}, 
the distance between the solution $u$ and the set of line solitary waves
in the energy space grows like $t^{1/2}$ or faster.
\begin{corollary}
  \label{cor:instability2}
Let $c_0>1$. Suppose that \eqref{ass:S} holds for $c=c_0$.
Then for any $\eps_0>0$, there exists a solution of \eqref{eq:BL1} satisfying
\eqref{eq:init-cond}, \eqref{eq:init-size} and
$$\liminf_{t\to\infty}t^{-1/2}\inf_{\widetilde{\Phi} \in\mathcal{K}}
\|\Phi(t,\cdot)-\widetilde{\Phi}\|_\bE>0\,,$$ where
$\mathcal{K}=\{\Phi_c(x\cos\theta+y\sin\theta-ct+\gamma)\mid
 \pm c>1\,,\quad \gamma\in\R\,,\quad \theta\in[0,2\pi)\}$.
\end{corollary}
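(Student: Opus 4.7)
The plan is to produce a specific solution of \eqref{eq:BL1} whose asymptotic phase $\gamma_\infty$ given by Theorem~\ref{thm:main} is nonzero, and then convert that phase jump into a $\sqrt{t}$ lower bound for the $\bE$-distance to $\mathcal{K}$.

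For the construction, I will extract from the analysis sketched in Section~\ref{sec:phase} the leading-order expression $\gamma_\infty = \Lambda[U_0] + O(\eps^2)$, where $\Lambda$ is a bounded linear functional on the space of initial perturbations; concretely, $\Lambda[U_0]$ arises by pairing $U_0$ with the zero-frequency resonant mode $g_2^*(\cdot,0,c_0)$ of the adjoint linearized operator and then tracking the component of the Burgers self-similar packet travelling with speed $-\lambda_{1,c_0}$. Since $g_2^*(\cdot,0,c_0)$ is an $O(\eta^2)$-perturbation of $\zeta_{2,c_0}^*\not\equiv 0$ by \eqref{eq:gk*-approx}, the functional $\Lambda$ is not identically zero, so there exists $U_*$ in the weighted space appearing in \eqref{eq:init-size} with $\Lambda[U_*]\ne 0$. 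Taking $U_0 = \mu U_*$ with $\mu>0$ sufficiently small then yields a solution with $\gamma_\infty = \mu\Lambda[U_*] + O(\mu^2)\ne 0$.

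For the lower bound I fix this solution and examine the contribution of an arbitrary $\widetilde{\Phi}\in\mathcal{K}$. I will first observe that $\|\Phi(t)-\widetilde{\Phi}\|_\bE=\infty$ unless $\widetilde{\Phi}=\Phi_{c_0}(\cdot-c_0 t)$: when $\sin\theta\ne 0$, $\partial_y\widetilde{\Phi}\notin L^2(\R^2)$ (its $L^2$-mass lies on a single line) while $\partial_y\Phi(t)\in L^2$, so the triangle inequality forces an infinite $\bE$-norm; when $\sin\theta=0$ but $(c,\tilde\gamma)\ne(c_0,0)$, the difference $\partial_x\widetilde{\Phi}-\partial_x\Phi(t)$ converges as $|y|\to\infty$ to a nonzero $y$-independent function of $x$ in $L^2_x$, which is again incompatible with $L^2(\R^2)$-integrability in $y$. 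Hence the infimum equals $\|\Phi(t)-\Phi_{c_0}(\cdot-c_0 t)\|_\bE$. Using \eqref{OS} I replace $\Phi(t)$ by $\Phi_{c(t,y)}(x-c_0 t-\gamma(t,y))$ modulo $O(\eps)$, and then bound the $\partial_x$-component from below: by \eqref{eq:phase-lim} and \eqref{phase1}, for $t$ large and $|y|\le(\lambda_{1,c_0}-\delta)t$ one has $|c(t,y)-c_0|+|\gamma(t,y)-\gamma_\infty|=o(1)$, whence
$$\|q_{c(t,y)}(\cdot-\gamma(t,y))-q_{c_0}\|_{L^2(\R)}^{2}\ge \tfrac12\|q_{c_0}(\cdot-\gamma_\infty)-q_{c_0}\|_{L^2(\R)}^{2}\ge c_1|\gamma_\infty|^2$$
for each such $y$, using the elementary non-degeneracy $\|q_{c_0}(\cdot-h)-q_{c_0}\|_{L^2}^2\ge c_1 h^2$ for $|h|$ small. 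Integrating over the $y$-interval of length $\sim t$ gives $\|\Phi(t)-\Phi_{c_0}(\cdot-c_0 t)\|_\bE^{2}\gtrsim |\gamma_\infty|^2 t - C\eps^2$, and hence $\liminf_{t\to\infty} t^{-1/2}\inf_{\widetilde{\Phi}\in\mathcal{K}}\|\Phi(t)-\widetilde{\Phi}\|_\bE\ge c|\gamma_\infty|>0$.

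The main obstacle is the first step: extracting the leading-order dependence $\gamma_\infty = \Lambda[U_0]+O(\eps^2)$ from the modulation analysis of Section~\ref{sec:phase} and verifying that $\Lambda$ is not identically zero on the space of admissible perturbations. The lower bound in the second step is then a direct consequence of Theorem~\ref{thm:main} together with the elementary $L^2$-translation estimate and the dichotomy on $(c,\theta,\tilde\gamma)$ above.
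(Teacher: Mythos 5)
Your proposal is correct and follows essentially the same route as the paper: the paper simply takes the explicit witness $U_0=-\eps\,\zeta_{2,c_0}(z)(\mathcal{F}_\eta^{-1}\zeta)(y)$, for which the linear functional you call $\Lambda$ (the pairing against $\zeta_{2,c_0}^*$ entering through $k(0,\cdot)$, $\tk(0,\cdot)$ and $\bb(0)$) is manifestly nonzero, and bounds the remainder by $|\gamma_{\infty,2}|\lesssim e^{-\a h}\eps+\eps^2$ (note the additional $e^{-\a h}\eps$ term, made harmless by taking $h$ large, which your $O(\eps^2)$ omits). The conversion of $\gamma_\infty\ne0$ into the $t^{1/2}$ lower bound is declared immediate from \eqref{eq:phase-lim} in the paper; your region-counting argument is the intended justification.
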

\bigskip

\section{Decay estimates for linearized modulation equations}
\label{sec:decay-est}
Modulation of the local amplitude and the local phase shift of line solitary
waves can be described by a system of Burgers' equations.
In this subsection, we introduce decay estimates for linearized
modulation equations following \cite{Miz15,Miz19}.
\par
Let $\nu$ be a real number, 
$\omega(\eta)=\bigl\{1-(\nu/\lambda_{1,c_0})^2\eta^2\bigr\}^{1/2}$ and
\begin{align*}
& \lambda_\pm(\eta)=-\lambda_{2,c_0}\eta^2\pm i\lambda_{1,c_0}\eta\omega(\eta)
\,,\quad \pi_\pm(\eta)=
(\nu\eta\mp i\lambda_{1,c_0}\omega(\eta), \lambda_{1,c_0}^2\eta)^T\,,
\\ &
\mathcal{A}_*(\eta)=
\begin{pmatrix}
-(\lambda_{2,c_0}+\nu)\eta^2 & 1 \\ 
-\lambda_{1,c_0}^2\eta^2 & -(\lambda_{2,c_0}-\nu)\eta^2
\end{pmatrix}\,,
\quad
\Pi(\eta)=\frac{1}{\lambda_{1,c_0}\eta}\left(\pi_+(\eta),\pi_-(\eta)\right)\,.
\end{align*}
Then $\mathcal{A}_*(\eta)\Pi(\eta)
=\diag(\lambda_+(\eta),\lambda_-(\eta))\Pi(\eta)$ and 
\begin{equation}
  \label{eq:etA}
e^{t\mathcal{A}_*(\eta)}=e^{-\lambda_{2,c_0}t\eta^2}\left\{
\cos t\lambda_{1,c_0}\eta\omega(\eta) I
+\frac{\sin t\lambda_{1,c_0}\eta\omega(\eta)}
{\lambda_{1,c_0}\eta\omega(\eta)}
\left(\mathcal{A}_*(\eta)+\lambda_{2,c_0}I\right)\right\}
\end{equation}

Let $\eta_0$ be a positive number satisfying
\begin{equation}
  \label{ass:eta0}
|\nu|\eta_0<\lambda_{1,c_0}\,,
\end{equation}
and let 
$\chi_1(\eta)$ and $\chi_2(\eta)$ be nonnegative smooth functions such that
$\chi_1(\eta)+\chi_2(\eta)=1$, 
$\chi_1(\eta)=1$ if $|\eta|\le \frac12\eta_0$ and $\chi_1(\eta)=0$ if
$|\eta|\ge \frac{3}{4}\eta_0$.
Then 
\begin{equation}
  \label{eq:k-est,high}
\|\chi_2(D_y)e^{t\mathcal{A}_*(D_y)}\|_{B(L^2(\R))}\lesssim e^{-\eta_0^2t/2}
\quad\text{for $t\ge0$.}
\end{equation}
Next, we will estimate the low frequency part of $e^{t\mathcal{A}_*(\eta)}$.
Let  
\begin{align*}
& K_1(t,y)=\frac{1}{\sqrt{2\pi}}\mathcal{F}^{-1}
\left(\chi_1(\eta)e^{-\lambda_{2,c_0}t\eta^2}\cos t\lambda_{1,c_0}\eta\omega(\eta)
\right)\,,
\\ &
K_2(t,y)=\frac{1}{\sqrt{2\pi}}\mathcal{F}^{-1} \left(
e^{-\lambda_{2,c_0}t\eta^2}\frac{\eta\chi_1(\eta)}{\omega(\eta)}
\sin t\lambda_{1,c_0}\eta\omega(\eta)\right)\,,
\\ &
K_3(t,y)=\frac{1}{\sqrt{2\pi}}\mathcal{F}^{-1}\left(
e^{-\lambda_{2,c_0}t\eta^2} \frac{\chi_1(\eta)}{\eta\omega(\eta)}
\sin t\lambda_{1,c_0}\eta\omega(\eta) \right)\,.  
\end{align*}
Then
\begin{equation}
  \label{eq:fundamental-low}
\chi_1(D_y)e^{t\mathcal{A}_*(D_y)}\delta=\begin{pmatrix}
K_1(t,y)-\frac{\nu}{\lambda_{1,c_0}}K_2(t,y)  &
\frac{1}{\lambda_{1,c_0}}K_3(t,y) \\ -\lambda_{1,c_0}K_2(t,y)
&  K_1(t,y)+\frac{\nu}{\lambda_{1,c_0}}K_2(t,y)
\end{pmatrix}\,.
\end{equation}
We can prove the following estimates for $K_1$, $K_2$ and $K_3$
in the same way as \cite[Lemma~2.2]{Miz19}.
\begin{lemma}
  \label{lem:fundamental-sol}
Suppose \eqref{ass:eta0} and $k\ge0$. Then for every $t\ge 0$,
  \begin{gather}
    \label{eq:k1-est}
\sup_{t>0}\|K_1(t,\cdot)\|_{L^1(\R)}<\infty\,,\quad \|K_1(t,\cdot)\|_{L^2(\R)}\lesssim \la t \ra^{-1/4}\,,
\\  \label{eq:k2-est1}
\|\pd_y^{j+1}K_1(t,\cdot)\|_{L^1(\R)} +\|\pd_y^jK_2(t,\cdot)\|_{L^1(\R)}+
\|\pd_y^{j+2}K_3(t,\cdot)\|_{L^1(\R)} 
\lesssim \la t \ra^{-(j+1)/2}\,,
\\
\label{eq:k2-est2}
\quad \|\pd_y^{j+1}K_1(t,\cdot)\|_{L^2(\R)}+\|\pd_y^jK_2(t,\cdot)\|_{L^2(\R)}
+\|\pd_y^{j+2}K_3(t,\cdot)\|_{L^2(\R)}\lesssim \la t \ra^{-(2j+3)/4}\,,
\\
\label{eq:k3-est'}
\sup_{t>0}\|\pd_yK_3(t,\cdot)\|_{L^1(\R)}<\infty\,,
\quad \|\pd_yK_3(t,\cdot)\|_{L^2(\R)}\lesssim \la t \ra^{-1/4}\,,
\\  \label{eq:k3-est}
\sup_{t>0}\|K_3(t,\cdot)*f\|_{L^\infty(\R)}\lesssim \|f\|_{L^1(\R)}\,.
\end{gather}
\end{lemma}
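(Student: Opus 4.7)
My plan is to prove all estimates by Fourier-analytic techniques, exploiting the Gaussian damping factor $e^{-\lambda_{2,c_0}t\eta^2}$ together with the low-frequency cutoff $\chi_1$, following the scheme of \cite[Lemma~2.2]{Miz19}.

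First, the $L^2$ bounds \eqref{eq:k1-est}, \eqref{eq:k2-est2}, \eqref{eq:k3-est'} follow directly from Plancherel. Multiplication by $\eta^j$ in the Fourier variable corresponds to $\partial_y^j$ in physical space, so by the crude inequalities $|\cos|\le 1$, $|\sin(A)/A|\le 1$, and $|\omega(\eta)|\simeq 1$ on $\supp\chi_1$ (which is guaranteed by \eqref{ass:eta0}), each estimate reduces to bounding $\|\eta^j \chi_1(\eta) e^{-\lambda_{2,c_0}t\eta^2}\|_{L^2(\R_\eta)}$, which scales like $\la t\ra^{-(2j+1)/4}$ by a change of variable $\eta\mapsto \eta/\sqrt{t}$.

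Next, for the $L^1$ bounds \eqref{eq:k1-est} and \eqref{eq:k2-est1}, I would split each trigonometric factor into two complex exponentials using $\cos\theta=(e^{i\theta}+e^{-i\theta})/2$ and $\sin\theta=(e^{i\theta}-e^{-i\theta})/(2i)$. Writing $e^{\pm it\lambda_{1,c_0}\eta\omega(\eta)}=e^{\pm it\lambda_{1,c_0}\eta}\cdot e^{\pm it\lambda_{1,c_0}\eta(\omega(\eta)-1)}$, the leading factor is a pure translation by $\mp t\lambda_{1,c_0}$ in physical space, which preserves $L^1$-norms. The residual phase satisfies $\eta(\omega(\eta)-1)=O(\eta^3)$ near $0$, so on the effective Gaussian support $|\eta|\lesssim t^{-1/2}$ one has $t\lambda_{1,c_0}\eta(\omega(\eta)-1)=O(t^{-1/2})$; expanding $e^{i\psi}=1+O(|\psi|)$ and using the elementary bound $\|\eta^j e^{-ct\eta^2}\|_{L^1(\R_\eta)}\lesssim t^{-(j+1)/2}$, Young's inequality (after writing each resulting physical-space function as a convolution of $\mathcal{F}^{-1}$ of a Gaussian-type symbol with a translated profile) yields the required $L^1$ decay. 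For \eqref{eq:k2-est1}, the extra factor $\eta^{j+1}$, $\eta^j$, or $\eta^{j+2}$ gives the improved $\la t\ra^{-(j+1)/2}$ rate through the same scaling.

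The delicate points are the uniform-in-$t$ estimates \eqref{eq:k1-est} ($\sup_t\|K_1\|_{L^1}<\infty$) and \eqref{eq:k3-est}, and here the oscillation must be used rather than discarded. For \eqref{eq:k3-est}, note that
\[
\frac{\sin(t\lambda_{1,c_0}\eta\omega(\eta))}{\eta\omega(\eta)}
=\frac{1}{2i}\int_{-t\lambda_{1,c_0}}^{t\lambda_{1,c_0}}e^{is\eta\omega(\eta)/\lambda_{1,c_0}^{-1}}\,\cdots
\]
type d'Alembert representation; more concretely, I would split $\chi_1(\eta)/(\eta\omega(\eta))=\chi_1(\eta)/\eta+r(\eta)$, where $r(\eta)=\chi_1(\eta)(1-\omega(\eta))/(\eta\omega(\eta))$ is smooth and compactly supported (hence its inverse Fourier transform is Schwartz and of bounded $L^1$-norm uniformly in the Gaussian convolution). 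For the singular piece $\chi_1(\eta)/\eta$ against $e^{\pm it\lambda_{1,c_0}\eta\omega(\eta)}e^{-\lambda_{2,c_0}t\eta^2}/(2i)$, integration by parts in $\eta$ on $\int e^{iy\eta}(\chi_1(\eta)/\eta)e^{\pm it\lambda_{1,c_0}\eta\omega(\eta)}e^{-\lambda_{2,c_0}t\eta^2}\,d\eta$, combined with recognizing the primitive as a smoothed signum kernel supported near $y=\mp t\lambda_{1,c_0}$, yields $\|K_3(t,\cdot)\|_{L^\infty}\le C$ uniformly in $t$, whence Young's inequality gives \eqref{eq:k3-est}. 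The same scheme, applied to $\partial_y K_3$ (which removes the $1/\eta$ factor), yields \eqref{eq:k3-est'}. The main technical obstacle is organizing this singular decomposition cleanly so that the Gaussian damping, the translational oscillation, and the residual $O(\eta^3)$ phase correction all combine to give bounds uniform in $t\ge 0$ rather than the weaker $t^{1/2}$ growth that a naive $\|\widehat{K_3}\|_{L^1}$ estimate would produce.
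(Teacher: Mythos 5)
Your overall scheme --- Plancherel for the $L^2$ bounds, splitting $\cos$ and $\sin$ into complex exponentials, factoring out the translations $e^{\pm it\lambda_{1,c_0}\eta}$, and treating $\sin(t\lambda_{1,c_0}\eta)/\eta$ as (essentially) the Fourier transform of the indicator $W_t$ for the uniform $L^\infty$ bound \eqref{eq:k3-est} --- is the right one and is the scheme behind the proof the paper actually relies on (the paper gives no argument of its own, only the reference to \cite[Lemma~2.2]{Miz19}). The $L^2$ estimates and the treatment of \eqref{eq:k3-est} in your sketch are sound.

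The gap is in the physical-space $L^1$ estimates \eqref{eq:k1-est}, \eqref{eq:k2-est1} and \eqref{eq:k3-est'}. The bound $\|\eta^je^{-ct\eta^2}\|_{L^1(\R_\eta)}\lesssim t^{-(j+1)/2}$ you invoke controls $\|\mathcal{F}^{-1}(\cdot)\|_{L^\infty(\R_y)}$, not $\|\mathcal{F}^{-1}(\cdot)\|_{L^1(\R_y)}$; the relevant physical-space fact is $\|\pd_y^{k}H_{ct}\|_{L^1(\R_y)}\sim t^{-k/2}$, which via Young's inequality with $\check{\chi}_1\in L^1$ does give the product-form main term the claimed rate $t^{-(j+1)/2}$ when $k=j+1$ powers of $\eta$ are present. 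But for the residual term $\eta^{k}\chi_1(\eta)e^{-ct\eta^2}\bigl(e^{\pm it\lambda_{1,c_0}\eta(\omega(\eta)-1)}-1\bigr)$ the pointwise bound $|e^{i\psi}-1|\lesssim t|\eta|^3$ gives no $L^1(\R_y)$ information at all: the symbol is no longer a product of a Gaussian with a fixed $L^1$ profile, and a pointwise-small symbol can have an inverse Fourier transform of arbitrarily large $L^1_y$ norm. The step that closes this is the interpolation $\|g\|_{L^1(\R)}^2\lesssim\|\hat g\|_{L^2}\,\|\pd_\eta\hat g\|_{L^2}$ (equivalently a weighted-$L^2$ estimate) applied to the remainder, together with the observation that $\pd_\eta$ of the residual phase equals $t\lambda_{1,c_0}(\omega(\eta)-1+\eta\omega'(\eta))=O(t\eta^2)$, so each $\eta$-derivative either lowers the power of $\eta$ by one or costs a factor $t\eta$ or $t\eta^2$, all of which the Gaussian absorbs at the rate $t^{-(j+1)/2}$ or better. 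The same repair is needed for the uniform bounds $\sup_t\|K_1(t,\cdot)\|_{L^1}<\infty$ and $\sup_t\|\pd_yK_3(t,\cdot)\|_{L^1}<\infty$; note in particular that \eqref{eq:k3-est'} is an $L^1$ statement, so the $L^\infty$-type argument you use for $K_3$ does not transfer to it directly.
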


Let $Y$ and $Z$ be closed subspaces of $L^2(\R)$ defined by
$$Y=\mF^{-1}_\eta Z\quad\text{and}\quad
Z=\{U \in L^2(\R)\mid\supp U \subset[-\eta_0,\eta_0]\}\,,$$
and let $Y_1=\mF^{-1}_\eta Z_1$ and
$Z_1=\{U \in Z \mid\|U\|_{Z_1}:=\|U\|_{L^\infty}<\infty\}$.
By the definition,
\begin{equation}
  \label{eq:H-infty-Y}
\|U\|_{H^s}\le (1+\eta_0^2)^{s/2}\|f\|_{L^2}
\quad\text{for any $s\ge0$ and $U\in Y$.}  
\end{equation}
Especially, we have $\|U\|_{L^\infty}\lesssim \|U\|_{L^2}$ for any $U\in Y$.
Let $\wP_1$ be a projection defined by
$\wP_1 U=\mF_\eta^{-1}\mathbf{1}_{[-\eta_0,\eta_0]}\mF_yU$.
Then $\|\wP_1U\|_{Y_1}\le (2\pi)^{-1/2}\|U\|_{L^1}$ for any $U\in L^1(\R)$.
In particular, for any $U_1$, $U_2\in Y$,
\begin{equation}
  \label{eq:Y1-L1}
  \|\wP_1(U_1U_2)\|_{Y_1}\le (2\pi)^{-1/2}\|U_1U_2\|_{L^1}
  \le (2\pi)^{-1/2}\|U_1\|_Y\|U_2\|_Y\,.
\end{equation}

Let $\chi(\eta)$ be a smooth function
such that $\chi(\eta)=1$ if $\eta\in[-\frac{\eta_0}{4},\frac{\eta_0}{4}]$
and $\chi(\eta)=0$ if $\eta\not\in[-\frac{\eta_0}{2},\frac{\eta_0}{2}]$.
We will use the following estimates to investigate large time
behavior of modulation parameters.
\begin{lemma}
\label{lem:fund-sol}
Let $k\ge0$. Then for every $t\ge0$,
\begin{gather}
  \label{eq:decay1}
\sum_{i=1,2}\|E_i\pd_y^ke^{t\mathcal{A}_*}E_i\|_{B(Y;L^\infty)}
+\|E_1\pd_y^{k+1}e^{t\mathcal{A}_*}E_2\|_{B(Y;L^\infty)}
\lesssim \la t\ra^{-(2k+1)/4}\,,
\\ \label{eq:decay2}
\sum_{i=1,2}\|E_i\pd_y^ke^{t\mathcal{A}_*}E_i\|_{B(Y_1;L^\infty)}
+\|E_1\pd_y^{k+1}e^{t\mathcal{A}_*}E_2\|_{B(Y_1;L^\infty)}
\lesssim \la t\ra^{-(k+1)/2}\,,
\\  \label{eq:decay3}
\|E_2\pd_y^ke^{t\mathcal{A}_*}E_1\|_{B(Y;L^\infty)}
\lesssim \la t\ra^{-(2k+3)/4}\,,\;
\|E_2\pd_y^ke^{t\mathcal{A}_*}E_1\|_{B(Y_1;L^\infty)}
\lesssim \la t\ra^{-(k+2)/2}\,,
\\  \label{eq:etA-1}
 \|\chi(D_y)e^{t\mathcal{A}_*}E_2\|_{B(L^1;L^\infty)}=O(1)\,,\quad
\|(I-\chi(D_y))e^{t\mathcal{A}_*}\|_{B(Y;L^\infty)}=O(e^{-c_1 t})\,,
\end{gather}
where $c_1$ is a positive constant.
\end{lemma}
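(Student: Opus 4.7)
All four displayed estimates will be extracted from the explicit kernel representation \eqref{eq:fundamental-low} combined with the bounds in Lemma~\ref{lem:fundamental-sol}. I first split
$$e^{t\mathcal{A}_*(D_y)}=\chi_1(D_y)e^{t\mathcal{A}_*(D_y)}+\chi_2(D_y)e^{t\mathcal{A}_*(D_y)}$$
and dispose of the high-frequency term immediately: if $f\in Y$ (hence also if $f\in Y_1\subset Y$), the Fourier support of $\chi_2(D_y)f$ lies in the compact annulus $\{\eta_0/2\le|\eta|\le\eta_0\}$, so \eqref{eq:k-est,high} together with the band-limited Bernstein inequality \eqref{eq:H-infty-Y} yields an $O(e^{-\eta_0^2t/2})$ contribution that is absorbed into every stated estimate. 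It remains to bound $\chi_1(D_y)e^{t\mathcal{A}_*(D_y)}$, whose matrix entries are convolutions with the four scalar kernels in \eqref{eq:fundamental-low}.

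\textbf{Inputs in $Y$.} For \eqref{eq:decay1} and the $Y$-part of \eqref{eq:decay3} I apply Young's inequality $\|K*f\|_{L^\infty}\le\|K\|_{L^2}\|f\|_{L^2}$ entrywise. The diagonal kernels $\pd_y^k(K_1\pm\nu\lambda_{1,c_0}^{-1}K_2)$ are controlled by $\|\pd_y^kK_1\|_{L^2}+C\|\pd_y^kK_2\|_{L^2}$; for $k=0$ the dominant term is $\|K_1\|_{L^2}\lesssim\la t\ra^{-1/4}$ from \eqref{eq:k1-est}, and for $k\ge1$ \eqref{eq:k2-est2} with $j=k-1$ gives $\la t\ra^{-(2k+1)/4}$. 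The $(2,1)$-kernel $-\lambda_{1,c_0}\pd_y^kK_2$ obeys $\la t\ra^{-(2k+3)/4}$ by \eqref{eq:k2-est2}, matching the $Y$-part of \eqref{eq:decay3}. The $(1,2)$-entry appears in the statement with an extra factor $\pd_y^{k+1}$; for $k=0$ the norm $\|\pd_yK_3\|_{L^2}\lesssim\la t\ra^{-1/4}$ is exactly \eqref{eq:k3-est'}, and for $k\ge1$ I again invoke \eqref{eq:k2-est2} with $j=k-1$.

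\textbf{Inputs in $Y_1$ and proof of \eqref{eq:etA-1}.} For \eqref{eq:decay2} and the $Y_1$-part of \eqref{eq:decay3} I use the Fourier-side bound $|K*f(y)|\le\|\widehat K\|_{L^1}\|\widehat f\|_{L^\infty}=\|\widehat K\|_{L^1}\|f\|_{Y_1}$. Each relevant symbol is of the form $\eta^m\chi_1(\eta)e^{-\lambda_{2,c_0}t\eta^2}$ times a uniformly bounded oscillatory factor, so
$$\|\widehat K\|_{L^1}\lesssim\int_{|\eta|\le\eta_0}|\eta|^me^{-\lambda_{2,c_0}t\eta^2}\,d\eta\lesssim\la t\ra^{-(m+1)/2};$$
reading off $m=k$ for the $\pd_y^k$ of the diagonal entries and the $\pd_y^{k+1}$ of the $(1,2)$-entry (where the prefactor $\eta^{k+1}$ cancels the $1/\eta$ in $\widehat{K_3}$) and $m=k+1$ for the $\pd_y^k$ of the $(2,1)$-entry gives exactly the stated powers. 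For the first identity in \eqref{eq:etA-1}, $\chi(D_y)$ restricts to $|\eta|\le\eta_0/2$ where $\chi_1\equiv1$, and the two kernels in the $E_2$-column, $\lambda_{1,c_0}^{-1}K_3$ and $K_1+\nu\lambda_{1,c_0}^{-1}K_2$, are uniformly bounded in $L^\infty$ (the first by \eqref{eq:k3-est}, the latter two by Fourier inversion of a compactly supported $L^1$ symbol), so Young's $L^1*L^\infty\to L^\infty$ yields the $O(1)$ bound. For the second identity, $(I-\chi(D_y))$ together with the support constraint of $Y$ localises to $\eta_0/4\le|\eta|\le\eta_0$, on which $e^{-\lambda_{2,c_0}t\eta^2}\le e^{-c_1t}$ with $c_1:=\lambda_{2,c_0}\eta_0^2/16$, yielding exponential decay on $L^2$ and hence on $L^\infty$ by Bernstein. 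The argument is essentially bookkeeping; the only delicate point is verifying that the apparent $1/\eta$ singularity in the $(1,2)$-entry is always neutralised either by the outer differentiation $\pd_y^{k+1}$ or by the fact that $\sin(t\lambda_{1,c_0}\eta\omega(\eta))/\eta$ stays bounded as $\eta\to0$.
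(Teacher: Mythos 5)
Your proof is correct and follows exactly the route the paper intends: the paper gives no written proof beyond the remark that the lemma "follows immediately from Lemma~\ref{lem:fundamental-sol}, \eqref{eq:k-est,high} and \eqref{eq:fundamental-low}", and your argument is precisely that bookkeeping — the low/high frequency split, Young's inequality with the $L^2$ (resp.\ Fourier-side $L^1$) kernel bounds for the $Y$ (resp.\ $Y_1$) estimates, and the support localisation for \eqref{eq:etA-1}. The exponent matching against \eqref{eq:k1-est}--\eqref{eq:k3-est} is carried out correctly, including the cancellation of the $1/\eta$ singularity in the $(1,2)$-entry.
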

Lemma~\ref{lem:fund-sol} follows immediately from
Lemma~\ref{lem:fundamental-sol}, \eqref{eq:k-est,high} and
\eqref{eq:fundamental-low}.
\par
Now we will show decay estimates for linearly perturbed equations
of $\pd_tu=\mathcal{A}_*u$. 
Suppose that $\delta_1$, $\delta_2$ and $\kappa$ are positive constants
and that $d_{ij}(\eta)$ and $b_{ij}(t,\eta)$ are
continuous functions satisfying  for $\eta\in[-\eta_0,\eta_0]$ and $t\ge0$,
\begin{equation}
  \label{eq:H}
  \begin{split}
& |d_{11}(\eta)+(\lambda_{2,c_0}+\nu)\eta^2|\le \delta_1|\eta|^3\,,
\quad |d_{12}(\eta)-1|\le \delta_1\eta^2\,,\\
& |d_{21}(\eta)+\lambda_{1,c_0}^2| \le \delta_1\eta^2\,,
\quad |d_{22}(\eta)+(\lambda_{2,c_0}-\nu)\eta^2|\le \delta_1|\eta|^3\,,\\
& |b_{ij}(t,\eta)|\le \delta_2 e^{-\kappa t}\,,
\quad |b_{ij}(t,\eta)-b_{ij}(t,0)|\le \delta_2\eta^2e^{-\kappa t}
\quad\text{for $i$, $j=1$, $2$.}
  \end{split}  \tag{H}
\end{equation}
Let  $\mathcal{A}(t,D_y)=\mathcal{A}_0(D_y)+\mathcal{A}_1(t,D_y)$,
$$\mathcal{A}_0(D_y)=\begin{pmatrix}d_{11}(D_y) & d_{12}(D_y)
\\ d_{21}(D_y)\pd_y^2 & d_{22}(D_y)\end{pmatrix}\,,\quad
\mathcal{A}_1(t,D_y)=\begin{pmatrix} b_{11}(t,D_y)  & b_{12}(t,D_y)
\\ b_{21}(t,D_y)\pd_y & b_{22}(t,D_y)\end{pmatrix}\,,$$
and let $U(t,s)$ a solution operator of
\begin{equation}
\label{eq:cx-linear}
\frac{\pd u}{\pd t}
\begin{pmatrix}u_1 \\ u_2\end{pmatrix}
=\mathcal{A}(t,D_y)\begin{pmatrix}u_1 \\ u_2\end{pmatrix}\,,\quad 
u(s,\cdot)=f\,.
\end{equation}
Then we have the following.
\begin{lemma}
\label{lem:decay-BB}
Let $k\ge0$.
If $\delta_1$ is sufficiently small, then for every 
$t\ge s\ge0$ and $f=(f_1,f_2)^T\in Y\times Y$,
\begin{align}
\label{eq:BB1}
& \|\pd_y^kU(t,s)f\|_{\dot{H}^1\times L^2}\le
C(1+t-s)^{-k/2}(\|\pd_yf_1\|_Y+\|f_2\|_Y)\,,
\\ & \label{eq:BB2}
\|\pd_y^kU(t,s)f\|_{\dot{H}^1\times L^2}\le C(1+t-s)^{-(2k+1)/4}
(\|\pd_yf_1\|_{Y_1}+\|f_2\|_{Y_1})\,,
\\ & \label{eq:BB1'}
\|\pd_y^kU(t,s)\diag(1,\pd_y)f\|_{\dot{H}^1\times L^2}
\le C(1+t-s)^{-(k+1)/2}\|f\|_Y\,,
\\ & \label{eq:BB2'}
\|\pd_y^kU(t,s)\diag(1,\pd_y)f\|_{\dot{H}^1\times L^2}\le
 C(1+t-s)^{-(2k+3)/4} \|f\|_{Y_1}\,,
\\ & \label{eq:BB3}
\|U(t,s)f_2\mathbf{e_2}\|_{L^\infty(\R)}\le C
\left(\|f_2\|_{Y_1}+\|\chi(D_y)f_2\|_{L^1}\right)\,,
\\ & \label{eq:BB4}
  \|U(t,s)\diag(1,\pd_y)f\|_{L^\infty(\R)}\le
C\la t-s\ra^{-1/4}\|f\|_Y\,,
\\ & \label{eq:BB3'}
\left\|\{\mu(t,s)U(t,s)-e^{(t-s)\mathcal{A}_*}\}f_2\mathbf{e_2}\right\|_{L^\infty(\R)}
\le C\la t-s\ra^{-1/4}\|f_2\|_{Y_1}\,,
\\ & \label{eq:BB4'}
  \|\{\mu(t,s)U(t,s)-e^{(t-s)\mathcal{A}_*}\}\diag(1,\pd_y)f\|_{L^\infty(\R)}\le
C\la t-s\ra^{-1/2}\|f\|_Y\,,
\end{align}
where $\mu(t,s)=\exp\left(-\int_s^t b_{22}(s,0)\,ds\right)$,
$C=C(\eta_0)$ and $\limsup_{\eta_0\downarrow0}C(\eta_0)<\infty$.
\end{lemma}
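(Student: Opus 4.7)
The plan is to treat \eqref{eq:cx-linear} as a Fourier-multiplier perturbation of the constant-coefficient equation $\pd_t u=\mathcal{A}_* u$ and to iterate Duhamel's formula
\[
U(t,s)=e^{(t-s)\mathcal{A}_*}+\int_s^t e^{(t-\tau)\mathcal{A}_*}\mathcal{B}(\tau,D_y)U(\tau,s)\,d\tau,
\]
with $\mathcal{B}(t,D_y):=(\mathcal{A}_0(D_y)-\mathcal{A}_*(D_y))+\mathcal{A}_1(t,D_y)$. Hypothesis \eqref{eq:H} bounds each entry of the symbol $\mathcal{B}(t,\eta)$ on $[-\eta_0,\eta_0]$ by $C(\delta_1\eta^2+\delta_2 e^{-\kappa t})$, with an additional factor of $\eta$ in the off-diagonal coming from the $\pd_y$ in $\mathcal{A}_1$. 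Since $\mathcal{A}$ is a Fourier multiplier in $y$, $\pd_y^k$ commutes with $U(t,s)$, so it will suffice to estimate $U(t,s)\pd_y^k f$; and on $Y$ the operator $\pd_y$ is bounded, so the quadratic vanishing and the exponential time decay of $\mathcal{B}$ both translate into genuine smallness in the Duhamel integral.

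For \eqref{eq:BB1}--\eqref{eq:BB2'} I would set up a fixed-point argument in weighted sup-norms $\sup_{\tau\ge s}\la\tau-s\ra^{\sigma}\|u(\tau)\|_{\dot H^1\times L^2}$, with $\sigma\in\{k/2,(2k+1)/4,(k+1)/2,(2k+3)/4\}$ matching the four target rates. Lemma~\ref{lem:fund-sol} together with the equivalence $\|u\|_{\dot H^1}\sim\|\pd_y u\|_{L^2}$ on $Y$ supplies the base estimate for $e^{(t-s)\mathcal{A}_*}$, and the Duhamel correction contributes
\[
\int_s^t\la t-\tau\ra^{-\sigma}\bigl(\delta_1\eta_0+\delta_2 e^{-\kappa\tau}\bigr)\la\tau-s\ra^{-\sigma}\,d\tau\lesssim(\delta_1\eta_0+\delta_2)\la t-s\ra^{-\sigma},
\]
which is absorbed on the left-hand side once $\delta_1$ is chosen small. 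Estimate \eqref{eq:BB3} uses, in addition, the uniform bound \eqref{eq:etA-1} to handle $\chi(D_y)f_2$ via the $L^1\to L^\infty$ map of the low-frequency part of $e^{t\mathcal{A}_*}E_2$, while $(I-\chi(D_y))f_2$ is controlled by $\|f_2\|_{Y_1}$ through \eqref{eq:decay2}. Estimate \eqref{eq:BB4} then follows directly from \eqref{eq:decay1}--\eqref{eq:decay3} applied to the $\diag(1,\pd_y)$-data plus the same Duhamel perturbation step.

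For the refined estimates \eqref{eq:BB3'}--\eqref{eq:BB4'}, the essential observation is that $b_{22}(t,0)$ is the only entry of $\mathcal{B}(t,\eta)$ whose symbol does \emph{not} vanish at $\eta=0$. The integrating factor $\mu(t,s)$ is engineered precisely so that $\wU(t,s):=\mu(t,s)U(t,s)$ satisfies $\pd_t\wU=\mathcal{A}_*\wU+\wR(t,D_y)\wU$, with every entry of $\wR(t,\eta)$ on $[-\eta_0,\eta_0]$ now of order $O(\delta_1\eta^2+\delta_2\eta^2 e^{-\kappa t})$. A further Duhamel step expresses $\wU-e^{(t-s)\mathcal{A}_*}$ with this extra $\eta^2$ in each symbol, and \eqref{eq:decay1}--\eqref{eq:decay3} convert the additional quadratic vanishing into a factor $\la t-s\ra^{-1/4}$ on $Y$-data and $\la t-s\ra^{-1/2}$ on $Y$-data with the $\diag(1,\pd_y)$ improvement, yielding the claimed sharper rates.

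The hard part will be the marginal integrability of these Duhamel convolutions: every rate appearing in \eqref{eq:BB1}--\eqref{eq:BB4} is slower than $\la t\ra^{-1}$, so $\la t\ra^{-\sigma}*\la t\ra^{-\sigma}$ does not by itself beat $\la t\ra^{-\sigma}$. Closing the fixed point genuinely requires both the quadratic $\eta$-vanishing of $\mathcal{A}_0-\mathcal{A}_*$ (supplying the small parameter $\eta_0^2$) and the exponential-in-$t$ decay of $\mathcal{A}_1$ from \eqref{eq:H}; likewise, the success of the integrating-factor trick for \eqref{eq:BB3'}--\eqref{eq:BB4'} hinges on the quantitative bound $|b_{ij}(t,\eta)-b_{ij}(t,0)|\le\delta_2\eta^2 e^{-\kappa t}$, which is the only piece of \eqref{eq:H} that is \emph{not} purely qualitative.
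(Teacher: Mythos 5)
Your strategy for the $L^\infty$ estimates \eqref{eq:BB3}--\eqref{eq:BB4'} --- Duhamel against $e^{(t-s)\mathcal{A}_*}$, feeding in the decay of Lemma~\ref{lem:fund-sol} and using the integrating factor $\mu(t,s)$ to remove $b_{22}(t,0)$ --- is essentially the paper's. But there is a genuine gap in your treatment of the base estimates \eqref{eq:BB1}--\eqref{eq:BB2'}. The convolution bound you assert,
$$\int_s^t\la t-\tau\ra^{-\sigma}\bigl(\delta_1\eta_0+\delta_2e^{-\kappa\tau}\bigr)\la\tau-s\ra^{-\sigma}\,d\tau\lesssim(\delta_1\eta_0+\delta_2)\la t-s\ra^{-\sigma}\,,$$
is false for the time-independent part of the perturbation: for $\sigma<1/2$ one has $\int_s^t\la t-\tau\ra^{-\sigma}\la\tau-s\ra^{-\sigma}\,d\tau\sim\la t-s\ra^{1-2\sigma}$, which grows (for $\sigma=0$, i.e.\ \eqref{eq:BB1} with $k=0$, the Duhamel term grows linearly in $t-s$), and no smallness of $\delta_1\eta_0$ can repair a growing power of $t-s$ inside a fixed-point scheme. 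The point is that $\mathcal{A}_0-\mathcal{A}_*$ does not decay in time, so it cannot be iterated away at the level of the integrated operator norms $\la t-s\ra^{-\sigma}$. What saves the day is frequency localization: at fixed $\eta$ the free evolution decays like $e^{-\lambda_{2,c_0}\eta^2(t-\tau)}$ while the perturbation is $O(\delta_1|\eta|^3)=O(\delta_1\eta_0\cdot\eta^2)$, and $\eta^2(t-s)e^{-\lambda_{2,c_0}\eta^2(t-s)}$ is bounded, so the iteration closes \emph{pointwise in $\eta$} and only afterwards is integrated in $\eta$ to produce the rates $\la t-s\ra^{-k/2}$ and $\la t-s\ra^{-(2k+1)/4}$. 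This is exactly what the paper's ``standard energy method'' applied to the system \eqref{eq:cx-linear2} for $v=(\pd_yu_1,u_2)$ (following \cite[Lemma~4.2]{Miz15}) accomplishes; your argument for \eqref{eq:BB1}--\eqref{eq:BB2'} must be restructured in this frequency-resolved form.

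A secondary inaccuracy: subtracting $b_{22}(t,0)I$ does \emph{not} make every entry of the remaining perturbation vanish like $\eta^2$; hypothesis \eqref{eq:H} gives no relation between $b_{11}(t,0)$, $b_{12}(t,0)$ and $b_{22}(t,0)$, so the first row of $\mathcal{A}_1(t,D_y)-b_{22}(t,0)I$ is only $O(\delta_2e^{-\kappa t})$. The gain in \eqref{eq:BB3'}--\eqref{eq:BB4'} comes instead from the structure of the free semigroup ($E_1e^{t\mathcal{A}_*}E_i$ already decays like $\la t\ra^{-1/4}$ on $Y$ by \eqref{eq:decay1}), combined with the extra $\eta$-vanishing in the second row only and the exponential time decay of $\mathcal{A}_1$; once \eqref{eq:BB1}--\eqref{eq:BB2'} are available as input for $w(\tau)$ inside the Duhamel integral, this part of your argument goes through as in the paper.
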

\begin{remark}
Since $\chi(D_y)\wP_1=\chi(D_y)$ is bounded on $L^1(\R)$,
we have $\chi(D_y)f_2\in L^1(\R)$
if $f_2=\wP_1\tilde{f}_2$ and $\tilde{f}_2\in L^1(\R)$.
\end{remark}
\begin{proof}[Proof of Lemma~\ref{lem:decay-BB}]
Let $u=(u_1,u_2)^T$ be a solution of \eqref{eq:cx-linear} and
$v=(\pd_yu_1,u_2)$. Then\eqref{eq:cx-linear} can be read as 
\begin{equation}
\label{eq:cx-linear2}
 \pd_t v=
\begin{pmatrix}d_{11}(D_y)+b_{11}(t,D_y) & (d_{12}(D_y)+b_{12}(t,D_y))\pd_y
\\ d_{21}(D_y)\pd_y+b_{21}(t,D_y) & d_{22}(D_y)+b_{22}(t,D_y)\end{pmatrix}v\,.
\end{equation}
Applying a standard energy method to \eqref{eq:cx-linear2}
as in  \cite[Lemma~4.2]{Miz15}, we have \eqref{eq:BB1} and \eqref{eq:BB2}.
We have \eqref{eq:BB1'} and \eqref{eq:BB2'}
immediately from \eqref{eq:BB1} and \eqref{eq:BB2}.
\par
Next, we will prove \eqref{eq:BB3}.
Let $w=(w_1,w_2)^T$ and  $w(t,y)=\mu(t,s)u(t,y)$. Since $Y\subset L^\infty$
and $0<\inf_{t\ge s\ge0}\mu(t,s)\le \sup_{t\ge s\ge0}\mu(t,s)<\infty$,
it suffices to show
$\|w_1(t)\|_{L^\infty}\lesssim \|f_2\|_{Y_1}+\|\chi(D_y)f_2\|_{L^1}$.
By \eqref{eq:cx-linear},
\begin{align*}
  \pd_tw=\mathcal{A}_0(D_y)w+B(t,D_y)w\,,\quad
B(t,D_y)=\mathcal{A}_1(t,D_y)-b_{22}(t,0)E\,,
\end{align*}
and it follows from \eqref{eq:H} and \eqref{eq:BB1} that
\begin{align*}
  \|\pd_tw_1(t)\|_Y \lesssim & \|\pd_y^2u_1(t)\|_Y+\|u_2(t)\|_Y
+\delta_2e^{-\kappa t}\|w_1(t)\|_Y
\lesssim  \|f\|_Y+\delta_2e^{-\kappa t}\|w_1(t)\|_Y\,.
\end{align*}
Thus by Gronwall's inequality,
\begin{equation}
  \label{eq:w1-est}
\|w_1(\tau)\|_Y \lesssim \la \tau-s\ra \|f\|_Y
\quad\text{for $\tau\ge s$.}
\end{equation}
Since
$|E_1B(t,\eta)|+|E_2\eta^{-1}B(t,\eta)|=O(\delta_2e^{-\kappa t})$,
it follows from Lemma~\ref{lem:fund-sol}, \eqref{eq:H}, \eqref{eq:BB2} and
\eqref{eq:w1-est} that
\begin{align*}
\|w_1(t)\|_{L^\infty} \lesssim &
\|E_1 e^{(t-s)\mathcal{A}_*}u(s)\|_{L^\infty}
+\left\|E_1\int_s^t e^{(t-\tau)\mathcal{A}_*}
\left(\mathcal{A}_0-\mathcal{A}_*+B(\tau)\right)w(\tau)\,d\tau\right\|_{L^\infty}
\\ \lesssim  &
\|f_2\|_{Y_1}+\|\chi(D_y)f_2\|_{L^1}
+ \delta_1\int_s^t \la t-\tau\ra^{-3/4}\|\pd_yw(\tau)\|_{\dot{H}^1\times L^2}\,d\tau
\\ &
+\delta_2\int_s^t \la t-\tau\ra^{-1/4} e^{-\kappa \tau}\|w(\tau)\|_Y\,d\tau
\\ \lesssim & \|f_2\|_{Y_1}+\|\chi(D_y)f_2\|_{L^1}\,.
\end{align*}
\par
Next, we will prove \eqref{eq:BB4}.
Using Lemma~\ref{lem:fund-sol} and \eqref{eq:BB1'}, we have
for $w=(w_1,w_2)^T=\mu(t,s)U(t,s)\diag(1,\pd_y)f$,
\begin{align*}
& \|w_1(t)\|_{L^\infty}\lesssim 
\| e^{(t-s)\mathcal{A}_*}w(0)\|_{L^\infty}
+
\left\|\int_s^t e^{(t-\tau)\mathcal{A}_*}
\left(\mathcal{A}_0-\mathcal{A}_*+B(\tau)\right)w(\tau)\,d\tau\right\|_{L^\infty}
\\  \lesssim & \la t-s\ra^{-1/4}\|f\|_Y
+ \delta_1 \int_s^t \la t-\tau\ra^{-3/4}
\|\pd_yw(\tau)\|_{\dot{H}^1\times L^2}\,d\tau
  \\ & \qquad
 +\delta_2\int_s^t \la t-\tau\ra^{-1/4}e^{-\kappa \tau}\|w(\tau)\|_Y\,d\tau
\\ \lesssim & \la t-s\ra^{-1/4}\|f\|_Y\,.
\end{align*}
We can prove \eqref{eq:BB3'} and \eqref{eq:BB4'} in the same way.
Thus we complete the proof.
\end{proof}
\begin{lemma}
  \label{lem:fun-asymp} Let $f=(f_1,f_2)^T$.
For every $t\ge0$,
\begin{equation}
\label{eq:fund-asymp1}
\left\|e^{t\mathcal{A}_*}f-H_{\lambda_{2,c_0}t}*W_t*f_2\mathbf{e_1}
\right\|_{L^\infty}
\lesssim \la t \ra^{-1/2}\|f\|_{Y_1}\,,
\end{equation}
\begin{equation}
\label{eq:fund-asymp2}
\begin{split}
& \left\|\diag(\pd_y,1)e^{t\mathcal{A}_*}f
-\frac1{2\lambda_{1,c_0}}\begin{pmatrix}
1 & -1 \\  \lambda_{1,c_0} & \lambda_{1,c_0} \end{pmatrix}
\begin{pmatrix}
 H_{\lambda_{2,c_0}t}(\cdot+\lambda_{1,c_0}t)\\ H_{\lambda_{2,c_0}t}(\cdot-\lambda_{1,c_0}t)
\end{pmatrix}
*f_2\right\|_{L^\infty}
\\ \lesssim &
\la t \ra^{-1}\|f\|_{Y_1}\,,  
\end{split}
\end{equation}
\begin{equation}
\label{eq:fund-asymp3}
\left\|e^{t\mathcal{A}_*}\begin{pmatrix}f_1 \\ \pd_yf_2 \end{pmatrix}
-\frac{1}{2\lambda_{1,c_0}}\sum_{\pm}
 H_{\lambda_{2,c_0}t}(\cdot\pm \lambda_{1,c_0}t)*(\lambda_{1,c_0}f_1\pm f_2)\mathbf{e_1}
\right\|_{L^\infty}
\lesssim 
\la t \ra^{-1}\|f\|_{Y_1}\,,  
\end{equation}
where
$H_t(y)=(4\pi t)^{-1/2}\exp(-y^2/4t)$ and
$W_t(y)=(2\lambda_{1,c_0})^{-1} \mathbf{1}_{[-\lambda_{1,c_0}t,\lambda_{1,c_0}t]}(y)$.
\end{lemma}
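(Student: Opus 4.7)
The plan is to prove \eqref{eq:fund-asymp1}--\eqref{eq:fund-asymp3} by direct comparison of Fourier symbols, using the explicit matrix formula \eqref{eq:etA} for $e^{t\mathcal{A}_*(\eta)}$. Since every $f\in Y_1$ has $\hat{f}\in L^\infty$ supported in $[-\eta_0,\eta_0]$, the Hausdorff--Young type bound
\[
\bigl\|\mathcal{F}^{-1}\bigl(m(\eta)\hat{g}(\eta)\bigr)\bigr\|_{L^\infty(\R)}
\le \frac{1}{\sqrt{2\pi}}\|m\|_{L^1(-\eta_0,\eta_0)}\|g\|_{Y_1}
\]
holds for every $g\in Y_1$. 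Therefore it suffices to bound the $L^1_\eta$-norm of the difference between the symbol of the left-hand side and that of the claimed leading term in each of \eqref{eq:fund-asymp1}--\eqref{eq:fund-asymp3}. The high-frequency part $\chi_2(D_y)e^{t\mathcal{A}_*}f$ is exponentially small by \eqref{eq:k-est,high}, so only $|\eta|\le\eta_0$ matters.

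For \eqref{eq:fund-asymp1}, the $(1,2)$-entry of $e^{t\mathcal{A}_*(\eta)}$ applied to $f_2$ produces the leading term: the Fourier multiplier of convolution with $H_{\lambda_{2,c_0}t}*W_t$ is $e^{-\lambda_{2,c_0}t\eta^2}\sin(\lambda_{1,c_0}t\eta)/(\lambda_{1,c_0}\eta)$, which is exactly the $(1,2)$-symbol of \eqref{eq:etA} after replacing $\omega(\eta)$ by $1$. Writing the two sides as $t\,\mathrm{Sinc}(t\lambda_{1,c_0}\eta\omega(\eta))$ and $t\,\mathrm{Sinc}(t\lambda_{1,c_0}\eta)$ and using $|\mathrm{Sinc}'(s)|\lesssim(1+|s|)^{-1}$ with $\omega(\eta)-1=O(\eta^2)$ gives
\[
t\bigl|\mathrm{Sinc}(t\lambda_{1,c_0}\eta\omega(\eta))-\mathrm{Sinc}(t\lambda_{1,c_0}\eta)\bigr|\lesssim\min(t^2|\eta|^3,\,t\eta^2)\,,
\]
whose $L^1_\eta$-integral against $e^{-\lambda_{2,c_0}t\eta^2}$ is $O(\la t\ra^{-1/2})$ after the substitution $u=\eta\sqrt{t}$. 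All other matrix entries carry either an extra factor $|\eta|$ or are bounded on the $\chi_1$-support, so their $L^1_\eta$-norms are $O(\la t\ra^{-1/2})$; these absorb both the $f_1$-contribution to the first component and the entire second component.

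For \eqref{eq:fund-asymp2} and \eqref{eq:fund-asymp3}, the identities $\cos\theta=\tfrac12(e^{i\theta}+e^{-i\theta})$ and $\sin\theta=\tfrac{1}{2i}(e^{i\theta}-e^{-i\theta})$ with $\theta=\lambda_{1,c_0}t\eta$ identify the Fourier multipliers of the shifted heat kernels $H_{\lambda_{2,c_0}t}(\cdot\pm\lambda_{1,c_0}t)$ as $(2\pi)^{-1/2}e^{\pm i\lambda_{1,c_0}t\eta}e^{-\lambda_{2,c_0}t\eta^2}$. The claimed leading profiles then coincide with the corresponding rows of $e^{t\mathcal{A}_*(\eta)}$ after replacing $\omega$ by $1$ and discarding the $O(|\eta|)$ terms coming from $\mathcal{A}_*(\eta)+\lambda_{2,c_0}I$. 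The improved $\la t\ra^{-1}$ accuracy follows from the product-to-sum estimate
\[
\bigl|\cos(t\lambda_{1,c_0}\eta\omega(\eta))-\cos(\lambda_{1,c_0}t\eta)\bigr|
\le 2\bigl|\sin\tfrac{t\lambda_{1,c_0}\eta(\omega+1)}{2}\,\sin\tfrac{t\lambda_{1,c_0}\eta(\omega-1)}{2}\bigr|
\lesssim \min(2,\,t|\eta|^3)
\]
and its $\sin$-analogue, combined with the two elementary integrals $\int e^{-\lambda_{2,c_0}t\eta^2}\min(2,t|\eta|^3)\,d\eta=O(\la t\ra^{-1})$ (obtained by splitting at $|\eta|=t^{-1/3}$) and $\int|\eta|e^{-\lambda_{2,c_0}t\eta^2}\,d\eta=O(\la t\ra^{-1})$.

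The main obstacle is the $\omega(\eta)$ correction inside the oscillatory phase: a naive Taylor expansion $\omega(\eta)=1-\tfrac12(\nu/\lambda_{1,c_0})^2\eta^2+O(\eta^4)$ inside $\sin$ or $\cos$ loses control as soon as $t|\eta|^3\gtrsim 1$, i.e., $|\eta|\gtrsim t^{-1/3}$, so one cannot linearize globally. The remedy is the two-regime bound $\min(2,t|\eta|^3)$, which uses the trivial estimate on $|\eta|\gtrsim t^{-1/3}$ (where the Gaussian $e^{-\lambda_{2,c_0}t\eta^2}$ supplies super-polynomial decay) and the Taylor estimate on $|\eta|\lesssim t^{-1/3}$. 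This is exactly the mechanism used in \cite{Miz19} and in the proof of Lemma~\ref{lem:fundamental-sol} above.
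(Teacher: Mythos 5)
Your argument is correct and is essentially the proof the paper delegates to \cite[(2.20)--(2.22)]{Miz19}: reduce to an $L^1_\eta$ bound on the difference of Fourier symbols via $\|\mathcal{F}^{-1}(m\hat g)\|_{L^\infty}\lesssim\|m\|_{L^1(-\eta_0,\eta_0)}\|g\|_{Y_1}$, identify the leading profiles with the entries of \eqref{eq:etA} after setting $\omega\equiv1$, and control the phase error $\omega(\eta)-1=O(\eta^2)$ by the two-regime bound $\min(2,t|\eta|^3)$ (resp. $\min(t^2|\eta|^3,t\eta^2)$ for the Sinc difference) integrated against the Gaussian $e^{-\lambda_{2,c_0}t\eta^2}$. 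The symbol identifications of $H_{\lambda_{2,c_0}t}*W_t$ and of the shifted heat kernels, and all the stated $L^1_\eta$ integrals, check out.
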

To investigate the large time behavior of $\gamma(t,y)$, we need the following.
\begin{lemma}
  \label{lem:phase-lim}
Suppose that $f\in L^1(\R_+\times \R)$. Then for any $\delta>0$,
\begin{gather}
\label{eq:lim-in}
\lim_{t\to\infty}\sup_{|y|\le (\lambda_{1,c_0}-\delta)t}
\left|\int_0^t H_{\lambda_{2,c_0}(t-s)}*W_{t-s}*f(s,\cdot)(y)\,ds
-\gamma_*\right|=0\,,
\\ \label{eq:lim-out}
\lim_{t\to\infty}\sup_{|y|\ge (\lambda_{1,c_0}+\delta)t}
\left|\int_0^t H_{\lambda_{2,c_0}(t-s)}*W_{t-s}*f(s,\cdot)(y)\,ds\right|=0\,,
\end{gather}
where $\gamma_*=(2\lambda_{1,c_0})^{-1}\int_0^\infty\int_\R f(s,y)\,dyds$.
\end{lemma}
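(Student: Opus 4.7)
The plan is to analyze the scalar kernel $M(\tau, w) := H_{\lambda_{2,c_0}\tau} * W_\tau(w)$ directly and exploit its sharp transition across the light cone $|w| = \lambda_{1,c_0}\tau$. Writing out the convolution,
\begin{equation*}
M(\tau, w) = \frac{1}{2\lambda_{1,c_0}} \int_{w-\lambda_{1,c_0}\tau}^{w+\lambda_{1,c_0}\tau} H_{\lambda_{2,c_0}\tau}(\xi)\, d\xi,
\end{equation*}
so $0 \le M(\tau, w) \le (2\lambda_{1,c_0})^{-1}$, and standard Gaussian tail bounds yield, for any $\kappa > 0$ and $\tau \ge 1$,
\begin{equation*}
(2\lambda_{1,c_0})^{-1} - M(\tau, w) \le C e^{-c\kappa^2 \tau} \text{ if } |w| \le (\lambda_{1,c_0}-\kappa)\tau, \qquad M(\tau, w) \le C e^{-c\kappa^2 \tau} \text{ if } |w| \ge (\lambda_{1,c_0}+\kappa)\tau.
\end{equation*}

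For \eqref{eq:lim-in}, let $F(t,y) := \int_0^t H_{\lambda_{2,c_0}(t-s)}*W_{t-s}*f(s,\cdot)(y)\,ds$. I would decompose
\begin{equation*}
F(t,y)-\gamma_* = \int_0^t\!\int_\R \bigl[M(t-s, y-z)-(2\lambda_{1,c_0})^{-1}\bigr] f(s,z)\,dz\,ds - \frac{1}{2\lambda_{1,c_0}}\int_t^\infty\!\int_\R f(s,z)\,dz\,ds.
\end{equation*}
The last term vanishes uniformly in $y$ because $f \in L^1(\R_+ \times \R)$. For the first integral I would fix $\eps > 0$, pick $T, R$ so that the $L^1$-mass of $f$ outside $[0,T] \times [-R, R]$ is at most $\eps$, and bound that piece via the uniform estimate $|M - (2\lambda_{1,c_0})^{-1}| \le (2\lambda_{1,c_0})^{-1}$. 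On $[0, T] \times [-R, R]$, whenever $|y| \le (\lambda_{1,c_0}-\delta) t$ and $t$ is large relative to $T$ and $R/\delta$, one has $|y-z| \le (\lambda_{1,c_0}-\delta/2)(t-s)$ uniformly, so the first Gaussian tail estimate produces an integrand exponentially small in $t-T$, whose integral tends to $0$.

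For \eqref{eq:lim-out}, the same compact-support truncation reduces matters to $(s,z) \in [0,t] \times [-R, R]$ with $|y|\ge(\lambda_{1,c_0}+\delta)t$. For $t$ large one has $|y-z|\ge(\lambda_{1,c_0}+\delta/2)(t-s)$ for every $s \in [0,t]$, and the second tail bound yields $M(t-s, y-z) \le Ce^{-c(t-s)}$. Setting $G(s) := \int_\R |f(s,z)|\,dz \in L^1(\R_+)$, the resulting bound $\int_0^t e^{-c(t-s)} G(s)\,ds$ splits at $s = t/2$: the low-$s$ piece is $O(e^{-ct/2}\|f\|_{L^1})$, and the high-$s$ piece is dominated by $\int_{t/2}^\infty G(s)\,ds \to 0$. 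The remaining $|z| > R$ contribution is again controlled by $f \in L^1$. The whole argument is a careful truncation plus dominated-convergence bookkeeping; the one point that needs care is the order of the choices, namely first $R, T$ large and only then $t$ much larger than $T + R/\delta$.
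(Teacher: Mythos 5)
Your proof is correct. The paper itself does not supply an argument for this lemma — it defers to \cite[Lemma~2.4]{Miz19} — and your direct analysis of the kernel $M(\tau,w)=H_{\lambda_{2,c_0}\tau}*W_\tau(w)$ via Gaussian tail bounds inside and outside the cone $|w|=\lambda_{1,c_0}\tau$ is precisely the standard argument used there. The only delicate point, the failure of the exponential tail bound when $t-s$ is small in \eqref{eq:lim-out}, is harmless because you fall back on the uniform bound $M\le(2\lambda_{1,c_0})^{-1}$ and the integrability of $G(s)=\|f(s,\cdot)\|_{L^1}$ on $[t/2,t]$, and you correctly fix $R$ and $T$ before letting $t\to\infty$.
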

Lemmas~\ref{lem:fun-asymp} and \ref{lem:phase-lim} can be shown in exactly
the same way as \cite[(2.20)--(2.22) and Lemma~2.4]{Miz19}.
\bigskip

\section{Decomposition of the perturbed line solitary waves}
\label{sec:decomposition}
\subsection{Ansatz for solutions around line solitary waves}
\label{subsec:ansatz}
Let us decompose a solution around $\varphi_c(x-c_0t)$
into a sum of a modulating line solitary wave
and a dispersive part plus a small wave which is caused by amplitude
changes of the line solitary wave:
\begin{equation}
  \label{eq:decomp}
\left\{
  \begin{aligned}
& \Phi(t,x+c_0t,y)=\Phi_{c(t,y)}(z)+U(t,x,y)-\Psi_{c(t,y)}(z_1)\,,
\\ & z=x-\gamma(t,y)\,,\quad z_1=z+\frac{c_0-1}{2}t+h\,.
  \end{aligned}
\right.
\end{equation}
Here $h$ is a large positive constant (see Lemma~\ref{lem:Implicit}).
The modulation parameters $c(t_0,y_0)$ and $\gamma(t_0,y_0)$ denote
the speed and the phase shift of the modulating line solitary wave
$q_{c(t,y)}(x-c_0t-\gamma(t,y))$ along the line $y=y_0$ at the time $t=t_0$,
$U$ is a remainder part which is expected to behave like an
oscillating tail and $\Psi_c$ is an auxiliary function 
such that 
\begin{gather*}
\Psi_c=(\tpsi_c, 0)^T\,,\quad 
\tpsi_c(z_1)=\int_{z_1}^\infty \psi_c(x)\,dx\,,
\quad \psi_c(x)=2\{\beta(c_0)-\beta(c)\}\psi(x)
\end{gather*}
with $\psi\in C_0^\infty(-1,1)$ and $\int_\R \psi(x)\,dx=1$.
Note that
\begin{equation}
  \label{eq:0mean}
\lim_{x\to-\infty}\tpsi_c(x)=
\lim_{x\to-\infty}
\left\{\varphi_c(x)-\varphi_{c_0}(x)\right\}
=-\int_\R\left(q_c-q_{c_0}\right)\,dx\,.
\end{equation}
We need the adjustment by $\Psi_c$ in order to obtain
the energy identity in Section~\ref{sec:energy}.
\par

Substituting \eqref{eq:decomp} into \eqref{eq:BL1}, we have
\begin{equation}
  \label{eq:U}
\pd_tU = (c_0\pd_x+L)U+\ell+N(\Phi_{c(t,y)}-\Psi_{c(t,y)}+U)
-N(\Phi_{c(t,y)}-\Psi_{c(t,y)})\,,
\end{equation}
where $\ell=\ell_1+\ell_2$, $\ell_2=\ell_{21}+\ell_{22}+\ell_{23}$ and
\begin{align*}
\ell_1=&-c_t\pd_c\Phi_{c(t,y)}(z)
+\left\{(c_0+\gamma_t)\pd_x+L\right\}\Phi_{c(t,y)}(z)
+N(\Phi_{c(t,y)}(z))\,,
\end{align*}
\begin{equation}
  \label{eq:defl2}
\left\{ 
\begin{aligned}
& \ell_{21}=c_t\pd_c\Psi_{c(t,y)}(z_1)
-\left(\gamma_t+\frac{c_0+1}{2}\right)\pd_x\Psi_{c(t,y)}(z_1)\,,\quad
\ell_{22}=-L\Psi_{c(t,y)}(z_1)\,,
\\ &
\ell_{23}=N(\Phi_{c(t,y)}(z)-\Psi_{c(t,y)}(z_1))-N(\Phi_{c(t,y)}(z))
=-N'(\Phi_c)\Psi_c\,.
\end{aligned}
\right.
\end{equation}
Note that $N(\Psi_c)=\mathbf{0}$.
Let $\tc(t,y)=c(t,y)-c_0$ and
$N_0(\Phi)=-B_0^{-1}(\phi_2\pd_x^2\phi_1+2\pd_x\phi_1\pd_x\phi_2)\mathbf{e_2}$
with $\Phi=\phi_1\mathbf{e_1}+\phi_2\mathbf{e_2}$,
$\mathbf{e_1}=(1,0)^T$ and $\mathbf{e_2}=(0,1)^T$.
In view of \eqref{eq:stationary} and the fact that $L=L_0-\pd_y^2L_1$,
we have $\ell_1=\ell_{11}+\ell_{12}+\ell_{13}$,
\begin{equation}
  \label{eq:defl1}
\left\{
\begin{aligned}
& \ell_{11}=-c_t\pd_c\Phi_{c(t,y)}(z)+(\gamma_t-\tc)\pd_z\Phi_{c(t,y)}(z)\,,
\quad \ell_{12}=-\pd_y^2L_1\Phi_{c(t,y)}(z_1)\,,
\\ & \ell_{13}=N(\Phi_{c(t,y)}(z))-N_0(\Phi_{c(t,y)}(z))\,.
\end{aligned}
\right.
\end{equation}
\par

Now we introduce a symplectic orthogonality condition to
fix the decomposition \eqref{eq:decomp}.
Since the adjoint resonant modes are exponentially increasing
as $x\to\infty$, we further decompose $U$ into a small solution of
\eqref{eq:BL1} and an exponentially localized part following the idea of
\cite{Miz09} and \cite{MPQ13,MT11,Miz18} in order to use 
exponential linear stability in \cite{Miz-Sh17}.
\par
Let $\Phi(t,x,y)$ be a solution of \eqref{eq:BL1} with $\Phi(0,x,y)=\Phi_{c_0}(x)+U_0(x,y)$
and let $U_1$ be a solution of 
\begin{equation}
  \label{eq:U1}
\pd_tU_1=(c_0\pd_x+L)U_1+N(U_1)\,, \quad U_1(0,x,y)=U_0(x,y)\,,
\end{equation}
and 
\begin{equation}
  \label{eq:defU2}
U_2(t,x,y)=U(t,x,y)-U_1(t,x,y)\,,\quad \wU_2(t,z,y)=U_2(t,x,y)\,. 
\end{equation}
By \eqref{eq:U}, \eqref{eq:U1} and \eqref{eq:defU2},
\begin{equation}
  \label{eq:U2}
\pd_tU_2=\mL_{c_0}U_2+\ell+N_1+N_2+N_3\,,\quad U_2(0)=0\,,
\end{equation}
where $N_{1,0}=N(\Phi_{c(t,y)}(z)-\Psi_{c(t,y)}(z_1)+U)
-N(\Phi_{c(t,y)}(z)-\Psi_{c(t,y)}(z_1))-N(U)$ and
\begin{align*}
& N_1=N_{1,0}-V_{c_0}U_2-N_3\,,\quad N_2=N(U)-N(U_1)\,,
\\ &
N_3=N'(\Phi_{c(t,y)}(z)-\Psi_{c(t,y)}(z_1))U_1\,.
\end{align*}
Lemma~\ref{lem:Phi-U1} below implies that 
$U_2(t,x,y)$, as well as $\wU_2(t,z,y)$, are exponentially localized
as $x\to\infty$ provided the phase shift $\gamma(t,y)$ is uniformly bounded.
\begin{lemma}
  \label{lem:Phi-U1}
  Let $\a\in (0,\a_{c_0})$ and $U_0\in \bE$. Suppose that $\Phi(t)$ is a
  solution of   \eqref{eq:BL1} with $\Phi(0)=\Phi_{c_0}+U_0$ 
 and that $U_1(t)$ is a solution of \eqref{eq:U1}. Then
\begin{equation}
  \label{eq:W-conti}
W(t,x,y):=\Phi(t,x+c_0t,y)-\Phi_{c_0}(x)-U_1(t,x,y)\in C([0,\infty);\bX_1)\,.
\end{equation}
\end{lemma}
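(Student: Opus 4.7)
The plan is to derive the PDE for $W$, exploit the exponential decay of $\pd_x\Phi_{c_0}$ to see that the inhomogeneity of that equation lives in the weighted space $\bX_1$, and then propagate the weight to $W$ via a standard mild/energy argument.

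Setting $\widetilde\Phi(t,x,y):=\Phi(t,x+c_0t,y)$, \eqref{eq:BL1} becomes $\pd_t\widetilde\Phi=(c_0\pd_x+L)\widetilde\Phi+N(\widetilde\Phi)$. Since $N$ is quadratic, one has $N(\widetilde\Phi)-N(\Phi_{c_0})-N(U_1)=N'(\Phi_{c_0})(U_1+W)+N'(U_1)W+N(W)$; subtracting the stationary identity $(c_0\pd_x+L)\Phi_{c_0}+N(\Phi_{c_0})=0$ coming from \eqref{eq:stationary} and the equation \eqref{eq:U1} for $U_1$ gives
\begin{equation*}
\pd_tW=(c_0\pd_x+L)W+N'(\Phi_{c_0})(U_1+W)+N'(U_1)W+N(W),\qquad W(0)=0.
\end{equation*}
The global well-posedness of \eqref{eq:BL1} and of \eqref{eq:U1} in $\bE$ from \cite{Q13} already gives $W\in C([0,\infty);\bE)$, so only the exponential weight remains to be propagated.

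The crucial structural remark is that $N$ involves only $\pd_x\phi_1$, $\Delta\phi_1$, $\phi_2$ and $\pd_x\phi_2$ of its argument, so every summand of $N'(\Phi_{c_0})V$ carries a factor of $q_{c_0}$, $q_{c_0}'$, $r_{c_0}$ or $r_{c_0}'$, each decaying like $e^{-\a_{c_0}|x|}$. Using in addition that $B^{-1}:L^2_\a\to H^2_\a$ is bounded for $\a\in(0,1/\sqrt b)$ (which contains $(0,\a_{c_0})$ since $a<b$ forces $\a_{c_0}<1/\sqrt b$), one obtains
\begin{equation*}
\|N'(\Phi_{c_0})V\|_{\bX_1}\lesssim\|V\|_\bE,\qquad \|N'(\Phi_{c_0})W\|_{\bX_1}\lesssim\|W\|_{\bX_1},
\end{equation*}
so that in particular the inhomogeneity $N'(\Phi_{c_0})U_1$ lies in $C([0,\infty);\bX_1)$ by the $\bE$-boundedness of $U_1$. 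Moreover, the semigroup $\{e^{t(c_0\pd_x+L)}\}_{t\ge 0}$ is uniformly bounded on $\bX_1$ for any $\a\in(0,\a_{c_0})$; this is the $V_c\equiv0$ special case of the weighted spectral analysis in \cite{Miz-Sh17} and relies on $c_0>1$ exceeding the maximum linear group velocity of the free Benney-Luke equation.

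A contraction mapping argument applied to Duhamel's formula in $C([0,T];\bX_1)$ then yields a unique local $\bX_1$-solution; a weighted $\bX_1$-energy estimate extends it globally, and by $\bE$-class uniqueness of the underlying Cauchy problem it coincides with the $W$ of the statement. The principal obstacle is the term $N'(U_1)W$, which is \emph{not} exponentially localized and plays the role of a time-dependent potential. It is handled by observing that each of its summands contains either a derivative of $U_1$ (uniformly in $L^\infty_tL^2_x$ by the $\bE$-bound on $U_1$) paired with a derivative of $W$, or $U_1$ itself paired with $\Delta W$; combined with the two-derivative smoothing of $B^{-1}$ and Sobolev embedding in $\R^2$, both contributions are controlled in $\bX_1$ by $\|U_1\|_\bE\|W\|_{\bX_1}$, which closes the Gronwall bound on any compact time interval.
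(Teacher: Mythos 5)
Your proposal is correct in outline, but it follows a genuinely different route from the paper. The paper's proof never invokes any semigroup theory in the weighted space: it derives the local energy identity $\pd_t\mathcal{E}(W)=c_0\pd_x\mathcal{E}(W)+\nabla\cdot\mathcal{F}_{quad}(W)+\dots$, integrates it against the \emph{truncated, bounded} weights $p_n(x)=e^{2\a n}(1+\tanh\a(x-n))$, controls the flux term by Claim~\ref{cl:ker-B}, closes a Gronwall estimate for $E_n(W)$ uniformly in $n$ (all quantities being finite a priori because $W\in\bE$), and then lets $n\uparrow\infty$ by monotone convergence; continuity in $\bX_1$ is recovered from weak continuity plus norm continuity. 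You instead set up Duhamel's formula and a contraction in $C([0,T];\bX_1)$, which buys a more systematic and arguably cleaner argument, but at the price of importing the uniform boundedness of $e^{t(c_0\pd_x+L)}$ on $\bX_1$ — i.e.\ the localization of the essential spectrum in the weighted space coming from $c_0>1\ge|\nabla\omega|$. That fact is true and is of the type established in \cite{Miz-Sh17}, but it is not proved in the present paper, and the paper's argument is deliberately arranged so that Lemma~\ref{lem:Phi-U1} does not depend on it. Three points you should make explicit to make your route airtight: (i) the product estimates for $N(W)$ and $N'(U_1)W$ cannot go through $L^2_\a\cdot L^2\to L^2_\a$ since $H^1(\R^2)\not\hookrightarrow L^\infty$; you need the $L^1$- and $L^q$-based mapping properties of $B^{-1}$ in Claim~\ref{cl:ker-B} (with $L^q\subset L^1+L^2$ for $1<q<2$), exactly as the term $w_2\Delta u_{1,1}$ — which your "either/or" description of $N'(U_1)W$ omits — also requires; (ii) to identify the fixed point with $W$ via $\bE$-uniqueness you should run the contraction in $C([0,T];\bE\cap\bX_1)$, since $\bX_1\not\subset\bE$ (weighted functions may grow as $x\to-\infty$); (iii) the global continuation should be phrased as a Gronwall bound on the Duhamel formula with coefficient $1+\|U_1(t)\|_\bE+\|W(t)\|_\bE$, which is what makes the already-known global $\bE$-bound do the work. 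With these repairs your proof is complete.
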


\begin{proof}[Proof of Lemma~\ref{lem:Phi-U1}]
First, we remark that $c_0\pd_x+L$ generates $C^0$-semigroup on $\bE$
and that $U_1(t)$, $W(t)\in C([0,\infty);\bE)$ follows from a standard
argument.
\par
Let
\begin{gather*}
  \mathcal{E}(\phi_1,\phi_2)
=\frac{1}{2}\left\{\phi_2^2+b|\nabla \phi_2|^2+|\nabla \phi_1|^2
+a(\Delta\phi_1)^2\right\}\,,
\\
\mathcal{F}_{quad}(\phi_1,\phi_2)
=\phi_2B^{-1}A\nabla \phi_1+a\Delta \phi_1\nabla\phi_2\,.
\end{gather*}
By \eqref{eq:BL1}, \eqref{eq:stationary}, \eqref{eq:U1} and the fact that
$N$ is quadratic nonlinearity,
\begin{equation}
  \label{eq:W}
  \pd_tW=(c_0\pd_x+L)W+G\,,\quad G=V_{c_0}(U_1+W)+N(U_1+W)-N(U_1)\,,
\end{equation}
and
\begin{equation}
  \label{eq:linearE-M}
\frac{d}{dt}\mathcal{E}(W(t))=
c_0\pd_x\mathcal{E}(W(t))+\nabla\cdot\mathcal{F}_{quad}(W(t))
+w_2g+b\nabla w_2\cdot\nabla g\,,
\end{equation}
where $W=(w_1,w_2)$ and $g=G\cdot\mathbf{e_2}$.
\par
Let $p_n(x)=e^{2\a n}(1+\tanh\a(x-n))$ and
$$E_n(W)=\int_{\R^2}p_n(x)\mathcal{E}(W)(t,x,y)\,dxdy\,.$$
By \eqref{eq:linearE-M},
\begin{align*}
  \frac{d}{dt}E_n(W(t))=& 
-\int_{\R^2} p_n'(x)\left\{c_0\mathcal{E}(W)(t,x,y)+\mathbf{e_1}\cdot\mF_{quad}(W)(t,x,y)\right\}\,dxdy
\\ &+
\int_{\R^2} p_n(x)\{w_2g+b\nabla w_2\cdot\nabla g\}\,dxdy\,.
\end{align*}
We remark that $0<p_n'(x)\le 2\a p_n(x)\le 4\a e^{2\a x}$.
By Claim~\ref{cl:ker-B},
$$\left|\int_{\R^2} p_n'(x)\mathbf{e_1}\cdot\mF_{quad}(W)(t,x,y)\,dxdy\right|
\lesssim \int_{\R^2}p_n'(x)\mathcal{W}(t,x,y)\,dxdy\,,$$
\begin{align*}
& \left|  \int_{\R^2} p_n(x)\{w_2g+b\nabla w_2\cdot\nabla g\}\,dxdy\right|
\\ \lesssim &
\|U_1(t)\|_{\bE}E_n(W)^{1/2}+(\|U_1+W\|_{\bE}+\|U_1\|_{\bE})E_n(W)
\\ \lesssim & 
E(U_0)+(1+E(U_0)+\|W(t)\|_{\bE})E_n(W(t))\,.
\end{align*}
In the last line, we use the energy identity
\begin{equation}
  \label{eq:energyU1}
 E(U_1(t))=E(U_0)\,.  
\end{equation}
Combining the above, we have
$$\frac{d}{dt}E_n(W(t)) \lesssim E(U_0)+(1+E(U_0)+\|W(t)\|_{\bE})E_n(W(t))\,.$$
Using Gronwall's inequality, we see that for any $T>0$, there exists a $C>0$ such that
\begin{equation*}
  E_n(t) \le CE(U_0)\quad\text{for every $t\in[0,T]$ and $n\in\N$.}
\end{equation*}
By passing to the limit $n\to\infty$, we have $\sup_{t\in[0,T]}\|W(t)\|_{\bX_1} \lesssim \|U_0\|_{\bE}$ since $0<p_n(x)\uparrow 2e^{2\a x}$.
Moreover, we can prove that $\|W(t)\|_{\bX_1}$ is continuous.
Since $W(t)\in C([0,\infty;\bE)$ and $W(t)$ is  weakly continuous in $\bX_1$,
we have \eqref{eq:W-conti}. Thus we complete the proof.
\end{proof}

\subsection{The  orthogonality condition}
\label{subsec:orth}
We impose a secular term condition for $\wU_2(t,z,y)$ to fix 
the decomposition \eqref{eq:decomp} with \eqref{eq:defU2}. 
\begin{equation}
  \label{eq:orth}
 \int_{\R^2}
\wU_2(t,z,y)\cdot g_k^*(z,\eta,c(t,y))e^{-iy\eta}\,dzdy=0
\quad\text{for $\eta\in(-\eta_0,\eta_0)$ and $k=1$, $2$,}
\end{equation}
where $\eta_0$ is a sufficiently small positive number.
\par

Now we introduce functionals to prove the existence of
the decomposition \eqref{eq:decomp} that satisfies \eqref{eq:defU2} and
\eqref{eq:orth}. 
For $U\in \bX$ and $\gamma$, $\tc\in Y$ and $h\ge 0$, let 
$c(y)=c_0+\tc(y)$ and 
\begin{align*}
F_k[U,\tc,\gamma,h](\eta):=\mathbf{1}_{[-\eta_0,\eta_0]}&(\eta)
\lim_{M\to\infty}\int_{-M}^M\int_\R
\bigl\{U(x,y)+\Phi_{c_0}(x)- \Phi_{c(y)}(x-\gamma(y))
\\ & +\Psi_{c(y)}(x-\gamma(y)+h)\bigr\}\cdot
g_k^*(x-\gamma(y),\eta,c(y))e^{-iy\eta}\,dxdy\,.
\end{align*}
To begin with, we will show that $F=(F_1,F_2)$ maps
$L^2_\a(\R^2)\times Y\times Y\times \R$ into $Z\times Z$.
\begin{lemma}
\label{lem:F_k}
Let $\a\in(0,\a_{c_0})$,  $U\in L^2_\a(\R^2)$, $\tc$, $\gamma\in Y$ and $h\ge 0$.
Then there exists a $\delta>0$ such that if 
$\|\tc\|_Y+\|\gamma\|_Y\le \delta$, then
$F_k[u,\tc,\gamma,h]\in Z$ for $k=1$, $2$.
\end{lemma}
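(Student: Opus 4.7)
The plan is to identify $F_k[U,\tc,\gamma,h]$ with a bounded linear functional on $L^2([-\eta_0,\eta_0])$ via the Riesz representation theorem; the support condition $\supp F_k\subset[-\eta_0,\eta_0]$ is automatic from the indicator factor in the definition, so it suffices to establish
\begin{equation*}
\left|\la F_k[U,\tc,\gamma,h],\varphi\ra\right|
\lesssim \left(\|U\|_{L^2_\a(\R^2)}+\|\tc\|_{L^2(\R)}+\|\gamma\|_{L^2(\R)}\right)\|\varphi\|_{L^2(\R)}
\end{equation*}
for every $\varphi\in L^2(\R)$ supported in $[-\eta_0,\eta_0]$. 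Setting
\begin{gather*}
R(x,y):=\Phi_{c_0}(x)-\Phi_{c(y)}(x-\gamma(y))+\Psi_{c(y)}(x-\gamma(y)+h),\\
G_\varphi(x,y):=\int_{-\eta_0}^{\eta_0}\overline{\varphi(\eta)}\,g_k^*(x-\gamma(y),\eta,c(y))\,e^{-iy\eta}\,d\eta,
\end{gather*}
Fubini rewrites the pairing as $\la F_k,\varphi\ra=\int_{\R^2}\{U+R\}\cdot G_\varphi\,dxdy$, and Cauchy--Schwarz in $(x,y)$ reduces the whole lemma to the two estimates $\|R\|_{L^2_\a(\R^2)}\lesssim \|\tc\|_{L^2}+\|\gamma\|_{L^2}$ and $\|G_\varphi\|_{L^2_{-\a}(\R^2)}\lesssim \|\varphi\|_{L^2}$.

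For the first estimate the auxiliary term $\Psi_c$ plays an essential role: the difference $\Phi_{c_0}(x)-\Phi_{c(y)}(x-\gamma(y))$ alone tends to the nonzero constant $2(\beta(c(y))-\beta(c_0))\mathbf{e_1}$ as $x\to-\infty$, which is incompatible with membership in $L^2_\a(\R_x)$, but by \eqref{eq:0mean} the term $\tpsi_{c(y)}(x-\gamma(y)+h)$ cancels this constant exactly. After the cancellation $R(\cdot,y)$ decays exponentially in $|x|$, and Taylor-expanding $R$ in $(\tc(y),\gamma(y))$ around $(0,0)$, valid for $\delta$ small in view of the embedding $Y\hookrightarrow L^\infty$, yields $\|R(\cdot,y)\|_{L^2_\a(\R_x)}\lesssim |\tc(y)|+|\gamma(y)|$ uniformly in $y$; squaring and integrating in $y$ then gives the claim.

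The second estimate is the main technical point and the principal obstacle of the proof, because $g_k^*$ has no decay in the transverse variable $y$, so a naive Cauchy--Schwarz bound on $G_\varphi$ diverges. To circumvent this I first change variable $z=x-\gamma(y)$ inside $G_\varphi$, which costs only the harmless factor $e^{\a\|\gamma\|_{L^\infty}}\lesssim 1$ (since $\gamma\in Y\hookrightarrow L^\infty$), reducing the goal to
\begin{equation*}
\int_{\R^2}|\widetilde{G}_\varphi(z,y)|^2 e^{-2\a z}\,dzdy\lesssim \|\varphi\|_{L^2}^2,
\qquad
\widetilde{G}_\varphi(z,y):=\int_{-\eta_0}^{\eta_0} \overline{\varphi(\eta)}\,g_k^*(z,\eta,c(y))\,e^{-iy\eta}\,d\eta.
\end{equation*}
Then I Taylor-expand $g_k^*(z,\eta,c(y))=g_k^*(z,\eta,c_0)+\int_0^{\tc(y)}\pd_cg_k^*(z,\eta,c_0+s)\,ds$. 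The leading piece of $\widetilde{G}_\varphi(z,\cdot)$ is (up to a constant) the Fourier transform in $y$ of $\overline{\varphi(\eta)}\,g_k^*(z,\eta,c_0)$, so Plancherel in $y$ gives $\|\widetilde{G}_\varphi^{(0)}(z,\cdot)\|_{L^2(\R_y)}\lesssim \sup_{|\eta|\le\eta_0}|g_k^*(z,\eta,c_0)|\,\|\varphi\|_{L^2}$; weighting by $e^{-2\a z}$ and integrating in $z$ is finite thanks to the hypothesis $\sup_{|\eta|\le\eta_0}\|g_k^*(\cdot,\eta,c_0)\|_{L^2_{-\a}}<\infty$. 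The correction piece is dominated by $C\|\tc\|_{L^\infty}\lesssim C\delta$ times an analogous Plancherel expression for $\pd_cg_k^*$ and is absorbed for $\delta$ small, which closes the estimate.
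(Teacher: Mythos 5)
Your duality strategy---testing $F_k$ against $\varphi\in L^2([-\eta_0,\eta_0])$ and splitting the pairing by Cauchy--Schwarz into $\|U+R\|_{L^2_\a(\R^2)}\cdot\|G_\varphi\|_{L^2_{-\a}(\R^2)}$---is a legitimate dual reformulation of the paper's argument, which instead bounds $\int_{-\eta_0}^{\eta_0}|I_j(\eta)|^2\,d\eta$ directly after decomposing the integrand into pieces linear in $(U,\tc,\gamma)$ (handled by Plancherel in $y$) and quadratic remainders (handled by uniform-in-$\eta$ bounds). The quantitative ingredients are the same in both routes: Plancherel in the transverse variable and the uniform $L^2_{-\a}(\R_x)$ bounds on $g_k^*$ and its $c$-derivatives. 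But your opening step ``Fubini rewrites the pairing as $\int_{\R^2}(U+R)\cdot G_\varphi$'' is not justified, and this is exactly what the $\lim_{M\to\infty}$ in the definition of $F_k$ is there to cope with. For fixed $\eta$ the $(x,y)$-integral defining $F_k(\eta)$ is \emph{not} absolutely convergent: the $x$-integral of $|U\cdot g_k^*|$ is controlled only by $\|U(\cdot,y)\|_{L^2_\a(\R_x)}$, which lies in $L^2(\R_y)$ but in general not in $L^1(\R_y)$, and the same applies to the part of $R$ that is linear in $\tc,\gamma\in Y\subset L^2(\R_y)$. Taking $U=u(x)v(y)$ with $v\in L^2\setminus L^1$ shows the triple integral of the absolute values diverges, so Tonelli/Fubini does not apply. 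The fix is standard but must be said: either run your computation for $U$, $\tc$, $\gamma$ compactly supported in $y$ (where absolute convergence holds) and pass to the limit using the bound obtained, or, as the paper does, isolate the linear pieces and interpret them as $L^2$-Fourier transforms in $y$ via Plancherel, treating only the genuinely quadratic remainders by absolute convergence, and finish with density of $C_0^\infty$ in $L^2_\a$.

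Two smaller points. Your justification for the role of $\Psi_c$ is incorrect: a function tending to a nonzero constant as $x\to-\infty$ \emph{is} in $L^2_\a(\R_x)=L^2(e^{2\a x}dx)$ for $\a>0$, since the weight decays in that direction; the cancellation \eqref{eq:0mean} is needed for the (unweighted) energy identity in Section~\ref{sec:energy}, not for this lemma. The estimate $\|R(\cdot,y)\|_{L^2_\a(\R_x)}\lesssim|\tc(y)|+|\gamma(y)|$ is nonetheless true. Also, your bound on the frozen piece places $\sup_{|\eta|\le\eta_0}$ inside the $z$-integral, which requires more than the stated hypothesis $\sup_\eta\|g_k^*(\cdot,\eta,c_0)\|_{L^2_{-\a}}<\infty$; either invoke the $C^\infty$ dependence of $g_k^*$ on $\eta$ and a Sobolev bound in $\eta$, or more simply keep the $\eta$-integral inside and apply Fubini against $e^{-2\a z}dz$, which uses exactly the stated hypothesis.
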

\begin{proof}
Let $U\in C_0^\infty(\R^2;\R^2)$ and
\begin{align*}
& \Theta(x,y)=\Phi_{c(y)}(x-\gamma(y))-\Phi_{c_0}(x)
-\Psi_{c(y)}(x-\gamma(y)+h)\,,\\
& \Theta_0(x,y)=\{\pd_c\Phi_{c_0}(x)-\pd_c\Psi_{c_0}(x+h)\}\tc(y)
-\pd_x\Phi_{c_0}(x)\gamma(y)\,,\\
& \Theta_1(x,y)=\Theta(x,y)-\Theta_0(x,y)\,,\quad
\Theta^*(x,y)=g_k^*(x-\gamma(y),\eta,c(y))-g_k^*(x,\eta,c_0)\,.
\end{align*}
Then
\begin{align*}
\int_{\R^2}\left\{U(x,y)-\Theta(x,y)\right\}\cdot
g_k^*(x-\gamma(y),\eta,c(y))e^{-iy\eta}\,dxdy=\sum_{j=1}^4 I_j(\eta)\,,
\end{align*}
where
\begin{align*}
I_1=& \int_{\R^2}U(x,y)\cdot g_k^*(x,\eta,c_0)e^{-iy\eta}\,dxdy\,,\\
I_2=& -\int_{\R^2}\Theta_0(x,y)\cdot g_k^*(x,\eta,c_0)e^{-iy\eta}\,dxdy\,,\\
I_3=& -\int_{\R^2}\Theta_1(x,y)\cdot g_k^*(x,\eta,c_0)e^{-iy\eta}\,dxdy\,,\\
I_4=& \int_{\R^2}\{U(x,y)-\Theta(x,y)\}\Theta^*(x,y)e^{-iy\eta}\,dxdy\,.\\
\end{align*}
By \eqref{eq:gk*-approx},
\begin{equation}
  \label{eq:gk-bound}
\sup_{|c-c_0|\le \delta\,,\, |\eta-\eta_0|\le\delta}
\left\|\pd_c^j\pd_x^kg_k^*(\cdot,\eta,c)\right\|_{L^2_{-\a}(\R)}<\infty
\enskip\text{ for $j$, $k\ge0$,}
\end{equation}
and it follows from the Plancherel theorem and \eqref{eq:gk-bound} that
\begin{align*}
 \int_{-\eta_0}^{\eta_0}|I_1(\eta)|^2d\eta \lesssim \|U\|_{L^2_\a(\R^2)}^2\,,
\enskip
 \int_{-\eta_0}^{\eta_0}|I_2(\eta)|^2d\eta \lesssim 
\|\tc\|_Y^2+\|\gamma\|_Y^2\,.
\end{align*}
Since $\sup_y(|\tc(y)|+|\gamma(y)|)\lesssim \|\tc\|_Y+\|\gamma\|_Y$, we have
\begin{gather*}
\|\Theta\|_{L^2_\a(\R^2)}+\|\Theta_0\|_{L^2_\a(\R^2)}
\le C(\|\tc\|_Y+\|\gamma\|_Y)\,,\\
\|e^{\a x}\Theta_1(x,y)\|_{L^1(\R^2)}\le C(\|\tc\|_Y+\|\gamma\|_Y)^2\,,\\
\|\Theta^*(x,y)\|_{L^2_{-\a}(\R^2)}\le C(\|\tc\|_Y+\|\gamma\|_Y)\,,
\end{gather*}
where $C$ is a positive constant depending only on $\delta$.
\par
Combining the above, we obtain
$$\sup_{-\eta_0\le \eta\le \eta_0}(|I_3(\eta)|+|I_4(\eta)|)
\lesssim \|U\|_{L^2_\a(\R^2)}(\|\tc\|_Y+\|\gamma\|_Y)
+(\|\tc\|_Y+\|\gamma\|_Y)^2\,.$$
Since $C_0^\infty(\R^2)$ is dense in $L^2_\a(\R^2)$, it follows that for any
$u\in L^2_\a(\R^2)$,
$$\mathbf{1}_{[-\eta_0,\eta_0]}(I_1+I_2)\in Z\,,\quad
\mathbf{1}_{[-\eta_0,\eta_0]}(I_3+I_4)\in Z_1\subset Z\,.$$
Thus we complete the proof.
\end{proof}

Next, we will prove the  existence of parameters $c$ and $\gamma$ that satisfy
\eqref{eq:decomp} and \eqref{eq:orth}.
\begin{lemma}
  \label{lem:Implicit}
Let $\a\in(0,\a_{c_0})$.
There exist positive constants $\delta_0$, $\delta_1$, $h_0$ and $C$ such that
if $\|U\|_{L^2_\a}<\delta_0$ and $h\ge h_0$, then there exists a unique
$(\tc,\gamma)$ with $c=c_0+\tc$ satisfying
\begin{gather}
\label{eq:imp1}
  \|\tc\|_{Y}+\|\gamma\|_{Y} <\delta_1\,,\\
\label{eq:imp2}
F_1[u,\tc,\gamma,h]=F_2[u,\tc,\gamma,h]=0\,.
\end{gather}
Moreover, the mapping $\{U\in L^2_\a(\R^2)\mid \|U\|_{L^2_\a(\R^2)}<\delta_0\}
\ni U\mapsto (\tc,\gamma)=:\Omega(U)\in Y\times Y$ is $C^1$.
\end{lemma}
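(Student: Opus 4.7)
The plan is a direct application of the implicit function theorem in Banach spaces. Take as base point $(U,\tc,\gamma)=(0,0,0)$. Since $\psi_c(x)=2(\beta(c_0)-\beta(c))\psi(x)$ vanishes identically when $c=c_0$, we have $\Psi_{c_0}\equiv 0$, which makes the integrand defining $F_k[0,0,0,h]$ equal to zero, so $F[0,0,0,h]=0$. The $C^1$-regularity of
\[
F:\{\|U\|_{L^2_\a}<\delta_0\}\times\{\|(\tc,\gamma)\|_{Y\times Y}<\delta_1\}\times[h_0,\infty)\to Z\times Z
\]
follows by refining the arguments of Lemma~\ref{lem:F_k}: smoothness of $c\mapsto(\Phi_c,\Psi_c)$ in weighted Sobolev norms together with smoothness of $\eta\mapsto g_k^*(\cdot,\eta,c)$ in $L^2_{-\a}$ (cf.\ \eqref{eq:gk*-approx}) permit differentiation under the integral sign.

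Next, I would compute the partial Fr\'echet derivative $D_{(\tc,\gamma)}F[0,0,0,h]:Y\times Y\to Z\times Z$. Using $\zeta_{1,c_0}=\pd_x\Phi_{c_0}$ and $\zeta_{2,c_0}=-\pd_c\Phi_{c_0}$ from \eqref{eq:zetas}, and noting that the variations of $g_k^*(x-\gamma,\eta,c)$ with respect to $\gamma$ and $c$ contribute nothing at the base point because they multiply $\Psi_{c_0}\equiv 0$, I find that on the Fourier side this operator is pointwise multiplication by
\[
M(\eta,h)=\sqrt{2\pi}\begin{pmatrix}
\la\zeta_{2,c_0}+\pd_c\Psi_{c_0}(\cdot+h),\,g_1^*(\cdot,\eta,c_0)\ra & \la\zeta_{1,c_0},\,g_1^*(\cdot,\eta,c_0)\ra \\
\la\zeta_{2,c_0}+\pd_c\Psi_{c_0}(\cdot+h),\,g_2^*(\cdot,\eta,c_0)\ra & \la\zeta_{1,c_0},\,g_2^*(\cdot,\eta,c_0)\ra
\end{pmatrix}.
\]
At $\eta=0$, combining $g_k^*(\cdot,0,c_0)=\zeta_{k,c_0}^*$ with \eqref{eq:zeta1-zeta1*} and \eqref{eq:zeta2-zeta1*} gives
\[
(2\pi)^{-1/2}M(0,h)=\begin{pmatrix}\beta_2(c_0) & \beta_1(c_0)\\ \beta_1(c_0) & 0\end{pmatrix}+O(e^{-\a h}),
\]
where the $O(e^{-\a h})$ correction arises because $\pd_c\Psi_{c_0}(\cdot+h)$ is supported in $\{x\le 1-h\}$ while the adjoint resonant modes $\zeta_{k,c_0}^*\in L^2_{-\a}$ decay like $e^{\a x}$ as $x\to-\infty$. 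Hence $\det M(0,h)=-2\pi\beta_1(c_0)^2+O(e^{-\a h})$, which is bounded away from zero once $h\ge h_0$ is sufficiently large. The continuity of $\eta\mapsto g_k^*(\cdot,\eta,c_0)$ in $L^2_{-\a}$ from \eqref{eq:gk*-approx} then propagates the invertibility bound to all $\eta\in[-\eta_0,\eta_0]$ after shrinking $\eta_0$, so $D_{(\tc,\gamma)}F[0,0,0,h]$ is an isomorphism $Y\times Y\to Z\times Z$ whose inverse is Fourier multiplication by $(2\pi)^{-1/2}M(\eta,h)^{-1}\mathbf{1}_{[-\eta_0,\eta_0]}(\eta)$.

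The Banach-space implicit function theorem now yields $\delta_0,\delta_1>0$ and a $C^1$-map $\Omega$ from the $\delta_0$-ball of $L^2_\a(\R^2)$ into $Y\times Y$ satisfying \eqref{eq:imp1}--\eqref{eq:imp2}, together with uniqueness in that ball. The main obstacle I anticipate is not the invertibility calculation but the careful verification of $C^1$-regularity of $F$ in $(\tc,\gamma)$: the nonlinear substitutions $x\mapsto x-\gamma(y)$ and $c\mapsto c_0+\tc(y)$ have to be differentiable from $Y\times Y$ into the weighted spaces in which the integrand lives, which uses the embedding $Y\hookrightarrow L^\infty$ together with Lipschitz bounds on the shifts of $\Phi_c$, $\Psi_c$ and $g_k^*$ uniformly in $\eta\in[-\eta_0,\eta_0]$.
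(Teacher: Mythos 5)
Your proposal is correct and follows essentially the same route as the paper: check that $F$ is $C^1$ into $Z\times Z$ (via Lemma~\ref{lem:F_k}), compute $D_{(\tc,\gamma)}F[0,0,0,h]$ as a Fourier multiplier whose symbol at $\eta=0$ is $\sqrt{2\pi}\bigl(\begin{smallmatrix}\beta_2(c_0)&\beta_1(c_0)\\ \beta_1(c_0)&0\end{smallmatrix}\bigr)+O(e^{-\a h})$ by \eqref{eq:zeta1-zeta1*}--\eqref{eq:zeta2-zeta1*} and \eqref{eq:gk*-approx}, invert it for $\eta_0$ small and $h$ large, and invoke the implicit function theorem. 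Your matrix entries agree with the paper's $f_{kj}$ (using $\zeta_{2,c_0}=-\pd_c\Phi_{c_0}$), and the identification of the $O(e^{-\a h})$ and $O(\eta_0^2)$ corrections is exactly the paper's argument.
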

\begin{proof}
We have $(F_1,F_2)\in C^1(L^2_\a(\R^2)\times Y\times Y\times\R;Z\times Z)$
and  for $\tc$, $\tilde{\gamma}\in Y$, 
$$D_{(c,\gamma)}(F_1,F_2)(0,0,0,h)
\begin{pmatrix}\tc \\ \tilde{\gamma}\end{pmatrix}
=\sqrt{2\pi}\begin{pmatrix}f_{11} & f_{12} \\ f_{21} & f_{22}\end{pmatrix}
\begin{pmatrix}
\mF_y\tc \\ \mF_y\tilde{\gamma}\end{pmatrix}\,,
$$
where 
\begin{align*}
& f_{k1}=-\int_\R\left\{\pd_c\Phi_{c_0}(x)-\pd_c\Psi_{c_0}(x+h)\right\}
\cdot g_k^*(x,\eta,c_0)\,dx\,,\\
& f_{k2}=\int_\R \pd_x\Phi_{c_0}(x)\cdot g_k^*(x,\eta,c_0)\,dx\,,
\end{align*}
and by \eqref{eq:zetas}, \eqref{eq:zeta*s}, \eqref{eq:zeta1-zeta1*},
\eqref{eq:zeta2-zeta1*} and \eqref{eq:gk*-approx},
\begin{align*}
& f_{11}=\beta_2(c_0)+O(\eta_0^2)+O(e^{-h\a})\,,
\quad f_{12}=\beta_1(c_0)+O(\eta_0^2)\,,
\\ &
f_{21}=\beta_1(c_0)+O(\eta_0^2)+O(e^{-h\a})\,,
\quad f_{22}=O(\eta_0^2)\,.
\end{align*}
Suppose $\eta_0$ and $e^{-h\a}$ are sufficiently small.
Then $D_{(c,\gamma)}(F_1,F_2)(0,0,0,h)\in B(Y\times Y,Z\times Z)$
has a bounded inverse and by the implicit function theorem,
there exists a unique $(\tc,\gamma)\in Y\times Y$ 
satisfying \eqref{eq:imp1} and \eqref{eq:imp2} for any $U$ satisfying
$\|U\|_{L^2_\a(\R^2)}<\delta_0$ and the mapping $(\tc,\gamma)=\Omega(U)$
is $C^1$.
\end{proof}
\begin{remark}
  \label{rem:decomp}
  If $U_0\in \bE$ and $\Phi(0,x,y)=\Phi_{c_0}(x)+U_0(x,y)$,
  then a solution of \eqref{eq:BL1} is in the class
\begin{equation}
  \label{eq:lwp}
\widetilde{U}(t,x,y):=\Phi(t,x+c_0t,y)-\Phi_{c_0}(x)
\in C(\R;\bE)\cap C^1(\R;H^1(\R^2)\times L^2(\R^2))\,.
\end{equation}
Combining the above with \eqref{eq:W-conti}, we have
\begin{equation}
  \label{eq:lwpX}
  W(t)\in C([0,\infty);\bX_1)\cap C^1((0,\infty);\bX)\,,
\end{equation}
and it follows from Lemma~\ref{lem:Implicit} that
$(\tc(t),\gamma(t))=\Omega(W(t))\in C^1$ as long as
$\|W(t)\|_{L^2_\a}$ remains small. Since $W(0)=0$, there exists a $T>0$ such
that the decomposition \eqref{eq:decomp} satisfying \eqref{eq:defU2} and
\eqref{eq:orth} persists for $t\in[0,T]$ and 
\begin{gather}
  \label{eq:mod-init}
\tc(0)=\gamma(0)=0\,,\quad \wU_2(0)=0\,\,,
\\ (\tc(t),\gamma(t))\in  C([0,T];Y\times Y)\cap C^1((0,T);Y\times Y)\,.
\end{gather}
\end{remark}

By a standard argument, we have the following continuation principle for
the decomposition \eqref{eq:decomp} satisfying \eqref{eq:defU2}
and \eqref{eq:orth}.
\begin{proposition}
\label{prop:continuation}
Let $\a$, $\delta_0$ and $h$ be the same as in Lemma~\ref{lem:Implicit}
and let $\Phi(t)$ and $U_1(t)$ be as in Lemma~\ref{lem:Phi-U1}.
Then there exists a constant $\delta_2>0$ such that if \eqref{eq:decomp},
\eqref{eq:defU2} and \eqref{eq:orth} hold for $t\in[0,T)$ and
\begin{gather}
\label{eq:C1cg}
(\tc,\gamma)\in C([0,T);Y\times Y)\cap C^1((0,T);Y\times Y)\,,\\
\label{eq:bd-vc}
\sup_{t\in[0,T)}\|U(t)\|_{L^2_\a}< \frac{\delta_0}{2}\,,
\quad \sup_{t\in[0,T)}\|\tc(t)\|_Y<\delta_2\,,\quad
\sup_{t\in[0,T)}\|\gamma(t)\|_Y<\infty\,,
\end{gather}
then either $T=\infty$ or $T$ is not the maximal time of
the decomposition \eqref{eq:decomp} satisfying \eqref{eq:defU2},
\eqref{eq:orth},  \eqref{eq:C1cg} and \eqref{eq:bd-vc}.
\end{proposition}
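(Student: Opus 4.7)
The plan is a standard continuation argument using the implicit function theorem of Lemma~\ref{lem:Implicit} together with the continuity of $W$ supplied by Lemma~\ref{lem:Phi-U1}. I argue by contradiction: suppose $T<\infty$ is the maximal time.

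First, I would pass to the limit $t\to T^-$. By Lemma~\ref{lem:Phi-U1}, $W\in C([0,\infty);\bX_1)$, and the embedding $\bX_1\hookrightarrow L^2_\a(\R^2)\times L^2_\a(\R^2)$ yields continuity of $W$ up to $t=T$ in $L^2_\a$. Combined with \eqref{eq:C1cg} and the strict inequalities in \eqref{eq:bd-vc}, the limits $\tc(T)$, $\gamma(T)$ exist in $Y$ with $\|\tc(T)\|_Y\le\delta_2$ and $\|\gamma(T)\|_Y<\infty$, and the orthogonality \eqref{eq:orth} passes to the limit.

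Next, I would apply Lemma~\ref{lem:Implicit} around the shifted base point $\bigl(W(T),\tc(T),\gamma(T),h\bigr)$ rather than $(0,0,0,h)$. The key observation is that under the $y$-dependent change of variable $z=x-\gamma(T,y)$, the functional $F=(F_1,F_2)$ takes the same form as in the proof of Lemma~\ref{lem:Implicit} with $c_0$ replaced by $c(T,y)$, so the Fréchet derivative $D_{(\tc,\gamma)}F$ at the shifted base point differs from the matrix $\bigl(f_{ij}\bigr)_{i,j=1,2}$ of Lemma~\ref{lem:Implicit} only by terms of order $O(\|\tc(T)\|_Y)+O(\|W(T)\|_{L^2_\a})=O(\delta_2)+O(\delta_0)$. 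Choosing $\delta_2$ small enough (in addition to the thresholds already fixed in Lemma~\ref{lem:Implicit}), the perturbed derivative remains invertible on $Z\times Z$, so the implicit function theorem yields a unique $C^1$-extension of $(\tc,\gamma)$ to $[T,T+\epsilon)$ satisfying \eqref{eq:orth}; by uniqueness, this extension agrees with the original decomposition at $t=T$.

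Finally, continuity of $W(\cdot)$ in $\bX_1$, of $U(\cdot)$ in $L^2_\a$ (through the identity $U=W+\Phi_{c_0}-\Phi_{c(t,y)}(z)+\Psi_{c(t,y)}(z_1)+U_1(t)$, whose right-hand side is continuous in $t$ in the relevant norm), and of $(\tc(\cdot),\gamma(\cdot))$ in $Y\times Y$ imply that the strict bounds in \eqref{eq:bd-vc} persist on some $[T,T+\epsilon')$, contradicting the maximality of $T$. The main obstacle I foresee is that the hypothesis permits $\|\gamma(t)\|_Y$ to grow large, so Lemma~\ref{lem:Implicit} cannot be reused at the origin; the remedy is the translation invariance built into $F$, by which the $y$-dependent change of variable $z=x-\gamma(T,y)$ reduces the IFT at $(\tc(T),\gamma(T))$ to a perturbation of the one at the origin, controlled by the single parameter $\|\tc(T)\|_Y\le\delta_2$.
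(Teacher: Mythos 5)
The paper offers no proof of Proposition~\ref{prop:continuation} (it is dismissed with ``by a standard argument''), so I am judging your argument against what that standard argument has to be. Your overall strategy --- contradiction, pass to $t=T$, re-apply the implicit function theorem at a shifted base point, push past $T$ by continuity --- is the right one, and you correctly identify that Lemma~\ref{lem:Implicit} cannot be reused at the origin because $\|\gamma\|_Y$ is only assumed finite. But there are two genuine soft spots. The first is logical: you assert that $\lim_{t\to T^-}\tc(t)$ and $\lim_{t\to T^-}\gamma(t)$ exist in $Y$, but continuity on $[0,T)$ together with the uniform bounds in \eqref{eq:bd-vc} does not produce a limit at the endpoint, and without that limit there is no base point at which to apply the IFT. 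The standard repair is to reverse the order of your steps: fix $t_0<T$ close to $T$, apply the IFT at $\bigl(W(t_0),\tc(t_0),\gamma(t_0),h\bigr)$ to obtain a $C^1$ branch $\widetilde\Omega$ on a neighborhood of $W(t_0)$ in $L^2_\a$, identify $\widetilde\Omega(W(t))$ with $(\tc(t),\gamma(t))$ for $t$ near $t_0$ by local uniqueness and \eqref{eq:C1cg}, and then use the continuity of $t\mapsto W(t)$ in $\bX_1\hookrightarrow L^2_\a$ on the closed interval (Lemma~\ref{lem:Phi-U1}) to continue the branch through and beyond $T$. This yields the limit and the extension simultaneously.

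The second issue is quantitative. You claim the Fr\'echet derivative at the shifted base point differs from $(f_{ij})$ by $O(\|\tc(T)\|_Y)+O(\|W(T)\|_{L^2_\a})=O(\delta_2)+O(\delta_0)$. Neither bound is justified as stated: the hypothesis \eqref{eq:bd-vc} controls $\|U(t)\|_{L^2_\a}$, not $\|W(t)\|_{L^2_\a}$ (the Gronwall argument in Lemma~\ref{lem:Phi-U1} only gives a $T$-dependent bound for the latter), and the correction terms pair the residual against $\pd_cg_k^*,\pd_zg_k^*\in L^2_{-\a}$ in the variable $z=x-\gamma(T,y)$, so the change of variables rescales the exponential weight by $e^{-\a\gamma(T,y)}$ and the operator-norm estimate picks up a factor of order $e^{\a\|\gamma(T)\|_{L^\infty}}$. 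This factor is finite under \eqref{eq:bd-vc} but not uniformly small, so you must either let the smallness thresholds depend on the a priori bound for $\|\gamma\|_Y$, or argue that invertibility of the derivative at the single base point (a Neumann-series perturbation of $(f_{ij})$ by a finite, solution-dependent amount that can be absorbed by shrinking $\delta_2$ and the size of the residual actually appearing, namely $\|\wU_2\|_{L^2_\a(dz\,dy)}$) is all the IFT requires. As written, the uniform-constant claim is a gap rather than a detail.
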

\bigskip

\section{Modulation equations}
\label{sec:modulation}
In this section, we will derive a system of PDEs which describe the motion
of  $c(t,y)$ and $\gamma(t,y)$.

Let $\wU_1(t,z,y)=U_1(t,x,y)$, $\wU(t,z,y)=U(t,x,y)$ and
$\tau$ be a shift operator defined by $(\tau_\gamma f)(x,y)=f(x+\gamma,y)$.
Then \eqref{eq:U1} and \eqref{eq:U2} are transformed into
\begin{gather}
  \label{eq:wU1}
  \pd_t\wU_1=\{(c_0+\gamma_t)\pd_z+L\}\wU_1+N(U_1)
+\tau_{\gamma(t,y)}[L,\tau_{-\gamma(t,y)}]\wU_1\,,
\\
  \label{eq:wU2}
\pd_t\wU_2=\mL_{c(t,y)}\wU_2+\ell+\sum_{j=1}^4 \widetilde{N}_j\,,
\quad \wU_2(0)=0\,,
\end{gather}
where
\begin{align*}
& \widetilde{N}_1=N_{1,0}-V_{c(t,y)}\wU\,,\quad
\widetilde{N}_2=N_2 \,,\quad \widetilde{N}_3=V_{c(t,y)}\wU_1
\\ & 
\widetilde{N}_4=(\gamma_t-\tc)\pd_z\wU_2
+\tau_{\gamma(t,y)}[L,\tau_{-\gamma(t,y)}]\wU_2\,.
\end{align*}

Suppose that the decomposition \eqref{eq:decomp}, \eqref{eq:defU2}
satisfying \eqref{eq:orth} persists for $t\in[0,T]$.
Differentiating \eqref{eq:orth} with respect to $t$, we have
in $L^2(-\eta_0,\eta_0)$,
\begin{equation}
  \label{eq:modeq}
\begin{split}
& \frac{d}{dt}\int_{\R^2}\wU_2(t,z,y)\cdot g_k^*(z,\eta,c(t,y))e^{-iy\eta}\,dzdy
\\=& \int_{\R^2} \ell\cdot g_k^*(z,\eta,c(t,y))e^{-iy\eta}\,dzdy
+\sum_{j=0}^5II_k^j(t,\eta)\,,
\end{split}  
\end{equation}
where
\begin{align}
& II_k^0(t,\eta)=\int_{\R^2} \wU_2\cdot\mL_{c(t,y)}^*
\left(g_k^*(z,\eta,c(t,y))e^{-iy\eta}\right)\,dzdy\,,
\\ &
II_k^j(t,\eta)=\int_{\R^2} \widetilde{N}_j
\cdot g_k^*(z,\eta,c(t,y))e^{-iy\eta}\,dzdy
\quad\text{for $j=1$, $2$, $3$,}
\\ &
II_k^4(t,\eta)=\int_{\R^2} \wU_2(t,z,y)\cdot 
\{c_t\pd_c-(\gamma_t-\tc)\pd_z\}g_k^*(z,\eta,c(t,y))e^{-iy\eta}\,dzdy\,,
\\ & 
II_k^5(t,\eta)=\int_{\R^2} \wU_2(t,z,y)\cdot 
[\tau_{\gamma(t,y)},L^*]\tau_{-\gamma(t,y)}
g_k^*(z,\eta,c(t,y))e^{-iy\eta}\,dzdy\,.
\end{align}
The modulation equations of $c(t,y)$ and $\gamma(t,y)$ can be obtained by
taking the inverse Fourier transform of \eqref{eq:modeq}.
\subsection{The dominant part of modulation equations}
In view of \eqref{eq:defl1}, we have
\begin{align}
\label{eq:l13exp}
\ell_{13} =&
-B^{-1}\{2\pd_y\varphi_{c(t,y)}(z)\pd_yr_{c(t,y)}(z)
+r_{c(t,y)}(z)\pd_y^2\varphi_{c(t,y)})(z)\}\mathbf{e_2}
\\ & \notag
-bB_0^{-1}B^{-1}\pd_y^2\{2q_{c(t,y)}(z)r_{c(t,y)}'(z)+r_{c(t,y)}(z)q_{c(t,y)}'(z)\}
\mathbf{e_2}\,,
\end{align}
and $\ell_1=\ell_{11}+\tilde{\ell}_{12}+\pd_y^2\tilde{\ell}_{1r}$,
\begin{align} 
 \tilde{\ell}_{12}=& -\pd_y^2L_1(0)\Phi_{c(t,y)}(z) \notag
\\ & \notag
 -B_0^{-1}\{2\pd_y\varphi_{c(t,y)}(z)\pd_yr_{c(t,y)}(z)
+r_{c(t,y)}(z)\pd_y^2\varphi_{c(t,y)})(z)\}\mathbf{e_2}
\\ & \notag
-bB_0^{-2}\pd_y^2\{2q_{c(t,y)}(z)r_{c(t,y)}'(z)
+r_{c(t,y)}(z)q_{c(t,y)}'(z)\}\mathbf{e_2} \,,
\\ & \label{eq:l1r}
\tilde{\ell}_{1r}=\pd_y^2L_2(D_y)\Phi_{c(t,y)}(z)
+bB^{-1}\left(\tilde{\ell}_{12}+\pd_y^2L_1(0)\Phi_{c(t,y)}(z)\right)\,,
\end{align}
where $L_2(\eta)=\eta^{-2}(L_1(\eta)-L_1(0))$ for $\eta\ne0$ and
$L_2(0)=\frac12\frac{d^2}{d\eta^2}L_1(\eta)|_{\eta=0}$.
Since $g_k^*(z,0,c)=\zeta_{k,c}^*(z)$, the leading terms of
\begin{equation}
\label{eq:modeql1}
\frac{1}{\sqrt{2\pi}}
\mF_\eta^{-1}\int_{\R^2} \ell_1\cdot g_k^*(z,\eta,c(t,y))e^{-iy\eta}\,dzdy
\end{equation}
are $G_k(t,y):=\int_\R (\ell_{11}+\tilde{\ell}_{12})\cdot
\zeta_{k,c(t,y)}^*(z)\,dz$.
\begin{lemma}
  \label{lem:G}
  \begin{equation}
\label{eq:G}
\begin{split}
\begin{pmatrix}G_1 \\ G_2\end{pmatrix}=&
\wP_1\left\{B_1(c)\begin{pmatrix}\gamma_t-\tc \\  c_t\end{pmatrix}
+M(c)\begin{pmatrix}\gamma_{yy}\\  c_{yy}\end{pmatrix}\right\}
\\ & +\wP_1
\begin{pmatrix}
2m_{13}(c)c_y\gamma_y +m_{14}(c)(c_y)^2+m_{15}(c)(\gamma_y)^2
\\ 2m_{23}(c)c_y\gamma_y +m_{24}(c)(c_y)^2 
\end{pmatrix}\,,
\end{split}
  \end{equation}
where $B_1(c)=\beta_1(c)I+\beta_2(c)E_{12}$,
$$M(c)=\sum_{j,k=1,2}m_{kj}(c)E_{kj}\,,\quad
m_{kj}(c)=
\begin{cases}
\la \mL_{1,c}(0)\zeta_{j,c},\zeta_{k,c}^*\ra \quad & \text{if $j=1$, $2$,}
\\ m_{kj}(c)=\la \Phi_{j,c},\zeta_{k,c}^*\ra & \quad \text{if $3\le j\le 5$,}
\end{cases}$$
\begin{align*}
& \Phi_{3,c}=\pd_c\{\mL_{1,c}(0)\zeta_{1,c}\}
+B_0^{-1}r_c'\pd_c\varphi_c\mathbf{e_2}\,,\enskip
\Phi_{4,c}= \pd_c\{\mL_{1,c}(0)\zeta_{2,c}\}
-B_0^{-1}\pd_cr_c\pd_c\varphi_c\mathbf{e_2}\,,
\\ &
\Phi_{5,c}=- \pd_z\{\mL_{1,c}(0)\zeta_{1,c}\}-B_0^{-1}r_c' q_c\mathbf{e_2}\,.
\end{align*}
\end{lemma}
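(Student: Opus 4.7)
The plan is to compute $G_k(t,y) = \int_\R (\ell_{11}+\tilde{\ell}_{12})\cdot\zeta_{k,c(t,y)}^*(z)\,dz$ directly, using the chain rule applied to functions of $(c(t,y), x-\gamma(t,y))$ together with the identities $\pd_z\Phi_c = \zeta_{1,c}$ and $\pd_c\Phi_c = -\zeta_{2,c}$. The contribution from $\ell_{11}$ is immediate: substituting the latter identities gives $\ell_{11} = c_t\zeta_{2,c(t,y)}(z) + (\gamma_t-\tc)\zeta_{1,c(t,y)}(z)$, and pairing against $\zeta_{k,c}^*$ and invoking \eqref{eq:zeta1-zeta1*} and \eqref{eq:zeta2-zeta1*} produces $\beta_1(c)(\gamma_t-\tc) + \beta_2(c)c_t$ for $k=1$ and $\beta_1(c)c_t$ for $k=2$, which is exactly $B_1(c)(\gamma_t-\tc, c_t)^T$.

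For $\tilde{\ell}_{12}$, I would apply the chain rule
\begin{equation*}
\pd_y f = c_y\pd_c f - \gamma_y\pd_z f, \qquad
\pd_y^2 f = c_{yy}\pd_c f - \gamma_{yy}\pd_z f + c_y^2\pd_c^2 f + \gamma_y^2\pd_z^2 f - 2c_y\gamma_y\pd_c\pd_z f
\end{equation*}
to each $\pd_y$ and $\pd_y^2$, then organise the result by the order of parameter derivatives. The linear-in-$(c_{yy},\gamma_{yy})$ piece, collected across the three parts of $\tilde{\ell}_{12}$, reconstructs $\mL_{1,c}(0)\zeta_{2,c}\cdot c_{yy} + \mL_{1,c}(0)\zeta_{1,c}\cdot\gamma_{yy}$ after comparison with the defining formula $\mL_{1,c}(0) = L_1(0) + B_0^{-1}r_c E_{21} + bB_0^{-2}(v_{1,c}(0)E_{21}+v_{2,c}(0)E_2)$. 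Pairing against $\zeta_{k,c}^*$ then yields $m_{k1}(c)\gamma_{yy}+m_{k2}(c)c_{yy}$ by the definition of $m_{k1}$ and $m_{k2}$.

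The quadratic-in-$(c_y,\gamma_y)$ coefficients come from the second-order chain-rule terms $\pd_c^2 f$, $\pd_z^2 f$, $\pd_c\pd_z f$ applied to $\Phi_c$, together with the explicit cross-product $\pd_y\varphi_c\pd_y r_c$ appearing in $\tilde{\ell}_{12}$ which is absent from a naive $-\pd_y^2\mL_{1,c}(0)\Phi_c$. Reorganising these and using that $\pd_c\{\mL_{1,c}(0)\zeta_{j,c}\}$ and $\pd_z\{\mL_{1,c}(0)\zeta_{1,c}\}$ each produce commutator contributions (since $\mL_{1,c}(0)$ depends on $c$ and $z$ through $r_c, q_c$), the leftover pieces collect into the correction terms $B_0^{-1}r_c'\pd_c\varphi_c\mathbf{e_2}$, $-B_0^{-1}\pd_cr_c\pd_c\varphi_c\mathbf{e_2}$, $-B_0^{-1}r_c'q_c\mathbf{e_2}$ featured in the definitions of $\Phi_{3,c}, \Phi_{4,c}, \Phi_{5,c}$. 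Pairing with $\zeta_{k,c}^*$ then produces $m_{kj}(c)$ for $j=3,4,5$.

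The projection $\wP_1$ appears because the modulation equation \eqref{eq:modeq} is inherently posed for $\eta\in[-\eta_0,\eta_0]$ (the region where $g_k^*(\cdot,\eta,c)$ is defined), so the inverse Fourier transform in $\eta$ of the right-hand side automatically lies in $Y$. The main obstacle is the bookkeeping of commutators in the third step: since $\mL_{1,c}(0)$ has coefficients $r_c(z), q_c(z)$ depending on both $c$ and $z$, pulling $\pd_c$ or $\pd_z$ across $\mL_{1,c}(0)$ introduces additional first-order terms which must cancel precisely against the mismatched contributions from $\tilde{\ell}_{12}$. Once this term-by-term algebra is carried out, the identity \eqref{eq:G} follows.
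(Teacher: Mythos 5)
Your strategy is the same as the paper's: pair $\ell_{11}$ with $\zeta_{k,c}^*$ using \eqref{eq:zeta1-zeta1*}--\eqref{eq:zeta2-zeta1*} to get the $B_1(c)$ block, then expand $\tilde{\ell}_{12}$ by the chain rule in $(c(t,y),z)$, split it into a part linear in $(\gamma_{yy},c_{yy})$ that reassembles $\gamma_{yy}\mL_{1,c}(0)\zeta_{1,c}+c_{yy}\mL_{1,c}(0)\zeta_{2,c}$ and a part quadratic in $(c_y,\gamma_y)$ whose coefficients are $\Phi_{3,c}$, $\Phi_{4,c}$, $\Phi_{5,c}$, and finally pair with $\zeta_{k,c}^*$. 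That is exactly the decomposition $\tilde{\ell}_{12}=\tilde{\ell}_{12L}+\tilde{\ell}_{12N}$ in the paper, and your identification of the commutator terms (arising because the coefficients $q_c$, $r_c$ of $\mL_{1,c}(0)$ depend on $c$ and $z$) as the source of the corrections $B_0^{-1}r_c'\pd_c\varphi_c\mathbf{e_2}$, $-B_0^{-1}\pd_cr_c\pd_c\varphi_c\mathbf{e_2}$, $-B_0^{-1}r_c'q_c\mathbf{e_2}$ is the right picture.

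There is, however, one concrete step missing. Carrying out your plan literally, the second component of the quadratic part would read $2m_{23}(c)c_y\gamma_y+m_{24}(c)(c_y)^2+m_{25}(c)(\gamma_y)^2$, whereas the stated formula \eqref{eq:G} has no $(\gamma_y)^2$ term in the $G_2$ row. To reconcile the two you must verify that $m_{25}(c)=\la\Phi_{5,c},\zeta_{2,c}^*\ra=0$; the paper does this by computing $m_{25}(c)=c\la (bc^2-2a)q_c'''+q_c'+3cq_cq_c',\,q_c\ra$ and observing that the integrand is odd in $z$ (since $q_c$ is even), so the integral vanishes by parity. Without this observation your argument does not yield the right-hand side of \eqref{eq:G} as written. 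The rest of the proposal is sound.
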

\begin{proof}[Proof of Lemma~\ref{lem:G}]
By \eqref{eq:zeta1-zeta1*} and \eqref{eq:zeta2-zeta1*},
\begin{align*}
\begin{pmatrix}
\int_\R \ell_{11}\cdot \zeta_{1,c(t,y)}^*(z)\,dz
\\
\int_\R \ell_{11}\cdot \zeta_{2,c(t,y)}^*(z)\,dz   
\end{pmatrix}
=B_1(c)\begin{pmatrix}\gamma_t-\tc \\ c_t \end{pmatrix}\,.
\end{align*}
By a straightforward computation, we have
$\tilde{\ell}_{12}=\tilde{\ell}_{12L}+\tilde{\ell}_{12N}$,
\begin{align}
\label{eq:l12L}
\tilde{\ell}_{12L}=&  
\gamma_{yy}\mL_{1,c}(0)\zeta_{1,c}+c_{yy}\mL_{1,c}(0)\zeta_{2,c}\,,
\\ \label{eq:l12N}
\tilde{\ell}_{12N}= &
2c_y\gamma_y\Phi_{3,c}+(c_y)^2\Phi_{4,c}+(\gamma_y)^2\Phi_{5,c}\,,
\end{align}
\begin{align*}
\mL_{1,c}\pd_z\Phi_c=& B_0^{-1}\{(2a-bc^2)q_c''-q_c+r_cq_c\}\mathbf{e_2}\,,
\\
\mL_{1,c}\pd_c\Phi_c=& B_0^{-1}\left[\pd_c\{(2a-bc^2)q_c'\}
-\pd_c\varphi_c+r_c\pd_c\varphi_c\right]\mathbf{e_2}\,,
\end{align*}
and
\begin{align*}
& \int_\R \tilde{\ell}_{12L}\cdot \zeta_{k,c(t,y)}^*(z)\,dz
= m_{k1}(c)\gamma_{yy}+m_{k2}(c)c_{yy}\,,
\\ &
\int_\R \tilde{\ell}_{12N}\zeta_{k,c(t,y)}^*(z)\,dz
=2m_{k3}(c)c_y\gamma_y+m_{k4}(c)(c_y)^2+m_{k5}(c)(\gamma_y)^2\,.
\end{align*}
We have $m_{25}(c)= c\la (bc^2-2a)q_c'''+q_c'+3cq_cq_c', q_c\ra=0$ by parity.
By \eqref{eq:L1czs},
\begin{equation}
  \label{eq:m11}
m_{11}(c)=m_{22}(c)<0\,,\quad m_{21}(c)<0\,.
\end{equation}
Thus we complete the proof.
\end{proof}
\par

\par
The equation $(G_1, G_2)\simeq(0,0)$ is a dissipative wave equation.
Indeed, let
$a_{1j}(c)=\{\beta_2(c)m_{2j}(c)-\beta_1(c)m_{1j}(c)\}/\beta_1(c)^2$,
$a_{2j}(c)=-m_{2j}(c)/\beta_1(c)$ and
$$A(c,\eta)=
 \begin{pmatrix}0 & 1 \\ 0 & 0 \end{pmatrix}
- \begin{pmatrix}a_{11}(c) & a_{12}(c) \\ a_{21}(c) & a_{22}(c)
\end{pmatrix}\eta^2\mathbf{1}_{[-\eta_0,\eta_0]}(\eta)\,.$$
Then by \eqref{eq:lambda1} and \eqref{eq:lambda2},
\begin{align*}
\det A(c,\eta)=& a_{21}(c)\eta^2+
\{a_{11}(c)a_{22}(c)-a_{12}(c)a_{21}(c)\}\eta^4
= \lambda_{1,c}^2\eta^2+O(\eta^4)\,,  
\\ \operatorname{tr} A(c,\eta)=& \beta_1(c)^{-2}
\left\{\beta_1(c)(m_{11}(c)+m_{22}(c))-\beta_2(c)m_{21}(c)\right\}\eta^2
= -2\lambda_{2,c}\eta^2<0\,.
\end{align*}
Solutions of the linearized equation
\begin{equation*}
  \frac{d}{dt}
  \begin{pmatrix}   \gamma \\ \tc   \end{pmatrix}
=A(c_0,D_y)  \begin{pmatrix}   \gamma \\ \tc   \end{pmatrix}
\end{equation*}
decays like 
$\|\pd_y^{k+1}\gamma\|_Y+\|\pd_y^k\tc\|_Y \lesssim \la t\ra^{-(2k+1)/4}$
for $k\ge0$ (see Lemma~\ref{lem:fund-sol}).
Therefore we expect that the modulation equations for
$c(t,y)$ and $\gamma(t,y)$ can be reduced to
\begin{equation}
  \label{eq:approx-modeq}
\begin{split}
\begin{pmatrix} \gamma_t \\ c_t \end{pmatrix}
\simeq& 
B_1(c)^{-1}\left\{(E_{12}+M(c)\pd_y^2)\begin{pmatrix}\gamma \\ \tc\end{pmatrix}
+
\begin{pmatrix}
m_{15}(c)(\gamma_y)^2 \\ 2m_{23}(c)c_y\gamma_y
\end{pmatrix}\right\}
\\ \simeq &
A(c,D_y)\begin{pmatrix} \gamma \\ \tc\end{pmatrix}
+
\begin{pmatrix} a_{15}(c)(\gamma_y)^2  \\ 2a_{23}(c)c_y\gamma_y
\end{pmatrix}                                                       
\end{split}  
\end{equation}
To estimate $\|\tc(t)\|_Y$ by using Lemma~\ref{lem:fund-sol},
we need to translate the nonlinear term $2m_{23}(c)c_y\gamma_y$ into
a divergent form because $\|m_{23}(c)c_y\gamma_y\|_{L^1}=O(t^{-1})$.
Let 
\begin{gather}
\ta_{21}(c)=a_{21}(c_0)
\exp\left(\int_{c_0}^c\frac{2a_{23}(s)}{a_{21}(s)}\,ds\right)\,,
\quad
\rho(c)=\int_{c_0}^c \frac{\ta_{21}(s)}{a_{21}(s)}\,ds\,,
\\
\label{eq:rhoc}
b=\wP_1\{\rho(c)-\rho(c_0)\}\,.
\end{gather}
We remark that $\rho'(c_0)= 1$ and $b\simeq \tc$
(see Claim~\ref{cl:b-capprox} in Appendix~\ref{ap:r}).

By \eqref{eq:approx-modeq} and \eqref{eq:rhoc},
\begin{align*}
\gamma_t\simeq &
\tc +a_{11}(c)\gamma_{yy}+a_{12}(c)c_{yy}+a_{15}(c)(\gamma_y)^2 
\\ =&
b+a_{11}(c_0)\gamma_{yy}+a_{12}(c_0)b_{yy}
+\wP_1\left(p_1b^2+a_{15}(c)(\gamma_y)^2\right)+\wP_1 R^{G,1}_1\,,
\end{align*}
where $p_1=-\frac12\rho''(c_0)$ and
$$R^{G,1}_1=(a_{11}(c)-a_{11}(c_0))\gamma_{yy}+a_{12}(c)c_{yy}-a_{12}(c_0)b_{yy}
+\tc-b-p_1b^2\,.$$
Let $\ta_{22}(c)=\rho'(c)a_{22}(c)$.
Since $\ta_{21}(c)=\rho'(c)a_{21}(c)$ and
$\ta_{21}'(c)=2\rho'(c)a_{23}(c)$,
\begin{align*}
b_t= \wP_1 \rho'(c)c_t
\simeq &  \wP_1(\ta_{21}(c)\gamma_{yy}
+\ta_{21}'(c)c_y\gamma_y+\ta_{22}(c)c_{yy})
\\=& a_{21}(c_0)\gamma_{yy}+a_{22}(c_0)b_{yy}
+\wP_1(p_2(c)\gamma_y)_y
+\wP_1 R^{G,1}_2\,,
\end{align*}
where $p_2(c)=\ta_{21}(c)-\ta_{21}(c_0)=O(\tc)$ and
$R^{G,1}_2= \ta_{22}(c)c_{yy}-\ta_{22}(c_0)b_{yy}$.
\par

Now we will translate the principal part of $(G_1, G_2)$ in terms of
$b$ and $\gamma$. Let $B_2(c)=\diag(1,\rho'(c))$,
$B_3(c)=B_2(c)B_1(c)^{-1}$, $B_4(c)=\wP_1B_2(c)\wP_1(\wP_1B_1(c)\wP_1)^{-1}\wP_1$,
\begin{align}
\label{eq:defdB}
& \delta B(c):= \wP_1B_3(c)-B_4(c)
\\=& \notag
\wP_1\left(B_3(c)-B_3(c_0)\right)(I-\wP_1)
+\wP_1\left(B_2(c)-B_2(c_0)\right)(I-\wP_1)B_1(c)^{-1}\wP_1
\\ \notag & +\wP_1B_2(c)\wP_1
\left(B_1(c)^{-1}-(\wP_1B_1(c)\wP_1)^{-1}\right)\wP_1\,.
\end{align}
By \eqref{eq:G} and \eqref{eq:rhoc},
\begin{equation}
\label{eq:modeq-pf1}
\begin{split}
 B_4(c) \begin{pmatrix} G_1 \\ G_2 \end{pmatrix}
=&
\begin{pmatrix}\gamma_t \\ b_t \end{pmatrix}
-\wP_1B_2(c)A(c,D_y)\begin{pmatrix}\gamma \\ \tc \end{pmatrix}
-\wP_1B_2(c)
\begin{pmatrix}a_{15}(c)(\gamma_y)^2 \\ 2a_{23}(c)c_y\gamma_y\end{pmatrix}
-R^{G,2}
\\ =&
\begin{pmatrix}\gamma_t \\ b_t \end{pmatrix}
-A(c_0,D_y)\begin{pmatrix}\gamma \\ b \end{pmatrix}-\mN_1-R^G\,,
\end{split}
\end{equation}
where
\begin{equation}
  \label{eq:defmN1}
\mN_1=\wP_1(n_1,\pd_yn_2)^T\,,\quad
n_1=p_1b^2+a_{15}(c)(\gamma_y)^2\,, \quad n_2=p_2(c)\gamma_y\,,
\end{equation}
with $p_2(c)=O(\tc)$ and $R^G=R^{G,1}+R^{G,2}$, $R^{G,1}=(R^{G,1}_1,R^{G,1}_2)^T$,
\begin{align*}
R^{G,2}=& \delta B(c)\left\{M(c)
\begin{pmatrix} \gamma_{yy} \\ c_{yy}\end{pmatrix}
+\begin{pmatrix}a_{15}(c)(\gamma_y)^2 \\
2\rho'(c)a_{23}(c)c_y\gamma_y\end{pmatrix}
\right\}
\\ &\qquad -B_4(c)
\begin{pmatrix}
2m_{13}(c)c_y\gamma_y+m_{14}(c)(c_y)^2 \\ m_{24}(c)(c_y)^2
\end{pmatrix}\,.
\end{align*}

\subsection{Expressions of remainder terms}
To describe the remainder terms of \eqref{eq:modeql1},
we introduce operators $S_k$ $(k=1,2)$.
For $Q_c=\zeta_{1,c}$, $\zeta_{2,c}$  and so on, let
\begin{align*}
& S_k^1[Q_c](f)(t,y)=\frac{1}{2\pi} \int_{-\eta_0}^{\eta_0}\int_{\R^2}
f(y_1)Q_{c_0}(z)\cdot g_{k1}^*(z,\eta,c_0)e^{i(y-y_1)\eta}\,dy_1dzd\eta\,,
\\ &
S_k^2[Q_c](f)(t,y)=\frac{1}{2\pi}\int_{-\eta_0}^{\eta_0}\int_{\R^2}
f(y_1)\tc(t,y_1)g_{k2}^*(z,\eta,c(t,y_1))e^{i(y-y_1)\eta}\,dy_1dzd\eta\,,
\end{align*}
where $\delta Q_c(z)=\{Q_c(z)-Q_{c_0}(z)\}/(c-c_0)$,
$g_{k1}^*(z,\eta,c)=\eta^{-2}\{g_k^*(z,\eta,c)-g_k^*(z,0,c)\}$ and
\begin{align*}
& g_{k2}^*(z,\eta,c)=\delta Q_c(z)\cdot g_{k1}^*(z,\eta,c_0)+
Q_c(z)\cdot\frac{g_{k1}^*(z,\eta,c)-g_{k1}^*(z,\eta,c_0)}{c-c_0}\,.
\end{align*}
For $j=1$ and $2$, let
\begin{equation*}
\wS_j=\begin{pmatrix}S^j_1[\zeta_{1,c}]  & S^j_1[\zeta_{2,c}]
\\ S^j_2[\zeta_{1,c}]  & S^j_2[\zeta_{2,c}]\end{pmatrix}\,,
\quad
\wS_j'=
\begin{pmatrix}S^j_1[\mL_{1,c}(0)\zeta_{1,c}]  & S^j_1[\mL_{1,c}(0)\zeta_{2,c}]
\\ S^j_2[\mL_{1,c}(0)\zeta_{1,c}]  & S^j_2[\mL_{1,c}(0)\zeta_{2,c}]\end{pmatrix}\,.
\end{equation*}
Then by \eqref{eq:defl1}, \eqref{eq:l12L} and \eqref{eq:l12N},
\begin{equation}
  \label{eq:mod-l1lin}
\begin{split}
&  \frac{1}{\sqrt{2\pi}}\wP_1\mF_\eta^{-1}
\begin{pmatrix}
\la \ell_{11}+\tilde{\ell}_{12L}, g_{11}^*(z,\eta,c(t,y))e^{iy\eta}\ra 
\\ 
\la \ell_{11}+\tilde{\ell}_{12L}, g_{21}^*(z,\eta,c(t,y))e^{iy\eta}\ra
\end{pmatrix}
\\ =& \wS_1
\begin{pmatrix}\gamma_t-b \\ b_t\end{pmatrix}
+\wS_1'
\begin{pmatrix}\gamma_{yy} \\ b_{yy}\end{pmatrix}+R^{\ell_1,1}\,,
\end{split}
\end{equation}
\begin{align} \notag
R^{\ell_1,1}=&
\wS_1
\begin{pmatrix}0 \\ c_t-b_t\end{pmatrix}
+
\wS_2\begin{pmatrix}\gamma_t-\tc \\ c_t\end{pmatrix}
+\wS_1'\begin{pmatrix} 0 \\ c_{yy}-b_{yy}\end{pmatrix}
+\wS_2'\begin{pmatrix}\gamma_{yy} \\ c_{yy}\end{pmatrix}\,,
\\ \label{eq:mod-l1nl}
R^{\ell_1,2}_k:=& \frac{1}{\sqrt{2\pi}}\wP_1\mF_\eta^{-1}
\la \tilde{\ell}_{12N},g_{k1}^*(z,\eta,c(t,y))e^{iy\eta}\ra
\\=& \sum_{j=1,2}\left(2S^j_k[\Phi_{3,c}](c_y\gamma_y)
+S^j_k[\Phi_{4,c}](c_y^2)+S^j_k[\Phi_{5,c}](\gamma_y^2)\right)\,.
\notag
\end{align}
Let 
$$R^{\ell_1,r}_k=\wS_1\left((b-\tc)\mathbf{e_1}\right)
+\frac{1}{\sqrt{2\pi}}\wP_1\mF_\eta^{-1}
\la \pd_y^2\tilde{\ell}_{1r}, g_{k}^*(z,\eta,c(t,y))e^{iy\eta}\ra\,.
$$
We set 
$R^{\ell_1,j}=(R^{\ell_1,j}_1,R^{\ell_1,j}_2)^T$ for $j=1$, $2$, $r$ 
and $R^{\ell_1}=\pd_y^2(R^{\ell_1,1}+R^{\ell_1,2})+R^{\ell_1,r}$.
\par
We rewrite $\ell_2$ as  $\ell_2=c_t\pd_c\Psi_{c_0}-\tc\widetilde{\Psi}_{c_0}
+\tilde{\ell}_{2N}+\pd_y^2\tilde{\ell}_{2r}$,
\begin{align*}
& \widetilde{\Psi}_{c_0}
=\left\{\frac{c_0+1}{2}\pd_x+L_0+N_0'(\Phi_{c_0})\right\}\pd_c\Psi_{c_0}\,,
\\ & 
\tilde{\ell}_{2N}=c_t(\pd_c\Psi_c-\pd_c\Psi_{c_0})
-\left(\frac{c_0+1}{2}\pd_x+L_0\right)(\Psi_c-\tc\pd_c\Psi_{c_0})
-\gamma_t\pd_x\Psi_c+N_{2N}\,,
\\ &
N_{2N}=\tc N_0'(\Phi_{c_0})\pd_c\Psi_{c_0}-N'(\Phi_c)\Psi_c\,,
\quad
\tilde{\ell}_{2r}=L_1\Psi_c\,.
\end{align*}
Let 
$S^j_k[p]=S^j_{k1}[p]-\pd_y^2S^j_{k2}[p]$ for $j=3$, $4$,
$p(z)\in C_0^\infty(\R;\R^2)$ and
\begin{align*}
& g_{k3}^*(z,\eta,c)=\frac{g_k^*(z,\eta,c)-g_k^*(z,\eta,c_0)}{c-c_0}\,,
\quad
g_{k4}^*(z,\eta,c)=\frac{g_{k3}^*(z,\eta,c)-g_{k3}^*(z,0,c)}{\eta^2}\,.
\end{align*}
\begin{align}
  \label{eq:defS3k1}
S^3_{k1}[p](f)=&
 \left(\int_\R p(z_1)\cdot\zeta_{k,c_0}^*(z)\,dz\right)\wP_1f\,,
\\  \label{eq:defS3k2}
S^3_{k2}[p](f)=&\frac{1}{2\pi}\int_{-\eta_0}^{\eta_0}\int_{\R^2}
f(y_1)p(z_1)\cdot g_{k1}^*(z,\eta,c_0)e^{i(y-y_1)\eta}\,dy_1dzd\eta\,,
\\ \label{eq:defS4k1}
S^4_{k1}[p](f)=& \wP_1\left(f(y)\tc(t,y)
\int_\R p(z_1)\cdot g_{k3}^*(z,0,c(t,y))\,dz\right)\,,
\\ \label{eq:defS4k2}
S^4_{k2}[p](f)=&\frac{1}{2\pi}\int_{-\eta_0}^{\eta_0}\int_{\R^2}
f(y_1)\tc(t,y_1)p(z_1)\cdot g_{k4}^*(z,\eta,c(t,y_1))e^{i(y-y_1)\eta}\,dy_1dzd\eta\,,
\\ & 
\wS_3=\sum_{k=1,2}S^3_k[\pd_c\Psi_{c_0}]E_{k2}\,,\quad \wS_{3j}=\sum_{k=1,2}S^3_{kj}[\pd_c\Psi_{c_0}]E_{k2}\,, \notag
\\ & A_1(t,D_y)=\sum_{k=1,2}S^3_k[\widetilde{\Psi}_{c_0}]E_{k2}\,,\quad
A_2(t,D_y)=\sum_{k=1,2}S^3_{k2}[\widetilde{\Psi}_{c_0}]E_{k2}\,.\notag
\end{align}
Then $\wS_3=\wS_{31}-\pd_y^2\wS_{32}$, $A_1(t,D_y)=A_1(t,0)-\pd_y^2A_2(t,D_y)$ and
\begin{equation}
\label{eq:mod-l2}
\frac{1}{\sqrt{2\pi}}\wP_1\mF_\eta^{-1}
\begin{pmatrix}
\la \ell_2,g_1^*(z,\eta,c(t,y))e^{iy\eta}\ra  
\\ \la \ell_2,g_2^*(z,\eta,c(t,y))e^{iy\eta}\ra  
\end{pmatrix}
=\wS_3\begin{pmatrix}  \gamma_t-b \\ b_t\end{pmatrix}
-A_1(t,D_y)
\begin{pmatrix}  \gamma \\ b\end{pmatrix}+R^{\ell_2}\,,
\end{equation}
where 
$R^{\ell_2}=R^{\ell_2,1}-\pd_y^2R^{\ell_2,2}+\pd_yR^{\ell_2,r}$,
$R^{\ell_2,j}=(R^{\ell_2,j}_1,R^{\ell_2,j}_2)^T$ for $j=1$, $2$ and $r$,
\begin{align*}
R^{\ell_2,1}_k=& -\wS_{31}(b_t-c_t)\mathbf{e_2}+A_1(t,0)(b-\tc)\mathbf{e_2}
\\ & 
+S^4_{k1}[\pd_c\Psi_{c_0}](c_t)
-S^4_{k1}[\widetilde{\Psi}_{c_0}](\tc)
+\wP_1\left(\int_\R \tilde{\ell}_{2N}\cdot \zeta_{k,c(t,y)}^*(z)\,dz\right)
\\ & +\frac{1}{\sqrt{2\pi}}\wP_1\mF_\eta^{-1}\left\la\pd_y\tilde{\ell}_{2r},
(\gamma_y\pd_z-c_y\pd_c)g_k^*\left(z,\eta,c(t,y)\right)e^{iy\eta}\right\ra\,,
\\
R^{\ell_2,2}_k=&  -\wS_{32}(b_t-c_t)\mathbf{e_2}+A_2(t,D_y)(b-\tc)\mathbf{e_2}
\\ &
+S^4_{k2}[\pd_c\Psi_{c_0}](c_t)-S^4_{k2}[\widetilde{\Psi}_{c_0}](\tc)
+\frac{1}{\sqrt{2\pi}}\wP_1\mF_\eta^{-1}
\la\tilde{\ell}_{2N}, g_{k1}^*(z,\eta,c(t,y))e^{iy\eta}\ra\,,
\\
R^{\ell_2,r}_k=& \frac{1}{\sqrt{2\pi}}\wP_1\mF_\eta^{-1}
\la \pd_y\tilde{\ell}_{2r}, g_k^*(z,\eta,c(t,y))e^{iy\eta}\ra\,.
\end{align*}
We set
$R^j_k(t,y)=\frac{1}{2\pi}\int_{-\eta_0}^{\eta_0}II^j_k(t,\eta)e^{iy\eta}\,d\eta$ 
and $R^j=(R^j_1,R^j_2)^T$ for $0\le j\le 5$.
\subsection{Modulation equations}
Let $p_\a(x)=1+\tanh\a x$,\newline
$\|U\|_{\bW(t)}=\left\|p_\a(z_1)^{1/2}\mathcal{E}(U)^{1/2}\right\|_{L^2(\R^2)}$ and
$\bM_\infty(T)=\sup_{0\le t\le T, y\in\R}|\gamma(t,y)|$,
\begin{align*}
& \bM_{c,\gamma}(T)=
\sum_{k=0,\,1}\sup_{t\in[0,T]}\bigl\{\la t\ra^{(2k+1)1/4}
(\|\pd_y^k\tc(t)\|_Y+\|\pd_y^{k+1}\gamma(t)\|_Y)
\\ & \phantom{\bM_{c,x}(T)=\sum}
+\la t\ra(\|\pd_y^2\tc(t)\|_Y+\|\pd_y^3\gamma(t)\|_Y)\bigr\}\,,
\\
& \bM_1(T)=\sup_{t\in[0,T]}\bigl\{\|U_1(t)\|_{\bE}+
\la t\ra^2\|U_1(t)\|_{\bW(t)}+\la t\ra\|(1+z_+)U_1(t)\|_{\bW(t)}\bigr\}
\\ & \qquad\qquad +\|\mathcal{E}(U_1)^{1/2}\|_{L^2(0,T;\bW(t))}\,,
\\ & \bM_2(T)=\sup_{0\le t\le T}\la t\ra^{3/4}\|U_2(t)\|_{\bX_1}\,,\quad
 \bM_U(T)=\sup_{t\in[0,T]}\|U(t)\|_{\bE}\,.
\end{align*}
\par

Suppose that $\|U_0\|_{\bE}$, $\bM_{c,\gamma}(T)$, $\bM_1(T)$,
$\bM_2(T)$ and $\bM_\infty(T)$ and $\bM_2(T)$ are sufficiently small.
Then by Proposition~\ref{prop:continuation},
the decomposition \eqref{eq:decomp}
satisfying \eqref{eq:defU2} and \eqref{eq:orth} persists for $t\in[0,T]$
and it follows from \eqref{eq:modeq}, \eqref{eq:G}, \eqref{eq:modeq-pf1},
\eqref{eq:mod-l1lin}, \eqref{eq:mod-l1nl} and \eqref{eq:mod-l2}
that for $t\in[0,T]$, 
\begin{align} \notag
B_5 \begin{pmatrix}\gamma_t-b \\ b_t \end{pmatrix}
=& \left\{A(c_0,D_y)-E_{12}+B_4(c_0)A_1(t,D_y)+\pd_y^4B_4(c_0)\wS_1'\right\}
  \begin{pmatrix} \gamma \\ b  \end{pmatrix}
\\ & +\mN_1
-B_4(c)\sum_{j\in\{\ell_1,\ell_2,0,1,\cdots,5\}}R^j+R^G+R^6+R^7\,,
\label{eq:modeq-nl}
\end{align}
where $B_5=I-B_4(c_0)(\pd_y^2\wS_1-\wS_3)$,
\begin{gather*}
R^6=(B_4(c)-B_4(c_0))\pd_y^2r_6\,,
\enskip R^7=(B_4(c)-B_4(c_0))r_7\,,\\
r_6=\wS_1\left((\gamma_t-b)\mathbf{e_1}+b_t\mathbf{e_2}\right)
+\wS_1'\left(\gamma_{yy}\mathbf{e_1}+b_{yy}\mathbf{e_2}\right)\,,
\enskip
r_7=A_1(t,D_y)(b\mathbf{e_2})-\wS_3(b_t\mathbf{e_2})\,.
\end{gather*}
\par
Next, we will show that $B_5^{-1}$ can be decomposed
into a sum of a time-dependent matrix multiplied by $\wP_1$ 
and an operator which belongs to $\pd_y^2(B(Y)\cap B(Y_1))$.
Let
$$\obu{B}_5=(I+B_4(c_0)\wS_{31})^{-1}\,,\quad \oc{B}_5=\obu{B}_5B_4(c_0)(\wS_1+\wS_{32})B_5^{-1}\,.$$
By the definition of $\wS_1$ and \eqref{eq:defS3k1},
$\left[\pd_y,\obu{B}_5\right]=\left[\pd_y,\oc{B}_5\right]=O$ and 
$B_5^{-1}=\obu{B}_5-\pd_y^2\oc{B}_5$.
\begin{claim}
\label{cl:obuB5}
There exist positive constants $\eta_1$, $h_0$, $\delta$ and $C$ such that
if $\eta_0\in(0,\eta_1]$, $h\ge h_0$ and $\bM_{c,\gamma}(T)\le \delta$,
then for $t\in[0,T]$,
\begin{gather*}
\bigl\|\obu{B}_5\bigr\|_{B(Y)\cap B(Y_1)}+
\bigl\|\chi(D_y)\obu{B}_5\bigr\|_{B(L^1)}+
\bigl\|\oc{B}_5\bigr\|_{B(Y)\cap B(Y_1)} \le C\,,
\\
\bigl\|\obu{B}_5-I\bigr\|_{B(Y_1)}
+\bigl\|\chi(D_y)(\obu{B}_5-I)\bigr\|_{B(L^1)}
\le C e^{-\a((c_0-1)t/2+h)}\,,
\\
\bigl\|\oc{B}_5-B_4(c_0)\wS_1(I-B_4(c_0)\pd_y^2\wS_1)^{-1}\bigr\|_{B(Y)\cap B(Y_1)}
\le C e^{-\a((c_0-1)t/2+h)}\,.
\end{gather*}
\end{claim}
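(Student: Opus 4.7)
The plan is to reduce everything to a single exponential decay estimate for the scalar couplings that define $\wS_{31}$ and $\wS_{32}$, then invert $I+B_4(c_0)\wS_{31}$ by a Neumann series and compose bounded pieces to handle $\oc{B}_5$.

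\emph{Step 1: the core decay estimate.} For $k=1,2$, set
$\a_k(t,y):=\int_\R \pd_c\Psi_{c_0}(z_1)\cdot \zeta_{k,c_0}^*(z)\,dz$
with $z_1=z+\frac{c_0-1}{2}t+h$. Because $\pd_c\Psi_{c_0}(z_1)$ is bounded and vanishes for $z_1\ge 1$, the change of variables $w=z_1$ gives
\[
|\a_k(t,y)|\lesssim
\int_{-\infty}^{1-\frac{c_0-1}{2}t-h}\!\!|\zeta_{k,c_0}^*(z)|\,dz\,.
\]
Since $\zeta_{k,c_0}^*\in H^1_{-\a}(\R)$ and $\a<\a_{c_0}$, we have $|\zeta_{k,c_0}^*(z)|\lesssim e^{\a z}$ for $z\le 0$, hence $|\a_k(t,y)|\lesssim e^{-\a((c_0-1)t/2+h)}$. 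An identical argument, using $g_{k1}^*(\cdot,\eta,c_0)\in H^1_{-\a}(\R)$ uniformly for $\eta\in[-\eta_0,\eta_0]$ from \eqref{eq:gk*-approx}, yields the same exponential bound for the kernel of $\wS_{32}$ in $\eta$. From these pointwise-in-$\eta$ estimates and the fact that the kernels are supported in $[-\eta_0,\eta_0]$, one concludes
\[
\|\wS_{31}\|_{B(Y_1)}+\|\chi(D_y)\wS_{31}\|_{B(L^1)}+\|\wS_{32}\|_{B(Y)\cap B(Y_1)}
\lesssim e^{-\a((c_0-1)t/2+h)}\,,
\]
using that $\chi(D_y)\wP_1=\chi(D_y)$ is bounded on $L^1$.

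\emph{Step 2: inversion via Neumann series.} Note $B_4(c_0)$ is bounded on $Y$, $Y_1$, and on $\chi(D_y)L^1$ (it is a Fourier multiplier matrix of $C^\infty$ symbols on $[-\eta_0,\eta_0]$). Choosing $h_0$ so large that $\|B_4(c_0)\wS_{31}\|_{B(Y_1)}\le 1/2$ for $h\ge h_0$, $t\ge0$, we may set
\[
\obu{B}_5=(I+B_4(c_0)\wS_{31})^{-1}=\sum_{n\ge 0}(-B_4(c_0)\wS_{31})^n\,.
\]
On $Y$, $\wS_{31}$ is bounded (since $\wP_1$ acts as the identity on $Y$), giving $\|\obu{B}_5\|_{B(Y)}\le C$; the $B(Y_1)$ and $\chi(D_y)$-composed $B(L^1)$ bounds follow identically. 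Writing $\obu{B}_5-I=-B_4(c_0)\wS_{31}\,\obu{B}_5$ and using Step~1 yields
\[
\|\obu{B}_5-I\|_{B(Y_1)}+\|\chi(D_y)(\obu{B}_5-I)\|_{B(L^1)}
\lesssim e^{-\a((c_0-1)t/2+h)}\,.
\]

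\emph{Step 3: bounds for $\oc{B}_5$.} Rewrite $B_5=(I+B_4(c_0)\wS_{31})-B_4(c_0)\pd_y^2(\wS_1+\wS_{32})$. On $Y$ and $Y_1$, $\pd_y^2\wP_1$ has norm $\le \eta_0^2$, so the second piece is of size $O(\eta_0^2)+O(e^{-\a((c_0-1)t/2+h)})$. Hence for $\eta_0$ small enough, a second Neumann series yields $\|B_5^{-1}\|_{B(Y)\cap B(Y_1)}\le C$. Since $\wS_1$, $\wS_{32}$, $B_4(c_0)$, $\obu{B}_5$ are all bounded on $Y$ and $Y_1$, composition gives $\|\oc{B}_5\|_{B(Y)\cap B(Y_1)}\le C$.

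\emph{Step 4: approximation up to exponential error.} Write
\[
\oc{B}_5-B_4(c_0)\wS_1(I-B_4(c_0)\pd_y^2\wS_1)^{-1}
=(\obu{B}_5-I)B_4(c_0)(\wS_1+\wS_{32})B_5^{-1}
+B_4(c_0)\wS_{32}B_5^{-1}
\]
\[
+B_4(c_0)\wS_1\bigl(B_5^{-1}-(I-B_4(c_0)\pd_y^2\wS_1)^{-1}\bigr)\,.
\]
The first two terms are $O(e^{-\a((c_0-1)t/2+h)})$ in $B(Y)\cap B(Y_1)$ by Steps~1--3. For the last, the second resolvent identity and the identity
$B_5-(I-B_4(c_0)\pd_y^2\wS_1)=B_4(c_0)\wS_{31}+B_4(c_0)\pd_y^2\wS_{32}$
reduce the estimate to Step~1 again.

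\emph{Main obstacle.} The delicate point is not the Neumann series itself but Step~1: propagating the precise exponential rate $\a$ (rather than some smaller rate) through all three topologies $Y$, $Y_1$, and $\chi(D_y)L^1$ simultaneously. This forces one to use $\zeta_{k,c_0}^*,g_{k1}^*(\cdot,\eta,c_0)\in H^1_{-\a}$ pointwise in $\eta$, together with the identity $\chi(D_y)\wP_1=\chi(D_y)$, to convert operator bounds on $Y_1$ into $L^1$-bounds after composition with $\chi(D_y)$. Once this is done, the remaining arguments are routine Neumann-series and composition estimates.
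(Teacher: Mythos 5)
Your proof is correct and takes essentially the same route as the paper, which simply invokes Claim~\ref{cl:S3} for the exponential decay of $\wS_{31}$ and $\wS_{32}$ (your Step 1) and then refers to \cite[Claim~5.2]{Miz19} for the Neumann-series and composition arguments (your Steps 2--4). Two harmless slips: the display in Step 1 omits the bound $\|\wS_{31}\|_{B(Y)}\lesssim e^{-\a((c_0-1)t/2+h)}$, which you implicitly use for the first term in Step 4 and which follows from the same Cauchy--Schwarz computation (and is part of Claim~\ref{cl:S3}); and the identity in Step 4 should carry a minus sign, $B_5-(I-B_4(c_0)\pd_y^2\wS_1)=B_4(c_0)\wS_{31}-B_4(c_0)\pd_y^2\wS_{32}$, which does not affect the estimate since both terms are exponentially small in operator norm.
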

Using Claim~\ref{cl:S3} in Appendix~\ref{ap:s},
we can prove Claim~\ref{cl:obuB5} in the same way as \cite[Claim~5.2]{Miz19}.
\par
Finally, we consider $B_5^{-1}B_4(c)R^3$.  In order to prove
$\|\tc(t,\cdot)\|_Y+\|\gamma_y(t,\cdot)\|_Y\lesssim \la t\ra^{-1/4}$
by using Lemma~\ref{lem:decay-BB}, a crude estimate \eqref{eq:R3-est1}
is insufficient.  To distinguish a problematic part of
$B_5^{-1}B_4(c)R^3$ in Section~\ref{sec:apriori},  we decompose
$B_5^{-1}B_4(c)R^3$ as follows.
Let 
\begin{gather*}
\mN_{U_1}=-E_2\obu{B}_5B_4(c_0)R^3\,,\quad  
III_1=-E_1B_5^{-1}B_4(c)R^3\,,\\
III_2=E_2\obu{B}_5(B_4(c)-B_4(c_0))R^3\,,\quad
III_3=E_2\oc{B}_5B_4(c)R^3\,.
\end{gather*}
Then $B_5^{-1}B_4(c)R^3=-\mN_{U_1}+III_1+III_2-\pd_y^2III_3$.
By Claims~\ref{cl:obuB5}, \ref{cl:B4} and \eqref{eq:R3-est1},
\begin{gather*}
\|III_1\|_Y+\|III_3\|_Y\lesssim \bM_1(T)\la t\ra^{-2} \,,
\\ \|III_2\|_{Y_1}+\|\chi(D_y)III_2\|_{L^1}\lesssim
\bM_{c,\gamma}(T)\bM_1(T)\la t\ra^{-9/4}\,,
\end{gather*}
and $III_i$ $(i=1,\,2,\,3)$ are good terms.
Using a change of variable, we will eliminate the bad part from $\mN_{U_1}$
in Section~\ref{sec:apriori} following the lines of \cite{Miz18,Miz19}.
\par
It follows from Claims~\ref{cl:Rg}--\ref{cl:R7} and the above that
that nonlinear terms of \eqref{eq:modeq-nl} satisfy
$$ B_5^{-1}(\mN_1+R^G+R^6+R^7)-B_5^{-1}B_4\sum_{j\in\{\ell_1,\ell_2,0,1,\cdots,5\}}R^j
= \mN_1+\mN_{U_1}+\mN_2+\mN_3+\pd_y\mN_4\,,$$
\begin{align}
 \label{eq:mN2}
& \mN_2=E_2\mN_2\,,\quad
\|\mN_2\|_{Y_1}+\|\chi(D_y)E_2\mN_2\|_{L^1(\R)}
\\ \notag \lesssim  &
\left(\bM_{c,\gamma}(T)+\bM_1(T)+\bM_2(T)\right)^2\la t\ra^{-5/4}\,,
\end{align}
\begin{align}
& \label{eq:mN3}
\mN_3=E_1\mN_3\,,\quad \|\mN_3\|_Y\lesssim \left(\bM_1(T)+\bM_{c,\gamma}(T)^2
+\bM_2(T)^2\right)\la t\ra^{-5/4}\,,
\\ & \label{eq:mN4}
\|\mN_4\|_Y \lesssim (\bM_{c,\gamma}(T)+\bM_1(T)+\bM_2(T)^2)\la t\ra^{-1}\,.
\end{align}
\begin{proposition}
  \label{prop:modulation}
There exists a positive number $\delta_3$ such that if
$\bM_{c,\gamma}(T)+\bM_1(T)\linebreak+\bM_2(T)+\eta_0+e^{-\a h}<\delta_3$,
then
\begin{align}
  \label{eq:modeq2}
& \begin{pmatrix}\gamma_t \\ b_t\end{pmatrix}
=\mathcal{A}(t,D_y)\begin{pmatrix} \gamma \\ b \end{pmatrix}
+\mN_1+\mN_2+\mN_3+\pd_y\mN_4+\mN_{U_1}\,,
\\ & \gamma(0,\cdot)=0\,,\quad \tc(0,\cdot)=0\,,
\end{align}
where $\mathcal{A}(t,D_y)
=E_{12}+B_5^{-1}\left\{A(c_0,D_y)-E_{12}+B_4(c_0)A_1(t,D_y)
+\pd_y^4B_4(c_0)\wS_1'\right\}$\linebreak
and $\mN_j$ $(2\le j\le 4)$ satisfy \eqref{eq:mN2}--\eqref{eq:mN4}.
\end{proposition}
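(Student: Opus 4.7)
The plan is to collect the preparatory computations carried out above and show that once they are combined the identity \eqref{eq:modeq-nl} can be inverted into the claimed form. First, smallness of $\bM_{c,\gamma}(T)+\bM_1(T)+\bM_2(T)$, together with Proposition~\ref{prop:continuation}, ensures that the decomposition \eqref{eq:decomp}--\eqref{eq:orth} persists on $[0,T]$, so differentiating the orthogonality condition \eqref{eq:orth} in $t$ is legitimate and yields the basic identity \eqref{eq:modeq} together with the $C^1$-regularity of $(\tc,\gamma)$. Into \eqref{eq:modeq} we substitute the splittings $\ell=\ell_1+\ell_2$, the $L^2$-reduction of the dominant piece given by \eqref{eq:G}, and the refined expansions \eqref{eq:mod-l1lin}, \eqref{eq:mod-l1nl} and \eqref{eq:mod-l2}; this is the algebraic content that produces the matrix $B_5=I-B_4(c_0)(\pd_y^2\wS_1-\wS_3)$ on the left and assembles every other contribution into $R^G$, $R^{\ell_1}$, $R^{\ell_2}$, $R^0,\ldots,R^5$, $R^6$ and $R^7$ on the right.

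Next, I would invert $B_5$. Claim~\ref{cl:obuB5} supplies the decomposition $B_5^{-1}=\obu{B}_5-\pd_y^2\oc{B}_5$ together with uniform $B(Y)\cap B(Y_1)$ bounds; multiplying \eqref{eq:modeq-nl} by $B_5^{-1}$ isolates on the right hand side the linear evolution operator
$$\mathcal{A}(t,D_y)=E_{12}+B_5^{-1}\bigl\{A(c_0,D_y)-E_{12}+B_4(c_0)A_1(t,D_y)+\pd_y^4B_4(c_0)\wS_1'\bigr\}$$
acting on $(\gamma,b)^T$, which is exactly the dissipative wave operator expected from \eqref{eq:approx-modeq} together with the $\Psi$-correction and the higher-order dispersive tails produced by $\wS_1'$ and $A_1$. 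At this stage the right-hand side is a sum of the principal nonlinearity $\mN_1$ from \eqref{eq:defmN1}, the contribution $\mN_{U_1}$ extracted from $R^3$, and remainders that we reorganise as $\mN_2+\mN_3+\pd_y\mN_4$.

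The key bookkeeping step is the treatment of $R^3$. The $U_1$-interaction term is what eventually supplies the forcing coming from the freely propagating small solution, and only its $E_2$-component through $\obu{B}_5B_4(c_0)$ is slowly decaying; accordingly we define $\mN_{U_1}=-E_2\obu{B}_5B_4(c_0)R^3$ and push all other pieces of $B_5^{-1}B_4(c)R^3$ (the $III_1,III_2,III_3$ written above) into $\mN_2$, $\mN_3$ and $\pd_y\mN_4$, using Claims~\ref{cl:obuB5} and \ref{cl:B4} to bound them in $Y$, $Y_1$ and $\chi(D_y)L^1$ at the rates required by \eqref{eq:mN2}--\eqref{eq:mN4}. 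All remaining remainders $R^G,R^{\ell_1},R^{\ell_2},R^0,R^1,R^2,R^4,R^5,R^6,R^7$ are then dispatched by Claims~\ref{cl:Rg}--\ref{cl:R7}: each one is shown to sit in the appropriate class, with its $E_1$-component controlled in $Y$ and its $E_2$-component controlled in $Y_1\cap \chi(D_y)L^1$, so that, after being put into divergence form where needed, they fit into $\mN_2$, $\mN_3$ or $\pd_y\mN_4$ with the declared bounds. The initial data condition $\gamma(0,\cdot)=\tc(0,\cdot)=0$ follows from \eqref{eq:mod-init}.

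The main obstacle is that before the change of unknown from $\tc$ to $b=\wP_1\{\rho(c)-\rho(c_0)\}$ the second component of \eqref{eq:G} contains the critical nonlinearity $2m_{23}(c)c_y\gamma_y$, whose $L^1(\R_y)$-norm only decays like $t^{-1}$ and is therefore too slow to close the estimates via Lemma~\ref{lem:decay-BB}. The definitions of $\ta_{21}(c)$ and $\rho(c)$ are engineered precisely so that this term combines with $\ta_{21}(c)\gamma_{yy}$ into the divergence form $(p_2(c)\gamma_y)_y$ that appears inside $\mN_1$, while $p_1b^2+a_{15}(c)(\gamma_y)^2$ collects the remaining quadratic contributions. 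Verifying that this algebraic rearrangement really leaves no residual critical terms, and that the secondary remainders $R^{G,1}$ and $R^{G,2}$ (together with the pieces of $R^{\ell_1,2}$ coming from $\tilde\ell_{12N}$) absorb only into terms of size $O(\la t\ra^{-5/4})$ in $Y_1$ or $O(\la t\ra^{-1})$ in $Y$, is the technical core of the proof.
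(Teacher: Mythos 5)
Your proposal is correct and follows essentially the same route as the paper: the proposition is exactly the summary of the Section~\ref{sec:modulation} derivation, namely differentiating \eqref{eq:orth}, assembling \eqref{eq:modeq-nl} with $B_5$ on the left via \eqref{eq:G}, \eqref{eq:modeq-pf1}, \eqref{eq:mod-l1lin}--\eqref{eq:mod-l2}, inverting $B_5$ by Claim~\ref{cl:obuB5}, isolating $\mN_{U_1}=-E_2\obu{B}_5B_4(c_0)R^3$, and absorbing all remaining terms into $\mN_2$, $\mN_3$, $\pd_y\mN_4$ via Claims~\ref{cl:Rg}--\ref{cl:R7}, with the $\rho(c)$ change of variable disposing of the critical term $2m_{23}(c)c_y\gamma_y$. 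No gaps beyond the level of detail already deferred to the appendix claims.
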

\bigskip

\section{\`A priori estimates for modulation parameters}
\label{sec:apriori}
In this section, we will estimate  $\bM_{c,\gamma}(T)$ and $\bM_\infty(T)$.
\begin{lemma}
  \label{lem:Mcx-bound}
There exist positive constants $\delta_4$ and $C$ such that if
$\bM_{c,\gamma}(T)+\bM_1(T)+\bM_2(T)+\eta_0+e^{-\a h}\le \delta_4$, then
\begin{equation*}
\bM_{c,\gamma}(T)\le C(\bM_1(T)+\bM_2(T)^2)\,,\quad 
\bM_\infty(T) \le C(\bM_{c,\gamma}(T)+\bM_1(T)+\bM_2(T)^2)\,.
\end{equation*}
\end{lemma}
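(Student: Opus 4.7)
The plan is to run a bootstrap argument on the modulation equation of Proposition~\ref{prop:modulation}, treating it as a Duhamel problem
\begin{equation*}
\begin{pmatrix}\gamma \\ b\end{pmatrix}(t)=\int_0^t U(t,s)\bigl[\mN_1+\mN_2+\mN_3+\pd_y\mN_4+\mN_{U_1}\bigr](s)\,ds,
\end{equation*}
and applying the propagator estimates of Lemma~\ref{lem:decay-BB} to each piece. Under a bootstrap hypothesis that $\bM_{c,\gamma}(T)$ is sufficiently small, I would show that the right-hand side is bounded by $C(\bM_1(T)+\bM_2(T)^2)+\epsilon\,\bM_{c,\gamma}(T)$ with small $\epsilon$, so that the last term can be absorbed.

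For the genuinely quadratic pieces, the explicit shape $\mN_1=\wP_1(n_1,\pd_y n_2)^T$ with $n_1=O(b^2+\gamma_y^2)$ and $n_2=O(\tc\,\gamma_y)$ reduces $\|n_1\|_{Y_1}$ and $\|n_2\|_Y$ to products of $Y$-norms via \eqref{eq:Y1-L1}; combined with the time weights built into $\bM_{c,\gamma}(T)$ this yields $\la s\ra^{-1/2}$-type decay. Feeding these through \eqref{eq:BB1'}--\eqref{eq:BB2'} reproduces the exact weights appearing in the definition of $\bM_{c,\gamma}(T)$ with a gain of smallness, so the total contribution is $O(\bM_{c,\gamma}(T)^2)$. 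The bounds \eqref{eq:mN2}--\eqref{eq:mN4} for $\mN_2,\mN_3,\mN_4$ are handled analogously; they contribute by either $\bM_{c,\gamma}(T)^2$ or by the combination $\bM_1(T)+\bM_2(T)^2$ already sitting in the statement.

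The main obstacle is $\mN_{U_1}=-E_2\obu{B}_5 B_4(c_0)R^3$, which is only linear in $U_1$ and whose naive bound in $Y$ is not integrable enough after Duhamel. Following \cite{Miz18,Miz19}, I would perform a near-identity change of variables $(\gamma,b)\mapsto(\gamma-\gamma_*,b-b_*)$ in which $(\gamma_*,b_*)$ is an explicit linear functional of $U_1$ chosen to cancel exactly the slowly-decaying low-frequency piece of $\mN_{U_1}$. The decomposition $B_5^{-1}B_4(c)R^3=-\mN_{U_1}+III_1+III_2-\pd_y^2III_3$ displayed in Section~\ref{sec:modulation}, together with Claim~\ref{cl:obuB5}, isolates precisely the term that must be renormalized; the good remainders $III_1,III_2,III_3$ have already been estimated there. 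After the substitution the new modulation equation has only divergence-type or $\mathbf{e_2}$-type sources with sufficient $\bM_1(T)$-smallness for the Duhamel integral to close, and $(\gamma_*,b_*)$ itself is directly bounded by $\bM_1(T)$.

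For the $L^\infty$ bound $\bM_\infty(T)$, I would exploit \eqref{eq:BB3}, which provides the time-uniform d'Alembert-type estimate $\|U(t,s)f_2\mathbf{e_2}\|_{L^\infty}\lesssim\|f_2\|_{Y_1}+\|\chi(D_y)f_2\|_{L^1}$, together with \eqref{eq:BB4} for pieces carrying a $\diag(1,\pd_y)$-structure, which gives the $\la t-s\ra^{-1/4}$ decay needed to integrate the $\la s\ra^{-1/2}$ or $\la s\ra^{-1}$ smallness of the nonlinearities in $s$. Decomposing each source into its $\mathbf{e_2}$-component, its $\pd_y$-differentiated component, and its renormalized $\mN_{U_1}$-contribution handled above, one obtains $\bM_\infty(T)\lesssim\bM_{c,\gamma}(T)+\bM_1(T)+\bM_2(T)^2$. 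The only delicate point is verifying the $\chi(D_y)f_2\in L^1$ hypothesis for each low-frequency piece, which follows from the $\wP_1$-projection already present in the nonlinearities.
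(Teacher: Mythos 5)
Your proposal is correct and follows essentially the same route as the paper: Duhamel's formula for \eqref{eq:modeq2} combined with Lemma~\ref{lem:decay-BB}, the product estimate \eqref{eq:Y1-L1} for $\mN_1$, the bounds \eqref{eq:mN2}--\eqref{eq:mN4} for $\mN_2,\mN_3,\mN_4$, and \eqref{eq:BB3}--\eqref{eq:BB4} for $\bM_\infty(T)$. Your "near-identity change of variables cancelling the slowly-decaying part of $\mN_{U_1}$" is precisely the paper's substitution $\tb=b-\tk$ with $\tk=(\beta_1(c_0)+S^3_{21})^{-1}\wP_1\la\wU_1,\zeta_{2,c_0}^*\ra$, which extracts the bad part of $\mN_{U_1}$ as $\pd_t\tk$ via Claim~\ref{cl:R3} (the only cosmetic difference being that the paper leaves $\gamma$ unshifted and must then estimate the resulting nonzero initial datum $\bb(0)=-\wP_1\tk(0)\mathbf{e_2}$).
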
 
\begin{proof}
Let $\mathcal{A}_*(D_y)=A(c_0,D_y)$, $\mathcal{A}(t,D_y)=\mathcal{A}_0(D_y)+\mathcal{A}_1(t,D_y)$ and
$$\mathcal{A}_0(D_y)=E_{12}+\left(I-B_4(c_0)\pd_y^2\wS_1\right)^{-1}\left(A(c_0,D_y)-E_{12} +\pd_y^4B_4(c_0)\wS_1'\right)\,.$$
Then $\mathcal{A}(t,D_y)$ satisfies \eqref{eq:H} and
we can apply Lemma~\ref{lem:decay-BB} to the solution operator $U(t,s)$
of \eqref{eq:cx-linear}.
\par
The $Y_1$-norm of $E_2\mN_{U_1}$ does not necessarily decay
as $t\to\infty$. We will express a bad part of $\mN_{U_1}$
as a time derivative of $$k(t,y):=\la \wU_1(t,\cdot,y),\zeta_{2,c_0}^*\ra\,,$$
and eliminate that part from \eqref{eq:modeq2} by the change of variable
\begin{gather*}
\bb(t,y)=(\gamma(t,y),\tb(t,y))^T\,,\quad  \tb(t,y)=b(t,y)-\tk(t,y)\,,
\end{gather*}
where $\quad\tk(t,y)=(\beta_1(c_0)+S^3_{21})^{-1}\wP_1k(t,y)$.
Since $E_2\obu{B}_5=(1+\beta_1(c_0)^{-1}S^3_{21})^{-1}E_2$ and
$E_2B_4(c_0)=\beta_1(c_0)^{-1}E_2\wP_1$, it follows from Claim~\ref{cl:R3} that
\begin{align*}
\mN_{U_1}=& -(\beta_1(c_0)+S^3_{21})^{-1}R^3_2\mathbf{e_2}
\\=& \left\{\pd_t\tk+(\beta_1(c_0)+S^3_{21})^{-1}\left(
(\pd_tS^3_{21})\tk-R^3_{21}+\pd_y^2R^3_{22}\right)\right\}\mathbf{e_2}\,,
\end{align*}
and
\begin{align}
  \label{eq:modeq3}
& \bb_t=\mathcal{A}(t,D_y)\bb+\mN_1+\mN_2+\mN_3+\pd_y\mN_4
+\obu{\mN}_{U_1}+\pd_y^2\oc{\mN}_{U_1}\,, 
\\ \label{eq:modeq3-init}
& \bb(0,\cdot)=-\wP_1\tk(0,\cdot)\mathbf{e_2}\,,
\end{align}
where $\mathcal{A}_2(t,\eta)=\eta^{-2}\{\mathcal{A}_1(t,\eta)
-\mathcal{A}_1(t,0)\}$ and
\begin{align*}
\obu{\mN}_{U_1}=& (\beta_1(c_0)+S^3_{21})^{-1}(\pd_tS^3_{21}\tk-R^3_{21})\mathbf{e_2}
-\tk\mathbf{e_1}-\mathcal{A}_1(t,0)\tk\mathbf{e_2}\,,
\\
\oc{\mN}_{U_1}=& (\beta_1(c_0)+S^3_{21})^{-1}R^3_{22}\mathbf{e_2}
-\left\{\pd_y^{-2}\left(\mathcal{A}_0(D_y)-E_{12}\right)
-\mathcal{A}_2(t,D_y)\right\}\tk\mathbf{e_2}\,.
\end{align*}
\par
Now we will estimate the right hand side of
$$
\bb(t)=U(t,0)\bb(0)+\int_0^tU(t,s)\left(\mN_1+\mN_2+\mN_3+\pd_y\mN_4
+\obu{\mN}_{U_1}+\pd_y^2\oc{\mN}_{U_1}\right)\,ds$$
by using Lemma~\ref{lem:decay-BB}.
Since $\|\tk(0)\|_{Y_1}+\|\chi(D_y)\tk(0)\|_{L^1}
\lesssim \|\la y\ra U_0\|_{L^2(\R^2)}$, it follows 
that for $k\ge0$ and $t\ge0$,
\begin{equation*}
\la t\ra^{(2k+1)/4} \left\|\pd_y^kU(t,0)\bb(0)\right\|_{\dot{H}^1\times L^2}
+\left\|U(t,0)\bb(0)\right\|_{L^\infty}
\lesssim \|\la y\ra U_0\|_{L^2}\,.
\end{equation*}
By \eqref{eq:defmN1}, we have
$\|\pd_y^kn_1(s)\|_Y+\|\pd_y^kn_2(s)\|_Y\lesssim
\bM_{c,\gamma}(T)^2 \la s\ra^{-(2k+3)/4}$ and for $t\in[0,T]$,
\begin{align*}
\left\|\int_0^tU(t,s)\mN_1\,ds\right\|_{\dot{H}^1\times L^2}
\lesssim & \sum_{j=1,2}\int_0^t \la t-s\ra^{-1/2}\|n_j(s)\|_Y\,ds
\lesssim  \bM_{c,\gamma}(T)^2\la t\ra^{-1/4}\,,
\\
 \left\|\int_0^t\pd_yU(t,s)\mN_1\,ds\right\|_{\dot{H}^1\times L^2}
\lesssim & \sum_{j=1,2\,,k=0,1}\int_{kt/2}^{(k+1)t/2} \la t-s\ra^{k/2-1}
\|\pd_y^kn_j(s)\|_Y\,ds
\\ \lesssim  &  \bM_{c,\gamma}(T)^2\la t\ra^{-3/4}\,,
\\
\left\|\int_0^t\pd_y^2U(t,s)\mN_1\,ds\right\|_{\dot{H}^1\times L^2}
\lesssim & \sum_{j=1,2}\int_0^t \la t-s\ra^{-1}\|\pd_yn_j(s)\|_Y\,ds
\lesssim  \bM_{c,\gamma}(T)^2\la t\ra^{-1}\,,
\\
\left\|\int_0^tU(t,s)\mN_1\,ds\right\|_{L^\infty} 
\lesssim & \sum_{j=1,2}\int_0^t \la t-s\ra^{-1/4}\|n_j(s)\|_Y\,ds
\lesssim  \bM_{c,\gamma}(T)^2\,.
\end{align*}
\par
It follows from \eqref{eq:mN2}--\eqref{eq:mN4} that
for $k=0$, $1$, $2$ and $t\in[0,T]$,
\begin{align*}
&  \la t\ra^{(2k+1)/4}
\left\|\pd_y^k\int_0^t U(t,s)\mN_2(s)\,ds\right\|_{\dot{H}^1\times L^2}
+\left\|\int_0^t U(t,s)\mN_2(s)\,ds\right\|_{L^\infty}
\\ \lesssim &
(\bM_{c,\gamma}(T)+\bM_1(T))(\bM_{c,\gamma}(T)+\bM_1(T)+\bM_2(T))\,,
\end{align*}
\begin{multline*}
\left\|\pd_y^k\int_0^t U(t,s)\mN_3(s)\,ds\right\|_{\dot{H}^1\times L^2}
\lesssim 
\int_0^t \la t-s\ra^{-(k+1)/2}\|\mN_3(s)\|_Y\,ds
\\ \lesssim 
(\bM_1(T)+\bM_{c,\gamma}(T)^2+\bM_{c,\gamma}(T)\bM_2(T))\la t\ra^{-(k+1)/2\wedge 5/4}\,,
\end{multline*}
\begin{align*}
\left\|\pd_y^k\int_0^t U(t,s)\pd_y\mN_4(s)\,ds\right\|_{\dot{H}^1\times L^2}
\lesssim &
\eta_0^{1/4}\int_0^t \la t-s\ra^{-\frac12(k+\frac34)}\|\mN_4(s)\|_Y\,ds
\\ \lesssim &
\eta_0^{1/4}(\bM_{c,\gamma}(T)+\eta_0\bM_1(T))\la t\ra^{-(2k+1)/4\wedge 1}\,,
\end{align*}
\begin{equation}
  \label{eq:Linfy=0-1}
\begin{split}
 \left\|\int_0^t U(t,s)\left(\mN_3(s)+\pd_y\mN_4(s)\right)\,ds
\right\|_{L^\infty}
\lesssim & \sum_{k=3,4}
\int_0^t \la t-s\ra^{-1/4}\|\mN_k(s)\|_Y\,ds
\\ \lesssim & (\bM_1(T)+\bM_{c,\gamma}(T))\la t\ra^{-1/4} \log(2+t)\,.
\end{split}  
\end{equation}
\par
Since $|S^3_{21}|+|\pd_tS^3_{21}|\lesssim e^{-\a((c_0-1)t/2+h)}$ and
$$\|\mathcal{A}_1(t,0)\|_{B(Y_1)}+
\|\chi(D_y)\mathcal{A}_1(t,0)\|_{B(L^1)}\|\mathcal{A}_2(t,D_y)\|_{B(Y_1)}
\lesssim e^{-\a((c_0-1)t/2+h)}\,,$$
it follows from Claims~\ref{cl:R3}, \ref{cl:k-decay} and \ref{cl:k-growth}
that for $t\in[0,T]$ and $k$ satisfying $0\le k\le 2$,
\begin{gather}
\label{eq:obu-est1}
\bigl\|E_1\obu{N}_{U_1}\bigr\|_Y+\bigl\|\oc{N}_{U_1}\bigr\|_Y
\lesssim \bM_1(T)\la t\ra^{-2}\,,
\\ \label{eq:obu-est2}
\bigl\|E_2\obu{N}_{U_1}\bigr\|_{Y_1}+\bigl\|E_2\chi(D_y)\obu{N}_{U_1}\bigr\|_{L^1}
\lesssim \bM_1(T)(e^{-\a h}+\bM_{c,\gamma}(T)+\bM_1(T))\la t\ra^{-9/4}\,.
\end{gather}
and that
\begin{align*}
\la t\ra^{(2k+1)/4}
\biggl\|\pd_y^k\int_0^t U(t,s)\obu{\mN}_{U_1}(s)\,ds\biggr\|_{\dot{H}^1\times L^2}
+\biggl\|\int_0^t U(t,s)E_2\obu{\mN}_{U_1}(s)\,ds\biggr\|_{L^\infty}
\lesssim & \bM_1(T)\,,
\end{align*}
\begin{align}
\label{eq:Linfy=0-2}
& \biggl\|\int_0^t U(t,s)E_1\obu{\mN}_{U_1}(s)\,ds\biggr\|_{L^\infty}
\lesssim 
\int_0^t \la t-s\ra^{-1/4} \bigl\|E_1\obu{\mN}_{U_1}\bigr\|_Y\,ds
\lesssim \bM_1(T)\la t\ra^{-1/4}\,,
\end{align}
\begin{align}
\label{eq:Linfy=0-3}
 \biggl\|\pd_y^{k+2}  \int_0^tU(t,s)\oc{\mN}_{U_1}\,ds\biggr\|_Y
\lesssim &
\int_0^t\la t-s\ra^{-(k+1)/2}\bigl\|\oc{\mN}_{U_1}\bigr\|_Y\,ds
\\ \lesssim  & \bM_1(T)\la t\ra^{-(k+1)/2}\,. \notag
\end{align}
\par
By the definition and Claim~\ref{cl:b-capprox}, we have for $k=0$, $1$, $2$ and $t\in[0,T]$,
\begin{align*}
\|\pd_y^k\tc(t)\|\lesssim &  \|\pd_y^k\tb\|_Y+\|k(t)\|_Y
\lesssim \|\pd_y^k\tb\|_Y+ \bM_1(T)\la t\ra^{-2}\,.
\end{align*}
Thus we have Lemma~\ref{lem:Mcx-bound}.
\end{proof}
\bigskip

\section{The energy identity}
\label{sec:energy}
In this section, we will derive an energy estimate for solutions around
modulating line solitary waves.
\begin{lemma}
  \label{lem:MU-bound}
Let $\a\in(0,\a_{c_0})$ and $\delta_4$ be as in Lemma~\ref{lem:Mcx-bound}.
Suppose that $\bM_{c,\gamma}(T)+\bM_1(T)+\bM_2(T)+\eta_0+e^{-\a h}\le \delta_4$.
Then there exists a positive constant $C$ such that 
$$\bM_U(T)\le C\left(\|U_0\|_{\bE}+\bM_{c,\gamma}(T)+\bM_1(T)+\bM_2(T)\right)\,.$$
\end{lemma}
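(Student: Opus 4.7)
The starting point is the energy identity for $U$ obtained by multiplying \eqref{eq:U} against appropriate variations. From \eqref{eq:U} we have $\pd_tU = (c_0\pd_x+L)U + \ell + N'(\widetilde{\Phi})U + N(U)$ with $\widetilde{\Phi}=\Phi_{c(t,y)}-\Psi_{c(t,y)}$, using that $N$ is quadratic so that $N(\widetilde{\Phi}+U)-N(\widetilde{\Phi})=N'(\widetilde{\Phi})U+N(U)$. Define the variational pairing $\mathcal{B}(U,F):=\int\{\nabla u_1\cdot\nabla F_1 + a\Delta u_1\Delta F_1 + u_2F_2 + b\nabla u_2\cdot\nabla F_2\}$, so that $\tfrac{d}{dt}E(U)=\mathcal{B}(U,\pd_tU)$. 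Two key algebraic cancellations drive the argument: $\mathcal{B}(U,(c_0\pd_x+L)U)=0$ (shown via the computation appearing in the proof of Lemma~\ref{lem:Phi-U1}), and $\mathcal{B}(U,N(U))=0$, which reflects energy conservation for the Benney--Luke equation and follows from $\int Bu_2\cdot B^{-1}(u_2\Delta u_1 + 2\nabla u_1\cdot\nabla u_2)=-\int u_2^2\Delta u_1 - 2\int u_2\nabla u_1\cdot\nabla u_2 = 0$ after integration by parts on the first term. Consequently,
\begin{equation*}
\frac{d}{dt}E(U(t)) = \mathcal{B}(U,\ell) + \mathcal{B}(U,N'(\widetilde{\Phi})U),
\end{equation*}
so the potentially dangerous cubic self-interaction is absent from the outset.

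For the forcing contribution $\mathcal{B}(U,\ell)$, I would use the explicit decomposition $\ell=\ell_1+\ell_2$ from \eqref{eq:defl1}--\eqref{eq:defl2}. The auxiliary function $\Psi_c$ was introduced precisely so that the non-decaying contributions of $\pd_c\Phi_c(z)$ at $z\to-\infty$ (responsible for infinite-energy pieces) are cancelled by matching pieces of $\pd_c\Psi_c(z_1)$; after this cancellation every coefficient appearing in $\ell$ is in the relevant dual energy class. Coupling this with the decay rates from Lemma~\ref{lem:Mcx-bound} (e.g.\ $\|\pd_y^k\tc\|_Y+\|\pd_y^{k+1}\gamma\|_Y\lesssim \bM_{c,\gamma}\la t\ra^{-(2k+1)/4}$, and parallel bounds for $c_t$, $\gamma_t-\tc$ via \eqref{eq:modeq2}), together with integration by parts to avoid $\Delta^2 u_1$, one obtains $|\mathcal{B}(U,\ell)|\le\|U\|_\bE\,\|\ell\|_\ast$ with $\|\ell\|_\ast$ an $L^1_t$-integrable quantity bounded by $\bM_{c,\gamma}+\bM_1+\bM_2$, yielding $\int_0^T|\mathcal{B}(U,\ell)|\,dt\le C\bM_U(T)(\bM_{c,\gamma}(T)+\bM_1(T)+\bM_2(T))$.

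The coupling term $\mathcal{B}(U,N'(\widetilde{\Phi})U)$ equals $-\int u_2\bigl(u_2\Delta\widetilde{\phi}_1 + \widetilde{\phi}_2\Delta u_1 + 2\nabla\widetilde{\phi}_1\cdot\nabla u_2 + 2\nabla u_1\cdot\nabla\widetilde{\phi}_2\bigr)$. Since $q_c,r_c,\psi_c$ and their derivatives decay exponentially in $z$, every coefficient is localized near $z=0$. The term $\int u_2\widetilde{\phi}_2\Delta u_1$ is first integrated by parts in $x$ and $y$ to avoid using the $L^2$-norm of $\Delta u_1$ (which is not controlled in a decaying way), producing pieces of the form $r_c\nabla u_1\cdot\nabla u_2$ and $u_2 r_c'\pd u_1$. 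Now I decompose $U=U_1+U_2$ and estimate each bilinear piece by weighted energy norms: the localized weight is dominated by $p_\a(z_1)^{1/2}$ near $z=0$, so each such piece is bounded by $(\|U_1\|_{\bW(t)}+\|U_2\|_{\bX_1})^2$. Using $\int_0^T\|U_1\|_{\bW(t)}^2\,dt\le \bM_1(T)^2$ from the definition of $\bM_1$, and $\int_0^T\|U_2\|_{\bX_1}^2\,dt\lesssim \bM_2(T)^2\int_0^T\la t\ra^{-3/2}\,dt\le C\bM_2(T)^2$, together with Cauchy--Schwarz for the cross term, I obtain $\int_0^T|\mathcal{B}(U,N'(\widetilde{\Phi})U)|\,dt\le C(\bM_1(T)+\bM_2(T))^2$.

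Combining these bounds with $E(U(0))=\tfrac12\|U_0\|_\bE^2$ (since $c(0)=c_0$, $\gamma(0)=0$ imply $U(0)=U_0$), taking the supremum over $t\in[0,T]$, and applying Young's inequality to absorb the factor of $\bM_U(T)$ from the forcing estimate yields the claimed bound. The main technical hurdle is arranging the integration by parts so that every time-integrated quadratic term is bounded uniformly in $T$: without the algebraic cancellation $\mathcal{B}(U,N(U))=0$ one would be stuck with an uncontrollable $T\bM_U^3$ contribution, and without the Psi-correction the forcing $\ell$ would have non-decaying components at $z\to-\infty$ that would prevent the dual pairing from making sense in $\bE'$.
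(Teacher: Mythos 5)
Your starting identity $\tfrac{d}{dt}E(U)=\la U,\ell\ra_\bE+\la U,N'(\Phi_c-\Psi_c)U\ra_\bE$ agrees with \eqref{eq:energy-pf1}, and your treatment of the quadratic coupling term and the final absorption via Young's inequality match the paper. The genuine gap is in the forcing estimate $|\mathcal{B}(U,\ell)|\le\|U\|_\bE\|\ell\|_\ast$ with $\|\ell\|_\ast\in L^1_t$. This fails for two reasons. First, the dominant piece $\ell_{11}=-c_t\pd_c\Phi_c+(\gamma_t-\tc)\pd_z\Phi_c$ carries the coefficients $c_t$ and $\gamma_t-\tc$, whose best available decay (Claim~\ref{cl:cx_t-bound}) is $\la t\ra^{-3/4}$ — not integrable on $[0,\infty)$ — so even in the most favorable dual norm the time integral of $\|U\|_\bE\|\ell_{11}\|_\ast$ diverges like $T^{1/4}$ and cannot be bounded by $\bM_U(T)\cdot(\bM_{c,\gamma}+\bM_1+\bM_2)$ uniformly in $T$. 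Second, the cancellation provided by $\Psi_c$ is not as clean as you assert: $\pd_c(\varphi_c(z)-\tpsi_c(z_1))$ is only $O(1)$ on the region $-\tfrac{c_0-1}{2}t-h\lesssim z\le 0$, whose length grows linearly in $t$, so the $\bE_y$-part of the pairing (which puts $y$-derivatives on $c_t$ and leaves $\pd_c\varphi_c$ undifferentiated in $z$) produces an additional factor $\la t\ra^{1/2}$ and genuinely cannot be closed by duality against $\|U\|_\bE$.

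The paper's resolution is not to estimate $\la U,\ell\ra_\bE$ at all, but to modify the functional itself: one works with the renormalized energy $Q(t)=\tfrac12\|U\|_\bE^2-\la U,\Psi_c\ra_{\bE_x}+\la U,\Phi_c-\Phi_{c_0}-\Psi_c\ra_{\bE_y}+\cdots$ of Section~\ref{sec:energy}. Differentiating the cross terms produces $-\la U,\ell_{11}\ra_{\bE_y}$ and related contributions that cancel the dangerous $\bE_y$-pairings exactly, so that in Lemma~\ref{lem:energy-id} the slowly decaying forcing $\ell_{11}$ survives only through $\la U,\ell_{11}\ra_{\bE_x}$. There $\diag(\pd_x,1)\ell_{11}$ is exponentially localized in $z$, hence this pairing is bounded by the \emph{localized} norms $\|U_1\|_{\bW(t)}+\|U_2\|_{\bX_1}\lesssim(\bM_1+\bM_2)\la t\ra^{-3/4}$ rather than by $\bM_U$, and the product with $\|c_t\|_Y+\|\gamma_t-\tc\|_Y\lesssim\la t\ra^{-3/4}$ gives the integrable rate $\la t\ra^{-3/2}$. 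Without this renormalization step — which is the actual content of the lemma — your scheme does not close.
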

To prove Lemma~\ref{lem:MU-bound}, we will derive an energy identity
of \eqref{eq:U}. Let $A_y=1-a\pd_y^2$ and
\begin{align*}
& \la \Phi,\Psi\ra_\bE=\la (-A\Delta E_1+BE_2)\Phi,\Phi\ra\,,
\quad
\la \Phi,\Psi\ra_{\bE_x}
=\la (-A_0\pd_x^2E_1+B_0E_2)\Phi, \Psi\ra\,,
\\ &
\la \Phi,\Psi\ra_{\bE_y}=\la \Phi,\Psi\ra_{\bE}-\la \Phi,\Psi\ra_{\bE_x}
=\la \pd_y^2(-A_y+2a\pd_x^2)E_1\Phi-b\pd_y^2E_2\Phi, \Psi\ra\,,
\end{align*}
and let $\|U\|_{\bE_x}=\la U,U\ra_{\bE_x}^{1/2}$.
The energy of solutions around line solitary waves is infinite since line solitary waves do not
decay in the $y$-direction. Moreover, the velocity potential $\varphi_c(x)$
tends to a negative constant as $x\to-\infty$.
Subtracting infinite energy part from $E(\Phi)$ and having
the orthogonality condition \eqref{eq:orth} for $\eta=0$ and $k=2$ in mind,
we find the quantity
\begin{align*}
 Q(t)=& \frac12\|U(t)\|_\bE^2-\left\la U(t),\Psi_{c(t,y)}(z_1)\right\ra_{\bE_x}
\\ & +\left\la U(t),\Phi_{c(t,y)}(z)-\Phi_{c_0}(z)-\Psi_{c(t,y)}(z)\right\ra_{\bE_y}
+\frac12\left\| \Psi_{c(t,y)}\right\|_{\bE_x}^2 \\ &
+\frac12\left\la \Phi_{c(t,y)}(z)-\Phi_{c_0}(z)-\Psi_{c(t,y)}(z_1),
\Phi_{c(t,y)}(z)-\Phi_{c_0}(z)-\Psi_{c(t,y)}(z_1)\right\ra_{\bE_y}\,.
\end{align*}
\begin{lemma}
\label{lem:energy-id}
  \begin{multline*}
Q(t)= Q(0)+\int_0^t\left(
\la U,\ell_{11}\ra_{\bE_x}+\la U,\ell_{13}\ra_\bE-\la \ell_{11},\Psi_c\ra_{\bE_x}
\right)\,ds
\\
+\int_0^t\left(\sum_{j=1,2,\,k=2,3}\la \ell_{jk},\Phi_c-\Phi_{c_0}\ra_{\bE_y}+
\left\la (\gamma_t-\tc)\pd_x\Phi_{c_0}-c\pd_x\Psi_c,
\Phi_c-\Phi_{c_0}-\Psi_c\right\ra_{\bE_y}\right)\,ds
\\
+\int_0^t\left(\la U, \tc\pd_x\Phi_{c_0}\ra_{\bE_y}+\la N'(\Psi_c)U-N(U),\Phi_c\ra_{\bE_x}+\la N'(\Psi_c)U-N(U),\Phi_{c_0}\ra_{\bE_y}\right)\,ds
\\  
+\int_0^t\left(
\la U, (\gamma_t-\tc)\pd_x\Phi_{c_0}\ra_{\bE_y}
-\la U, \pd_y^2L_1\Phi_{c_0}\ra_\bE
+\la\pd_y^2L_1U_,\Phi_c-\Phi_{c_0}\ra_{\bE_x}\right)\,ds
\\ 
+\int_0^t\left(\la N'(\Phi_c)U,\Phi_c-\Phi_{c_0}\ra_{\bE_y}
+\la U, N_0(\Phi_c)-N_0(\Phi_{c_0})\ra_{\bE_y}\right)\,ds\,.
  \end{multline*}
\end{lemma}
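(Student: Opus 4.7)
I will prove the identity by differentiating $Q(t)$ in $t$, substituting \eqref{eq:U} for $\pd_tU$ and the chain-rule expressions for $\pd_t\Phi_{c(t,y)}(z)$ and $\pd_t\Psi_{c(t,y)}(z_1)$, and then regrouping terms using: (a) antisymmetry of $c_0\pd_x+L_0$ with respect to $\la\cdot,\cdot\ra_\bE$; (b) the splitting $L=L_0-\pd_y^2L_1$ that separates the $\bE_x$- and $\bE_y$-parts; and (c) the definitions \eqref{eq:defl1}--\eqref{eq:defl2} of the pieces $\ell_{ij}$, together with the quadraticity of $N$ (so $N(V+U)-N(V)=N'(V)U+N(U)$).

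The key algebraic input is
$$\la(c_0\pd_x+L_0)U,V\ra_\bE+\la U,(c_0\pd_x+L_0)V\ra_\bE=0\,,$$
which follows from integration by parts and the self-adjointness of $A_0$, $B_0$, $\Delta$. Taking $V=U$ eliminates the free-evolution contribution in $\frac{d}{dt}\tfrac12\|U\|_\bE^2$, leaving only the forcing $\ell+N'(\Phi_c-\Psi_c)U+N(U)$. Writing $N'(\Phi_c-\Psi_c)U=N'(\Phi_c)U-N'(\Psi_c)U$ and rearranging isolates the grouping $N'(\Psi_c)U-N(U)$ in the claim. The full operator $L=L_0-\pd_y^2L_1$ then produces the two extra terms $-\la U,\pd_y^2L_1\Phi_{c_0}\ra_\bE$ and $\la\pd_y^2L_1U,\Phi_c-\Phi_{c_0}\ra_{\bE_x}$ by redistributing the $\pd_y^2L_1$ piece between the $\bE_x$- and $\bE_y$-inner products.

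For the $c(t,y)$- and $\gamma(t,y)$-dependent pieces, chain-rule differentiation gives
$$\pd_t\Phi_{c(t,y)}(z)=c_t\pd_c\Phi_c-\gamma_t\pd_z\Phi_c\,,\qquad
\pd_t\Psi_{c(t,y)}(z_1)=c_t\pd_c\Psi_c-\bigl(\gamma_t-\tfrac{c_0-1}{2}\bigr)\pd_x\Psi_c\,.$$
Combining these with the stationary equation \eqref{eq:stationary} and its $\pd_y^2$-expansion $L=L_0-\pd_y^2L_1$ lets me rewrite the modulation-driven time derivatives as $\ell_1=\ell_{11}+\ell_{12}+\ell_{13}$ and $\ell_2=\ell_{21}+\ell_{22}+\ell_{23}$, modulo remainder pieces of the form $(\gamma_t-\tc)\pd_x\Phi_{c_0}$ and $c\pd_x\Psi_c$ that are paired against $\Phi_c-\Phi_{c_0}-\Psi_c$ in the $\bE_y$-slot, matching the sum $\la(\gamma_t-\tc)\pd_x\Phi_{c_0}-c\pd_x\Psi_c,\Phi_c-\Phi_{c_0}-\Psi_c\ra_{\bE_y}$ in the statement. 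The $N_0(\Phi_c)-N_0(\Phi_{c_0})$ contribution arises by splitting $N=N_0+(N-N_0)$ in $\ell_{13}$ and $\ell_{23}$ and pairing against $U$ through $\la\cdot,\cdot\ra_{\bE_y}$.

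The main obstacle is bookkeeping: differentiating the five pieces of $Q$ produces on the order of a dozen terms, and each has to be matched against one of the groupings on the right-hand side, with the $\Psi_c$-adjustment ensuring finiteness. Concretely, the design choice $\lim_{x\to-\infty}\tpsi_c(x)=\varphi_c(-\infty)-\varphi_{c_0}(-\infty)$ from \eqref{eq:0mean} makes $\Phi_c-\Phi_{c_0}-\Psi_c$ spatially integrable in $x$, so every $\bE_y$-pairing in the identity converges. Once these cancellations are verified line by line and the resulting pointwise identity is integrated from $0$ to $t$, the lemma follows.
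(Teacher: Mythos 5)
Your overall route is the same as the paper's: differentiate each piece of $Q(t)$, substitute \eqref{eq:U} and the chain rule for $\Phi_{c(t,y)}(z)$ and $\Psi_{c(t,y)}(z_1)$, and cancel using skew-adjointness, the stationary equation and the quadraticity of $N$. However, the one piece of mathematics you actually assert — the ``key algebraic input''
$\la(c_0\pd_x+L_0)U,V\ra_\bE+\la U,(c_0\pd_x+L_0)V\ra_\bE=0$ — is false as written: $L_0$ is \emph{not} antisymmetric with respect to $\la\cdot,\cdot\ra_\bE$ (checking it reduces to $BB_0^{-1}A_0\pd_x^2=A\Delta$, which fails). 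The identities that the argument actually needs are $\la L\Phi,\Psi\ra_\bE+\la\Phi,L\Psi\ra_\bE=0$ (from $L\Phi=JE'(\Phi)$ with $J=B^{-1}(E_{12}-E_{21})$ skew-symmetric) together with $\la L_0\Phi,\Psi\ra_{\bE_x}+\la\Phi,L_0\Psi\ra_{\bE_x}=0$; in particular the free-evolution term in $\frac{d}{dt}\frac12\|U\|_\bE^2$ is killed by $\la(c_0\pd_x+L)U,U\ra_\bE=0$ with the \emph{full} operator $L$, and it is precisely the mismatch $L_0-L=\pd_y^2L_1$ between the two antisymmetry statements that generates the terms $-\la U,\pd_y^2L_1\Phi_{c_0}\ra_\bE+\la\pd_y^2L_1U,\Phi_c-\Phi_{c_0}\ra_{\bE_x}$. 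Your later sentence about ``redistributing the $\pd_y^2L_1$ piece'' describes the right mechanism, but it is inconsistent with the identity you stated, so as it stands the computation of $\frac{d}{dt}\frac12\|U\|_\bE^2$ and of $\frac{d}{dt}\la U,\Phi_c-\Phi_{c_0}\ra_{\bE_y}$ would go wrong.

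Beyond that, since the lemma is an exact identity, the line-by-line cancellation you defer is the entire content of the proof. In particular you never exhibit the nonlinear identities that produce the groupings in the statement, namely $\la U,N'(\Phi_c)U\ra_\bE+\la N(U),\Phi_c\ra_\bE=0$, $\la U,N'(\Psi_c)U\ra_\bE=0$ and $\la U,N'(\Phi_c)\Psi_c\ra_\bE+\la N'(\Psi_c)U,\Phi_c\ra_\bE=0$, which are what turn $\la U,N'(\Phi_c-\Psi_c)U\ra_\bE$ into $-\la N(U),\Phi_c\ra_\bE+\la N'(\Psi_c)U,\Phi_c\ra_\bE$ and hence into the combination $N'(\Psi_c)U-N(U)$ appearing in the claim; nor do you verify $\ell_{jk}\cdot\mathbf{e_1}=0$ and $\la\ell_{jk},\Psi_c\ra_{\bE_x}=\la\ell_{jk},\Psi_c\ra_{\bE_y}=0$, which are needed so that only $\la\ell_{11},\Psi_c\ra_{\bE_x}$ survives from the cross term. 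These are not difficult, but without them the proposal is a plan rather than a proof.
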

\begin{proof}
Let $\Phi=(\phi_1, \phi_2)^T$, $\Psi=(\psi_1,\psi_2)^T$ and
$J=B^{-1}(E_{12}-E_{21})$.
Since $L\Phi=JE'(\Phi)$ and $J$ is skew-symmetric,
\begin{equation}
\label{eq:E'1}
\la \Phi, L\Phi\ra_\bE=\la E'(\Phi), L\Phi\ra=0\,,
\quad 
\la L\Phi, \Psi\ra_\bE+\la \Phi, L\Psi\ra_\bE=0\,.
\end{equation}
Similarly,
\begin{equation}
  \label{eq:E0'1}
\la L_0\Phi, \Phi\ra_{\bE_x}=0\,,
\quad 
\la L_0\Phi, \Psi\ra_{\bE_x}+\la \Phi, L_0\Psi\ra_{\bE_x}=0\,.
\end{equation}
By integration by parts,
\begin{align}
  \label{eq:E'2}
& \la \pd_x\Phi,\Phi\ra_\bE=0\,,\enskip \la \Phi, N(\Phi)\ra_\bE=0\,,
\\ & \label{eq:UN'U+N}
\la U,N'(\Phi_c)U\ra_\bE+\la N(U),\Phi_c\ra_\bE=0\,,
\quad
\la U,N'(\Psi_c)U\ra_\bE=-\la N(U),\Psi_c\ra_\bE=0\,,
\\ & \label{eq:UN'abba}
\la U, N'(\Phi_c)\Psi_c\ra_\bE+\la N'(\Psi_c)U,\Phi_c\ra_\bE=0\,.
\end{align}
We remark that 
$N'(\Phi_c)\Psi_c=-B^{-1}(r_c\Delta \tpsi_c+2\nabla r_c\cdot\nabla \tpsi_c)
\mathbf{e_2}\in \bE$ and $E_1N'(\Psi_c)=O$.

Since $N$ is quadratic,
$$N(\Phi_c-\Psi_c+U)-N(\Phi_c-\Psi_c)-N(U)=N'(\Phi_c-\Psi_c)U\,,$$
and it follows from \eqref{eq:U}, \eqref{eq:E'1}, \eqref{eq:E'2} and
\eqref{eq:UN'U+N} that
\begin{equation}
\label{eq:energy-pf1}
\frac{d}{dt}E(U)= \la U,\ell+N'(\Phi_c-\Psi_c)U\ra_\bE
=\la U,\ell\ra_\bE-\la N(U),\Phi_c\ra_\bE\,.
\end{equation}
By \eqref{eq:U}, \eqref{eq:E'1},  \eqref{eq:UN'abba} and the fact that
$N\cdot\mathbf{e_1}=0$ and $\Psi_c\cdot\mathbf{e_2}=0$,
\begin{equation}
\label{eq:energy-pf2}
\begin{split}
  \frac{d}{dt}\la U,\Psi_c\ra_\bE
=& \la U, \ell_{21}\ra_\bE+\la LU+\ell,\Psi_c\ra_\bE
\\ =& \la U, \ell_2\ra_\bE+\la \ell,\Psi_c\ra_\bE-\la N'(\Psi_c)U,\Phi_c\ra_\bE\,.
\end{split}
\end{equation}
Combining the above, we have
\begin{equation}
    \label{eq:energy-pf3}
    \begin{split}
\frac{d}{dt}\{E(U)-\la U,\Psi_c\ra_\bE\}
=&\la U,\ell_1\ra_\bE-\la \ell,\Psi_c\ra_\bE-\la N(U),\Phi_c\ra_\bE
+\la N'(\Psi_c)U,\Phi_c\ra_\bE\,.
    \end{split}
\end{equation}
Since $(d\pd_x+L_0)\Phi_c+N_0(\Phi_d)=0$ for $d=c_0$ and $d=c(t,y)$,
it follows from \eqref{eq:U} and \eqref{eq:defl1} that
\begin{align*}
& \frac{d}{dt}\la U,\Phi_c-\Phi_{c_0}\ra_{\bE_y}=
\la \pd_tU,\Phi_c-\Phi_{c_0}\ra_{\bE_y}+\la U, 
c_t\pd_c\Phi_c-\gamma_t\pd_x(\Phi_c-\Phi_{c_0})\ra_{\bE_y}
\\= & - \left\la U, \ell_{11}\right\ra_{\bE_y}
+\left\la U,\gamma_t\pd_x\Phi_{c_0}
+L_0(\Phi_c-\Phi_{c_0})
+N_0(\Phi_c)-N_0(\Phi_{c_0})\right\ra_{\bE_y}
\\ & \qquad +\la LU+\ell+N'(\Phi_c-\Psi_c)U+N(U),\Phi_c-\Phi_{c_0}\ra_{\bE_y}\,.
\end{align*}
By \eqref{eq:E'1} and \eqref{eq:E0'1},
\begin{align*}
& \la LU,\Phi_c-\Phi_{c_0}\ra_{\bE_y}+
\la U, L_0(\Phi_c-\Phi_{c_0})\ra_{\bE_y}
\\ = & -\la U,\ell_{12}\ra_\bE-\la U, \pd_y^2L_1\Phi_{c_0}\ra_\bE
+\la \pd_y^2L_1U,\Phi_c-\Phi_{c_0}\ra_{\bE_x}\,,
\end{align*}
and
\begin{multline*}
 \frac{d}{dt}\la U,\Phi_c-\Phi_{c_0}\ra_{\bE_y}=
-\la U,\ell_{11}\ra_{\bE_y}-\la U,\ell_{12}\ra_\bE
+\la \ell, \Phi_c-\Phi_{c_0}\ra_{\bE_y}
\\ +\la N(U)-N'(\Psi_c)U,\Phi_c-\Phi_{c_0}\ra_{\bE_y}
+\la U, \gamma_t\pd_x\Phi_{c_0}\ra_{\bE_y}-\la U, \pd_y^2L_1\Phi_{c_0}\ra_\bE
\\ +\la\pd_y^2L_1U_,\Phi_c-\Phi_{c_0}\ra_{\bE_x}
+\la N'(\Phi_c)U,\Phi_c-\Phi_{c_0}\ra_{\bE_y}
+\la U, N_0(\Phi_c)-N_0(\Phi_{c_0})\ra_{\bE_y}\,.
\end{multline*}
Since $\Psi_c\cdot\mathbf{e_2}=0$,
$N\cdot \mathbf{e_1}=N_0\cdot \mathbf{e_1}=0$
and $E_1LE_1=O$,
\begin{gather}
\label{eq:ljkge2}
\ell_{jk}\cdot\mathbf{e_1}=0\,,
\\ \notag
\la \ell_{jk}, \Psi_c\ra_{E_x}=
\la \ell_{jk}, \Psi_c\ra_{E_y}=0 \quad\text{for $j=1$, $2$ and $k=2$, $3$,}
\\ \notag
\frac12\frac{d}{dt}\la \Psi_{c(t,y)},\Psi_{c(t,y)}\ra_{\bE_x}
=\la \ell_2,\Psi_c\ra_{\bE_x}\,.
\end{gather}
Moreover,
\begin{align*}
& \frac12\frac{d}{dt}\la \Phi_c-\Phi_{c_0}-\Psi_c,\Phi_c-\Phi_{c_0}-\Psi_c\ra_{\bE_y}
+\la \ell_{11}+\ell_{21},\Phi_c-\Phi_{c_0}-\Psi_c\ra_{\bE_y}
\\ =&
\left\la (\gamma_t-\tc)\pd_x\Phi_{c_0}-c\pd_x\Psi_c,
\Phi_c-\Phi_{c_0}-\Psi_c\right\ra_{\bE_y}\,.
\end{align*}
\par

Combining the above, we have
\begin{multline*}
 \frac{d}{dt}Q(t)=
\la U,\ell_{11}\ra_{\bE_x}+\la U,\ell_{13}\ra_\bE-\la \ell_{11},\Psi_c\ra_{\bE_x}
 +\sum_{j=1,2,\,k=2,3}\la \ell_{jk},\Phi_c-\Phi_{c_0}\ra_{\bE_y}
\\  +\left\la (\gamma_t-\tc)\pd_x\Phi_{c_0}-c\pd_x\Psi_c,
\Phi_c-\Phi_{c_0}-\Psi_c\right\ra_{\bE_y}
\\ +\la N'(\Psi_c)U- N(U),\Phi_c\ra_{\bE_x}
+\la N'(\Psi_c)U-N(U),\Phi_{c_0}\ra_{\bE_y}
\\ 
+\la U, \gamma_t\pd_x\Phi_{c_0}\ra_{\bE_y}-\la U, \pd_y^2L_1\Phi_{c_0}\ra_\bE
+\la\pd_y^2L_1U_,\Phi_c-\Phi_{c_0}\ra_{\bE_x}
\\ 
+\la N'(\Phi_c)U,\Phi_c-\Phi_{c_0}\ra_{\bE_y}
+\la U, N_0(\Phi_c)-N_0(\Phi_{c_0})\ra_{\bE_y}\,.
\end{multline*}
Thus we prove Lemma~\ref{lem:energy-id}.
\end{proof}

By Proposition~\ref{prop:modulation},
Claim~\ref{cl:b-capprox} and \eqref{eq:R3-est1}, we have the following.
\begin{claim}
\label{cl:cx_t-bound}
Let $\delta_2$ be as in proposition~\ref{prop:modulation}. Suppose
$\bM_{c,\gamma}(T)+\bM_1(T)+\bM_2(T)+\eta_0+e^{-\a L}<\delta_2$ for a $T\ge0$.
Then for $t\in[0,T]$,
$$\|c_t\|_Y+\|\gamma_t-\tc\|_Y\lesssim 
(\bM_{c,\gamma}(T)+\bM_1(T)+\bM_2(T)^2)\la t\ra^{-3/4}\,.$$
\end{claim}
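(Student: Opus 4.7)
The plan is to read $\gamma_t$ and $c_t$ directly off the modulation equation \eqref{eq:modeq2} from Proposition~\ref{prop:modulation}, and to bound each term in the $Y$-norm using the decay built into the definition of $\bM_{c,\gamma}(T)$, together with the remainder estimates \eqref{eq:mN2}--\eqref{eq:mN4}, \eqref{eq:obu-est1}--\eqref{eq:obu-est2}.

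For $\gamma_t - \tc$, I would split
$$\gamma_t - \tc = (\gamma_t - b) + (b - \tc).$$
From the first row of \eqref{eq:modeq2}, $\gamma_t - b = \left[\left(\mathcal{A}(t,D_y) - E_{12}\right)(\gamma, b)^T\right]_1 + \left[\mN_1 + \mN_2 + \mN_3 + \pd_y\mN_4 + \mN_{U_1}\right]_1$.  By the structure of $\mathcal{A}_0$ (the only non-$\pd_y^2$ piece of $\mathcal{A}(t,D_y)$ is the $E_{12}$ leading term) and by \eqref{eq:H} (the $\mathcal{A}_1(t,D_y)$ coefficients decay like $e^{-\kappa t}$), the linear contribution is dominated by $\|\pd_y^2\gamma\|_Y + \|\pd_y^2 b\|_Y + e^{-\kappa t}(\|\gamma\|_Y + \|b\|_Y)$, all of which are $O(\bM_{c,\gamma}(T)\la t\ra^{-3/4})$ directly from the definition of $\bM_{c,\gamma}(T)$ (using $b \simeq \tc$ up to a quadratic correction).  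The nonlinear contributions are controlled by \eqref{eq:mN2}--\eqref{eq:mN4} (with the embedding $Y_1 \hookrightarrow Y$ coming from the Fourier support constraint) and \eqref{eq:obu-est1}--\eqref{eq:obu-est2}, which all decay like $\la t\ra^{-5/4}$ or faster — strictly better than the target rate.  For the second piece, Claim~\ref{cl:b-capprox} gives $\|b - \tc\|_Y \lesssim \bM_{c,\gamma}(T)^2\la t\ra^{-1/2} + \bM_1(T)\la t\ra^{-2}$, again faster than $\la t\ra^{-3/4}$.

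For $c_t$, I would use the defining relation $b = \wP_1\{\rho(c) - \rho(c_0)\}$; differentiating in $t$ yields $b_t = \wP_1(\rho'(c)c_t)$. Since $\rho'(c_0) = 1$ and $\rho'(c) - 1 = O(\tc)$, a short bootstrap in $Y$ (using smallness of $\bM_{c,\gamma}(T)$ to absorb the $\wP_1((\rho'(c)-1)c_t)$ term) produces $\|c_t\|_Y \lesssim \|b_t\|_Y$. The bound on $\|b_t\|_Y$ is read off the second row of \eqref{eq:modeq2}: every entry of $\mathcal{A}(t,D_y)$ in that row carries a factor of $\pd_y^2$ (since $d_{21}(D_y)\pd_y^2$ and $d_{22}(D_y) = O(\eta^2)$ by \eqref{eq:H}), and the $b_{2j}(t,D_y)$ pieces are exponentially decaying, while the nonlinear contributions are handled as above. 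This gives $\|b_t\|_Y \lesssim (\bM_{c,\gamma}(T) + \bM_1(T) + \bM_2(T)^2)\la t\ra^{-3/4}$.

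The only real nuisance is the bookkeeping across the numerous remainder terms; no single estimate is difficult — each is either in the definition of $\bM_{c,\gamma}(T)$, in Claim~\ref{cl:b-capprox}, in the nonlinear estimates \eqref{eq:mN2}--\eqref{eq:mN4}, or in \eqref{eq:obu-est1}--\eqref{eq:obu-est2} — and every one decays at least at rate $\la t\ra^{-3/4}$ when weighted by the small factors $\bM_{c,\gamma}(T) + \bM_1(T) + \bM_2(T)^2$. Combining the two halves yields the claim.
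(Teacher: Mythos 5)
Your overall strategy---reading $\gamma_t-b$ and $b_t$ off \eqref{eq:modeq2}, converting $b$ back to $\tc$ via Claim~\ref{cl:b-capprox}, and bounding each term by the decay encoded in $\bM_{c,\gamma}(T)$---is exactly what the paper intends (its proof is the one-line citation of Proposition~\ref{prop:modulation}, Claim~\ref{cl:b-capprox} and \eqref{eq:R3-est1}). Two of your citations are off, though harmlessly: $\mN_{U_1}$ in \eqref{eq:modeq2} is bounded directly by \eqref{eq:R3-est1}, $\|\mN_{U_1}\|_Y\lesssim\|R^3\|_Y\lesssim\bM_1(T)\la t\ra^{-2}$, not by \eqref{eq:obu-est1}--\eqref{eq:obu-est2}, which concern the transformed quantities $\obu{\mN}_{U_1}$, $\oc{\mN}_{U_1}$ appearing in \eqref{eq:modeq3}; and \eqref{eq:mN2}--\eqref{eq:mN4} do not cover $\mN_1$, which needs its own estimate from \eqref{eq:defmN1}.

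The genuine gap is in your treatment of the quadratic terms. You bound $\|b-\tc\|_Y\lesssim\bM_{c,\gamma}(T)^2\la t\ra^{-1/2}$ and call this ``faster than $\la t\ra^{-3/4}$''; it is slower, and $\bM_{c,\gamma}(T)^2\la t\ra^{-1/2}$ is \emph{not} bounded by $C\bM_{c,\gamma}(T)\la t\ra^{-3/4}$ uniformly in $t$. The same issue is hidden in $\mN_1=\wP_1(n_1,\pd_yn_2)^T$ with $n_1=p_1b^2+a_{15}(c)(\gamma_y)^2$: the crude product estimate $\|uv\|_{L^2}\le\|u\|_{L^\infty}\|v\|_{L^2}$ together with the embedding $\|u\|_{L^\infty}\lesssim\|u\|_{L^2}$ for $u\in Y$ only yields $\|n_1\|_Y\lesssim\bM_{c,\gamma}(T)^2\la t\ra^{-1/2}$, which does not close the claimed $\la t\ra^{-3/4}$ rate. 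The repair is to interpolate, $\|u\|_{L^\infty}\lesssim\|u\|_{L^2}^{1/2}\|\pd_yu\|_{L^2}^{1/2}$, which combined with the $\la t\ra^{-3/4}$ decay of $\pd_y\tc$ and $\pd_y^2\gamma$ gives $\|\tc\|_{L^\infty}+\|\gamma_y\|_{L^\infty}\lesssim\bM_{c,\gamma}(T)\la t\ra^{-1/2}$; then $\|b-\tc\|_Y\lesssim\|\tc\|_{L^\infty}\|\tc\|_Y\lesssim\bM_{c,\gamma}(T)^2\la t\ra^{-3/4}$ and $\|n_1\|_Y+\|\pd_yn_2\|_Y\lesssim\bM_{c,\gamma}(T)^2\la t\ra^{-3/4}$, which is precisely the estimate the paper itself invokes in the proof of Lemma~\ref{lem:Mcx-bound}. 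With that interpolation step supplied, your argument closes; without it, the stated rates do not add up.
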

Now we are in a position to prove Lemma~\ref{lem:MU-bound}.
\begin{proof}[Proof of Lemma~\ref{lem:MU-bound}]
First, we remark $\|\Psi_c\|_{\bE_x}\lesssim \|\tc\|_Y$ and 
that for $k\ge1$,
\begin{equation*}
\pd_c^k\left(\varphi_c(z)-\tpsi_c(z_1)\right)=\left\{
  \begin{aligned}
     O(e^{-2\a|z|})\quad &\text{if $z\ge0$ or $z_1\le-1$,}
\\   O(1)\quad &\text{otherwise.}
  \end{aligned}\right.
\end{equation*}
Since 
$\pd_y\{\varphi_{c(t,y)}(z)-\varphi_{c_0}(z)\}
=   c_y\pd_c\varphi_{c(t,y)} -\gamma_y(q_{c(t,y)}(z)-q_{c_0}(z))$,
\begin{multline*}
\pd_y^2\{\varphi_{c(t,y)}(z)-\varphi_{c_0}(z)\}
=   c_{yy}\pd_c\varphi_{c(t,y)}(z)-c_y^2\pd_c^2\varphi_{c(t,y)}(z)
-2c_y\gamma_y\pd_cq_{c(t,y)}(z)
\\  -\gamma_{yy}(q_{c(t,y)}(z)-q_{c_0}(z))
+(\gamma_y)^2(q_{c(t,y)}'(z)-q_{c_0}'(z))\,,
\end{multline*}
and $r_c(z)$, $q_c(z)=\varphi_c'(z)$ and their derivatives are exponentially
localized in $z$, we have
\begin{align*}
\|\Phi_{c(t,y)}(z)-\Phi_{c_0}(z)-\Psi_{c(t,y)}(z_1)\|_\bE
\lesssim & \|c_y(t)\|_Y\la t\ra^{1/2}+\|\tc(t)\|_Y+\|\gamma_y(t)\|_Y
\\ \lesssim & \bM_{c,\gamma}(T)\la t\ra^{-1/4}\,,
\end{align*}
\begin{gather*}
|\la U, \Phi_c-\Phi_{c_0}-\Psi_c\ra_{\bE_y}|
\lesssim   \|U\|_\bE\|\Phi_c-\Phi_{c_0}-\Psi_c\|_\bE
\lesssim  \bM_{c,\gamma}\bM_U(T)\la t\ra^{-1/4}\,,
\\
\left|\left\la \Phi_c-\Phi_{c_0}-\Psi_c,
\Phi_c-\Phi_{c_0}-\Psi_c\right\ra_{\bE_y}\right|
\lesssim \bM_{c,\gamma}(T)^2\la t\ra^{-1/2}\,.
\end{gather*}
Hence it follows that 
$$ \left|Q(t)-\frac{1}{2}\|U(t)\|_\bE^2\right| \lesssim 
\bM_{c,\gamma}(T)\bM_U(T)\la t\ra^{-1/4}
+\bM_{c,\gamma}(T)^2\la t\ra^{-1/2}\,.$$
\par

Since $\|e^{\a|z|}\diag(\pd_x,1)\ell_{11}\|_{H^1}\lesssim \|c_t\|_Y+\|\gamma_t-\tc\|_Y$, it follows from Claim~\ref{cl:cx_t-bound} that
\begin{align*}
| \la U,\ell_{11}\ra_{\bE_x}|+| \la \ell_{11},\Psi_c\ra_{\bE_x}|
 \lesssim & (\|U_1\|_{\bW(t)}+\|U_2\|_{\bX_1}+\|\Psi_c\|_{\bX_1})
(\|c_t\|_Y+\|\gamma_t-\tc\|_Y)
\\ \lesssim & \left(\bM_{c,\gamma}(T)+\bM_1(T)+\bM_2(T)\right)^2\la t\ra^{-3/2}\,.
\end{align*}
Simlilarly, we have
$$|\la U, (\gamma_t-\tc)\pd_x\Phi_{c_0}\ra_{\bE_y}|
\lesssim \left(\bM_1(T)+\bM_2(T)\right)
\left(\bM_{c,\gamma}(T)+\bM_1(T)+\bM_2(T)^2\right)\la t\ra^{-3/2}\,.$$
By \eqref{eq:defl1}, \eqref{eq:l13exp} and the definition of $\Phi_c$,
\begin{align*}
& \|\sech\a z \ell_{12}\|_{H^2(\R^2)}+\|\cosh(\a z) \ell_{13}\|_{H^2(\R^2)}
+\|\cosh(\a z)E_2\pd_y^2(\Phi_c-\Phi_{c_0})\|_{L^2}
\\ \lesssim &
\|c_{yy}\|_Y+\|\gamma_{yy}\|_Y+\|(c_y)^2\|_Y+\|c_y\gamma_y\|_Y+\|(\gamma_y)^2\|_Y
\\ \lesssim  & \bM_{c,\gamma}(T)\la t\ra^{-3/4}\,.
\end{align*}
Combining the above with \eqref{eq:ljkge2}, we have
\begin{gather*}
|\la U,\ell_{13}\ra_\bE|\lesssim
(\bM_1(T)+\bM_2(T))\bM_{c,\gamma}(T)\la t\ra^{-3/2}\,,
\\  
\sum_{j=1,2}|\la \ell_{j2}+\ell_{j3},\Phi_c-\Phi_{c_0}\ra_{\bE_y}|
 \lesssim \bM_{c,\gamma}(T)^2\la t\ra^{-3/2}\,.
\end{gather*}
\par
We have
$|\la c\pd_x\Psi_c,\Psi_c\ra_{\bE_y}|\lesssim 
\bM_{c,\gamma}(T)^2\la t\ra^{-3/2}$ because
\begin{align*}
|\la \pd_x^j\pd_y^k(c\tpsi_c),\pd_x^{j-1}\pd_y^k\tpsi_c\ra|\lesssim &
\|c_y\|_Y^2+\|c_y\|_Y\|\gamma_y\|_Y\|\tc\|_{L^\infty}+\|\gamma_y\|_Y^2\|\tc\|_{L^\infty}^2
\\ \lesssim & \bM_{c,\gamma}(T)^2\la t\ra^{-3/2}
\quad\text{for $j$, $k\ge1$.}
\end{align*}
\par
Since $\mathbf{e_2}\cdot \Phi_c=r_c=O(e^{-2\a_c|z|})$,
$N\cdot\mathbf{e_1}=0$ and $E_1N'(\Psi_c)=O$,
\begin{align*}
& \left|\left\la N'(\Psi_c)U-N(U),\Phi_c\right\ra_{\bE_x}\right|+\left|\left\la N'(\Psi_c)U-N(U),\Phi_{c_0}\right\ra_{\bE_y}\right|
\\ \lesssim & (\|\Psi_c\|_{\bX_1}+\|U_1\|_{\bW(t)}+\|U_2\|_{\bX_1})
(\|U_1\|_{\bW(t)}+\|U_2\|_{\bX_1})
\\ \lesssim &  (\bM_{c,\gamma}(T)+\bM_1(T)+\bM_2(T))^2\la t\ra^{-3/2}\,.
\end{align*}
\par
In view of $\pd_y^2\Phi_{c_0}(z)= -\gamma_{yy}\pd_x\Phi{c_0}(z)+(\gamma_y)^2\pd_x^2\Phi_{c_0}(z)$, we have\newline
$\|\sech(\a z) \pd_y^2L_1\Phi_{c_0}\|_\bE\lesssim \|\gamma_{yy}\|_Y+\|(\gamma_y)^2\|_Y $ and
\begin{align*}
|\la U,\pd_y^2L_1\Phi_{c_0}\ra_\bE| \lesssim & 
(\|U_1\|_{\bW(t)}+\|U_2\|_{\bX_1})(\|\gamma_{yy}\|_Y+\|(\gamma_y)^2\|_Y)
\\ \lesssim & (\bM_{c,\gamma}(T)+\bM_1(T)+\bM_2(T))^2\la t\ra^{-3/2}\,.
\end{align*}
We can estimate the rest of the terms in a similar manner.
\end{proof}

\section{Virial identities}
\label{sec:virial}
In this section, we will prove virial identities for small solutions
to \eqref{eq:BL}. They give bounds on the transport of energy and
the rate of decay of the energy density in the region $x>c_1t$
with $c_1>1$.

If the initial data is small in the energy space and polynomially localized,
we can prove time decay estimates by using virial identities.
\begin{lemma}
  \label{lem:virial-0}
Let $\Phi(t)$ be a solution of \eqref{eq:BL1}.
For any $c_1>1$, there exist positive constants $\a_0$ and $\delta$
such that if $\a\in(0,\a_0)$ and $\|\Phi(0)\|_{\bE}<\delta$,
then
\begin{equation}
\label{eq:U1-decay1}
\int_0^\infty\int_{\R^2} p_\a(x-c_1t)\mathcal{E}(\Phi(t,x,y))\,dxdydt
\lesssim 
\int_{\R^2} (1+x_+)p_\a(x)\mathcal{E}(\Phi(0,x,y))\,dxdy\,,
\end{equation}
\begin{equation}
\label{eq:U1-decay2}
\begin{split}
& \left\|(1+x_+)^{\rho_1}p_\a(x)^{1/2}\mathcal{E}(\Phi(t,x+c_1t,y))^{1/2}
\right\|_{L^2(\R^2)}
\\ & \lesssim
\la t \ra^{-(\rho_2-\rho_1)}
\left\|(1+x_+)^{\rho_2}p_\a(x)^{1/2}\mathcal{E}(\Phi(0,x,y))^{1/2} 
\right\|_{L^2(\R^2)}\,,  
\end{split}
\end{equation}
where $\rho_1$ and $\rho_2$ are constants satisfying $\rho_2>\rho_1\ge0$.
\end{lemma}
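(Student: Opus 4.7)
The plan is a weighted (virial) energy argument: since the linearization of \eqref{eq:BL} at $\Phi=0$ has group velocity bounded by $1$ when $0<a<b$ (by \eqref{eq:drbl}) while the moving observer propagates at speed $c_1>1$, a moving window strictly outruns every linear energy mode. Writing \eqref{eq:BL1} as a first-order system and using $B\pd_t\phi_2=A\Delta\phi_1+N$, one verifies that the energy density satisfies the pointwise conservation law $\pd_t\mathcal{E}(\Phi)=\nabla\cdot\mathcal{F}(\Phi)$, where $\mathcal{F}$ consists of local bilinear terms, one nonlocal term $b\phi_2\pd_x\pd_t\phi_2$ (which, after using the equation, is a pseudodifferential operator in $\phi_1$ with symbol proportional to $\xi\omega(\xi,\eta)^2$), and a cubic piece $-\phi_2^2\nabla\phi_1$. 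Testing against any weight $q(t,x)\ge 0$ gives
$$
\frac{d}{dt}\int q\,\mathcal{E}\,dxdy=\int\pd_tq\,\mathcal{E}\,dxdy-\int\pd_xq\,\mathcal{F}_1(\Phi)\,dxdy.
$$

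The core linear estimate I would prove is a flux bound
$$
\Bigl|\int\pd_xq\,\mathcal{F}_1(\Phi)\,dxdy\Bigr|\le(1+o(1))\int\pd_xq\,\mathcal{E}(\Phi)\,dxdy\quad\text{as }\a,\|\Phi(t)\|_\bE\to 0,
$$
for the weights used below. The local bilinear pieces of $\mathcal{F}_1$ are handled by Cauchy-Schwarz in the norm $\mathcal{E}^{1/2}$, with commutators between $q$ and differential operators producing the $o(1)$ correction; the nonlocal piece is treated in Fourier, where the inequality reduces to $\omega(\xi,\eta)^2\le\xi^2+\eta^2$, i.e., $|\nabla_\xi\omega|\le 1$; and the cubic piece $-\phi_2^2\pd_x\phi_1$ contributes $O(\|\Phi\|_\bE)\int\pd_xq\,\mathcal{E}$ by Sobolev embedding. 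For \eqref{eq:U1-decay1} I would use the primitive weight $Q(t,x)=c_1^{-1}\int_{-\infty}^{x-c_1t}p_\a(u)\,du$, which satisfies $\pd_tQ=-p_\a(x-c_1t)$, $\pd_xQ=c_1^{-1}p_\a(x-c_1t)$, and $Q(0,x)\le C(1+x_+)p_\a(x)$. Combined with the flux estimate this gives, for small $\a$ and small $\delta$,
$$
\frac{d}{dt}\int Q\,\mathcal{E}\,dxdy+\frac{c_1-1}{2c_1}\int p_\a(x-c_1t)\mathcal{E}\,dxdy\le 0,
$$
and integrating on $[0,\infty)$ yields \eqref{eq:U1-decay1}.

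For \eqref{eq:U1-decay2} I would use
$$
v(t,x)=(1+(x-t)_+)^{2(\rho_2-\rho_1)}(1+(x-c_1t)_+)^{2\rho_1}p_\a(x-c_1t),
$$
chosen so that the product rule gives $\pd_tv\le 0$, $\pd_xv\ge 0$, and
$$
\pd_tv+\pd_xv=(1-c_1)(1+(x-t)_+)^{2(\rho_2-\rho_1)}\bigl[2\rho_1(1+(x-c_1t)_+)^{2\rho_1-1}\mathbf{1}_{x>c_1t}p_\a+(1+(x-c_1t)_+)^{2\rho_1}p_\a'\bigr]\le 0
$$
pointwise (crucially using $c_1>1$). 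Hence $\frac{d}{dt}\int v\,\mathcal{E}\,dxdy\le 0$, so $\int v(t)\mathcal{E}(t)\,dxdy\le\int v(0)\mathcal{E}(0)\,dxdy=\int(1+x_+)^{2\rho_2}p_\a\mathcal{E}(0)\,dxdy$. Lower-bounding $v(t,x)\ge(1+(c_1-1)t)^{2(\rho_2-\rho_1)}(1+(x-c_1t)_+)^{2\rho_1}p_\a(x-c_1t)$ on $\{x>c_1t\}$ and $v(t,x)\ge(1+\tfrac12(c_1-1)t)^{2(\rho_2-\rho_1)}p_\a(x-c_1t)$ on the intermediate strip $\{c_1t-\tfrac12(c_1-1)t<x<c_1t\}$, and using that $p_\a(x-c_1t)=O(e^{-\a(c_1-1)t/2})$ together with energy conservation $\int\mathcal{E}(t)\lesssim\delta^2$ on the far-behind region, one recovers \eqref{eq:U1-decay2} after the translation $x\mapsto x+c_1t$.

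The main obstacle is the flux estimate: while the local bilinear and cubic pieces are standard, controlling the nonlocal contribution from $b\phi_2\pd_x\pd_t\phi_2$ without losing derivatives requires a careful Fourier-symbolic argument showing that its associated quadratic form is dominated---up to small commutators with $q$---by that defining $\mathcal{E}$, which is precisely the analytic reformulation of $|\nabla_\xi\omega|\le 1$ in the regime $0<a<b$.
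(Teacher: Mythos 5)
Your setup coincides with the paper's: the pointwise conservation law $\pd_t\mathcal{E}(\Phi)=\nabla\cdot\mathcal{F}(\Phi)$, and the key linear fact that the quadratic flux is dominated by the energy density, which the paper isolates as Claim~\ref{cl:em} (namely $\int(\mathcal{E}(\Phi)+\mathbf{e_1}\cdot\mathcal{F}_{quad}(\Phi))\,dxdy\ge0$, proved in Fourier and amounting exactly to your $|\nabla_\xi\omega|\le1$), with commutator errors of size $O(\a)\int p_\a'\mathcal{E}$ controlled by Claim~\ref{cl:em-commu}. Your derivation of \eqref{eq:U1-decay1} via the primitive weight $Q$ is sound: there $-\pd_tQ=p_\a$ and $\pd_xQ=c_1^{-1}p_\a$ are proportional, so the margin $1-(1+o(1))/c_1>0$ is uniform in $(t,x)$ and the $o(1)$ flux errors are absorbed everywhere.

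The gap is in \eqref{eq:U1-decay2}. With $v=F(x-t)G(x-c_1t)$ you correctly have $\pd_tv+\pd_xv=(1-c_1)FG'\le0$, but what you must control is $\int\pd_tv\,\mathcal{E}-\int\pd_xv\,\mathcal{F}_1\le\int(\pd_tv+\pd_xv)\mathcal{E}+C(\a+\|\Phi\|_\bE)\int\pd_xv\,\mathcal{E}$, and the error contains $C(\a+\delta)\int F'G\,\mathcal{E}$, which is not dominated by the good term $(c_1-1)\int FG'\,\mathcal{E}$. For $\rho_1=0$ one has $G=p_\a$, and in the region $x-c_1t\gg1$ the ratio $G'/G=p_\a'/p_\a\sim2\a e^{-2\a(x-c_1t)}$ is exponentially small while $F'/F\sim2\rho_2/(x-t)$ is only polynomially small, so no smallness of $\a,\delta$ makes $\eps F'G\le(c_1-1-\eps)FG'$ hold there; data concentrated far ahead of the front makes $\int FG'\mathcal{E}$ arbitrarily small while $\int F'G\,\mathcal{E}\sim\|\Phi\|_\bE^2$, and $\frac{d}{dt}\int v\mathcal{E}\le0$ does not follow. (The case $\rho_1=0$ is genuinely needed, e.g.\ for the $\la t\ra^{2}\|U_1(t)\|_{\bW(t)}$ bound inside $\bM_1$.) The paper avoids this: it proves the single monotonicity inequality \eqref{eq:virial-pa} for the weight $p_\a$ and then obtains \eqref{eq:U1-decay2} by the iteration of \cite[Lemma~7.2]{Miz19} on $\mathcal{I}_k(t)=\int(1+(x-c_1t)_+)^{2k}p_\a(x-c_1t)\mathcal{E}\,dxdy$: since $\pd_\xi[(1+\xi_+)^{2k}p_\a(\xi)]\ge c(\a,k)(1+\xi_+)^{2k-1}p_\a(\xi)$ for all $\xi$, one gets $\frac{d}{dt}\mathcal{I}_k\le-c\,\mathcal{I}_{k-1/2}$, which together with the almost-monotonicity of $\mathcal{I}_{k-1/2}$ yields $\mathcal{I}_{k-1/2}(t)\lesssim t^{-1}\mathcal{I}_k(0)$, and iterating trades half-powers of the weight for half-powers of $t^{-1}$. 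There every error is supported where the full derivative of the weight is bounded below by the weight of the next lower order, so absorption is uniform --- precisely the feature your single space-time weight lacks. You should replace your second weight by this iteration (or restrict to $\rho_1>0$ and supply a separate argument for $\rho_1=0$).
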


To start with, we observe the energy conservation law of the Benney-Luke
equation. Let $\mathcal{E}$ and $\mathcal{F}_{quad}$ be as
in Section~\ref{subsec:ansatz} and let
\begin{gather*}
\mathcal{F}(\phi_1,\phi_2)
=\mathcal{F}_{quad}(\phi_1,\phi_2)+\mathcal{F}_{cubic}(\phi_1,\phi_2)\,,
\\
\mathcal{F}_{cubic}(\phi_1,\phi_2)
=-b\phi_2\nabla B^{-1}(\phi_2\Delta \phi_1+2\nabla\phi_1\cdot\nabla\phi_2)
-\phi_2^2\nabla\phi_1\,. 
\end{gather*}
By a straightforward computation,
\begin{gather*}
\pd_t \mathcal{E}(\Phi)=
\pd_t\phi (B\pd_t^2\phi-A\Delta\phi)+\nabla\cdot\mathcal{F}_1\,,\\
\mathcal{F}_1=
b(\pd_t\phi)(\nabla\pd_t^2\phi)+(\pd_t\phi)(\nabla\phi)
+a\{(\Delta\phi)(\nabla\pd_t\phi)-(\pd_t\phi)(\nabla\Delta\phi)\}\,.
\end{gather*}
In view of \eqref{eq:BL1},
\begin{align*}
& \pd_t\phi (B\pd_t^2\phi-A\Delta\phi)
=-\nabla\cdot\{(\pd_t\phi)^2\nabla\phi\}\,,
\\ &
b\pd_t^2\phi-a\Delta\phi+\phi=B^{-1}A\phi-bB^{-1}
\{(\pd_t\phi)(\Delta\phi)+\pd_t|\nabla\phi|^2\}\,.
\end{align*}
Combining the above, we have
\begin{equation}
  \label{eq:energy-flux}
\pd_t\mathcal{E}(\Phi)=\nabla\cdot\mathcal{F}(\Phi)\,.  
\end{equation}

To prove Lemma~\ref{lem:virial-0}, we need the following.
\begin{claim}
  \label{cl:em}
Suppose that $\Phi=(\phi_1,\phi_2)^T\in \bE$.
Then
$$\int_{\R^2} \left(\mathcal{E}(\Phi)+\mathbf{e_1}\cdot
\mathcal{F}_{quad}(\Phi)\right)\,dxdy\ge0\,.$$
\end{claim}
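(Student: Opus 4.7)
The plan is to apply Plancherel's identity to pass the integral into Fourier variables, and then verify that the resulting integrand is a non-negative Hermitian quadratic form in $(\hat{\phi}_1(\xi,\eta),\hat{\phi}_2(\xi,\eta))$ at every frequency. With $r:=\xi^2+\eta^2$, using that $A$ and $B$ have Fourier symbols $1+ar$ and $1+br$, Plancherel gives
\[
2\int_{\R^2}\mathcal{E}(\Phi)\,dxdy=\int_{\R^2}\bigl\{r(1+ar)|\hat{\phi}_1|^2+(1+br)|\hat{\phi}_2|^2\bigr\}\,d\xi d\eta.
\]
For the cross term I would integrate $a\Delta\phi_1\,\pd_x\phi_2$ by parts in $x$ so as to combine it with $\phi_2\,B^{-1}A\,\pd_x\phi_1$, obtaining $\int_{\R^2}\phi_2\,(B^{-1}A-a\Delta)\,\pd_x\phi_1\,dxdy$; since the composition $(B^{-1}A-a\Delta)\,\pd_x$ has Fourier symbol $i\xi(1+2ar+abr^2)/(1+br)$, Plancherel expresses the cross term as $2\Re\!\int_{\R^2}\bigl(-i\xi(1+2ar+abr^2)/(2(1+br))\bigr)\overline{\hat{\phi}_1}\hat{\phi}_2\,d\xi d\eta$.

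At each frequency the integrand is then a $2\times 2$ Hermitian form with diagonal entries $\tfrac12 r(1+ar)$, $\tfrac12(1+br)$ and off-diagonal entry $-i\xi(1+2ar+abr^2)/(2(1+br))$. The form is pointwise non-negative iff its determinant is non-negative, that is,
\[
r(1+ar)(1+br)^3\ge \xi^2(1+2ar+abr^2)^2.
\]
Since $\xi^2\le r$, it suffices to prove the frequency-free algebraic inequality
\[
(1+ar)(1+br)^3\ge(1+2ar+abr^2)^2\quad\text{for all }r\ge0,
\]
which is where the assumption $0<a<b$ enters.

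To verify the algebraic inequality I would set $X=1+ar$, $Y=1+br$ and use the identities $XY-(1+2ar+abr^2)=(b-a)r$ and $Y-X=(b-a)r$. Substituting and regrouping gives the factorization
\[
XY^3-(1+2ar+abr^2)^2=(b-a)r\,\bigl[XY(Y+2)-(b-a)r\bigr].
\]
Expanding $XY(Y+2)=(1+ar)(1+br)(3+br)$ and subtracting $(b-a)r$ yields the polynomial $3+(4a+3b)r+(b^2+4ab)r^2+ab^2r^3$, whose coefficients are strictly positive whenever $0<a<b$; combined with $(b-a)r\ge0$ this proves the inequality and closes the argument. I expect the main obstacle to be the careful symbol computation of the cross term, where the signs arising from the Plancherel formula together with the $x$-integration by parts must be tracked correctly; once the Hermitian form is written down, reducing to the scalar inequality via $\xi^2\le r$ and verifying it by factorization is routine.
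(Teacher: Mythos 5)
Your proof is correct and takes essentially the same route as the paper: Plancherel reduces the claim to pointwise positive semi-definiteness of a $2\times 2$ Hermitian symbol matrix, which the paper verifies via the eigenvalue (Gershgorin-type) bound $\kappa_j\ge 1+a(\xi^2+\eta^2)-|d(\xi,\eta)|$ together with $|\xi|\le(\xi^2+\eta^2)^{1/2}$ and $a<b$, whereas you verify the determinant condition and the resulting polynomial inequality by the factorization $XY^3-(XY-\delta)^2=\delta\bigl(XY(Y+2)-\delta\bigr)$ with $\delta=(b-a)r\ge0$. Your symbol computation for the combined cross term, the reduction via $\xi^2\le r$, and the positivity of the coefficients $3$, $4a+3b$, $b^2+4ab$, $ab^2$ all check out, so the argument is sound; it is merely a more computational way of certifying the same pointwise positivity.
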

\begin{proof}
Let  $U(\xi,\eta)=\left((\xi^2+\eta^2)^{1/2}
\mF_y\phi_1(\xi,\eta),\mF_y\phi_2(\xi,\eta)\right)^T$ and
\begin{gather*}
d(\xi,\eta)=\frac{1+a(\xi^2+\eta^2)}{1+b(\xi^2+\eta^2)}
\frac{\xi}{\sqrt{\xi^2+\eta^2}}+a\xi\sqrt{\xi^2+\eta^2}\,,
\\  
D(\xi,\eta)=
\begin{pmatrix}
  1+a(\xi^2+\eta^2) & -id(\xi,\eta) \\ id(\xi,\eta) & 1+b(\xi^2+\eta^2)
\end{pmatrix}\,.
\end{gather*}
By the Plancherel theorem, 
$$
\int_{\R^2} \left(\mathcal{E}(\Phi)+\mathbf{e_1}\cdot
\mathcal{F}_{quad}(\Phi)\right)\,dxdy
=\frac12\int_{\R^2} U(\xi,\eta)^TD(\xi,\eta)\overline{U(\xi,\eta)}\,
d\xi d\eta\,.$$
Since $b>a>0$, the eigenvalues $\kappa_1(\xi,\eta)$ and $\kappa_2(\xi,\eta)$
of the Hermitian matrix $D(\xi,\eta)$ satisfies
$$\kappa_j(\xi,\eta)\ge 1+a(\xi^2+\eta^2)-|d(\xi,\eta)|\ge0\,.$$
Thus $D(\xi,\eta)$ is non-negative and Claim~\ref{cl:em} follows.
\end{proof}

Now we are in a position to prove Lemma~\ref{lem:virial-0}.
\begin{proof}[Proof of Lemma~\ref{lem:virial-0}]
Let
$$\mathcal{I}(t)=
\int_{\R^2}p_\a(x-c_1t)\mathcal{E}(\Phi)(t,x,y)\,dxdy\,.$$
By \eqref{eq:energy-flux},
\begin{align*}
  \frac{d}{dt}\mathcal{I}(t)
=& -\int_{\R^2}p_\a'(x-c_1t)\left\{c_1\mathcal{E}(\Phi)+\mathbf{e}_1\cdot
\mathcal{F}(\Phi)\right\}\,dxdy\,.
\end{align*}
Now let $\tilde{p}_\a=\sech\a(x-c_1t)$. Then $p_\a'(x-c_1t)=\a\tilde{p}_\a^2$
and it follows from Claims~\ref{cl:em} and \ref{cl:em-commu} and
\eqref{eq:pd-1} that
\begin{align*}
&  -\int_{\R^2}p_\a'(x-c_1t)\left(\mathcal{E}(\Phi)+\mathbf{e}_1\cdot
\mathcal{F}_{quad}(\Phi)\right)\,dxdy
\\=& 
 -\a\int_{\R^2}\left(\mathcal{E}(\tilde{p}_\a\Phi)+\mathbf{e}_1\cdot
\mathcal{F}_{quad}(\tilde{p}_\a\Phi)\right)\,dxdy
+O\left(\a\int_{\R^2}p_\a'(x-c_1t)\mathcal{E}(\Phi)\,dxdy \right)
\\ \lesssim & 
\a\int_{\R^2}p_\a'(x-c_1t)\mathcal{E}(\Phi)\,dxdy\,.
\end{align*}
Here we use the fact that $\a\|u\|_{L^2_\a(\R^2)}\le \|\pd_xu\|_{L^2(\R^2)}$.
Moreover,
\begin{gather*}
\left|\int_{\R^2} p_\a'(x-c_1t)\mathcal{F}_{cube}(\Phi)\,dxdy\right|
\lesssim
\|\Phi(t)\|_{\bE} \int_{\R^2} p_\a'(x-c_1t)\mathcal{E}(\Phi)\,dxdy\,.
\end{gather*}
Since $\|\Phi(t)\|_{\bE}=\|\Phi(0)\|_{\bE}$ by the energy conservation law,
we have for $t\ge0$,
\begin{equation}
  \label{eq:virial-pa}
  \begin{split}
& \int_{\R^2}p_\a(x-c_1t)\mathcal{E}(\Phi(t))\,dxdy
+\mu\int_0^\infty\int_{\R^2}p_\a(x-c_1t)\mathcal{E}(\Phi(t))\,dxdydt
\\ \lesssim & 
\int_{\R^2}p_\a(x)\mathcal{E}(\Phi(0))\,dxdy
  \end{split}
\end{equation}
with $\mu=(c_1-1)/2$
provided $\a_0$ and $\delta_5$ are sufficiently small.
\par
Using \eqref{eq:virial-pa}, we can prove \eqref{eq:U1-decay1} and
\eqref{eq:U1-decay2} in the same way as \cite[Lemma~7.2]{Miz19}.
Thus we complete the proof.
\end{proof}

Lemma~\ref{lem:virial-0} and \eqref{eq:energyU1} give an upper bound of
$\bM_1(T)$. 
\begin{lemma}
  \label{lem:virial-a}
There exist positive constants $\a_0$, $\delta_5$ and $C$ such that
if $\a\in(0,\a_0)$ and $\|U_0\|_\bE+\bM_\infty(T)\le \delta_5$, then
$\bM_1(T)\le C\left\|\la x\ra^2\mathcal{E}(U_0)^{1/2}\right\|_{L^2(\R^2)}$.
\end{lemma}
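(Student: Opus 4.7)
The plan is to reduce the estimate to Lemma~\ref{lem:virial-0} by transforming $U_1$ into a solution of the original Benney--Luke system \eqref{eq:BL1}. Setting $\widetilde\Phi(t,x,y):=U_1(t,x-c_0t,y)$, the chain rule together with \eqref{eq:U1} yields $\pd_t\widetilde\Phi=L\widetilde\Phi+N(\widetilde\Phi)$ with initial datum $\widetilde\Phi(0,\cdot)=U_0$. Since $\|\widetilde\Phi(0)\|_\bE=\|U_0\|_\bE\le\delta_5$, for $\delta_5$ sufficiently small the virial estimates \eqref{eq:U1-decay1}--\eqref{eq:U1-decay2} of Lemma~\ref{lem:virial-0} apply to $\widetilde\Phi$ with $c_1=(c_0+1)/2>1$.

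I would then compare the weight $p_\a(z_1)$ defining the $\bW(t)$-norm with the weight $p_\a(x'-c_1t)$ used in Lemma~\ref{lem:virial-0}. Under the substitution $x'=x+c_0t$, one computes
$$z_1=x'-c_0t-\gamma(t,y)+\tfrac{c_0-1}{2}t+h=(x'-c_1t)+(h-\gamma(t,y))\,.$$
The displacement $h-\gamma(t,y)$ is uniformly bounded, since $h$ is the fixed constant of Lemma~\ref{lem:Implicit} and $|\gamma(t,y)|\le\bM_\infty(T)\le\delta_5$. Because $p_\a(\xi+O(1))\simeq p_\a(\xi)$ with constants depending only on $\a$, $h$ and $\delta_5$,
$$\|U_1(t)\|_{\bW(t)}^2\simeq\int_{\R^2}p_\a(x'-c_1t)\mathcal{E}(\widetilde\Phi)(t,x',y)\,dx'dy\,.$$
Similarly, writing $x'-c_0t-\gamma=(x'-c_1t)-\tfrac{c_0-1}{2}t-\gamma$ and using $c_0>c_1$ together with $|\gamma|\le\delta_5$, one obtains $z_+\le(x'-c_1t)_++C$ uniformly in $t\ge0$, so $(1+z_+)^2p_\a(z_1)\lesssim(1+(x'-c_1t)_+)^2p_\a(x'-c_1t)$ pointwise.

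With these comparisons the four constituent pieces of $\bM_1(T)$ follow directly. The energy identity \eqref{eq:energyU1} yields $\|U_1(t)\|_\bE=\|U_0\|_\bE\lesssim\|\la x\ra^2\mathcal{E}(U_0)^{1/2}\|_{L^2}$. Applying \eqref{eq:U1-decay2} to $\widetilde\Phi$ with $(\rho_1,\rho_2)=(0,2)$ and with $(\rho_1,\rho_2)=(1,2)$, combined with the elementary bound $(1+x_+)^2p_\a(x)^{1/2}\lesssim\la x\ra^2$, controls $\la t\ra^2\|U_1(t)\|_{\bW(t)}$ and $\la t\ra\|(1+z_+)U_1(t)\|_{\bW(t)}$ by $\|\la x\ra^2\mathcal{E}(U_0)^{1/2}\|_{L^2}$. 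Finally, \eqref{eq:U1-decay1} bounds $\int_0^T\|U_1(t)\|_{\bW(t)}^2\,dt$ by $\|\la x\ra\mathcal{E}(U_0)^{1/2}\|_{L^2}^2$, controlling the space--time piece. The only subtlety is the weight comparison: since $\gamma$ is only uniformly bounded (not small) and $h$ is a fixed large constant, the implicit constants depend on these parameters, which is harmless for the final inequality.
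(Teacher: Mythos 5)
Your argument is correct and is exactly the route the paper intends: the paper gives no written proof, stating only that Lemma~\ref{lem:virial-0} and the energy identity \eqref{eq:energyU1} yield the bound, and your proposal fills in precisely those details (undoing the Galilean shift so $U_1$ becomes a solution of \eqref{eq:BL1}, applying \eqref{eq:U1-decay1}--\eqref{eq:U1-decay2} with $c_1=(c_0+1)/2$, and comparing the weights $p_\a(z_1)$ and $p_\a(x'-c_1t)$ using the uniform bound on $h-\gamma$). No gaps.
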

\bigskip

\section{The decay estimate for the exponentially localized 
perturbations}
\label{sec:U2}
In this section, we will estimate $\bM_2(T)$ using the exponential linear
stability of $e^{t\mL_{c_0}}\mathcal{Q}_{c_0}$
(Theorem~\ref{thm:linear-stability}).
\begin{lemma}
  \label{lem:exp-bound}
Let $\a\in(0,\a_{c_0})$ and $\eta_0$ be a sufficiently small positive number.
Then there exist positive constants $\delta_6$ and $C$
such that if  $\|\la x\ra^2\mathcal{E}(U_0)^{1/2}\|_{L^2}+\bM_{c,\gamma}(T)
+\bM_\infty(T)+\bM_2(T)+\bM_U(T)\le \delta_6$,
\begin{equation}
  \label{eq:M4-bound}
\bM_2(T)\le C\left(\|\la x\ra^2\mathcal{E}(U_0)^{1/2}\|_{L^2}
+\bM_{c,\gamma}(T)\right)\,.
\end{equation}
\begin{proof}
Applying Theorem~\ref{thm:linear-stability} to \eqref{eq:U2}, we have
\begin{align*}
  \|\mathcal{Q}_{c_0}U_2\|_{\bX_1}
\lesssim & \int_0^t e^{-\beta'(t-s)} \left(
\|\ell\|_{\bX_1}+\|N_1(s)\|_{\bX_1}+\|N_2(s)\|_{\bX_1}+\|N_3(s)\|_{\bX_1}
\right)\,ds\,.
\end{align*}
By \eqref{eq:defl2}, \eqref{eq:defl1} and
Claims~\ref{cl:cx_t-bound} and \ref{cl:B-1},
\begin{align*}
& \|\ell_1\|_{\bX_1}\lesssim 
(\bM_{c,\gamma}(T)+\bM_1(T)+\bM_2(T)^2)\la t\ra^{-3/4}\,,
\\ &  
\|\ell_2\|_{\bX_1}\lesssim 
(\bM_{c,\gamma}(T)+\bM_1(T)+\bM_2(T)^2)e^{-\a\{(c_0-1)t/2+h\}}\,.
\end{align*}
Since $N_1=\{N'(\Phi_{c(t,y)}(z)-\Phi_{c_0}(x))-N'(\Psi_{c(t,y)}(z_1))\}U_2$,
\begin{align*}
 \|N_1\|_{\bX_1} \lesssim & (\bM_{c,\gamma}(T)+\bM_\infty(T))
\bM_2(T)\la t\ra^{-3/4}\,.
\end{align*}
By \eqref{eq:energyU1},
\begin{align*}
\|N_2\|_{\bX_1}\lesssim  &(\|U_1\|_\bE+\|U_2\|_\bE)\|U_2\|_{\bX_1}
 \lesssim (\|U_0\|_\bE+\bM_U(T))\bM_2(T)\la t\ra^{-3/4}\,.
\end{align*}
Since $|\nabla^j\Phi_{c(t,y)}(z)|+ |\nabla^j\Psi_{c(t,y)}(z_1)|\lesssim e^{-2\a z}
\wedge 1$ for $j\ge0$,
$$\|N_3\|_{\bX_1}\lesssim  \|U_1\|_{\bW(t)}\lesssim \bM_1(T)\la t\ra^{-2}\,.
$$
Combining the above, we have for $t\in[0,T]$,
\begin{align*}
\|\mathcal{Q}_{c_0}(\eta_0)U_2\|_{\bX_1}
 \lesssim & (\bM_{c,\gamma}(T)+\bM_1(T))\la t\ra^{-3/4}
\\ & +  \bM_2(T)(\|U_0\|_\bE+\bM_2(T)+\bM_{c,\gamma}(T)+\bM_\infty(T)+\bM_U(T))
\la t\ra^{-3/4}\,.
\end{align*}
Since $\|\mathcal{P}_{c_0}(\eta_0)U_2\|_{\bX_1}\lesssim
(\bM_{c,\gamma}(T)+\bM_\infty(T))\|U_2\|_{\bX_1}$ by \eqref{eq:orth}, we have
$$\|U_2(t)\|_{\bX_1}\lesssim \|\mathcal{Q}_{c_0}(\eta_0)U_2\|_{\bX_1}\,.$$
Combining the above with Lemma~\ref{lem:virial-a}, we have 
\eqref{eq:M4-bound} provided $\delta_6$
is sufficiently small. Thus we complete the proof.
\end{proof}
\end{lemma}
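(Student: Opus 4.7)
The plan is to apply Duhamel's formula to \eqref{eq:U2} and exploit the exponential linear stability of $e^{t\mL_{c_0}}\mathcal{Q}_{c_0}(\eta_0)$ from Theorem~\ref{thm:linear-stability}. Since $U_2(0)=0$,
\[ U_2(t)=\int_0^t e^{(t-s)\mL_{c_0}}\bigl(\ell+N_1+N_2+N_3\bigr)(s)\,ds, \]
and I split $U_2=\mathcal{P}_{c_0}(\eta_0)U_2+\mathcal{Q}_{c_0}(\eta_0)U_2$. The orthogonality condition \eqref{eq:orth} is centered at $c(t,y)$ rather than $c_0$, so first-order expansion in $c-c_0$ and $\gamma$ yields the bound $\|\mathcal{P}_{c_0}(\eta_0)U_2\|_{\bX_1}\lesssim(\bM_{c,\gamma}(T)+\bM_\infty(T))\|U_2\|_{\bX_1}$, a perturbative error absorbable once $\delta_6$ is small. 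On the complement the semigroup decays like $e^{-\beta'(t-s)}$, so it remains to estimate the forcing in the exponentially weighted norm $\bX_1$.

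For each forcing term I aim at an integrand of size $\la s\ra^{-3/4}$, which, after $e^{-\beta'(t-s)}$ convolution, produces the target rate $\la t\ra^{-3/4}$. The term $\ell_1$ is built from exponentially $z$-localized profiles (derivatives of $\Phi_{c(t,y)}$) multiplied by $c_t$, $\gamma_t-\tc$, second $y$-derivatives, and quadratic combinations of first $y$-derivatives of $c$ and $\gamma$; Claim~\ref{cl:cx_t-bound} and the definition of $\bM_{c,\gamma}(T)$ give the desired rate. The term $\ell_2$ is supported near $z_1\approx 0$, i.e.\ at $x\approx-(c_0-1)s/2-h$, so the weight $e^{2\alpha x}$ produces an exponential gain $e^{-\alpha\{(c_0-1)s/2+h\}}$. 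For $N_1=\{N'(\Phi_c-\Psi_c)-V_{c_0}\}U_2$, the bracket vanishes at $c=c_0$, $\Psi_c=0$, so its coefficient is exponentially localized of size $O(\bM_{c,\gamma}(T)+\bM_\infty(T))$, giving $\|N_1\|_{\bX_1}\lesssim(\bM_{c,\gamma}(T)+\bM_\infty(T))\bM_2(T)\la s\ra^{-3/4}$. The term $N_2=N(U)-N(U_1)$ is bilinear in $(U_1+U_2,U_2)$, so by the conservation law \eqref{eq:energyU1} it is dominated by $(\|U_0\|_\bE+\bM_U(T))\bM_2(T)\la s\ra^{-3/4}$.

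The main obstacle is $N_3=N'(\Phi_{c(t,y)}(z)-\Psi_{c(t,y)}(z_1))U_1$, because $U_1$ lives only in the global energy space $\bE$ and is not a priori exponentially localized in $x$. The resolution is that the derivatives of $\Phi_c$ and of $\Psi_c$ which multiply $U_1$ in $N_3$ decay exponentially in $z$ and $z_1$ respectively, so $\|N_3\|_{\bX_1}$ is controlled by the \emph{window-localized} energy norm $\|U_1\|_{\bW(t)}$, precisely the quantity measured by the factor $\la t\ra^2\|U_1\|_{\bW(t)}$ in $\bM_1(T)$. Thus $\|N_3\|_{\bX_1}\lesssim\bM_1(T)\la s\ra^{-2}$, even stronger than what we need. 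This polynomial decay of the window energy is exactly what the virial identity of Section~\ref{sec:virial} (summarized in Lemma~\ref{lem:virial-a}) provides.

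Assembling these bounds and carrying out the $e^{-\beta'(t-s)}$ convolution (which preserves the slowest polynomial rate $\la s\ra^{-3/4}$), I obtain
\[ \la t\ra^{3/4}\|U_2(t)\|_{\bX_1}\lesssim\bM_1(T)+\bM_{c,\gamma}(T)+\bigl(\|U_0\|_\bE+\bM_{c,\gamma}(T)+\bM_\infty(T)+\bM_U(T)+\bM_2(T)\bigr)\bM_2(T). \]
Choosing $\delta_6$ small enough to absorb the last bracket into the left-hand side, and applying Lemma~\ref{lem:virial-a} to replace $\bM_1(T)$ by $\|\la x\ra^2\mathcal{E}(U_0)^{1/2}\|_{L^2}$, yields \eqref{eq:M4-bound}. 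The key conceptual point is that the exponentially localized target $U_2$ couples to the non-localized $U_1$ only through $N_3$, and this coupling is tamed by the virial-driven decay of $\|U_1\|_{\bW(t)}$ established independently in Section~\ref{sec:virial}.
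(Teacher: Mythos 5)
Your proposal is correct and follows essentially the same route as the paper: Duhamel plus the exponential decay of $e^{t\mL_{c_0}}\mathcal{Q}_{c_0}(\eta_0)$, the same term-by-term $\bX_1$ bounds on $\ell$, $N_1$, $N_2$, $N_3$ (including the key observation that the coupling to the non-localized $U_1$ occurs only through $N_3$ and is controlled by $\|U_1\|_{\bW(t)}\lesssim\bM_1(T)\la t\ra^{-2}$ from the virial estimate), the smallness of $\mathcal{P}_{c_0}(\eta_0)U_2$ via the orthogonality condition, and absorption for small $\delta_6$ followed by Lemma~\ref{lem:virial-a}. No gaps.
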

\bigskip

\section{Large time behavior of the phase shift of line solitary waves}
\label{sec:phase}
In this section, we will prove Theorem~\ref{thm:main} and
Corollary~\ref{cor:instability2}.  To begin with, we remark that
$\bM_{c,\gamma}(T)$, $\bM_\infty(T)$, $\bM_1(T)$, $\bM_2(T)$ and
$\bM_U(T)$ remain small for every $T\in[0,\infty]$ provided the
initial perturbation $U_0$ is sufficiently small.  Combining
Proposition~\ref{prop:continuation} and Lemmas~\ref{lem:Mcx-bound},
\ref{lem:MU-bound}, \ref{lem:virial-a} and \ref{lem:exp-bound}, we
have the following.
\begin{proposition}
\label{prop:poly}  
There exist positive constants $\eps_0$ and $C$ such that if 
$\eps:=\|(1+x^2+y^2)\mathcal{E}(U_0)^{1/2}\|_{L^2(\R^2)}<\eps_0$,
then
$\bM_{c,\gamma}(\infty)+\bM_1(\infty)+\bM_2(\infty)+\bM_U(\infty)\le C\eps$.
\end{proposition}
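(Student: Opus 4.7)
The plan is a standard continuity/bootstrap argument that combines the four \`a priori estimates already established. Since $\wU_2(0)=0$, $\tc(0)=\gamma(0)=0$, and at $t=0$ both $\bM_1$ and $\bM_U$ are controlled by $\eps$, Proposition~\ref{prop:continuation} supplies a nontrivial interval $[0,T_0]$ on which the decomposition \eqref{eq:decomp}--\eqref{eq:orth} exists with all of $\bM_{c,\gamma}$, $\bM_1$, $\bM_2$, $\bM_U$, $\bM_\infty$ small. First I would fix a constant $K>0$ (to be determined) and introduce the maximal time
\[
T^* = \sup\left\{T\ge 0 : \bM_{c,\gamma}(T)+\bM_1(T)+\bM_2(T)+\bM_U(T)+\bM_\infty(T)\le K\eps\right\},
\]
then argue by contradiction that $T^*<\infty$ is impossible.

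On $[0,T^*]$ the bootstrap bound $K\eps$, for $\eps$ small, lies below each smallness threshold $\delta_i$ appearing in the hypotheses of Lemmas~\ref{lem:Mcx-bound}, \ref{lem:MU-bound}, \ref{lem:virial-a}, and \ref{lem:exp-bound}. I would then run the four lemmas in a carefully chosen order. Lemma~\ref{lem:virial-a} gives $\bM_1(T^*)\le C\eps$. Lemma~\ref{lem:exp-bound} gives $\bM_2(T^*)\le C(\eps+\bM_{c,\gamma}(T^*))$. Feeding these into Lemma~\ref{lem:Mcx-bound},
\[
\bM_{c,\gamma}(T^*)\le C\bigl(\bM_1(T^*)+\bM_2(T^*)^2\bigr)\le C\eps+C'\bigl(\eps+\bM_{c,\gamma}(T^*)\bigr)^2,
\]
and since $\bM_{c,\gamma}(T^*)\le K\eps$ the quadratic term is of order $\eps\cdot\bM_{c,\gamma}(T^*)$ and can be absorbed on the left for $\eps_0$ small, yielding $\bM_{c,\gamma}(T^*)\le C''\eps$. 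The same lemma then bounds $\bM_\infty(T^*)\le C'''\eps$, and Lemma~\ref{lem:MU-bound} bounds $\bM_U(T^*)\le C''''\eps$. Choosing $K$ larger than $C+C''+C'''+C''''$ (and then $\eps_0$ small enough that $K\eps_0\le\min_i\delta_i$) strictly improves each bootstrap bound at $T^*$.

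The strict improvement, combined with continuity of each $\bM$-norm in $T$ and the continuation principle (Proposition~\ref{prop:continuation}), contradicts the maximality of $T^*$, forcing $T^*=\infty$ and giving the claimed global-in-time bound. The main obstacle is the interlocking nature of the smallness hypotheses: each lemma presumes the remaining $\bM$-quantities to be below a fixed threshold, so the lemmas must be applied in an order that lets the bootstrap assumption discharge every precondition. The ordering above is chosen precisely for this purpose, and the only delicate algebraic point is the absorption of the $\bM_2(T^*)^2$ contribution, which works because that term is quadratic in quantities that have already been shown to be $O(\eps)$ rather than $O(1)$.
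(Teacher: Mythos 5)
Your proposal is correct and is essentially the paper's argument: the paper states Proposition~\ref{prop:poly} as an immediate consequence of Proposition~\ref{prop:continuation} together with Lemmas~\ref{lem:Mcx-bound}, \ref{lem:MU-bound}, \ref{lem:virial-a} and \ref{lem:exp-bound}, i.e.\ exactly the continuity/bootstrap closure you spell out, with the same ordering (virial bound for $\bM_1$, then $\bM_2$, then $\bM_{c,\gamma}$ and $\bM_\infty$, then $\bM_U$) and the same absorption of the quadratic $\bM_2^2$ term.
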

We see that \eqref{OS}--\eqref{AS} follows immediately from
Proposition~\ref{prop:poly}.
To prove \eqref{eq:phase-lim}, we need the first order asymptotics
of $\gamma_y$ and $\tc$ as $t\to\infty$.
Let
$\nu=(a_{11}(c_0)-a_{22}(c_0))/2$ and let
$\omega(\eta)$, $\lambda_\pm(\eta)$ and $\Pi(\eta)$
be as in Section~\ref{sec:decay-est} and
$\widetilde{\Pi}(\eta)=\diag(i\eta,1)\Pi(\eta)$. 
Let
$$\mu(t)=\exp\left(\int_t^\infty b_{22}(s,0)\,ds\right)\,,\quad
\bd(t,\cdot)=\mu(t)e^{-\lambda_{1,c_0}t\sigma_3\pd_y}II(D_y)^{-1}\bb(t,\cdot)\,,
$$
where $b_{22}(t,\eta)$ is the $(2,2)$ entry of $\mathcal{A}_1(t,\eta)$
and $\sigma_3=\diag(1,-1)$.
Then \eqref{eq:modeq3} is translated into
\begin{align}
  \label{eq:modeq-bd}
& \bd_t=\left\{\lambda_{2,c_0}\pd_y^2I
+\lambda_{1,c_0}\pd_y\tilde{\omega}(D_y)\sigma_3\right\}\bd
+\pd_y(\widetilde{\mN}+\widetilde{\mN}'')+\widetilde{\mN}'\,,
\\ \label{eq:bd-init}
& \bb(0,\cdot)=-\wP_1\widetilde{\Pi}(D_y)^{-1}\tk(0,\cdot)\mathbf{e_2}\,,
\end{align}
where $\tilde{\omega}(\eta)=\omega(\eta)-1$,
$\widetilde{\mN}=e^{-\lambda_{1,c_0}t\sigma_3\pd_y}\widetilde{\Pi}(D_y)^{-1}
(n_1,n_2)^T$ and
 \begin{align*}
 \widetilde{\mN}'=& e^{-\lambda_{1,c_0}t\sigma_3\pd_y}\widetilde{\Pi}(D_y)^{-1} 
\chi(D_y)\bigl(\mN_2+E_2\obu{\mN}_{U_1}\bigr)\,,
\\ 
\widetilde{\mN}''=& e^{-\lambda_{1,c_0}t\sigma_3\pd_y}\widetilde{\Pi}(D_y)^{-1}
\biggl\{\pd_y^{-1}\left(I-\chi(D_y)\right)\bigl(\mN_2+E_2\obu{\mN}_{U_1}\bigr)
+\mN_3+E_1\obu{\mN}_{U_1}
\\ & 
+\diag(\pd_y,1)\left(\mN_4+\pd_y\oc{\mN}_{U_1}
+\pd_y^{-1}(\mathcal{A}_0(D_y)-\mathcal{A}_*(D_y))\bb\right)
\\ & +E_1\mathcal{A}_1(t,D_y)\bb
+\pd_y^{-1}\left(E_2\mathcal{A}_1(t,D_y)-b_{22}(t,0)E_2\right)\bb
\biggr\}\,.
\end{align*}
Note that $\diag(\pd_y,1)\mN_2=\mN_2$ and $\diag(\pd_y,1)\mN_3=\pd_y\mN_3$
since $E_2\mN_2=\mN_2$ and $E_1\mN_3=\mN_3$.
We have for $\eta\in[-\eta_0,\eta_0]$,
\begin{equation}
\label{eq:Pi-est}
\left|\widetilde{\Pi}(\eta)
-\begin{pmatrix}1  & -1\\ \lambda_{1,c_0} & \lambda_{1,c_0} \end{pmatrix}\right|
+\left|\widetilde{\Pi}(\eta)^{-1}-\frac{1}{2\lambda_{1,c_0}}
\begin{pmatrix}\lambda_{1,c_0} & 1 \\ -\lambda_{1,c_0} & 1 \end{pmatrix}\right|
\lesssim |\eta|\,.
\end{equation}
If $\eta_0$ is sufficiently small, then $\widetilde{\Pi}(D_y)$ and its
inverse belong to $B(Y)$
and it follows from Claim~\ref{cl:k-decay} and the definitions of $\bb$
and $\bd$ that
\begin{equation}
  \label{eq:bx-bb} 
  \begin{split}
& \left\|\begin{pmatrix}\gamma_y(t,\cdot)\\  b(t,\cdot)\end{pmatrix}
-\begin{pmatrix}1 & -1 \\ \lambda_{1,c_0} & \lambda_{1,c_0} \end{pmatrix}
e^{t\lambda_{1,c_0}\sigma_3\pd_y}\bd(t,\cdot)\right\|_Y
\\ & \lesssim \|k(t,\cdot)\|_Y+\|\pd_y\bd(t,\cdot)\|_Y
 \lesssim  \eps\la t\ra^{-3/4}\,.
  \end{split}
\end{equation}
Moreover, we have $\|\bd(0)\|_{Y_1}+\|\chi(D_y)\bd(0)\|_{L^1}\lesssim \eps$.

\par
We will investigate the asymptotic behavior of solutions by using the
compactness argument in \cite{Karch}.  More precisely, we consider the
rescaled solution $\bd_\lambda(t,y)=\lambda\bd(\lambda^2t,\lambda y)$
and prove that for any $t_1$ and $t_2$ satisfying $0<t_1<t_2<\infty$,
\begin{equation}
\label{eq:bd-c}
\lim_{\lambda\to\infty}\sup_{t\in[t_1,t_2]}
\|\bd_\lambda(t,y)-\bd_\infty(t,y)\|_{L^2(\R)}=0\,,
\end{equation}
where $\bd_\infty(t,y)={}^t(d_{\infty,+}(t,y), d_{\infty,-}(t,y))$ 
and $d_{\infty,\pm}(t,y)$ are self-similar solutions of the Burgers' equation
\begin{equation}
\label{eq:Burgers}
\pd_td_\pm=\lambda_{2,c_0}\pd_y^2d_\pm \pm p_3\pd_yd_\pm^2\,,
\quad p_3=\frac{1}{2}\left(\lambda_{1,c_0}^2p_1+a_{15}(c_0)+2a_{23}(c_0)\right)\,,
\end{equation}
satisfying
\begin{equation}
\label{eq:ss}
\lambda\bd_\infty(\lambda^2t,\lambda y)=\bd_\infty(t,y)
\quad\text{for every $\lambda>0$.}  
\end{equation}

First, we will show that rescaled solutions $\bd_\lambda$ are
uniformly bounded with respect to $\lambda\ge1$.
\begin{lemma}
\label{lem:tbb-bound}
Let $\eps$ be as in Proposition~\ref{prop:poly}.
Then there exists a positive constants $C$ such that
for any $\lambda\ge1$ and $t\in(0,\infty)$,
\begin{gather}
  \label{eq:tbb-bound1}
\sum_{k=0,1}\|\pd_y^k\bd_\lambda(t,\cdot)\|_{L^2}\le C\eps t^{-(2k+1)/4} \,,
\quad \|\pd_y^2\bd_\lambda(t,\cdot)\|_{L^2}\le C\eps\lambda^{1/2}t^{-1}\,,
\\  \label{eq:tbb-bound2}
\left\|\pd_t\bd_\lambda(t,\cdot)\right\|_{H^{-2}}
\le C(t^{-3/4}+t^{-5/4})\eps\,.
\end{gather}
\end{lemma}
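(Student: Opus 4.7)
The plan is to deduce \eqref{eq:tbb-bound1} and \eqref{eq:tbb-bound2} from decay estimates on $\bd$ and $\pd_t\bd$ at the unscaled level via the scaling identities
\begin{equation*}
\|\pd_y^k\bd_\lambda(t,\cdot)\|_{L^2}=\lambda^{k+1/2}\|\pd_y^k\bd(\lambda^2 t,\cdot)\|_{L^2}\,,\quad
\widehat{\pd_t\bd_\lambda}(t,\eta)=\lambda^2\widehat{\pd_t\bd}(\lambda^2 t,\eta/\lambda)\,,
\end{equation*}
obtained directly from $\bd_\lambda(t,y)=\lambda\bd(\lambda^2t,\lambda y)$. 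For \eqref{eq:tbb-bound1}, Proposition~\ref{prop:poly} combined with Lemma~\ref{lem:Mcx-bound} gives $\|\pd_y^k\tc(s)\|_Y+\|\pd_y^{k+1}\gamma(s)\|_Y\lesssim \eps\la s\ra^{-(2k+1)/4}$ for $k=0,1$ and $\|\pd_y^2\tc(s)\|_Y+\|\pd_y^3\gamma(s)\|_Y\lesssim \eps\la s\ra^{-1}$. Combined with \eqref{eq:bx-bb} and the $Y$-boundedness of $e^{-\lambda_{1,c_0}t\sigma_3\pd_y}\widetilde{\Pi}(D_y)^{-1}$, this yields $\|\pd_y^k\bd(s,\cdot)\|_{L^2}\lesssim \eps\la s\ra^{-(2k+1)/4}$ for $k=0,1$ and $\|\pd_y^2\bd(s,\cdot)\|_{L^2}\lesssim \eps\la s\ra^{-1}$; substitution into the scaling identity then produces \eqref{eq:tbb-bound1}.

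For \eqref{eq:tbb-bound2} I would use \eqref{eq:modeq-bd} to factor one derivative out of the linear part and write $\pd_t\bd=\pd_y\widetilde{F}+\widetilde{\mN}'$ with
\begin{equation*}
\widetilde{F}:=\left(\lambda_{2,c_0}\pd_yI+\lambda_{1,c_0}\tilde\omega(D_y)\sigma_3\right)\bd+\widetilde{\mN}+\widetilde{\mN}''\,.
\end{equation*}
Setting $\widetilde{F}_\lambda(t,y):=\lambda^2\widetilde{F}(\lambda^2t,\lambda y)$ and $\widetilde{\mN}'_\lambda(t,y):=\lambda^3\widetilde{\mN}'(\lambda^2t,\lambda y)$, one has $\pd_t\bd_\lambda=\pd_y\widetilde{F}_\lambda+\widetilde{\mN}'_\lambda$. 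Since $\bd$ has Fourier support in $[-\eta_0,\eta_0]$ and $|\tilde\omega(\eta)|\lesssim\eta_0|\eta|$ there, the linear part of $\widetilde{F}$ is bounded in $L^2$ by $C\|\pd_y\bd\|_{L^2}\lesssim \eps\la s\ra^{-3/4}$; using \eqref{eq:mN2}--\eqref{eq:mN4}, Claims~\ref{cl:R3}--\ref{cl:k-growth} and the $L^2$-boundedness of $\widetilde{\Pi}(D_y)^{-1}$ on the same band, we also obtain $\|\widetilde{\mN}(s,\cdot)\|_{L^2}+\|\widetilde{\mN}''(s,\cdot)\|_{L^2}\lesssim \eps^2\la s\ra^{-3/4}$. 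Hence $\|\widetilde{F}(s,\cdot)\|_{L^2}\lesssim \eps\la s\ra^{-3/4}$, and
\begin{equation*}
\|\pd_y\widetilde{F}_\lambda(t,\cdot)\|_{H^{-2}}\le\|\widetilde{F}_\lambda(t,\cdot)\|_{L^2}=\lambda^{3/2}\|\widetilde{F}(\lambda^2t,\cdot)\|_{L^2}\lesssim \eps t^{-3/4}\,,
\end{equation*}
the last estimate following from $\lambda\ge 1$ by distinguishing the regimes $\lambda^2 t\gtrless 1$.

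For the remaining piece I exploit that $\widetilde{\mN}'$ is band-limited. From \eqref{eq:mN2} and Claim~\ref{cl:R3}, together with the observation that $e^{-\lambda_{1,c_0}t\sigma_3\pd_y}\widetilde{\Pi}(D_y)^{-1}\chi(D_y)$ is $L^1$-bounded (its Fourier symbol is smooth with compact support, so the convolution kernel is Schwartz), we obtain
\begin{equation*}
\|\widetilde{\mN}'(s,\cdot)\|_{L^1}\lesssim \|\chi(D_y)(\mN_2+E_2\obu{\mN}_{U_1})(s,\cdot)\|_{L^1}\lesssim \eps^2\la s\ra^{-5/4}\,,\quad \operatorname{supp}\widehat{\widetilde{\mN}'}\subset[-\tfrac{\eta_0}{2},\tfrac{\eta_0}{2}]\,.
\end{equation*}
Plancherel, the substitution $u=\lambda\zeta$, and $\int_\R(1+u^2)^{-2}du<\infty$ then give
\begin{equation*}
\|\widetilde{\mN}'_\lambda(t,\cdot)\|_{H^{-2}}^2=\lambda^5\int_{|\zeta|\le\eta_0/2}(1+\lambda^2\zeta^2)^{-2}|\widehat{\widetilde{\mN}'}(\lambda^2t,\zeta)|^2\,d\zeta\lesssim \lambda^4\|\widetilde{\mN}'(\lambda^2t)\|_{L^1}^2\lesssim \eps^4\lambda^{-1}\la\lambda^2t\ra^{-5/2}\,,
\end{equation*}
so $\|\widetilde{\mN}'_\lambda(t,\cdot)\|_{H^{-2}}\lesssim \eps t^{-5/4}$ for $\lambda\ge 1$ by the same regime-splitting, and adding this to the bound on $\pd_y\widetilde{F}_\lambda$ yields \eqref{eq:tbb-bound2}. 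The main obstacle is precisely this non-divergence remainder $\widetilde{\mN}'$ in \eqref{eq:modeq-bd}: a naive estimate $\|\pd_t\bd_\lambda\|_{H^{-2}}\le\lambda^{5/2}\|\pd_t\bd(\lambda^2t)\|_{H^{-2}}$ would produce an unremovable factor of $\lambda$; the Fourier-compact-support/$L^1$ argument above is what extracts the gain $\lambda^{-1/2}$ needed to keep the bound uniform in $\lambda\ge 1$, a uniformity that is essential for the compactness argument \`a la Karch used in the sequel.
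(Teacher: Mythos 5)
Your argument is correct and follows essentially the same route as the paper: \eqref{eq:tbb-bound1} from Proposition~\ref{prop:poly} and \eqref{eq:bx-bb} via the scaling identity, and \eqref{eq:tbb-bound2} from the rescaled equation \eqref{eq:rebd-eq} combined with $L^2$ bounds on $\widetilde{\mN}$, $\widetilde{\mN}''$ and the linear part, plus the $L^1$/compact-Fourier-support treatment of the non-divergence remainder $\widetilde{\mN}'$ that gains the factor $\lambda^{-1/2}$ (this is exactly the paper's estimate \eqref{eq:wmN'}). The only blemishes are cosmetic: in your last display the prefactor should be $\lambda^4\la\lambda^2t\ra^{-5/2}$ rather than $\lambda^{-1}\la\lambda^2t\ra^{-5/2}$ (the stated conclusion survives the regime splitting $\lambda^2t\gtrless1$), and the bound on $\widetilde{\mN}''$ is $O(\eps)$ rather than $O(\eps^2)$, neither of which affects the result.
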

\begin{proof}
By Proposition~\ref{prop:poly} and \eqref{eq:bx-bb},
we have and \eqref{eq:tbb-bound1}.
Let
$$\widetilde{\mN}_\lambda(t,y)
=\lambda^2\widetilde{\mN}(\lambda^2t,\lambda y)\,,
\quad
\widetilde{\mN}_\lambda'(t,y)
=\lambda^3\widetilde{\mN}(\lambda^2t,\lambda y)\,,
\quad
\widetilde{\mN}_\lambda''(t,y)
=\lambda^2\widetilde{\mN}(\lambda^2t,\lambda y)\,.$$
Then
\begin{equation}
  \label{eq:rebd-eq}
\pd_t\bd_\lambda=\lambda_{2,c_0}\pd_y^2\bd_\lambda
+\lambda\sigma_3\pd_y\tilde{\omega}(\lambda^{-1}D_y)\bd_\lambda
+\widetilde{\mN}_\lambda'
+\pd_y(\widetilde{\mN}_\lambda+\widetilde{\mN}_\lambda'')\,,  
\end{equation}
and it follows from \eqref{eq:H}, \eqref{eq:mN2}--\eqref{eq:mN4}
and the fact that $\mathcal{A}_0(\eta)-\mathcal{A}_*(\eta)=O(\eta^4)$ that
\begin{align}
& \label{eq:wmN}
\|\widetilde{\mN}_\lambda(t,\cdot)\|_{L^2}
= \lambda^{3/2}\|\widetilde{\mN}(\lambda^2t,\cdot)\|_Y
\lesssim \eps^2t^{-3/4}\,,
\\ & \label{eq:wmN'}
\|\widetilde{\mN}\lambda(t,\cdot)\|_{L^1}
=\lambda^2\|\widetilde{\mN}'(\lambda^2t,\cdot)\|_{L^1}
\lesssim 
\left(e^{-\a h}\eps+\eps^2\right)\lambda^{-1/2}t^{-5/4}\,,
\\ & \label{eq:wmN''}
\|\widetilde{\mN}''_\lambda(t,\cdot)\|_{L^2}
=\lambda^{3/2}\|\widetilde{\mN}_1''(\lambda^2t,\cdot)\|_Y
\lesssim \eps\lambda^{3/2}(1+\lambda^2t)^{-1}
\lesssim \eps\lambda^{-1/4}t^{-7/8}\,.
\end{align}
Combining \eqref{eq:rebd-eq}--\eqref{eq:wmN''} with \eqref{eq:tbb-bound1},
we have \eqref{eq:tbb-bound2}.
\end{proof}
By the standard compactness argument, we have the following.
\begin{corollary}
\label{cor:tbb-bound}
There exists a sequence $\{\lambda_n\}_{n\ge1}$ satisfying
$\lim_{n\to\infty}\lambda_n=\infty$ and $\bd_\infty(t,y)$ such that
\begin{align*}
& \bd_{\lambda_n}(t,\cdot) \to \bd_\infty(t,\cdot)
\quad\text{weakly star in $L^\infty_{loc}((0,\infty);H^1(\R))$,}\\
& \pd_t\bd_{\lambda_n}(t,\cdot)\to \pd_t\bd_\infty(t,\cdot)
\quad\text{weakly star in $L^\infty_{loc}((0,\infty);H^{-1}(\R))$,}\\
\end{align*}
\begin{equation}
  \label{eq:condition}
 \sup_{t>0}t^{1/4}\|\bd_\infty(t)\|_{L^2}\le C\eps\,,
\end{equation}
where $C$ is a constant given in Lemma~\ref{lem:tbb-bound}.
Moreover, for any $R>0$ and $t_1$, $t_2$ with $0<t_1\le t_2<\infty$,
\begin{equation}
  \label{eq:aubli}
\lim_{n\to\infty}\sup_{t\in[t_1,t_2]}
\|\bd_{\lambda_n}(t,\cdot)-\bd_\infty(t,\cdot)\|_{L^2(|y|\le R)}=0\,.  
\end{equation}
\end{corollary}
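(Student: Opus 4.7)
The plan is to extract a convergent subsequence from $\{\bd_\lambda\}_{\lambda \ge 1}$ by a standard Aubin--Lions compactness argument based on the uniform bounds supplied by Lemma~\ref{lem:tbb-bound}. Fix any $0<t_1<t_2<\infty$. Estimate \eqref{eq:tbb-bound1} in the cases $k=0$ and $k=1$ yields that $\{\bd_\lambda\}_{\lambda\ge 1}$ is uniformly bounded in $L^\infty((t_1,t_2);H^1(\R))$, while \eqref{eq:tbb-bound2} gives uniform boundedness of $\{\pd_t\bd_\lambda\}$ in $L^\infty((t_1,t_2);H^{-2}(\R))$.

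First I would apply the Banach--Alaoglu theorem together with a diagonal argument over an exhausting family of intervals $[1/m,m]$ (and a countable dense set of test functions) to extract a subsequence $\lambda_n\to\infty$ and a limit $\bd_\infty$ such that $\bd_{\lambda_n}\rightharpoonup^* \bd_\infty$ in $L^\infty_{loc}((0,\infty);H^1(\R))$ and $\pd_t\bd_{\lambda_n}\rightharpoonup^* \pd_t\bd_\infty$ in $L^\infty_{loc}((0,\infty);H^{-2}(\R))$.

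Next, to upgrade weak convergence to the strong convergence claimed in \eqref{eq:aubli}, I would invoke the Aubin--Lions lemma on each cylinder $[t_1,t_2]\times\{|y|\le R\}$. The embedding $H^1(|y|\le R)\hookrightarrow L^2(|y|\le R)$ is compact by Rellich--Kondrachov, while $L^2(|y|\le R)\hookrightarrow H^{-2}(|y|\le R)$ is continuous; combined with the uniform bound on $\pd_t\bd_{\lambda_n}$ in $H^{-2}$, this gives relative compactness of $\{\bd_{\lambda_n}\}$ in $C([t_1,t_2];L^2(|y|\le R))$. Any cluster point must coincide with the weak-$*$ limit $\bd_\infty$, yielding \eqref{eq:aubli} along the original subsequence (after one further diagonal extraction over $R$, $t_1$, $t_2$ ranging over a countable exhaustion).

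Finally, the pointwise-in-$t$ bound \eqref{eq:condition} follows from weak lower semicontinuity. From \eqref{eq:tbb-bound1} with $k=0$ we have $t^{1/4}\|\bd_\lambda(t,\cdot)\|_{L^2(\R)}\le C\eps$ uniformly in $\lambda\ge 1$ and $t>0$. Using \eqref{eq:aubli} together with Fatou's lemma for the $L^2$-norm on $\{|y|\le R\}$ and letting $R\to\infty$, we obtain $t^{1/4}\|\bd_\infty(t,\cdot)\|_{L^2(\R)}\le C\eps$ for every $t>0$ (continuity in $t$ into $H^{-2}$ fixes the representative). The most delicate ingredient is the Aubin--Lions step, where one must match the scale of spaces so that the constants controlling $\bd_{\lambda_n}$ and $\pd_t\bd_{\lambda_n}$ in Lemma~\ref{lem:tbb-bound} are uniform on each compact time cylinder; the singular behaviour at $t=0^+$ of these bounds is harmless because the desired convergence is only asserted on $[t_1,t_2]\subset(0,\infty)$.
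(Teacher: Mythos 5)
Your argument is correct and is precisely the ``standard compactness argument'' the paper invokes without detail: uniform bounds from Lemma~\ref{lem:tbb-bound} on $[t_1,t_2]\subset(0,\infty)$, Banach--Alaoglu with a diagonal extraction for the weak-$*$ limits, Aubin--Lions--Simon on $[t_1,t_2]\times\{|y|\le R\}$ for \eqref{eq:aubli}, and lower semicontinuity plus $R\to\infty$ for \eqref{eq:condition}. The only cosmetic mismatch is that you (correctly, given the stated form of \eqref{eq:tbb-bound2}) obtain the time-derivative convergence in $H^{-2}$ rather than the $H^{-1}$ written in the corollary; this does not affect the compactness step or any of the conclusions actually used later.
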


To prove \eqref{eq:bd-c}, we need the following.
\begin{lemma}
\label{lem:outer-region}
Suppose that $\eps$ is sufficiently small.
Then for every $t_1$ and $t_2$ satisfying $0<t_1\le t_2<\infty$,
there exist a positive constant $C$ and a function
$\tilde{\delta}(R)$ satisfying $\lim_{R\to\infty}\tilde{\delta}(R)=0$
such that
$$\sup_{t\in[t_1,t_2]}\left\|\bd_\lambda(t,\cdot)\right\|_{L^2(|y|\ge R)}
\le C(\tilde{\delta}(R)+\lambda^{-1/4})
\quad\text{for  $\lambda\ge 1$.}$$
\end{lemma}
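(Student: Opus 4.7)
The plan is to prove a weighted $L^2$ moment bound on the un-rescaled profile $\bd(s,\cdot)$ and then derive the outer-region estimate by Chebyshev's inequality. Changing variable $u=\lambda y$ in the definition $\bd_\lambda(t,y)=\lambda\bd(\lambda^2 t,\lambda y)$ gives
\begin{equation*}
\|\bd_\lambda(t,\cdot)\|_{L^2(|y|\ge R)}=\lambda^{1/2}\|\bd(\lambda^2 t,\cdot)\|_{L^2(|u|\ge\lambda R)}.
\end{equation*}
Once the weighted bound $\|u\bd(s,\cdot)\|_{L^2}\le C\eps\la s\ra^{1/4}$ is in hand, Chebyshev's inequality yields
\begin{equation*}
\lambda^{1/2}\|\bd(\lambda^2 t,\cdot)\|_{L^2(|u|\ge\lambda R)}\le\frac{\|u\bd(\lambda^2 t,\cdot)\|_{L^2}}{\lambda^{1/2}R}\le\frac{C\eps\,t^{1/4}}{R}\le\frac{C\eps\,t_2^{1/4}}{R},
\end{equation*}
which supplies the $\tilde\delta(R)=C\eps\,t_2^{1/4}/R$ part of the estimate.

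To establish the weighted bound I would use Duhamel's formula for equation \eqref{eq:modeq-bd}. The initial datum \eqref{eq:bd-init} satisfies $\|\la u\ra\bd(0)\|_{L^2}\le C\eps$ thanks to the weight assumption in \eqref{eq:init-size} and the definition of $\tk(0,\cdot)$. For the semigroup $S(\tau)=\exp\{\tau(\lambda_{2,c_0}\pd_u^2+\lambda_{1,c_0}\pd_u\tilde\omega(D_u)\sigma_3)\}$ I would use the commutator identity
\begin{equation*}
[u,S(\tau)]=2\lambda_{2,c_0}\tau\,\pd_u S(\tau)+\lambda_{1,c_0}\tau\,\tilde\omega(D_u)\sigma_3\,S(\tau),
\end{equation*}
together with $\|\pd_u S(\tau)\|_{B(L^1,L^2)}\lesssim\tau^{-3/4}$, to obtain $\|u\,S(\tau)\bd(0)\|_{L^2}\le C\eps(1+\tau^{1/4})$. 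For the forcing integral a weighted upgrade of \eqref{eq:wmN}--\eqref{eq:wmN''} is required: after integration by parts transferring the outer $\pd_y$ onto the weight $u$, the weighted norms of $\widetilde\mN$, $\widetilde\mN'$ and $\widetilde\mN''$ are controlled once the $\la y\ra$-weight has been propagated through the a priori estimates of Proposition~\ref{prop:poly}.

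The $\lambda^{-1/4}$ correction in the statement is extracted separately from the sub-leading forcings $\widetilde\mN'_\lambda$ and $\widetilde\mN''_\lambda$, which by \eqref{eq:wmN'}--\eqref{eq:wmN''} already carry the explicit factors $\lambda^{-1/2}$ and $\lambda^{-1/4}$; their Duhamel contribution is estimated directly in full $L^2$ (rather than on the tail) by routine $L^1\to L^2$ and $L^2\to L^2$ heat smoothing. The main obstacle is the weighted bootstrap: Proposition~\ref{prop:poly} supplies the time decay of $\gamma_y,\,b,\,\tk,\,U_1,\,U_2$ but says nothing about their $\la y\ra$-weighted moments. Closing the weighted estimate requires re-running the a priori loop of Section~\ref{sec:apriori} carrying a $\la y\ra$-weight --- exploiting that $[y,\mathcal{A}(t,D_y)]$ is of strictly lower differential order than $\mathcal{A}(t,D_y)$ --- and supplementing it with a $\la y\ra$-weighted virial identity for $U_1$ extending Lemma~\ref{lem:virial-0}, which is available because the $y^2$-moment condition on $U_0$ in \eqref{eq:init-size} suffices to seed it.
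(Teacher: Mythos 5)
Your Chebyshev reduction is arithmetically consistent, but the weighted moment bound $\|u\,\bd(s,\cdot)\|_{L^2}\le C\eps\la s\ra^{1/4}$ on which the whole argument rests is not available in the function spaces of this paper, for two independent reasons. First, the quantity is generically infinite: $\bd(t,\cdot)$ is band-limited by the sharp cut-off $\wP_1$ (the forcing $\mN_1=\wP_1(n_1,\pd_yn_2)^T$ reinserts $\mathbf{1}_{[-\eta_0,\eta_0]}(\eta)$ at every time), so $\mF\bd(t,\cdot)$ has jump discontinuities at $\eta=\pm\eta_0$, $\bd(t,u)$ decays only like $|u|^{-1}$, and $u\,\bd(t,\cdot)\notin L^2$. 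Second, even with a regularized weight the exponent $1/4$ is exactly borderline for your rescaling step (an exponent $\theta$ returns $C\eps\lambda^{2\theta-1/2}t^{\theta}/R$, which grows in $\lambda$ as soon as $\theta>1/4$), and the quadratic forcing violates it: in the moving frame of $\bd$ the part of $n_1,n_2$ carried by the opposite characteristic (the $d_+d_-$ and $d_\mp^2$ cross terms) is centred at distance $\sim\lambda_{1,c_0}s$ from $u=0$, so its weighted $L^2$ norm is of size $s\|n_j(s)\|_{Y}\sim\eps^2 s^{1/4}$, and $\int_0^t\la t-s\ra^{-1/2}s^{1/4}\,ds\sim t^{3/4}$ overshoots the required $t^{1/4}$. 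The step you defer --- ``re-running the a priori loop of Section~\ref{sec:apriori} with a $\la y\ra$-weight'' together with a weighted virial for $U_1$ --- is therefore not a routine supplement; it is the entire content of the lemma, and the accounting above indicates it cannot close at the needed rate.

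The proof the paper appeals to (\cite{Miz19}, Lemma~10.4) avoids spatial weights altogether. It works directly with the Duhamel representation of the rescaled equation \eqref{eq:rebd-eq} (equivalently the conservative form \eqref{eq:modeq5}): the free evolution of the nearly-$L^1$ initial datum is split into a compactly supported piece, whose heat evolution leaves only an exponentially small mass in $|y|\ge R$, and a remainder of small $L^1$ norm, which together produce $\tilde\delta(R)$; the $\lambda^{-1/4}$ term comes from the explicit negative powers of $\lambda$ already present in \eqref{eq:wmN'}--\eqref{eq:wmN''}; and the quadratic forcing $\pd_y\widetilde{\mN}_\lambda$ is handled through its unweighted $L^1$ and $L^2$ decay combined with the spreading properties of the heat kernel, not through a $y$-moment. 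If you wish to salvage your route you would at minimum need a truncated weight and a separate treatment of the left/right cross terms, at which point you are reconstructing that argument.
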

Since we can prove Lemma~\ref{lem:outer-region} in the same way as
\cite[Lemma~10.4]{Miz19}, we omit the proof.

The initial data of $\bd_\infty(t,y)$ at $t=0$ is a constant multiple of
the delta function. Using the change of variable
$$ \tbd(t,y)=(d_+(t,y),d_-(t,y))^T:=\bd(t,y)-\bar{\bd}(t,y)\,,\quad
\bar{\bd}(t,y)=-\int_t^\infty \widetilde{\mN}'(s,\cdot)\,ds\,,
$$
we can transform \eqref{eq:modeq-bd} into a conservative system
\begin{equation}
  \label{eq:modeq5}
\pd_t\tbd=\lambda_{2,c_0}\pd_y^2\tbd+\widetilde{\mN}'
+\pd_y(\widetilde{\mN}+\widetilde{\mN}'')+\pd_y^2\widetilde{\mN}'''\,,
\end{equation}
where $\widetilde{\mN}'''=\lambda_{2,c_0}\bar{\bd}
+\pd_y^{-1}\tilde{\omega}(D_y)\sigma_3\bd$.
\begin{lemma}
\label{lem:Burgers-ini}
\begin{equation}
  \label{eq:B-ini}
\lim_{t\downarrow0}\int_\R \bd_\infty(t,y)h(y)\,dy
=h(0)\int_\R\tbd(0,y)\,dy
\quad\text{for any $h\in H^2(\R)$.}
\end{equation}
\end{lemma}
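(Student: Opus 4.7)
The plan is to follow \cite[Lemma~2.4]{Miz19}: integrate the conservative equation \eqref{eq:modeq5} for $\tbd$ against a test function $h\in H^2(\R)$ on a short interval $[0,t]$ after rescaling by $\lambda$, then send first $\lambda\to\infty$ along the subsequence $\{\lambda_n\}$ of Corollary~\ref{cor:tbb-bound} and finally $t\downarrow 0$.

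As preliminaries I would observe that $\tbd_\lambda(t,y):=\lambda\tbd(\lambda^2 t,\lambda y)$ and $\bd_\lambda$ share the same weak limit: since $\tbd_\lambda-\bd_\lambda=-\bar{\bd}_\lambda$ with $\bar{\bd}_\lambda(t,y)=-\int_t^\infty\widetilde{\mN}'_\lambda(s,y)\,ds$, the bound \eqref{eq:wmN'} gives $\|\bar{\bd}_\lambda(t,\cdot)\|_{L^1}\lesssim \lambda^{-1/2}t^{-1/4}\to 0$, so that $\tbd_{\lambda_n}(t,\cdot)\to\bd_\infty(t,\cdot)$ weakly in $L^2(\R)$ for every $t>0$. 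Next, $\tbd(0,\cdot)\in L^1(\R)$ (both $\bd(0,\cdot)$ and $\bar{\bd}(0,\cdot)$ are explicit $L^1$-functions), so the substitution $z=\lambda y$ together with dominated convergence (using $h\in H^2\hookrightarrow L^\infty$) yields
\begin{equation*}
\int_\R\tbd_\lambda(0,y)h(y)\,dy=\int_\R\tbd(0,z)h(z/\lambda)\,dz\longrightarrow h(0)\int_\R\tbd(0,z)\,dz\quad\text{as $\lambda\to\infty$.}
\end{equation*}

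The main step is to write \eqref{eq:modeq5} in rescaled form, test against $h$ over $[0,t]$, and integrate by parts to shift all $y$-derivatives onto $h$, obtaining
\begin{equation*}
\int\tbd_\lambda(t)h\,dy-\int\tbd_\lambda(0)h\,dy
=\int_0^t\!\!\int\bigl[\lambda_{2,c_0}h''\tbd_\lambda+h\widetilde{\mN}'_\lambda-h'(\widetilde{\mN}_\lambda+\widetilde{\mN}''_\lambda)+h''\widetilde{\mN}'''_\lambda\bigr]\,dy\,ds.
\end{equation*}
Using Lemma~\ref{lem:tbb-bound} and \eqref{eq:wmN}--\eqref{eq:wmN''} together with the interpolated estimate $\|\widetilde{\mN}'_\lambda(s)\|_{L^1}\lesssim\lambda^{2}\min\!\bigl(1,(\lambda^2 s)^{-5/4}\bigr)$ derived from $\|\widetilde{\mN}'(\tau)\|_{L^1}\lesssim\la\tau\ra^{-5/4}$, the four integrals on the right are dominated by $C_h\eps\,t^\alpha$ for some $\alpha>0$, uniformly in $\lambda\ge 1$. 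Passing $\lambda\to\infty$ along $\{\lambda_n\}$ then gives
\begin{equation*}
\Bigl|\int_\R\bd_\infty(t,y)h(y)\,dy-h(0)\int_\R\tbd(0,y)\,dy\Bigr|\le C_h\eps\,t^\alpha,
\end{equation*}
and sending $t\downarrow 0$ establishes \eqref{eq:B-ini}.

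The main obstacle is obtaining this uniform-in-$\lambda$ smallness as $t\downarrow 0$, most delicately for the non-conservative source term $\int_0^t\!\int h\widetilde{\mN}'_\lambda\,dy\,ds$: the pointwise-in-$s$ estimate $\|\widetilde{\mN}'_\lambda(s)\|_{L^1}\lesssim\lambda^{-1/2}s^{-5/4}$ fails to be integrable at $s=0$, so one must use it only on $s\gtrsim\lambda^{-2}$ and interpolate with the saturated bound $\|\widetilde{\mN}'_\lambda(s)\|_{L^1}\lesssim\lambda^2$ near $s=0$, then change variables to $u=\lambda^2 s$ and exploit the fact that $h(z/\lambda)\to h(0)$ to pass to the limit via dominated convergence. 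It is precisely the definition of $\bar{\bd}$ as $-\int_t^\infty\widetilde{\mN}'(s,\cdot)\,ds$, absorbing the entire nonconservative tail of \eqref{eq:modeq-bd}, that ensures the correct limiting mass $\int\tbd(0,y)\,dy$ appears on the right-hand side rather than a modified value.
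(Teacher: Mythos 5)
Your strategy --- rescale the equation for $\tbd$, test against $h\in H^2$, control the Duhamel terms uniformly in $\lambda$ via Lemma~\ref{lem:tbb-bound} and \eqref{eq:wmN}--\eqref{eq:wmN''}, then pass to the limit along $\{\lambda_n\}$ and send $t\downarrow0$ --- is exactly the Karch-type argument the paper invokes, and your treatment of the initial data and of the divergence-form sources $\pd_y(\widetilde{\mN}+\widetilde{\mN}'')+\pd_y^2\widetilde{\mN}'''$ is sound. The gap is in the term $\int_0^t\int_\R h\,\widetilde{\mN}'_\lambda\,dy\,ds$. The claim that it is bounded by $C_h\eps\,t^\alpha$ uniformly in $\lambda\ge1$ is false: the interpolated bound $\|\widetilde{\mN}'_\lambda(s)\|_{L^1}\lesssim\lambda^2\min\bigl(1,(\lambda^2s)^{-5/4}\bigr)$ only gives $\int_0^t\|\widetilde{\mN}'_\lambda(s)\|_{L^1}\,ds=O(1)$, and the change of variables $u=\lambda^2s$, $z=\lambda y$ shows that $\int_0^t\int_\R h\,\widetilde{\mN}'_\lambda\,dy\,ds=\int_0^{\lambda^2 t}\int_\R h(z/\lambda)\widetilde{\mN}'(u,z)\,dz\,du\to h(0)\int_0^\infty\int_\R\widetilde{\mN}'\,dy\,du$ as $\lambda\to\infty$ for each fixed $t>0$. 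This limit is generically nonzero (at zero frequency it reduces to the quantity producing $\gamma_{\infty,2}$), so no bound tending to $0$ as $t\downarrow0$ can hold uniformly in $\lambda$. Carried out as written, your argument would yield $h(0)\bigl[\int\tbd(0)\,dy+\int_0^\infty\int\widetilde{\mN}'\,dy\,ds\bigr]$ rather than $h(0)\int\tbd(0)\,dy$; indeed your own final paragraph, which correctly identifies the dominated-convergence limit of this term, contradicts the displayed $O(t^\alpha)$ claim.

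The resolution is that $\widetilde{\mN}'$ should not appear in the tested identity at all. Since $\pd_t\bar{\bd}=\widetilde{\mN}'$, differentiating $\tbd=\bd-\bar{\bd}$ cancels the non-conservative source of \eqref{eq:modeq-bd} exactly, and the correct form of \eqref{eq:modeq5} is $\pd_t\tbd=\lambda_{2,c_0}\pd_y^2\tbd+\pd_y(\widetilde{\mN}+\widetilde{\mN}'')+\pd_y^2\widetilde{\mN}'''$; the printed $+\widetilde{\mN}'$ is spurious (this is what ``conservative system'' means there, and it is the entire point of introducing $\bar{\bd}$). You took the printed equation at face value and then had to make an untenable smallness claim to force the stated conclusion. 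Once the $h\,\widetilde{\mN}'_\lambda$ term is removed, every remaining integral genuinely is $O_h(\eps\,t^\alpha)$ uniformly in $\lambda$ (for $\widetilde{\mN}'''_\lambda$ use $\|\bar{\bd}(\tau)\|_{Y_1}\lesssim\eps^2\la\tau\ra^{-1/4}$ and $\tilde{\omega}(\eta)=O(\eta^2)$), and your proof closes.
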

Using \eqref{eq:wmN}--\eqref{eq:wmN''}, we can prove
Lemma~\ref{lem:Burgers-ini} in the same way as \cite{Karch}.
See also the proof of \cite[Lemma~10.3]{Miz19}.
\par
Combining Lemmas~\ref{lem:outer-region} and \ref{lem:Burgers-ini},
Corollary~\ref{cor:tbb-bound} with Claim~\ref{cl:b-capprox}
and \eqref{eq:bx-bb}, we have the following.
\begin{proposition}
   \label{prop:burgers}
Suppose $c_0>1$ and that \eqref{ass:S} holds for $c=c_0$.
Let $\Phi(t,x,y)$ be as in Theorem~\ref{thm:main}.
Then there exist positive constants $\eps_0$ and $C$ such that if 
$\eps:=\|(1+x^2+y^2)\mathcal{E}(\Phi_0)^{1/2}\|_{L^2(\R^2)} <\eps_0$,
then
\begin{equation}
  \label{eq:profile}
\left\|\begin{pmatrix} \gamma_y(t,\cdot) \\ \tc(t,\cdot)\end{pmatrix}
-\begin{pmatrix}  1 & -1 \\ \lambda_{1,c_0} & \lambda_{1,c_0}\end{pmatrix}
\begin{pmatrix}u_B^+(t,\cdot+\lambda_{1,c_0}t) \\ u_B^-(t,\cdot-\lambda_{1,c_0}t)
\end{pmatrix}
\right\|_{L^2(\R)}=o(\eps t^{-1/4})
\end{equation}
as $t\to\infty$, where 
$$u_B^\pm(t,y)=\frac{\pm \lambda_{2,c_0}}{p_3}
\frac{m_\pm H_{\lambda_{2,c_0}t}(y)}{1+m_\pm\int_0^{y}H_{\lambda_{2,c_0}t}(y_1)\,dy_1}\,,
$$
and $m_\pm\in(-2,2)$ are constants satisfying
$$\frac{\lambda_{2,c_0}}{p_3}\log\left(\frac{2\pm m_\pm}{2\mp m_\pm}\right)=
\int_\R d_\pm(0,y)\,dy\,.$$
\end{proposition}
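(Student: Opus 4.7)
The plan is to analyze the rescaled profile $\bd_\lambda(t,y)=\lambda\bd(\lambda^2t,\lambda y)$ and identify its $\lambda\to\infty$ limit with the self-similar profile of two decoupled Burgers equations. First I would invoke the uniform bounds of Lemma~\ref{lem:tbb-bound} together with Corollary~\ref{cor:tbb-bound} to extract a subsequence $\bd_{\lambda_n}\to\bd_\infty$ converging weakly-$\ast$ in $L^\infty_{loc}((0,\infty);H^1(\R))$ and strongly on each $[t_1,t_2]\times\{|y|\le R\}$, and Lemma~\ref{lem:outer-region} upgrades this to local-in-time strong $L^2(\R)$ convergence.

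Next I would pass to the limit in the rescaled equation \eqref{eq:rebd-eq}. Because $\tilde\omega(\eta)=O(\eta^2)$, the nonlocal advection $\lambda\sigma_3\pd_y\tilde\omega(\lambda^{-1}D_y)$ has symbol of size $O(\eta^2/\lambda)$ on compact frequency sets and vanishes in the limit. The forcing terms $\widetilde{\mN}'_\lambda$ and $\pd_y\widetilde{\mN}''_\lambda$ disappear by \eqref{eq:wmN'}--\eqref{eq:wmN''}. The principal nonlinearity $\pd_y\widetilde{\mN}_\lambda$ is the key point: by \eqref{eq:Pi-est}, $\widetilde{\Pi}(D_y)^{-1}$ reduces in the low-frequency limit to the constant matrix $(2\lambda_{1,c_0})^{-1}\bigl(\begin{smallmatrix}\lambda_{1,c_0} & 1 \\ -\lambda_{1,c_0} & 1\end{smallmatrix}\bigr)$; meanwhile, $\bb$ is reconstructed from $\bd$ via $e^{\lambda_{1,c_0}t\sigma_3\pd_y}$, which transports the two components of $\bd$ to $y\sim\mp\lambda_{1,c_0}t$, i.e.\ in opposite directions. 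After rescaling, the two translates have disjoint supports up to a region of width $O(\lambda^{-1})$, so cross products of $d_+$ and $d_-$ converge weakly to zero, while the diagonal contributions $d_\pm^2$ coming from $p_1 b^2+a_{15}(c)(\gamma_y)^2$ in $n_1$ and from $(p_2(c)\gamma_y)_y$ in $\pd_yn_2$, after conjugating by the outer shift $e^{-\lambda_{1,c_0}t\sigma_3\pd_y}$, produce exactly the coefficient $\pm p_3=\pm\tfrac12(\lambda_{1,c_0}^2p_1+a_{15}(c_0)+2a_{23}(c_0))$. This decouples the limit system into the two Burgers equations \eqref{eq:Burgers} for $d_{\infty,\pm}$.

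To identify the initial data I would apply Lemma~\ref{lem:Burgers-ini}, which gives $d_{\infty,\pm}(0,\cdot)=M_\pm\delta_0$ with $M_\pm=\int_\R d_\pm(0,y)\,dy$. The Burgers equation with a delta-mass initial datum admits a unique self-similar solution, obtainable via the Cole--Hopf transform $d=-(\lambda_{2,c_0}/p_3)\pd_y\log\phi$ where $\phi$ solves the heat equation with the appropriate Heaviside-type initial datum; this yields exactly the stated formula for $u_B^\pm$ once $m_\pm\in(-2,2)$ is chosen so that the total mass $\int_\R u_B^\pm(t,y)\,dy$ matches $M_\pm$, giving the relation $(\lambda_{2,c_0}/p_3)\log((2\pm m_\pm)/(2\mp m_\pm))=M_\pm$. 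Because the limit profile is thus uniquely determined, the whole family $\bd_\lambda$ converges (not only a subsequence); undoing the rescaling, applying the shifts $\pm\lambda_{1,c_0}t$ to return to the original frame, and combining with \eqref{eq:bx-bb} (whose $O(\eps t^{-3/4})$ remainder is absorbed into $o(\eps t^{-1/4})$) yields \eqref{eq:profile}.

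The main obstacle is the rigorous passage to the limit in the quadratic term $\pd_y\widetilde{\mN}_\lambda$, where only weak-$\ast$ convergence is a priori available. I would combine the Aubin--Lions type compactness from Lemma~\ref{lem:tbb-bound} with the uniform tail decay of Lemma~\ref{lem:outer-region} to justify convergence of the nonlinearity in the sense of distributions, and I would verify carefully that the long shifts of magnitude $\lambda_{1,c_0}\lambda^2t$ genuinely separate the supports of $d_+$ and $d_-$ well enough that the $d_+ d_-$ cross terms do vanish in the limit; the subprincipal discrepancies $p_2(c)-p_2(c_0)$ and $a_{15}(c)-a_{15}(c_0)$ must be shown to be absorbed into the error by Claim~\ref{cl:b-capprox}.
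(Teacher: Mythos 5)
Your proposal is correct and follows essentially the same route as the paper: the paper's proof of Proposition~\ref{prop:burgers} consists precisely of combining the rescaling/compactness ingredients (Lemma~\ref{lem:tbb-bound}, Corollary~\ref{cor:tbb-bound}, Lemma~\ref{lem:outer-region}), the initial-data identification of Lemma~\ref{lem:Burgers-ini}, the conservative reformulation \eqref{eq:modeq5}, Claim~\ref{cl:b-capprox} and \eqref{eq:bx-bb}, with the detailed limit passage deferred to the proof of Theorem~1.4 in \cite{Miz19}. Your filling-in of that limit passage (vanishing of the $\tilde{\omega}$ and forcing terms, separation of the $\pm\lambda_{1,c_0}t$-shifted supports killing the cross terms, emergence of the coefficient $p_3$, uniqueness of the self-similar Burgers profile upgrading subsequential to full convergence) is the intended argument.
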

See e.g. \cite[Proof of Theorem~1.4]{Miz19} for the proof.
\par
Now we are in a position to prove \eqref{eq:phase-lim}.
\begin{proof}[Proof of \eqref{eq:phase-lim}]
By  Lemmas~\ref{lem:decay-BB} and \ref{lem:fun-asymp},
$$\lim_{t\to\infty}\left\|U(t,0)\bb(0)+\mu(0)H_{\lambda_{2,c_0}t}*W_t*
\tk(0)\mathbf{e_1}\right\|_{L^\infty}=0\,,$$
and
\begin{equation*}
\lim_{t\to\infty}\left\|U(t,0)\bb(0)
-\gamma_{\infty,1}\mathbf{1}_{[-\lambda_{1,c_0}t,\lambda_{1,c_0}t]}(y)\mathbf{e_1}
\right\|_{L^\infty(|y\pm \lambda_{1,c_0}t|\ge \delta t)}=0
\quad\text{for any $\delta>0$,} 
\end{equation*}
where $\gamma_{\infty,1}=-(2\lambda_{1,c_0})^{-1}\mu(0)\int_\R \tk(0,y)\,dy$.
By Proposition~\ref{prop:poly}, \eqref{eq:Linfy=0-1}, \eqref{eq:Linfy=0-2}
and \eqref{eq:Linfy=0-3},
\begin{align*}
& \biggl\|\int_0^t U(t,s)\bigl(\mN_3(s)+\pd_y\mN_4(s)
+E_1\obu{\mN}_{U_1}(s)+\pd_y^2\oc{\mN}_{U_1}(s)\bigr)\,ds\biggr\|_{L^\infty}
\lesssim 
\eps \la t\ra^{-1/4} \log(2+t)\,.
\end{align*}
It follows from Lemmas~\ref{lem:decay-BB}--\ref{lem:phase-lim},
\eqref{eq:mN2} and \eqref{eq:obu-est2} that as $t\to\infty$,
\begin{align*}
&  \biggl\|\int_0^t \left\{U(t,s)-\mu(s)H_{\lambda_{2,c_0}(t-s)}*W_{t-s}*\right\}
\bigl(\mN_2(s)+E_2\obu{\mN}_{U_1}(s)\bigr)\,ds\biggr\|_{L^\infty}
\to 0\,,
\end{align*}
\begin{equation*}
\biggl\|\int_0^t U(t,s)\bigl(\mN_2(s)+E_2\obu{\mN}_{U_1}(s)\bigr)\,ds
-\gamma_{\infty,2}\mathbf{1}_{[-\lambda_{1,c_0}t,\lambda_{1,c_0}t]}(y)\mathbf{e_1}
\biggr\|_{L^\infty(|y\pm \lambda_{1,c_0}t|\ge \delta t)}\to0\,,    
\end{equation*}
for any $\delta>0$, where
$\gamma_{\infty,2}=(2\lambda_{1,c_0})^{-1}
\int_{\R_+\times\R}\mu(s)
\bigl(\mN_2(s)+\obu{\mN}_{U_1}(s)\bigr)\cdot\mathbf{e_2}\,dsdy$ and
\begin{equation}
  \label{eq:gamma2}
|\gamma_{\infty,2}|\lesssim e^{-\a h}\eps+\eps^2\,.  
\end{equation}
\par
Finally, we will prove
 \begin{equation}
   \label{eq:UmN1}
\lim_{t\to\infty}\left\|
\int_0^t U(t,s)\mN_1(s)\,ds\right\|_{L^\infty(|y\pm \lambda_{1,c_0}t|\ge\delta t)}=0
\quad\text{ for any $\delta>0$.}
 \end{equation}
Let $\mN_{1,0}=(n_{1,0},\pd_yn_{2,0})^T$ and
\begin{align*}
& n_{1,0}=\left(\lambda_{1,c_0}^2p_1+a_{15}(c_0)\right)\left\{
u_B^+(t,\cdot+\lambda_{1,c_0}t)^2+u_B^-(t,\cdot-\lambda_{1,c_0}t)^2\right\}\,,
\\ &
n_{2,0}=2\lambda_{1,c_0}a_{23}(c_0)\left\{u_B^+(t,\cdot+\lambda_{1,c_0}t)^2
-u_B^-(t,\cdot-\lambda_{1,c_0}t)^2\right\}\,.
\end{align*}
By Propositions~\ref{prop:poly} and \ref{prop:burgers},
\begin{align*}
IV(t):=\left\|n_1-n_{1,0}\right\|_{L^2}+\left\|n_2-n_{2,0}\right\|_{L^2} \lesssim \eps^2\delta(t)\la t\ra^{-3/4}\,,
\end{align*}
where $\delta(t)$ is a function that tends to $0$ as $t\to\infty$.
Hence it follows from 
Lemma~\ref{lem:decay-BB} that as $t\to\infty$,
\begin{align*}
& \left\|\int_0^t U(t,s)\left(\mN_1(s)-\mN_{1,0}(s)\right)\,ds\right\|_{L^\infty}
\lesssim \int_0^t \la t-s\ra^{-1/4}IV(s)\,ds\to0\,.
\end{align*}
Moreover, by Lemmas~\ref{lem:decay-BB} and \ref{lem:fun-asymp},
\begin{align*}
\left\|\int_0^t \left(U(t,s)-e^{(t-s)\mathcal{A}_*}\mu(s)\right)\mN_{1,0}(s)\,ds
\right\|_{L^\infty}
\lesssim & \sum_{j=1,2}\int_0^t \la t-s\ra^{-1/2}\|n_{j,0}(s)\|_Y\,ds\to0\,,
\end{align*}
\begin{align*}
  & \biggl\|\int_0^t \mu(s)\bigl\{e^{(t-s)\mathcal{A}_*}\mN_{1,0}(s)
-\frac{1}{2\lambda_{1,c_0}}\sum_\pm
 H_{\lambda_{2,c_0}(t-s)}(\cdot\pm\lambda_{1,c_0}(t-s))    
  \\  & \phantom{\biggl\|\int_0^t \bigl\{e^{(t-s)\mathcal{A}_*-\frac{1}{2\lambda_{1,c_0}}\sum_\pm}}
*\left(\lambda_{1,c_0}n_{1,0}(s)\mp n_{2,0}(s)\right)\bigr\}\,ds
\biggr\|_{L^\infty}
\\ \lesssim & \sum_{j=1,2}\int_0^t \la t-s\ra^{-1}\|n_{j,0}(s)\|_{Y_1}\,ds
 \lesssim  \eps^2\la t\ra^{-1/2}\log(2+t)\,.
\end{align*}
Let $\tilde{\delta}(t)$ be a function satisfying
$\lim_{t\downarrow0}\tilde{\delta}(t)=0$.
Since $|u^B_\pm(s,y)|\lesssim H_{\lambda_{2,c_0}s}(y)$, we have
for $y$ satisfying
$|y+\lambda_{1,c_0}t|\wedge|y-\lambda_{1,c_0}t|\ge
4\{\lambda_{2,c_0}t/\tilde{\delta}(t)\}^{1/2}$,
\begin{align*}
&  \left|\int_0^t \mu(s)H_{\lambda_{2,c_0}(t-s)}(\cdot\pm\lambda_{1,c_0}(t-s))
*\left(\lambda_{1,c_0}n_{1,0}(s)\mp n_{2,0}(s)\right)\,ds\right|
 \lesssim e^{-\tilde{\delta}(t)^{-1}}\to0\,,
\end{align*}
as $t\to\infty$. Combining the above, we have \eqref{eq:UmN1}.
Thus we complete the proof.
\end{proof}
\begin{proof}[Proof of Corollary~\ref{cor:instability2}]
 Let $\zeta\in C_0^\infty(-\eta_0,\eta_0)$ such that $\zeta(0)=1$ and let
$$U_0(x,y)=-\eps\zeta_{2,c_0}(z)(\mathcal{F}_\eta^{-1}\zeta)(y)\,.$$
Then $\bb(0)=\eps(1+\beta_1(c_0)^{-1}S^3_{21}(0))^{-1}
(\mathcal{F}_\eta^{-1}\zeta)(y)\mathbf{e_2}$ and
$$\gamma_{\infty,1}\gtrsim 
\int_\R\tb(0,y)\,dy=\eps(1+O(e^{-\a h}))\,.$$
Combining the above with \eqref{eq:gamma2}, we have
$\gamma_\infty\gtrsim \eps$, 
where $\gamma_\infty$ is a constant in \eqref{eq:phase-lim}.
Corollary~\ref{cor:instability2} follows immediately from
\eqref{eq:phase-lim} and the fact that $\gamma_\infty\gtrsim \eps$.
Thus we complete the proof.
\end{proof}
\bigskip

\appendix
\section{Miscellaneous estimates for operator norms}
\begin{claim}
\label{cl:B-1}
Suppose that $\a\in (0,1/\sqrt{b})$. Then
\begin{align}
\label{eq:pd-1}
& \a\|u\|_{L^2_\a(\R^2)}\le \|\pd_xu\|_{L^2_\a(\R^2)}\,,\\
\label{eq:B-1}
& \|\nabla^jB^{-1}f\|_\bX+\|\pd_x^jB_0^{-1}f\|_{L^2_\a(\R^2)}
\lesssim \|f\|_{L^2_\a(\R^2)}\quad\text{for $j=0$, $1$, $2$,}
\\ & \label{eq:L1}
     \|L_1f\|_\bX+\|L_1(0)f\|_\bX\lesssim \|f\|_\bX\,,\quad
     \|L_2f\|_{\bX_2}\lesssim \|f\|_\bX\,,
\\ & \label{eq:L1'}
     \|\mL_1f\|_\bX+\|\mL_1(0)f\|_\bX\lesssim \|f\|_\bX\,.
\end{align}
\end{claim}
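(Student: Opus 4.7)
The plan is to establish the four estimates in sequence, the unifying device being conjugation by the weight $e^{\a x}$, which converts weighted $L^2$ bounds into Fourier-multiplier estimates on the unweighted $L^2$. For \eqref{eq:pd-1}, substituting $v = e^{\a x} u$ yields $\|u\|_{L^2_\a} = \|v\|_{L^2}$ and $\|\pd_x u\|_{L^2_\a}^2 = \|(\pd_x - \a)v\|_{L^2}^2$; an integration by parts kills the cross term and leaves $\|\pd_x v\|_{L^2}^2 + \a^2\|v\|_{L^2}^2 \ge \a^2\|u\|_{L^2_\a}^2$.

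For \eqref{eq:B-1}, the same conjugation gives
\[
e^{\a x} B e^{-\a x} = (1 - b\a^2) - b\pd_x^2 + 2b\a\pd_x - b\pd_y^2,
\]
whose Fourier symbol $p_B(\xi,\eta) = (1 - b\a^2) + b(\xi^2 + \eta^2) + 2ib\a\xi$ satisfies $|p_B(\xi,\eta)| \ge (1 - b\a^2) + b(\xi^2 + \eta^2) \gtrsim 1 + \xi^2 + \eta^2$, since $\a < 1/\sqrt{b}$. Hence $(i\xi)^{j_1}(i\eta)^{j_2}/p_B(\xi,\eta)$ is a bounded Fourier multiplier for every $j_1 + j_2 \le 2$, and Plancherel translates this into the boundedness of $\nabla^j B^{-1}$ on $L^2_\a$. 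The one-dimensional estimate for $B_0^{-1}$ follows identically after restricting to $\eta = 0$.

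For \eqref{eq:L1} I would substitute the explicit form $L_1(0) = B_0^{-1}(I - A_0 - B_0^{-1}A_0)E_{21}$ (and analogous formulas for $L_1(\eta)$ and $L_2(\eta)$) to reduce each term to a composition of $B_0^{-1}$, $A_0 B_0^{-1}$ (whose symbol $(1 + a\xi^2)/(1 + b\xi^2)$ is bounded because $a < b$), and $\pd_x^2 B_0^{-1}$, all acting on the first component of $f$. Since $E_{21}$ places the output in the second slot of $\bX$, the target norm is just $\|\cdot\|_{L^2_\a}$, and \eqref{eq:pd-1} supplies the reduction $\|f_1\|_{L^2_\a} \le \a^{-1}\|\pd_x f_1\|_{L^2_\a} \le \a^{-1}\|f\|_\bX$. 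For $L_2(\eta) = \eta^{-2}(L_1(\eta) - L_1(0))$, expanding $B(\eta)^{-1}A(\eta) - B_0^{-1}A_0$ reveals an extra factor of $B(\eta)^{-1}$, providing two additional orders of smoothing and hence the mapping $\bX \to \bX_2$.

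For \eqref{eq:L1'}, $\mL_1(0) = L_1(0) + B_0^{-1} r_c E_{21} + b B_0^{-2}\{v_{1,c}(0) E_{21} + v_{2,c}(0) E_2\}$ differs from $L_1(0)$ only by terms involving multiplication by $r_c$, $q_c$ and their derivatives, each belonging to $W^{k,\infty}(\R)$ for every $k$; these act as bounded multipliers on $L^2_\a$ and, via Leibniz, on $H^1_\a$, so their composition with $B_0^{-1}$ or $B_0^{-2}$ remains bounded on $\bX$ by the Fourier analysis above. The main obstacle will be the bookkeeping: since $\bX$ has asymmetric regularity between its two slots, one must verify that the two orders of smoothing from $B_0^{-1}$ exactly absorb the derivatives landing on $f_1$, and one must track uniformity of the symbol bounds in $\eta$ throughout the $L_1(\eta)$ and $L_2(\eta)$ estimates.
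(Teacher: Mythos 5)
Your argument is essentially the paper's: the identity $\|f\|_{L^2_\a}=\|\hat f(\xi+i\a,\eta)\|_{L^2}$ used in the paper is exactly your conjugation by $e^{\a x}$, and both proofs then reduce everything to Plancherel plus the lower bound $\Re p_B(\xi,\eta)=(1-b\a^2)+b(\xi^2+\eta^2)\gtrsim 1+\xi^2+\eta^2$, with \eqref{eq:L1} and \eqref{eq:L1'} obtained by writing $L_1$, $L_2$, $\mL_1$ as compositions of these bounded multipliers, bounded coefficient multiplications, and the reduction $\|f_1\|_{L^2_\a}\le\a^{-1}\|\pd_x f_1\|_{L^2_\a}$ from \eqref{eq:pd-1}. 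The proposal is correct, and in fact spells out more detail than the paper, which simply cites Plancherel and \cite{MPQ13}.
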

\begin{proof}
Since $\|f\|_{L^2_\a(\R^2)}=\|\hat{f}(\xi+i\a,\eta)\|_{L^2(\R^2)}$,
 we can prove \eqref{eq:pd-1} and \eqref{eq:B-1}
by using the Plancherel theorem
(see \cite{MPQ13}). We can obtain \eqref{eq:L1} and \eqref{eq:L1'}
by using \eqref{eq:B-1}.
\end{proof}

\begin{claim}
  \label{cl:ker-B}
Let $\a\in(0,1/\sqrt{b})$ and $1<q<2$. Then there exists a positive constant $C$ such that for any $\gamma\in\R$,
\begin{gather}
  \label{eq:clB-1exp1}
\|B^{-1}g\|_{L^2_\a(\R^2)}\le C\|e^{\a x}g\|_{L^1(\R^2)+L^2(\R^2)}\,,
\\  \label{eq:clB-1exp2}  
\|e^{-\a|x-\gamma|}B^{-1}g\|_{L^2(\R^2)}\le C\|e^{-\a|x-\gamma|}g\|_{L^1(\R^2)+L^2(\R^2)}\,,
\\  \label{eq:clB-1exp3}
\|e^{\a (x-\gamma)}\nabla B^{-1}g\|_{L^2(\R^2)}\le C\|e^{\a(x-\gamma)}g\|_{L^q(\R^2)+L^2(\R^2)}\,,
\\  \label{eq:clB-1exp4}
 \|e^{-\a |x-\gamma|}\nabla B^{-1}g\|_{L^2(\R^2)}\le C\|e^{-\a|x-\gamma|}g\|_{L^q(\R^2)+L^2(\R^2)}\,.
\end{gather}
\end{claim}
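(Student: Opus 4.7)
My plan is to reduce everything to explicit convolution estimates using the Green's function of $B=I-b\Delta$ on $\R^2$. Writing $(B^{-1}f)(z)=(K*f)(z)$, one has $K(z)=\frac{1}{2\pi b}K_0(|z|/\sqrt{b})$, where $K_0$ is the modified Bessel function of the second kind. First I would record the two asymptotics $K_0(r)\sim-\log r$ as $r\downarrow0$ and $K_0(r)\sim\sqrt{\pi/(2r)}\,e^{-r}$ as $r\to\infty$. These imply that whenever $\a<1/\sqrt{b}$, one has $e^{\a|z|}K(z)\in L^1(\R^2)\cap L^2(\R^2)$, while $e^{\a|z|}|\nabla K(z)|\in L^r(\R^2)$ only for $r\in[1,2)$ because of the $|z|^{-1}$ singularity at the origin. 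These three facts would do essentially all of the work.

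For \eqref{eq:clB-1exp1} I would use the identity $e^{\a x}(K*g)(x,y)=[(e^{\a z_1}K)*(e^{\a z_1}g)](x,y)$, split $e^{\a\cdot}g=f_1+f_2\in L^1+L^2$, and apply Young's inequality to each piece: $\|(e^{\a\cdot}K)*f_1\|_{L^2}\le\|e^{\a\cdot}K\|_{L^2}\|f_1\|_{L^1}$ and $\|(e^{\a\cdot}K)*f_2\|_{L^2}\le\|e^{\a\cdot}K\|_{L^1}\|f_2\|_{L^2}$. The estimate \eqref{eq:clB-1exp3} follows by the same manipulation with $\nabla K$ instead of $K$; Young then gives $\|(e^{\a\cdot}\nabla K)*f\|_{L^2}\lesssim\|e^{\a|\cdot|}\nabla K\|_{L^r}\|f\|_{L^q}$ with the exponent $1/r=3/2-1/q$. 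Since $1<q<2$ corresponds to $1<r<2$, which is precisely the range in which $e^{\a|\cdot|}|\nabla K|\in L^r$, this closes the estimate.

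For the $\gamma$-dependent weights in \eqref{eq:clB-1exp2} and \eqref{eq:clB-1exp4} I would use the Schur-type device of writing the weighted operator as integration against
$$\widetilde K(x,x'):=e^{-\a|x-\gamma|+\a|x'-\gamma|}K(x-x'),$$
and absorbing the $\gamma$-dependence via the reverse triangle inequality $|x'-\gamma|-|x-\gamma|\ge-|x-x'|$, which yields the uniform pointwise bound $|\widetilde K(x,x')|\le e^{\a|x-x'|}|K(x-x')|$. The $L^2\to L^2$ mapping property then follows from Schur's test applied to the $\gamma$-free majorant (whose integrals in either variable are controlled by $\|e^{\a|\cdot|}K\|_{L^1}$), while the $L^1\to L^2$ piece follows from Cauchy--Schwarz using $\|e^{\a|\cdot|}K\|_{L^2}$. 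The same reasoning with $\nabla K$ replacing $K$ and with the Young exponent above handles \eqref{eq:clB-1exp4}, the only modification being that one splits $e^{-\a|\cdot-\gamma|}g\in L^q+L^2$ instead of $L^1+L^2$.

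The main obstacle I anticipate is the $|z|^{-1}$ singularity of $\nabla K$ at the origin, which rules out the case $q=1$ for the gradient inequalities: at $q=1$ the Young exponent would be $r=2$, but $\nabla K\notin L^2_{\mathrm{loc}}$. This is precisely what forces the hypothesis $1<q<2$ in \eqref{eq:clB-1exp3}--\eqref{eq:clB-1exp4}. Once this sharp restriction is identified, the remaining steps are bookkeeping.
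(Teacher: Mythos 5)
Your proposal is correct and follows essentially the same route as the paper: explicit Bessel-kernel asymptotics for the Green function of $B$ ($|K(z)|\lesssim e^{-|z|/\sqrt b}\max(1,\log|z|^{-2})$, $|\nabla K(z)|\lesssim e^{-|z|/\sqrt b}/|z|$), pointwise domination of the weighted kernel by the convolution kernel $e^{\a|z|}|K(z)|$ (resp.\ $e^{\a|z|}|\nabla K(z)|$), and then Young/Schur in each variable, with the $|z|^{-1}$ singularity of $\nabla K$ in $2$D correctly identified as the reason for the restriction $1<q<2$. One cosmetic slip: to bound $e^{\a(|x'-\gamma|-|x-\gamma|)}$ from above you need $|x'-\gamma|-|x-\gamma|\le|x-x'|$ rather than the inequality $\ge-|x-x'|$ you quote, but the stated majorant $|\widetilde K(x,x')|\le e^{\a|x-x'|}|K(x-x')|$ is the right one and the argument goes through.
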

\begin{proof}
The green kernel $K(x,y)$ of the operator $B$ satisfies that
for $(x,y)\ne(0,0)$,
\begin{equation}
\label{eq:kerB}
|K(x,y)| \lesssim e^{-\sqrt{x^2+y^2}/\sqrt{b}}
\max(1,\log(x^2+y^2)^{-1})\,,\quad
|\nabla K(x)|\lesssim \frac{e^{-\sqrt{x^2+y^2}/\sqrt{b}}}{\sqrt{x^2+y^2}}\,.
\end{equation}
Let $\widetilde{K}(x,y,x_1,y_1)=e^{\a(x-x_1)}K(x-x_1,y-y_1)$.
Since 
$$e^{\a x}(B^{-1}g)(x,y)=\int_{\R^2} \widetilde{K}(x,y,x_1,y_1)
e^{\a x_1}g(x_1,y_1)\,dx_1dy_1\,,$$
and $\sup_{x,y} \|\widetilde{K}(x,y,x_1,y_1)\|_{(L^1\cap L^2)(\R^2_{x_1,y_1})}
+\sup_{x_1,y_1} \|\widetilde{K}(x,y,x_1,y_1)\|_{(L^1\cap L^2)(\R^2_{x,y})}<\infty$
for $\a\in(0,1/\sqrt{b})$, we have \eqref{eq:clB-1exp1}.
We can prove the rest in the same way.
\end{proof}

\begin{claim}
  \label{cl:em-commu}
Let $\a\in(0,1/\sqrt{b})$. There exists a positive constant $C$ such that
for any $\gamma\in\R$,
\begin{align*}
  & \|[\pd_x, \sech\a (x-\gamma)]g\|_{L^2(\R^2)}
+\|[B^{-1},\sech\a (x-\gamma)]g\|_{L^2(\R^2)}
\le C\a\|e^{-\a|x-\gamma|}g\|_{L^2(\R^2)}\,.
\end{align*}
\end{claim}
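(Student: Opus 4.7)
The plan is to treat the two commutators separately. The first is immediate: since $\sech\a(x-\gamma)$ is a multiplication operator by a function of $x$ alone,
\[
[\pd_x,\sech\a(x-\gamma)]g=-\a\sech\a(x-\gamma)\tanh\a(x-\gamma)\,g,
\]
and the pointwise bound $|\sech\a(x-\gamma)\tanh\a(x-\gamma)|\le\sech\a(x-\gamma)\le 2e^{-\a|x-\gamma|}$ immediately gives $\|[\pd_x,\sech\a(x-\gamma)]g\|_{L^2}\le 2\a\|e^{-\a|x-\gamma|}g\|_{L^2}$.

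For the second commutator the plan is to use the algebraic identity
\[
[B^{-1},m]=B^{-1}[m,B]B^{-1},
\]
valid for any multiplication operator $m$ (it follows from differentiating $B\cdot B^{-1}=I$ in the commutator sense). With $m=m(x)=\sech\a(x-\gamma)$ and $B=I-b\Delta$, the $y$-derivatives in $\Delta$ drop out and a direct computation yields $[m,B]=b(m''+2m'\pd_x)$. Since $|m'|\le 2\a e^{-\a|x-\gamma|}$ and $|m''|\le 2\a^2 e^{-\a|x-\gamma|}$, and since $B^{-1}$ is bounded on $L^2(\R^2)$ with norm at most $1$ by Plancherel, one obtains
\[
\|[B^{-1},m]g\|_{L^2}\lesssim \a\|e^{-\a|x-\gamma|}\pd_xB^{-1}g\|_{L^2}+\a^2\|e^{-\a|x-\gamma|}B^{-1}g\|_{L^2}.
\]

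To finish I plan to invoke Claim~\ref{cl:ker-B}: the estimates \eqref{eq:clB-1exp2} and \eqref{eq:clB-1exp4} (taking any $q\in(1,2)$), together with the trivial embeddings $L^2\subset L^1+L^2$ and $L^2\subset L^q+L^2$, control both weighted quantities on the right by $C\|e^{-\a|x-\gamma|}g\|_{L^2}$ with $C$ independent of $\gamma$. Summing the two commutator estimates and absorbing the $O(\a^2)$ term using $\a<1/\sqrt b$ produces the desired bound. Since the commutator identity and the pointwise bounds on $m',m''$ are routine and the weighted resolvent estimates have already been established in Claim~\ref{cl:ker-B}, I do not foresee any real obstacle; the mild point worth checking is only that the constant in Claim~\ref{cl:ker-B} is uniform in $\gamma$, which is clear from the translation-invariance of $B^{-1}$.
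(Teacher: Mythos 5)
Your argument is correct. The first commutator is indeed just multiplication by $m'=-\a\sech\a(x-\gamma)\tanh\a(x-\gamma)$, and for the second one the identity $[B^{-1},m]=B^{-1}[m,B]B^{-1}$ together with $[m,B]=b(2m'\pd_x+m'')$ (the $y$-derivatives cancel since $m$ depends on $x$ only) reduces everything to the weighted bounds $\|e^{-\a|x-\gamma|}B^{-1}h\|_{L^2}+\|e^{-\a|x-\gamma|}\nabla B^{-1}h\|_{L^2}\lesssim\|e^{-\a|x-\gamma|}h\|_{L^2}$, which are special cases of \eqref{eq:clB-1exp2} and \eqref{eq:clB-1exp4} via the trivial embeddings into the sum spaces; the factor $\a$ comes out of $|m'|\lesssim\a e^{-\a|x-\gamma|}$, $|m''|\lesssim\a^2e^{-\a|x-\gamma|}$, and $\a^2\le\a/\sqrt b$. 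This is a genuinely different route from the paper's: the paper simply invokes the pointwise Green-kernel bounds \eqref{eq:kerB} and argues "as in Claim~11.2 of \cite{MPQ13}", i.e.\ it writes $[B^{-1},m]g$ as an integral against $K(x-x_1,y-y_1)\{m(x_1)-m(x)\}$ and extracts the factor $\a$ from the difference $m(x_1)-m(x)$ by a Schur-type estimate. The kernel route is self-contained and does not pass through the auxiliary weighted resolvent estimates; your route is shorter and more transparent about where the $O(\a)$ gain comes from, at the cost of invoking Claim~\ref{cl:ker-B} (whose proof rests on the same kernel bounds, so there is no circularity — it precedes this claim and does not use it). The only point worth stating explicitly is that the outer $B^{-1}$ in $B^{-1}[m,B]B^{-1}$ is discarded using $\|B^{-1}\|_{B(L^2)}\le1$ and that the constants in Claim~\ref{cl:ker-B} are translation-invariant in $\gamma$, both of which you note. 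No gap.
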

\begin{proof}
Using \eqref{eq:kerB},
we can prove Claim~\ref{cl:em-commu} in the same way as
\cite[Claim~11.2]{MPQ13}.
\end{proof}

\section{Operator norms of $S^j_k$}
\label{ap:s}
\begin{claim}
\label{cl:S1}
Let $\a\in(0,\a_{c_0})$.
There exist positive constants $\eta_1$ and  $C$ such that
for $\eta_0\in (0,\eta_1]$, $j\in\Z_{\ge0}$, $k=1$, $2$, $t\ge0$ and
$f\in L^2(\R)$,
\begin{align*}
& \|\pd_y^jS_k^1[Q_c](f)(t,\cdot)\|_Y \le C\|e^{\a z}Q_{c_0}\|_{L^2}
\|\pd_y^j\wP_1f\|_Y \,,\\
& \|\pd_y^jS_k^1[Q_c](f)(t,\cdot)\|_{Y_1}
\le C\|e^{\a z}Q_{c_0}\|_{L^2}\|\pd_y^j\wP_1f\|_{Y_1}\,,\\
& \bigl[\pd_y, S^1_k[q_c]\bigr]=0\,.
\end{align*}
\end{claim}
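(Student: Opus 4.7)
The plan is to recognize $S^1_k[Q_c]$ as a Fourier multiplier composed with $\wP_1$. Performing the $y_1$-integration first, the definition of $S^1_k[Q_c]$ reduces to
\begin{equation*}
S^1_k[Q_c](f)(t,y)=\frac{1}{\sqrt{2\pi}}\int_{-\eta_0}^{\eta_0}m_k(\eta)\hat f(\eta)e^{iy\eta}\,d\eta,\qquad
m_k(\eta):=\int_\R Q_{c_0}(z)\cdot g_{k1}^*(z,\eta,c_0)\,dz,
\end{equation*}
so that $S^1_k[Q_c]f=\bigl(m_k\mathbf{1}_{[-\eta_0,\eta_0]}\bigr)(D_y)f=m_k(D_y)\wP_1 f$. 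Since $S^1_k[Q_c]$ acts on $f$ purely through its Fourier symbol, the commutation $[\pd_y,S^1_k[Q_c]]=0$ is immediate, and $\pd_y^j S^1_k[Q_c]f=S^1_k[Q_c]\pd_y^j f$ for every $j\ge0$.

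Next I would uniformly bound $m_k$ on $[-\eta_0,\eta_0]$. By \eqref{eq:zeta-parity} and the $C^\infty$-regularity stated before \eqref{eq:lambda-asymp}, the function $g_k^*(\cdot,\eta,c_0)$ is smooth in $\eta$ with values in $L^2_{-\a}(\R)$ and even in $\eta$. A Taylor expansion in $\eta$ therefore contains only even powers, whence $g_k^*(\cdot,\eta,c_0)-g_k^*(\cdot,0,c_0)=O(\eta^2)$ in $L^2_{-\a}$, and consequently
\begin{equation*}
\sup_{\eta\in[-\eta_0,\eta_0]}\|g_{k1}^*(\cdot,\eta,c_0)\|_{L^2_{-\a}(\R)}\le C
\end{equation*}
for some constant $C$ independent of $\eta_0\in(0,\eta_1]$ (provided $\eta_1$ is chosen small enough that the Taylor remainder is controlled). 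Cauchy--Schwarz with the weights $e^{\a z}\cdot e^{-\a z}$ then yields the pointwise symbol bound
\begin{equation*}
\sup_{\eta\in[-\eta_0,\eta_0]}|m_k(\eta)|\le C\,\|e^{\a z}Q_{c_0}\|_{L^2(\R)}.
\end{equation*}

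Finally, the two operator estimates follow from the multiplier representation: by Plancherel,
\begin{equation*}
\|\pd_y^j S^1_k[Q_c]f\|_Y=\bigl\|m_k(\eta)(i\eta)^j\mathbf{1}_{[-\eta_0,\eta_0]}(\eta)\hat f(\eta)\bigr\|_{L^2}\le \|m_k\|_{L^\infty([-\eta_0,\eta_0])}\,\|\pd_y^j\wP_1 f\|_Y,
\end{equation*}
and analogously, using $\|\,\cdot\,\|_{Y_1}=\|\,\widehat{\cdot}\,\|_{L^\infty}$,
\begin{equation*}
\|\pd_y^j S^1_k[Q_c]f\|_{Y_1}\le \|m_k\|_{L^\infty([-\eta_0,\eta_0])}\,\|\pd_y^j\wP_1 f\|_{Y_1}.
\end{equation*}
Combining with the symbol bound established above concludes the proof. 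There is no real obstacle: the only point requiring care is the bound on $g_{k1}^*$, which is forced by the evenness of $g_k^*$ in $\eta$ stated in \eqref{eq:zeta-parity}; without parity one would only obtain the weaker estimate $\|g_{k1}^*\|_{L^2_{-\a}}=O(|\eta|^{-1})$, which would be incompatible with the claimed bound.
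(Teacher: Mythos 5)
Your proof is correct: $S^1_k[Q_c]$ is indeed the Fourier multiplier $m_k(D_y)\wP_1$ with symbol $m_k(\eta)=\int_\R Q_{c_0}\cdot g_{k1}^*(\cdot,\eta,c_0)\,dz$, the commutation with $\pd_y$ is immediate, and the uniform symbol bound follows from Cauchy--Schwarz together with $\|g_{k1}^*(\cdot,\eta,c_0)\|_{L^2_{-\a}}=O(1)$, which the paper already supplies via $g_k^*(z,0,c)=\zeta_{k,c}^*(z)$ and \eqref{eq:gk*-approx} (equivalently, via the evenness in $\eta$ you invoke). The paper itself only cites an external reference for this claim, and your argument is the standard one that reference uses, so there is nothing further to reconcile.
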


\begin{claim}
  \label{cl:S2}
Let $\a\in(0,\a_{c_0})$.
There exist positive constants $\eta_1$, $\delta$ and $C$ such that
if $\eta_0\in(0,\eta_1]$ and $\bM_{c,\gamma}(T)\le \delta$,
then for $k=1$, $2$, $t\in[0,T]$ and $f\in L^2(\R)$,
\begin{equation*}
 \|S_k^2[Q_c](f)(t,\cdot)\|_{Y_1}  \le C
\sup_{|c\in[c_0-\delta,c_0+\delta]}\left(\|e^{\a z}Q_c\|_{L^2}
+\|e^{\a z}\pd_cQ_c\|_{L^2}\right)\|\tc\|_Y\|f\|_{L^2}\,,
\end{equation*}
\begin{align*}
& \|\pd_yS_k^2[Q_c](f)(t,\cdot)\|_{Y_1} \\ \le & C\sum_{i=0,1,2}
\sup_{c\in[c_0-\delta,c_0+\delta]}
\|e^{\a z}\pd_c^iQ_c\|_{L^2} (\|c_y\|_Y\|f\|_{L^2}+\|\tc\|_Y\|\pd_yf\|_{L^2})\,,    
\end{align*}
\begin{align*}
& \|S_k^2[Q_c](f)(t,\cdot)\|_Y  \le C
\sum_{0\le i\le 2}\sup_{c\in[c_0-\delta,c_0+\delta]} \|e^{\a z}\pd_c^iQ_c\|_{L^2}\|
\tc\|_{L^\infty}\|f\|_{L^2}\,,\\
& \|[\pd_y,S_k^2[Q_c]]f(t,\cdot)\|_{Y_1}  \le C
\sum_{0\le i\le 3}\sup_{c\in[c_0-\delta,c_0+\delta]} \|e^{\a z}\pd_c^iQ_c\|_{L^2}
\|c_y\|_Y\|f\|_{L^2}\,.
  \end{align*}
\end{claim}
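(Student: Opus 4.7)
The plan is to rewrite $S^2_k[Q_c](f)$ as a cutoff pseudo-differential operator in $y$ whose symbol is uniformly bounded, and then to extract each bound via Plancherel, weighted Cauchy--Schwarz, and one integration by parts in the source variable $y_1$.

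First I would carry out the $z$-integration inside the definition of $S^2_k$, setting $h(t,y_1,\eta):=\int_\R g_{k2}^*(z,\eta,c(t,y_1))\,dz$, so that
\begin{equation*}
S^2_k[Q_c](f)(t,y)=\frac{1}{2\pi}\int_{-\eta_0}^{\eta_0}\!\!\int_\R f(y_1)\tc(t,y_1)h(t,y_1,\eta)e^{i(y-y_1)\eta}\,dy_1\,d\eta.
\end{equation*}
The technical core is a pointwise bound on $h$ and $\pd_c h$ in $(t,y_1,\eta)$. Writing $\delta Q_c=\int_0^1\pd_cQ_{c_0+s(c-c_0)}\,ds$ and $\tilde g_{k1}^*(z,\eta,c)=\int_0^1\pd_c g_{k1}^*(z,\eta,c_0+s(c-c_0))\,ds$, the decomposition $g_{k2}^*=\delta Q_c\cdot g_{k1}^*(z,\eta,c_0)+Q_c\cdot\tilde g_{k1}^*$ together with weighted Cauchy--Schwarz against $e^{\a z}$ and $e^{-\a z}$ yields
\begin{equation*}
|h|\lesssim \sup_{|c-c_0|\le\delta}\bigl(\|e^{\a z}Q_c\|_{L^2}+\|e^{\a z}\pd_cQ_c\|_{L^2}\bigr),\quad
|\pd_c h|\lesssim\sum_{0\le i\le 2}\sup_{|c-c_0|\le\delta}\|e^{\a z}\pd_c^iQ_c\|_{L^2},
\end{equation*}
uniformly in $t$, $y_1$, and $\eta\in[-\eta_0,\eta_0]$; this uses that $g_k^*$ is even in $\eta$ (so $g_{k1}^*$ and $\pd_c g_{k1}^*$ extend continuously to $\eta=0$) together with the smoothness estimate \eqref{eq:gk-bound}.

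The $Y_1$ bound is then immediate: taking $\mathcal{F}_y$ and estimating in $L^\infty_\eta$ gives $\|S^2_k[Q_c](f)\|_{Y_1}\le (2\pi)^{-1/2}\|h\|_{L^\infty}\|\tc f\|_{L^1}\le (2\pi)^{-1/2}\|h\|_{L^\infty}\|\tc\|_Y\|f\|_{L^2}$. For the $Y$ bound the naive Plancherel-plus-Cauchy--Schwarz argument leaks a factor of $\|\tc\|_{L^2}$ rather than the sharper $\|\tc\|_{L^\infty}$ that is claimed, so I would instead Taylor-expand the symbol at $c_0$. Since $h(t,y_1,\eta)$ depends on $y_1$ only through $c(t,y_1)$, I can write $h(t,y_1,\eta)=h_0(\eta)+\tc(t,y_1)h_1(t,y_1,\eta)$ with $h_0$, $h_1$ bounded; the leading contribution becomes $h_0(D_y)\wP_1(\tc f)$, which Plancherel bounds by $\|h_0\|_{L^\infty}\|\tc\|_{L^\infty}\|f\|_{L^2}$, and the remainder, being quadratic in $\tc$, is absorbed using the smallness assumption $\bM_{c,\gamma}(T)\le\delta$.

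Finally, the $\pd_y$ and commutator estimates both come from integration by parts in $y_1$ against $e^{i(y-y_1)\eta}$: the factor $i\eta$ produced by $\pd_y$ becomes $-\pd_{y_1}$, and distributing the derivative gives three terms, namely $S^2_k[Q_c](\pd_y f)$, the same operator with $c_y$ in place of $\tc$, and a piece of symbol $\pd_c h$ acting on $\tc c_y f$. Applying the $Y_1$ bound just proved to each (with $h$ replaced by $\pd_c h$ in the last piece) yields the stated estimate for $\pd_y S^2_k[Q_c](f)$, while the commutator drops the first term and is controlled by the remaining two. The main obstacle I anticipate is the $Y$-bound bookkeeping in paragraph three; the other three estimates reduce immediately to the uniform-symbol lemma plus this single integration by parts.
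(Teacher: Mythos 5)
Your proposal is correct and follows essentially the same route as the argument the paper delegates to \cite[Claims~B.1 and B.2]{Miz15}: bound the $z$-integrated symbol and its $c$-derivatives uniformly via weighted Cauchy--Schwarz using \eqref{eq:gk-bound}, get the $Y_1$ bounds from the $L^1\to L^\infty$ estimate on the Fourier side, obtain the sharper $Y$ bound with $\|\tc\|_{L^\infty}$ by freezing the symbol at $c=c_0$ (Fourier multiplier plus a quadratic-in-$\tc$ remainder controlled by the smallness of $\bM_{c,\gamma}(T)$), and derive the $\pd_y$ and commutator estimates by one integration by parts in $y_1$. Note also that your argument yields the commutator bound with only $i\le 2$ derivatives of $Q_c$, which is stronger than (hence implies) the stated bound with $i\le 3$.
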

We can prove Claims~\ref{cl:S1} and \ref{cl:S2} in exactly the same way
as \cite[Claims~B.1 and B.2]{Miz15}.
\par

\begin{claim}
  \label{cl:S3}
Let $\a\in(0,\a_{c_0})$. There exist positive constants $C$ and $\eta_1$
such that for $\eta\in(0,\eta_1]$, $h\ge 0$, $k=1$, $2$ and $t\ge0$,
\begin{gather*}
\|\chi(D_y)S^3_{k1}[p](f)(t,\cdot)\|_{L^1}
\le Ce^{-\a((c_0-1)t/2+h)}\|e^{\a z}p\|_{L^2}\|f\|_{L^1(\R)}\,,
\\
\|S^3_{k1}[p](f)(t,\cdot)\|_Y +\|S^3_{k2}[p](f)(t,\cdot)\|_Y
\le Ce^{-\a((c_0-1)t/2+h)}\|e^{\a z}p\|_{L^2}\|f\|_{L^2(\R)}\,,
\\ 
\|S^3_{k1}[p](f)(t,\cdot)\|_{Y_1}+\|S^3_{k2}[p](f)(t,\cdot)\|_{Y_1}
\le Ce^{-\a((c_0-1)t/2+h)}\|e^{\a z}p\|_{L^2}\|\wP_1f\|_{Y_1}\,.
\end{gather*}
\end{claim}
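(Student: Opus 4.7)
The plan is to exploit the relation $z_1-z = \tfrac{c_0-1}{2}t+h$ (a $y$-independent shift) to factor out the exponential $e^{-\a((c_0-1)t/2+h)}$, and then to recognize both $S^3_{k1}[p]$ and $S^3_{k2}[p]$ as Fourier multipliers in $y$ acting on $\wP_1 f$. First, I would introduce the scalars/symbols
$$C(t) = \int_\R p(z_1)\cdot \zeta_{k,c_0}^*(z)\,dz, \qquad P(t,\eta) = \int_\R p(z_1)\cdot g_{k1}^*(z,\eta,c_0)\,dz,$$
and apply Cauchy--Schwarz with the weight $e^{\a z_1}e^{-\a z_1}=1$, i.e.
$$|P(t,\eta)| \le \|e^{\a z_1}p\|_{L^2}\|e^{-\a z_1}g_{k1}^*(z,\eta,c_0)\|_{L^2(dz)} = e^{-\a((c_0-1)t/2+h)}\|e^{\a z}p\|_{L^2}\|e^{-\a z}g_{k1}^*(\cdot,\eta,c_0)\|_{L^2_z},$$
and the same bound with $\zeta_{k,c_0}^*$ in place of $g_{k1}^*$ for $C(t)$.

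Next, I would observe that $\|e^{-\a z}\zeta_{k,c_0}^*\|_{L^2} < \infty$ (this is just $\|\zeta_{k,c_0}^*\|_{L^2_{-\a}}$), and that $\|e^{-\a z}g_{k1}^*(\cdot,\eta,c_0)\|_{L^2}$ is uniformly bounded for $\eta\in[-\eta_0,\eta_0]$. The latter follows from the parity $\overline{g_k^*(z,\eta,c_0)}=g_k^*(z,-\eta,c_0)$ together with the realness of $g_k^*$ from \eqref{eq:def-gk*}, which forces the Taylor expansion in $\eta$ about $\eta=0$ to start with a quadratic term, so the difference quotient $\eta^{-2}(g_k^*(\cdot,\eta,c_0)-g_k^*(\cdot,0,c_0))$ extends continuously in $\eta$ with values in $L^2_{-\a}$. (Alternatively, this is already built in by the expansion \eqref{eq:gk*-approx}.) This gives $\sup_{\eta\in[-\eta_0,\eta_0]}|P(t,\eta)|+|C(t)| \le C e^{-\a((c_0-1)t/2+h)}\|e^{\a z}p\|_{L^2}$.

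Finally, I would identify the operators. Carrying out the $y_1$-integration in \eqref{eq:defS3k2} using $\int f(y_1)e^{-iy_1\eta}dy_1 = \sqrt{2\pi}\,\hat f(\eta)$ yields
$$S^3_{k1}[p](f) = C(t)\,\wP_1 f, \qquad S^3_{k2}[p](f) = P(t,D_y)\,\wP_1 f,$$
both being multipliers whose symbol is supported in $[-\eta_0,\eta_0]$ and of $L^\infty$-norm bounded by $C e^{-\a((c_0-1)t/2+h)}\|e^{\a z}p\|_{L^2}$. The three estimates then follow from standard Fourier mapping properties: for the $L^2\to Y$ bound, use that $\wP_1$ has operator norm $1$ from $L^2$ into $Y$ and that $P(t,D_y)$ with bounded symbol is bounded on $L^2$; for the $Y_1$ bound, use $\|P(t,D_y)\wP_1 f\|_{Y_1} = \|P(t,\cdot)\mathbf{1}_{[-\eta_0,\eta_0]}\hat f\|_{L^\infty}\le (\sup|P|)\|\wP_1 f\|_{Y_1}$; and for the $L^1$ bound of $\chi(D_y)S^3_{k1}[p](f)=C(t)\chi(D_y)f$, use that $\chi(D_y)$ is convolution with a Schwartz function, hence bounded on $L^1(\R)$.

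The only nontrivial step is the uniform $L^2_{-\a}$-bound on $g_{k1}^*(\cdot,\eta,c_0)$ for small $\eta$, and that has essentially been recorded already in \eqref{eq:gk*-approx}; everything else is a routine application of Cauchy--Schwarz and Fourier-multiplier bounds, in exact parallel with \cite[Claims~B.1--B.3]{Miz15} and the $S^3_k$-estimates of \cite{Miz19}.
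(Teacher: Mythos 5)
Your proof is correct, and it is essentially the argument the paper has in mind: the paper gives no proof of Claim~C.2, deferring to \cite[Claims~A.1--A.3]{Miz19}, and the standard route there is exactly yours --- write $e^{-\a z_1}=e^{-\a((c_0-1)t/2+h)}e^{-\a z}$ to peel off the exponential factor, bound $\int p(z_1)\cdot\zeta_{k,c_0}^*(z)\,dz$ and $\int p(z_1)\cdot g_{k1}^*(z,\eta,c_0)\,dz$ by Cauchy--Schwarz using $\|\zeta_{k,c_0}^*\|_{L^2_{-\a}}<\infty$ and the uniform $L^2_{-\a}$-bound on $g_{k1}^*(\cdot,\eta,c_0)$ coming from \eqref{eq:gk*-approx}, and then read off the three estimates from the fact that $S^3_{k1}[p]$ and $S^3_{k2}[p]$ are multipliers with bounded symbols supported in $[-\eta_0,\eta_0]$ (using $\chi(D_y)\wP_1=\chi(D_y)\in B(L^1)$ for the first bound). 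No gaps.
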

\begin{claim}
\label{cl:S4}
There exist positive constants $\eta_1$, $\delta$ and $C$ such that
if $\eta_0\in(0,\eta_1]$ and $\bM_{c,\gamma}(T)\le \delta$,
then for $k=1$, $2$, $t\in[0,T]$, $h\ge0$ and $f\in L^2$,
  \begin{gather*}
\|\chi(D_y)S^4_{k1}[p](f)(t,\cdot)\|_{L^1(\R)} \le
Ce^{-\a((c_0-1)t/2+h)}\|e^{\a z}p\|_{L^2}\|\tc\|_Y\|f\|_{L^2}\,,
\\ 
\|S^4_{k1}[p](f)(t,\cdot)\|_{Y_1}+\|S^4_{k2}[p](f)(t,\cdot)\|_{Y_1} \le
Ce^{-\a((c_0-1)t/2+h)}\|e^{\a z}p\|_{L^2}\|\tc\|_Y\|f\|_{L^2}\,.
\end{gather*}
\end{claim}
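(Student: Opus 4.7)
\textbf{Proof plan for Claim~\ref{cl:S4}.} The argument closely parallels Claim~\ref{cl:S3} and \cite[Claim~B.2]{Miz15}; the only new feature is the extra factor $\tc(t,y)$ (resp.\ $\tc(t,y_1)$) inside each expression, which will produce the $\|\tc\|_Y$ on the right-hand side. The scheme has two steps: (i) a pointwise-in-$y$ (resp.\ in $(y_1,\eta)$) bound on the inner $z$-integral that extracts the exponential factor $e^{-\a((c_0-1)t/2+h)}$ from the shift $z_1=z+(c_0-1)t/2+h$; (ii) the standard passage from this pointwise bound to $L^1$ and $Y_1$ estimates via Cauchy--Schwarz and Fourier-support considerations.

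For step (i), set
\[
J_3(t,y):=\int_\R p(z_1)\cdot g_{k3}^*(z,0,c(t,y))\,dz\,,\qquad
J_4(t,y_1,\eta):=\int_\R p(z_1)\cdot g_{k4}^*(z,\eta,c(t,y_1))\,dz\,.
\]
Writing the integrand in $J_3$ as $\{e^{\a z_1}p(z_1)\}\{e^{-\a z_1}g_{k3}^*(z,0,c(t,y))\}$ and applying Cauchy--Schwarz in $z$, together with the change of variables $w=z_1$ in the first factor and $w=z$ in the second, yields
\[
|J_3(t,y)|\le \|e^{\a z}p\|_{L^2}\cdot e^{-\a((c_0-1)t/2+h)}\|g_{k3}^*(\cdot,0,c(t,y))\|_{L^2_{-\a}}\,,
\]
and the same reasoning delivers $|J_4(t,y_1,\eta)|\lesssim e^{-\a((c_0-1)t/2+h)}\|e^{\a z}p\|_{L^2}$ uniformly in $(y_1,\eta)$. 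This uses that $g_{k3}^*(\cdot,0,c)$ and $g_{k4}^*(\cdot,\eta,c)$ are uniformly bounded in $L^2_{-\a}(\R;\R^2)$ for $(\eta,c)$ in a small neighborhood of $(0,c_0)$, which follows from the smoothness in $(\eta,c)$ of $g_k^*(\cdot,\eta,c)$ recorded in \eqref{eq:zeta*-asymp}, \eqref{eq:zeta-asymp-remainder} and \eqref{eq:gk*-approx}; the assumption $\bM_{c,\gamma}(T)\le\delta$ keeps $c(t,y)$ inside this neighborhood.

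For step (ii), consider first $S^4_{k1}[p](f)=\wP_1(f(y)\tc(t,y)J_3(t,y))$. Cauchy--Schwarz in $y$ gives $\|f\tc J_3\|_{L^1}\le \|J_3\|_{L^\infty}\|f\|_{L^2}\|\tc\|_{L^2}$, and $\|\tc\|_{L^2}=\|\tc\|_Y$ since $Y\subset L^2$ isometrically. Combining with $\|\wP_1 g\|_{Y_1}\le (2\pi)^{-1/2}\|g\|_{L^1}$ gives the $Y_1$ estimate, while $\chi(D_y)\wP_1=\chi(D_y)$ is convolution with a Schwartz kernel, hence bounded on $L^1$, giving the $\chi(D_y)\cdot L^1$ estimate. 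For $S^4_{k2}$, a direct computation yields
\[
\mF_yS^4_{k2}[p](f)(\eta)=\mathbf{1}_{[-\eta_0,\eta_0]}(\eta)\,\frac{1}{\sqrt{2\pi}}\int_\R e^{-iy_1\eta}f(y_1)\tc(t,y_1)J_4(t,y_1,\eta)\,dy_1\,,
\]
and a pointwise-in-$\eta$ Cauchy--Schwarz gives $\|\mF_yS^4_{k2}[p](f)\|_{L^\infty}\le C\|J_4\|_{L^\infty}\|f\|_{L^2}\|\tc\|_Y$, which is exactly the $Y_1$ bound.

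The only non-routine ingredient is the uniform $L^2_{-\a}$-boundedness of the difference quotients $g_{k3}^*$ and $g_{k4}^*$ near $(c_0,0)$; this is a direct consequence of the $C^\infty$ dependence of $\zeta_c^*(\cdot,\eta)$ on $(\eta,c)$ established in \cite{Miz-Sh17} and quoted in Section~\ref{subsec:linear stability}. Everything else is a line-by-line adaptation of the already-completed argument for Claim~\ref{cl:S3}, so no new analytic difficulty arises.
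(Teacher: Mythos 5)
Your argument is correct and is essentially the proof the paper has in mind: the paper only remarks that Claims~\ref{cl:S3}--\ref{cl:S5} follow as in \cite[Claims~A.1--A.3]{Miz19}, and the method there is exactly your two steps --- peel off $e^{-\a((c_0-1)t/2+h)}$ from the shifted weight $e^{\a z_1}$ by Cauchy--Schwarz in $z$ against the uniform $L^2_{-\a}$ bounds on the divided differences $g_{k3}^*$, $g_{k4}^*$ (which rest on the smooth, even-in-$\eta$ dependence of $g_k^*$ on $(\eta,c)$), then pass to $Y_1$ via $\|\wP_1 g\|_{Y_1}\lesssim\|g\|_{L^1}$, Cauchy--Schwarz in $y$ to produce $\|\tc\|_Y\|f\|_{L^2}$, and $L^1$-boundedness of $\chi(D_y)$. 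No gap.
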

Let $S^j_k=S^j_{k1}-\pd_y^2S^j_{k2}$ for $j=5$, $6$ and
\begin{gather*}
S^5_{k1}(f)(t,y)=\wP_1\left(
\int_\R f(y)\wU_2(t,z,y)\cdot\pd_c\zeta_{k,c(t,y)}^*(z)\,dz\right)\,,\\
S^5_{k2}(f)(t,y)=\frac{1}{2\pi}\int_{-\eta_0}^{\eta_0}
\int_{\R^2} f(y_1)\wU_2(t,z,y_1)\cdot\pd_cg_{k1}^*(z,\eta,c(t,y_1))
e^{i(y-y_1)\eta}\,dz dy_1d\eta\,,\\
S^6_{k1}(f)(t,y)=\wP_1\left(\int_\R
f(y)\wU_2(t,z,y)\cdot\pd_z\zeta_{k,c(t,y)}^*(z)\,dz\right)\,,\\
S^6_{k2}(f)(t,y)=\frac{1}{2\pi}\int_{-\eta_0}^{\eta_0}\int_{\R^2}
f(y_1)\wU_2(t,z,y_1)\cdot\pd_zg_{k1}^*(z,\eta,c(t,y_1))
e^{i(y-y_1)\eta}\,dz dy_1d\eta\,.
\end{gather*}
\begin{claim}
  \label{cl:S5}
There exist positive constants $\eta_1$, $\delta$ and $C$ such that
if $\eta_0\in(0,\eta_1]$ and $\bM_{c,\gamma}(T)\le \delta$,
then for $k=1$, $2$, $t\in[0,T]$ and $f\in L^2$,
\begin{gather*}
\|\chi(D_y)S^5_{k1}(f)(t,\cdot)\|_{L^1(\R)}+\|\chi(D_y)S^6_{k1}(f)(t,\cdot)\|_{L^1(\R)}
\le C\|v_2(t)\|_X\|f\|_{L^2(\R)}\,,
\\ 
\sum_{j=5,6}\left( \|S^j_{k1}(f)(t,\cdot)\|_{Y_1}
+\|S^j_{k2}(f)(t,\cdot)\|_{Y_1}\right)
\le C\|v_2(t)\|_X\|f\|_{L^2}\,.
\end{gather*}
\end{claim}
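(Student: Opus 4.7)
The plan is to prove Claim~\ref{cl:S5} by the same Cauchy--Schwarz scheme that underlies Claims~\ref{cl:S1}--\ref{cl:S4}, with the only novelty being that the $z$-factor is now the polynomially localized profile $\pd_c\zeta_{k,c}^{*}$ or $\pd_z\zeta_{k,c}^{*}$ (and the analogous $\pd_cg_{k1}^{*}$, $\pd_zg_{k1}^{*}$), while the variable factor is the exponentially localized remainder $\wU_2(t,z,y)$. First I would record the basic smoothness input: by \eqref{eq:gk*-approx}, the Neumann expansion of the resolvent in the proof of \cite[Theorem~2.1]{Miz-Sh17}, and the differentiation of \eqref{eq:def-gk*} in $c$ and $z$, the families
$\{\pd_c^i\pd_z^j g_{k1}^{*}(\cdot,\eta,c)\}_{|\eta|\le\eta_0,\,|c-c_0|\le\delta}$ and $\{\pd_c^i\pd_z^j \zeta_{k,c}^{*}\}_{|c-c_0|\le\delta}$ are uniformly bounded in $L^2_{-\a}(\R;\R^2)$ for $0\le i+j\le 1$, provided $\eta_0$ and $\delta$ are small. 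This is the only analytic ingredient; everything else is a pairing argument.

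With that input, define
$$h_j(t,y)=\int_\R \wU_2(t,z,y)\cdot Q_j(z,c(t,y))\,dz,\qquad j=5,6,$$
where $Q_5=\pd_c\zeta_{k,c}^{*}$ and $Q_6=\pd_z\zeta_{k,c}^{*}$. Cauchy--Schwarz in $z$ yields
$$|h_j(t,y)|\le \|\wU_2(t,\cdot,y)\|_{L^2_\a(\R;\R^2)}\,\sup_{|c-c_0|\le\delta}\|Q_j(\cdot,c)\|_{L^2_{-\a}(\R;\R^2)},$$
so $\|h_j(t,\cdot)\|_{L^2(\R)}\lesssim \|\wU_2(t)\|_{L^2_\a(\R^2)}$. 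Since $S^j_{k1}(f)=\wP_1(fh_j)$, \eqref{eq:Y1-L1} gives
$$\|S^j_{k1}(f)(t,\cdot)\|_{Y_1}\le (2\pi)^{-1/2}\|fh_j\|_{L^1}\le C\|f\|_{L^2}\|\wU_2(t)\|_{L^2_\a},$$
and because $\chi(D_y)$ has a Schwartz kernel and is bounded on $L^1(\R)$, the same Hölder estimate produces
$$\|\chi(D_y)S^j_{k1}(f)(t,\cdot)\|_{L^1}\le C\|f\|_{L^2}\|\wU_2(t)\|_{L^2_\a}.$$

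For $S^j_{k2}$ I would compute the $y$-Fourier transform: writing $e^{i(y-y_1)\eta}=e^{iy\eta}e^{-iy_1\eta}$ and using Fubini,
$$(\mF_yS^j_{k2}(f))(\eta)=\mathbf{1}_{[-\eta_0,\eta_0]}(\eta)\,\frac{1}{\sqrt{2\pi}}
\int_\R e^{-iy_1\eta}f(y_1)\,H_j(t,y_1,\eta)\,dy_1,$$
where $H_5(t,y_1,\eta)=\int_\R \wU_2(t,z,y_1)\cdot\pd_cg_{k1}^{*}(z,\eta,c(t,y_1))\,dz$ and $H_6$ is defined analogously. Another application of Cauchy--Schwarz in $z$, using the uniform $L^2_{-\a}$-bound on $\pd_c g_{k1}^{*}$ and $\pd_zg_{k1}^{*}$, gives $\sup_\eta\|H_j(t,\cdot,\eta)\|_{L^2}\lesssim\|\wU_2(t)\|_{L^2_\a}$, and Cauchy--Schwarz in $y_1$ yields
$$\|\mF_yS^j_{k2}(f)\|_{L^\infty}\lesssim \|f\|_{L^2}\,\|\wU_2(t)\|_{L^2_\a},$$
so $\|S^j_{k2}(f)(t,\cdot)\|_{Y_1}\lesssim \|f\|_{L^2}\|\wU_2(t)\|_{L^2_\a}$, completing the list of four bounds (identifying $\|v_2(t)\|_X$ with $\|\wU_2(t)\|_{L^2_\a}$).

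Since each estimate reduces in one line to a pairing between an $L^2_\a$-function and an $L^2_{-\a}$-function, there is no genuine analytic obstacle; the only place where care is needed is the uniform $L^2_{-\a}$-control of $\pd_cg_{k1}^{*}(\cdot,\eta,c)$ and $\pd_zg_{k1}^{*}(\cdot,\eta,c)$ as $(\eta,c)$ varies in $[-\eta_0,\eta_0]\times[c_0-\delta,c_0+\delta]$. This is the \emph{main point to check}, and it follows from the $C^\infty$-regularity of the spectral data established in \cite{Miz-Sh17}, together with the analytic-perturbation formula for $g_{k1}^{*}$, after restricting $\eta_0$ and $\delta$ to be sufficiently small.
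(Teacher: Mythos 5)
Your proof is correct and follows essentially the route the paper intends: the paper defers to \cite[Claims~A.1--A.3]{Miz19}, whose arguments are exactly these Cauchy--Schwarz pairings of the exponentially localized factor $\wU_2\in L^2_\a$ against the $L^2_{-\a}$-bounded profiles, combined with the $L^1\to Y_1$ bound \eqref{eq:Y1-L1} for the $S^j_{k1}$ terms and the Plancherel computation of $\mF_yS^j_{k2}$ for the $S^j_{k2}$ terms. Your identification of $\|v_2(t)\|_X$ with $\|\wU_2(t)\|_{L^2_\a}$ (which is controlled by $\|\wU_2(t)\|_{\bX}$ via \eqref{eq:pd-1}), and your isolation of the uniform $L^2_{-\a}$-bounds on $\pd_c g_{k1}^*$ and $\pd_z g_{k1}^*$ as the only nontrivial input, are consistent with how the paper handles the analogous quantities (cf.\ \eqref{eq:gk-bound} and the evenness of $g_k^*$ in $\eta$).
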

We can prove Claims~\ref{cl:S3}--\ref{cl:S5} in the same way as
\cite[Claims~A.1--A.3]{Miz19}.
\bigskip

\section{Estimates of $R^j$}
\label{ap:r}

To start with, we estimate the difference between $b$ and $\tc$ and
the operator norms of $\delta B(c)$.
\begin{claim}
  \label{cl:b-capprox}
There exist positive constants $\delta$ and $C$ such that
if \linebreak $\sup_{t\in[0,T]}\|\tc(t)\|_Y \le \delta$, then for $t\in[0,T]$,
\begin{align*}
& \|b-\tc\|_Y\lesssim \|\tc\|_{L^\infty}\|\tc\|_Y\,,
\quad \|b-\tc\|_{Y_1}\lesssim \|\tc\|_Y^2\,,
\\ &
\|b_y-c_y\|_Y\lesssim \|\tc\|_{L^\infty}\|c_y\|_Y\,,\quad
\|b_y-c_y\|_{Y_1}\lesssim \|\tc\|_Y\|c_y\|_Y\,,
\\ &
 \|b_t-c_t\|_Y \lesssim \|\tc\|_{L^\infty}\|c_t\|_Y\,,
\quad  \|b_t-c_t\|_{Y_1} \lesssim \|\tc\|_Y\|c_t\|_Y\,,
\\ &
\|b_{yy}-c_{yy}\|_Y \lesssim \|\tc\|_{L^\infty}\|c_{yy}\|_Y
+\|c_y\|_{L^\infty}\|c_y\|_Y\,, \\ &
\|b_{yy}-c_{yy}\|_{Y_1} \lesssim \|\tc\|_Y\|c_{yy}\|_Y+\|c_y\|_Y^2\,,
\\ &
\left\|b-\tc-\frac12\rho''(c_0)\wP_1(\tc)^2\right\|_Y
\lesssim \|\tc\|_{L^\infty}^2\|\tc\|_Y\,,
\\ &
\left\|b-\tc-\frac12\rho''(c_0)\wP_1(\tc)^2\right\|_{Y_1}
\lesssim \|\tc\|_{L^\infty}\|\tc\|_Y^2\,.
\end{align*}
\end{claim}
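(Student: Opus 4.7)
The plan is to reduce everything to Taylor expansion of the smooth function $\rho(c)$ around $c=c_0$, combined with the two mapping properties of $\wP_1$: the $L^2$-contractivity $\|\wP_1 u\|_Y\le\|u\|_{L^2}$ and the $L^1\to Y_1$ bound $\|\wP_1 u\|_{Y_1}\le (2\pi)^{-1/2}\|u\|_{L^1}$ noted in \eqref{eq:Y1-L1}. Since $\rho(c_0)=0$ and $\rho'(c_0)=1$, Taylor's formula with integral remainder gives smooth functions $r_j(\tc)$ (uniformly bounded for $|\tc|\le\delta$) such that
\begin{gather*}
\rho(c)-\rho(c_0)-\tc=\tfrac12\rho''(c_0)\tc^2+r_1(\tc)\tc^3,\qquad
\rho'(c)-1=\rho''(c_0)\tc+r_2(\tc)\tc^2,\\
\rho(c)-\rho(c_0)-\tc-\tfrac12\rho''(c_0)\tc^2=r_1(\tc)\tc^3.
\end{gather*}

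First I would observe that since $\tc\in Y$, so are $c_y=\tc_y$, $c_t=\tc_t$ and $c_{yy}=\tc_{yy}$ (the operator $\pd_y$ preserves the Fourier-support condition), hence $\wP_1$ acts as the identity on each of these. Therefore
\begin{align*}
b-\tc&=\wP_1\bigl(\rho(c)-\rho(c_0)-\tc\bigr),\\
b_y-c_y&=\wP_1\bigl((\rho'(c)-1)c_y\bigr),\qquad
b_t-c_t=\wP_1\bigl((\rho'(c)-1)c_t\bigr),\\
b_{yy}-c_{yy}&=\wP_1\bigl((\rho'(c)-1)c_{yy}\bigr)+\wP_1\bigl(\rho''(c)(c_y)^2\bigr),
\end{align*}
and the last identity of the claim becomes $\wP_1(r_1(\tc)\tc^3)$.

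For the $Y$-norms I would bound $\|\wP_1 F\|_Y\le\|F\|_{L^2}$ and split each product as $L^\infty\cdot L^2$: for instance $\|\rho(c)-\rho(c_0)-\tc\|_{L^2}\lesssim\|\tc\|_{L^\infty}\|\tc\|_{L^2}$ using boundedness of $\rho''+r_1\tc$ on $|\tc|\le\delta$, and similarly $\|(\rho'(c)-1)c_y\|_{L^2}\lesssim\|\tc\|_{L^\infty}\|c_y\|_{L^2}$. For the $b_{yy}$ identity, the second piece contributes $\|c_y\|_{L^\infty}\|c_y\|_{L^2}$. For the $Y_1$-norms I would instead invoke $\|\wP_1 F\|_{Y_1}\lesssim\|F\|_{L^1}$ and split as $L^2\cdot L^2$: $\|\rho(c)-\rho(c_0)-\tc\|_{L^1}\lesssim\|\tc\|_{L^2}^2=\|\tc\|_Y^2$, and likewise for the others, with the second $b_{yy}$ piece yielding $\|c_y\|_{L^2}^2$. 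The cubic remainders $\|r_1(\tc)\tc^3\|_{L^2}\lesssim\|\tc\|_{L^\infty}^2\|\tc\|_Y$ and $\|r_1(\tc)\tc^3\|_{L^1}\lesssim\|\tc\|_{L^\infty}\|\tc\|_Y^2$ give the final two inequalities.

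The only subtle point — which is not really an obstacle but worth singling out — is verifying that $\wP_1$ acts as the identity on $\tc$ and its $y$- and $t$-derivatives, so that one may actually write $b-\tc=\wP_1(\rho(c)-\rho(c_0)-\tc)$ rather than $\wP_1\rho(c)-\rho(c_0)-\wP_1\tc$; once this is in hand, the rest is a routine application of the two $\wP_1$ estimates together with the smoothness of $\rho$ on a neighborhood of $c_0$, which is guaranteed by the choice of $\delta$ small enough that $|\tc|\le\delta$ keeps $c$ in a fixed compact interval where $a_{21}(s)$ stays bounded away from zero (so $\rho$ is smooth).
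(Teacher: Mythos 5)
Your proposal is correct and follows essentially the same route as the paper: the paper's proof consists of recording the identity $b-\tc=\wP_1\bigl(\rho(c)-\rho(c_0)-\rho'(c_0)\tc\bigr)$ (valid because $\wP_1\tc=\tc$ for $\tc\in Y$) and deferring the details to \cite[Claim~D.6]{Miz15}, which is precisely the Taylor expansion of $\rho$ combined with the $L^2\to Y$ and $L^1\to Y_1$ bounds for $\wP_1$ that you spell out. Your explicit verification, including the product splittings $L^\infty\cdot L^2$ for the $Y$-estimates and $L^2\cdot L^2$ for the $Y_1$-estimates, fills in exactly what the citation covers.
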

\begin{proof}
Since
$b-\tc=\wP_1\left(\rho(c)-\rho(c_0)-\rho'(c_0)\tc\right)$,
we can prove Claim~\ref{cl:b-capprox} in exactly the same way as
\cite[Claim~D.6]{Miz15}.
\end{proof}

\begin{claim}
  \label{cl:B4}
There exist positive constants $C$ and $\delta$ such that if
$\bM_{c,\gamma}(T)\le \delta$, then for $t\in[0,T]$,
\begin{align*}
& \|B_4(c)-B_4(c_0)\|_{B(Y)} \le C\|\tc\|_{L^\infty}\,,
\\ &
\|B_4(c)-B_4(c_0)\|_{B(Y,Y_1)}
+\|\chi(D_y)(B_4(c)-B_4(c_0))\|_{B(Y,L^1)} \le C\|\tc\|_Y\,,
\\ & 
 \|\delta B(c)f\|_Y \le C\|\tc\|_{L^\infty}
(\|\pd_yf\|_{L^2}+\|c_y\|_Y\|f\|_{L^\infty})\,,
\\ & 
\|\delta B(c)f\|_{Y_1}+\|\chi(D_y)\delta B(c)f\|_{L^1} 
 \le C\|\tc\|_Y(\|\pd_yf\|_{L^2}+\|c_y\|_Y\|f\|_{L^\infty})\,.
\end{align*}
\end{claim}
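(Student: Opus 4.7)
The plan is to reduce both estimates to elementary $L^p$ bounds on multiplication operators $M_g$ sandwiched between the Fourier cutoffs $\wP_1$ and $I-\wP_1$, exploiting that each relevant $g$ depends smoothly on $c$ and vanishes like $|\tc|$ at $c=c_0$. Two structural remarks drive the argument. First, since the symbol $m(\eta):=(i\eta)^{-1}(1-\mathbf{1}_{[-\eta_0,\eta_0]}(\eta))$ is bounded, $(I-\wP_1)=m(D_y)\pd_y$ on $L^2$; applied to a product $\varphi v$ this yields
\begin{equation*}
\|(I-\wP_1)(\varphi v)\|_{L^2}\lesssim\|\varphi_y\|_{L^2}\|v\|_{L^\infty}+\|\varphi\|_{L^\infty}\|\pd_y v\|_{L^2},
\end{equation*}
producing the derivative gain that ultimately leads to the $\|\pd_y f\|_{L^2}$ and $\|c_y\|_Y\|f\|_{L^\infty}$ factors on the right-hand side. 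Second, since $B_j(c_0)$ and $B_1(c_0)^{-1}$ commute with $\wP_1$, any composition of the form $\wP_1\cdots M_{h(c_0)}(I-\wP_1)$ with constant factors inserted between vanishes identically; hence $\wP_1\cdots M_h(I-\wP_1)=\wP_1\cdots M_{h-h(c_0)}(I-\wP_1)$, producing a $\|\tc\|_{L^\infty}$ prefactor whenever needed.

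For $B_4(c)-B_4(c_0)$, apply the resolvent identity on $\wP_1L^2$, valid for small $\|\tc\|_{L^\infty}$ via a Neumann series around $\wP_1B_1(c_0)\wP_1=B_1(c_0)\wP_1$, to obtain a sum of terms of the form $\wP_1M_gT$ with $T\in B(Y)$ and $|g|\lesssim|\tc|$ pointwise. The $B(Y)$ estimate then follows from $\|\wP_1M_gh\|_{L^2}\le\|g\|_{L^\infty}\|h\|_{L^2}$; the $B(Y,Y_1)$ estimate from \eqref{eq:Y1-L1} via $\|\wP_1(gh)\|_{Y_1}\lesssim\|gh\|_{L^1}\le\|g\|_{L^2}\|h\|_{L^2}\lesssim\|\tc\|_Y\|h\|_Y$; and the $\chi(D_y)\cdot\to L^1$ estimate from the $L^1\to L^1$ boundedness of $\chi(D_y)\wP_1$ (convolution against a Schwartz kernel).

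For $\delta B(c)$, use the three-term decomposition \eqref{eq:defdB}. Terms 1 and 2 are already of the form $\wP_1M_{g_0}(I-\wP_1)\cdots$ with $|g_0|\lesssim|\tc|$, and applying the derivative-gain identity (with $\varphi\equiv 1$ for term 1 and $\varphi=B_1(c)^{-1}$ for term 2) produces the desired $L^2$ bound $C\|\tc\|_{L^\infty}(\|\pd_yf\|_{L^2}+\|c_y\|_Y\|f\|_{L^\infty})$. For term 3, start from the algebraic identity
\begin{equation*}
[B_1(c)^{-1}-(\wP_1B_1(c)\wP_1)^{-1}]\wP_1f=-B_1(c)^{-1}(I-\wP_1)(B_1(c)-B_1(c_0))(\wP_1B_1(c)\wP_1)^{-1}\wP_1f,
\end{equation*}
derived by writing $B_1(c)h=\wP_1f+(I-\wP_1)B_1(c)h$ for $h:=(\wP_1B_1(c)\wP_1)^{-1}\wP_1f$ and invoking $(I-\wP_1)B_1(c_0)h=0$. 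Inserting this into term 3 and replacing $\wP_1B_2(c)\wP_1B_1(c)^{-1}(I-\wP_1)$ by $\wP_1B_2(c)\wP_1(B_1(c)^{-1}-B_1(c_0)^{-1})(I-\wP_1)$ via the second structural remark supplies the missing $\|\tc\|_{L^\infty}$ prefactor; the derivative-gain identity then yields the required estimate. The $Y_1$ and $L^1$ bounds follow identically, upgrading the $L^\infty$-norm of the small factor to its $L^2$-norm via \eqref{eq:Y1-L1} to produce the $\|\tc\|_Y$ prefactor.

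The main obstacle is this last manoeuvre for term 3: a naive $L^2$ bound loses the $\|\tc\|_{L^\infty}$ prefactor on the $\|c_y\|_Y\|f\|_{L^\infty}$ sub-term that arises when the $\pd_y$ inside $(I-\wP_1)$ falls on $B_1(c)-B_1(c_0)$, and one must exploit the cancellation $\wP_1B_1(c_0)^{-1}(I-\wP_1)=0$ to restore the linear $\tc$-dependence uniformly across all sub-terms.
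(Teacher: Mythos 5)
Your proposal is correct and follows essentially the same route as the paper: the paper's proof consists precisely of the decomposition \eqref{eq:defdB}, the derivative gain $\pd_y^{-1}(I-\wP_1)\in B(Y)$, and the identity $(\wP_1B_1(c)^{-1}\wP_1)(\wP_1B_1(c)\wP_1)=\wP_1-\wP_1\left(B_1(c)^{-1}-B_1(c_0)^{-1}\right)(I-\wP_1)\left(B_1(c)-B_1(c_0)\right)$, which is algebraically equivalent to your term-3 identity once you insert the cancellation $\wP_1B_1(c_0)^{-1}(I-\wP_1)=0$ exactly as you do. Your Neumann-series treatment of $B_4(c)-B_4(c_0)$ is what the paper means by ``follows immediately from the definition of $B_4$.''
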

\begin{proof}
The first two estimates follow immediately from the definition of $B_4$.
To prove the last two estimates, we use \eqref{eq:defdB},
the fact that $\pd_y^{-1}(I-\wP_1)\in B(Y)$ and
\begin{align*}
&(\wP_1B_1(c)^{-1}\wP_1)(\wP_1B_1(c)\wP_1)=
\wP_1-\wP_1B_1(c)^{-1}(I-\wP_1)B_1(c)\wP_1
\\ & = \wP_1-\wP_1\left(B_1(c)^{-1}-B_1(c_0)^{-1}\right)
(I-\wP_1)\left(B_1(c)-B_1(c_0)\right)\,.
\end{align*}
\end{proof}

Using Claims~\ref{cl:b-capprox} and \ref{cl:B4},
we have the following.
\begin{claim}
  \label{cl:Rg}
Let $R^{G,1}_{2,1}=\ta_{22}'(c)(c_y)^2$,
$R^{G,1}_{2,2}=\left(\ta_{22}(c)-\ta_{22}(c_0)\right)c_y-\ta_{22}(c_0)(b_y-c_y)$.
Then $R^{G,1}_2=\pd_y R^{G,1}_{2,2}-R^{G,1}_{2,1}$ and there exist positive constants
$\delta$ and $C$ such that if
$\bM_{c,\gamma}(T)\le \delta$, then for $t\in[0,T]$, 
\begin{gather*}
\|R^{G,1}_1\|_Y+\|R^{G,2}\|_Y\le C\bM_{c,\gamma}(T)^2\la t\ra^{-5/4}\,,
\\
\|R^{G,1}_{2,1}\|_{Y_1}+\|\chi(D)R^{G,1}_{2,1}\|_{L^1} \le
C\bM_{c,\gamma}(T)^2\la t\ra^{-3/2}\,,
\quad \|R^{G,1}_{2,2}\|_{Y_1}\le C\bM_{c,\gamma}(T)^2\la t\ra^{-1}\,,
\\
\|R^{G,2}_2\|_{Y_1}+\|\chi(D_y)R^{G,2}_2\|_{L^1} \le C\bM_{c,\gamma}(T)^2\la t\ra^{-5/4}\,.
\end{gather*}
\end{claim}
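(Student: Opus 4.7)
The plan begins with the algebraic identity. Applying the chain rule,
\[
\pd_y R^{G,1}_{2,2} = \ta_{22}'(c)(c_y)^2 + (\ta_{22}(c)-\ta_{22}(c_0))c_{yy} - \ta_{22}(c_0)(b_{yy}-c_{yy}),
\]
and since $\ta_{22}(c)c_{yy}-\ta_{22}(c_0)b_{yy} = (\ta_{22}(c)-\ta_{22}(c_0))c_{yy} + \ta_{22}(c_0)(c_{yy}-b_{yy})$, one obtains $R^{G,1}_2 = \pd_y R^{G,1}_{2,2} - R^{G,1}_{2,1}$. The purpose of this decomposition is that the $\pd_y$ in front of $R^{G,1}_{2,2}$ will be absorbed by the solution operator in Duhamel's formula (Lemma~\ref{lem:decay-BB}), so the $\la t\ra^{-1}$ $Y_1$-rate of $R^{G,1}_{2,2}$ effectively improves, while $R^{G,1}_{2,1}$ is already quadratic in first derivatives.

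For $\|R^{G,1}_1\|_Y$ I split the expression into $(a_{11}(c)-a_{11}(c_0))\gamma_{yy}$, $(a_{12}(c)-a_{12}(c_0))c_{yy}$, $a_{12}(c_0)(c_{yy}-b_{yy})$ and $\tc-b-p_1 b^2$. Smoothness of $a_{ij}(c)$ gives $|a_{ij}(c)-a_{ij}(c_0)| \lesssim |\tc|$, and the one-dimensional interpolation $\|\tc\|_{L^\infty} \lesssim \|\tc\|_Y^{1/2}\|\pd_y \tc\|_Y^{1/2} \lesssim \bM_{c,\gamma}(T)\la t\ra^{-1/2}$ combined with $\|\gamma_{yy}\|_Y+\|c_{yy}\|_Y \lesssim \bM_{c,\gamma}(T)\la t\ra^{-3/4}$ gives $\bM_{c,\gamma}(T)^2\la t\ra^{-5/4}$ for the first two terms. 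The third is controlled by Claim~\ref{cl:b-capprox}'s bound $\|b_{yy}-c_{yy}\|_Y \lesssim \|\tc\|_{L^\infty}\|c_{yy}\|_Y+\|c_y\|_{L^\infty}\|c_y\|_Y$; the interpolation $\|c_y\|_{L^\infty}\lesssim \bM_{c,\gamma}(T)\la t\ra^{-7/8}$ pushes both summands below $\la t\ra^{-5/4}$. The last term is the Taylor remainder: Claim~\ref{cl:b-capprox} yields $\|\tc-b-p_1 b^2\|_Y \lesssim \|\tc\|_{L^\infty}^2\|\tc\|_Y \lesssim \bM_{c,\gamma}(T)^3\la t\ra^{-5/4}$.

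The $Y_1$-bounds for $R^{G,1}_{2,1}$ and $R^{G,1}_{2,2}$ use \eqref{eq:Y1-L1}: $\|\wP_1(fg)\|_{Y_1} \lesssim \|f\|_{L^2}\|g\|_{L^2}$. For $R^{G,1}_{2,1}=\ta_{22}'(c)(c_y)^2$ this yields $\|c_y\|_{L^2}^2 \lesssim \bM_{c,\gamma}(T)^2\la t\ra^{-3/2}$, and the $L^1$-bound on $\chi(D_y)R^{G,1}_{2,1}$ is immediate since $\chi(D_y)$ is bounded on $L^1$. The two summands of $R^{G,1}_{2,2}$ are of size $\|\tc\|_Y\|c_y\|_Y \lesssim \bM_{c,\gamma}(T)^2\la t\ra^{-1}$, the second using the Claim~\ref{cl:b-capprox} estimate $\|b_y-c_y\|_{Y_1} \lesssim \|\tc\|_Y\|c_y\|_Y$. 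The $Y$-bound on $R^{G,2}$ uses Claim~\ref{cl:B4}: the prefactor $\|\tc\|_{L^\infty}$ in $\|\delta B(c)f\|_Y$ supplies an extra $\la t\ra^{-1/2}$ factor, which multiplied by $\|\pd_y f\|_{L^2}\lesssim \bM_{c,\gamma}(T)\la t\ra^{-3/4}$ for the bracketed argument $M(c)(\gamma_{yy},c_{yy})^T+(\ldots)$ yields the required $\la t\ra^{-5/4}$, while the $B_4(c)$-piece produces only $(c_y)^2$ and $c_y\gamma_y$ whose $\wP_1$-projections lie in $L^1$ by Cauchy–Schwarz with the correct decay.

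The principal technical difficulty will be the $Y_1$-estimate of $R^{G,2}_2$: the cross term $c_y\gamma_y$ by itself has only $\la t\ra^{-1}$ $L^1$-decay, short of the target $\la t\ra^{-5/4}$. The resolution exploits the upper-triangular structure $B_1(c)=\beta_1(c)I+\beta_2(c)E_{12}$, which forces $[B_4(c_0)]_{21}=0$ to leading order; hence the cross term enters $R^{G,2}_2$ only through $B_4(c)-B_4(c_0)$, whose operator norm is bounded by $\|\tc\|_Y \lesssim \bM_{c,\gamma}(T)\la t\ra^{-1/4}$ via Claim~\ref{cl:B4}. This supplies exactly the extra $\la t\ra^{-1/4}$ needed to upgrade $\la t\ra^{-1}$ to $\la t\ra^{-5/4}$. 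The $\chi(D_y)$-projected $L^1$-bound is obtained by replacing the $Y_1$-norm by the analogous $L^1$-norm bound of Claim~\ref{cl:B4}, and every remaining term either carries an extra factor of $\|\tc\|_{L^\infty}$ (via $\delta B(c)$) or is quadratic with one high-derivative factor, in both cases satisfying the claimed bounds.
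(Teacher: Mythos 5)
Your proposal is correct and follows essentially the same route as the paper, which proves this claim simply by invoking Claims~\ref{cl:b-capprox} and \ref{cl:B4} and remarking that the last estimate uses the upper-triangular structure of $B_4$. The only minor imprecision is that $B_4(c)$ is \emph{exactly} upper triangular for every $c$ (since $B_1(c)=\beta_1(c)I+\beta_2(c)E_{12}$ and $B_2(c)$ is diagonal), so the cross term $c_y\gamma_y$ simply never enters $E_2B_4(c)(\cdots)$; your detour through $B_4(c)-B_4(c_0)$ is harmless but unnecessary.
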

To prove the last estimate, we use the fact that $B_4(c)$ is an upper triangular
matrix.
\par

Let
\begin{align*}
R^{\ell_1,r}_{k1}=& \wS_1\left((b-\tc)\mathbf{e_1}\right)+
\wP_1\left\{\int_\R \pd_y\tilde{\ell}_{1r} \left(
\gamma_y\pd_z\zeta_{k,c(t,y)}^*(z)-c_y\pd_c\zeta_{k,c(t,y)}^*(z)
\right)\,dz\right\}\,,
\\ 
R^{\ell_1,r}_{k2}=&
\wP_1\left(\int_\R \pd_y\tilde{\ell}_{1r}\zeta_{k,c(t,y)}^*(z)\,dz\right)\,,
\\
R^{\ell_1,r}_{k3}=& 
\frac{1}{\sqrt{2\pi}}\wP_1\mF_\eta^{-1}
\la \pd_y^2\tilde{\ell}_{1r}, g_{k1}^*(z,\eta,c(t,y))e^{iy\eta}\ra\,.  
\end{align*}
Then $R^{\ell_1,r}_k=R^{\ell_1,r}_{k1}+\pd_yR^{\ell_1,r}_{k2}-\pd_y^2R^{\ell_1,r}_{k3}$.
\begin{claim}
  \label{cl:Rel1}
There exist positive constants $\delta$ and $C$ such that if
$\bM_{c,\gamma}(T)\le \delta$, then for $t\in[0,T]$, 
\begin{gather}
\label{eq:Rl1}
\|R^{\ell_1,1}\|_Y+\|\pd_yR^{\ell_1,2}\|_Y\le C
\bM_{c,\gamma}(T)(\bM_{c,\gamma}(T)+\bM_1(T)+\bM_2(T)^2)\la t\ra^{-5/4}\,,
\\ \label{eq:Rl1,r1}
\|R^{\ell_1,r}_{k1}\|_{Y_1}+ \|\chi(D_y)R^{\ell_1,r}_{k1}\|_{L^1}
\le C\bM_{c,\gamma}(T)^2\la t\ra^{-5/4}\,,
\\ \label{eq:Rl1,r2}
\|R^{\ell_1,r}_{k2}\|_Y+\|R^{\ell_1,r}_{k3}\|_Y
\le C\bM_{c,\gamma}(T)\la t\ra^{-1}\,.
\end{gather}
\end{claim}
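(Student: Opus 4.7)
The plan is to establish each of the six quantities appearing in Claim~\ref{cl:Rel1} separately by decomposing into $S^j_k$-operator applications and invoking the operator-norm estimates of Claims~\ref{cl:S1} and \ref{cl:S2}, together with Claims~\ref{cl:B-1}, \ref{cl:b-capprox}, and \ref{cl:cx_t-bound} and the time-decay rates encoded in $\bM_{c,\gamma}(T)$, $\bM_1(T)$, and $\bM_2(T)$.

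For $\|R^{\ell_1,1}\|_Y$ I would decompose into the four matrix--vector summands. The $\wS_1$- and $\wS_1'$-pieces, which act on $(c_t-b_t)\mathbf{e_2}$ and $(c_{yy}-b_{yy})\mathbf{e_2}$, reduce by Claim~\ref{cl:S1} to the $Y$-norms of $c_t-b_t$ and $c_{yy}-b_{yy}$, which Claim~\ref{cl:b-capprox} in turn bounds by products of $\|\tc\|_{L^\infty}$ and $\|c_y\|_{L^\infty}$ with $\|c_t\|_Y$, $\|c_{yy}\|_Y$, and $\|c_y\|_Y$. The $\wS_2$- and $\wS_2'$-pieces, acting on $\gamma_t-\tc$, $c_t$, $\gamma_{yy}$, and $c_{yy}$, are estimated by the $Y$-bound of Claim~\ref{cl:S2}, which furnishes an extra factor $\|\tc\|_{L^\infty}$ times the $L^2$-norm of the input. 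Combining Claim~\ref{cl:cx_t-bound}'s $\la t\ra^{-3/4}$-rate for $c_t$ and $\gamma_t-\tc$ with the rates $\|\pd_y^k\tc\|_Y+\|\pd_y^{k+1}\gamma\|_Y\lesssim\la t\ra^{-(2k+1)/4}$ implied by the definition of $\bM_{c,\gamma}(T)$ then yields the announced bound.

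For $\|\pd_yR^{\ell_1,2}\|_Y$ I would start from the explicit formula $R^{\ell_1,2}_k=\sum_{j=1,2}\bigl(2S^j_k[\Phi_{3,c}](c_y\gamma_y)+S^j_k[\Phi_{4,c}]((c_y)^2)+S^j_k[\Phi_{5,c}]((\gamma_y)^2)\bigr)$ and distribute $\pd_y$ using $[\pd_y,S^1_k]=0$ from Claim~\ref{cl:S1} together with the commutator bound of Claim~\ref{cl:S2}. The $\pd_y$ either differentiates one of the quadratic factors (yielding, e.g., $c_{yy}\gamma_y$ with combined rate $\la t\ra^{-3/2}$) or is absorbed into the $S^2_k$-kernel with an extra $\|c_y\|_Y$ factor; in either case, combining with the $\|\tc\|_{L^\infty}\lesssim\la t\ra^{-1/4}$ prefactor from $S^2_k$ and the quadratic decay of the inputs closes at $\la t\ra^{-5/4}$.

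For the three $R^{\ell_1,r}_{kj}$ bounds I would use the explicit form \eqref{eq:l1r} of $\tilde{\ell}_{1r}$ and compute $\pd_y$ and $\pd_y^2$ by the chain rule in $c(t,y)$ and $\gamma(t,y)$, each derivative producing a factor of $c_y$, $\gamma_y$, $c_{yy}$, or $\gamma_{yy}$. Claim~\ref{cl:B-1} controls the operators $B^{-1}$, $L_1$, and $L_2$ on $L^2_\a$, and the exponential $z$-localization of $\pd_c^j\pd_z^k\Phi_c$ makes the $z$-pairings against $\zeta_{k,c}^*$ or $g_{k1}^*$ absolutely convergent. For $R^{\ell_1,r}_{k1}$ the extra piece $\wS_1((b-\tc)\mathbf{e_1})$ is absorbed using Claim~\ref{cl:b-capprox}'s sharper bound $\|b-\tc\|_{Y_1}\lesssim\|\tc\|_Y^2$, and the $\|\chi(D_y)\cdot\|_{L^1}$-bound follows since $\chi(D_y)\wP_1$ is $L^1$-bounded. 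The main obstacle is the meticulous bookkeeping: for each piece one must select the correct variant of the $S^j_k$-estimate---in $Y$, in $Y_1$, or in the projected $L^1$---so that the output norm matches what Lemma~\ref{lem:decay-BB} will later consume, and the $\pd_y$-derivatives must be distributed among the factors so that the quadratic products of decay rates close at the sharper rates $\la t\ra^{-5/4}$ and $\la t\ra^{-1}$ rather than their naive single-derivative counterparts.
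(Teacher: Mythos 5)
Your route is the paper's: the same term-by-term decomposition, the same operator bounds (Claims~\ref{cl:S1}, \ref{cl:S2}, \ref{cl:B-1}), the same input estimates (Claims~\ref{cl:b-capprox} and \ref{cl:cx_t-bound}), and for the $R^{\ell_1,r}_{kj}$ the single computation
$\|\pd_y\tilde{\ell}_{1r}\|_{\bX}\lesssim\|\gamma_{yyy}\|_Y+\|c_{yyy}\|_Y+\|\pd_y(\gamma_y)^2\|_Y+\|\pd_y(c_y\gamma_y)\|_Y+\|\pd_y(c_y)^2\|_Y\lesssim\bM_{c,\gamma}(T)\la t\ra^{-1}$,
which is precisely what the paper records. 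Two of your steps, however, do not deliver the stated rates as written. For \eqref{eq:Rl1}, every summand of $R^{\ell_1,1}$ is a product of one factor controlled by $\|\tc\|_{L^\infty}$ (or $\|c_y\|_{L^\infty}$) and one factor of size $\la t\ra^{-3/4}$ (namely $\|c_t\|_Y$, $\|\gamma_t-\tc\|_Y$ or $\|\gamma_{yy}\|_Y$, $\|c_{yy}\|_Y$). If you bound $\|\tc\|_{L^\infty}\lesssim\|\tc\|_Y\lesssim\bM_{c,\gamma}(T)\la t\ra^{-1/4}$ via the embedding $Y\subset L^\infty$, you only reach $\la t\ra^{-1}$. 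Closing at $\la t\ra^{-5/4}$ requires the interpolation $\|\tc\|_{L^\infty}\lesssim\|\tc\|_{L^2}^{1/2}\|\pd_y\tc\|_{L^2}^{1/2}\lesssim\bM_{c,\gamma}(T)\la t\ra^{-1/2}$ (and its analogue for $\gamma_y$); you should make this explicit.

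The more serious gap is your treatment of the $\wS_1\bigl((b-\tc)\mathbf{e_1}\bigr)$ piece of $R^{\ell_1,r}_{k1}$. The bound you invoke gives only $\|b-\tc\|_{Y_1}\lesssim\|\tc\|_Y^2\lesssim\bM_{c,\gamma}(T)^2\la t\ra^{-1/2}$, which falls well short of the required $\la t\ra^{-5/4}$ in \eqref{eq:Rl1,r1}, and no refinement of $\|b-\tc\|_{Y_1}$ alone can repair this since $b-\tc$ genuinely decays only like $\la t\ra^{-1/2}$. The resolution is structural: this term arises from the $\eta^2g_{k1}^*$-part of $g_k^*$ in the pairing with $\ell_{11}$ (after splitting $\gamma_t-\tc=(\gamma_t-b)+(b-\tc)$), so it actually carries a $\pd_y^2$, which commutes with $\wS_1$ by Claim~\ref{cl:S1}. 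The quantity to estimate is therefore $\|\pd_y^2(b-\tc)\|_{Y_1}=\|b_{yy}-c_{yy}\|_{Y_1}\lesssim\|\tc\|_Y\|c_{yy}\|_Y+\|c_y\|_Y^2\lesssim\bM_{c,\gamma}(T)^2\la t\ra^{-5/4}$ by Claim~\ref{cl:b-capprox}, which matches the claimed rate exactly. Without tracking that hidden $\pd_y^2$, the step fails.
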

\begin{proof}
We see that \eqref{eq:Rl1} immediately follows from
Claims~\ref{cl:S1}, \ref{cl:S2} and \ref{cl:cx_t-bound}.
Since
\begin{align*}
 \|\pd_y\tilde{\ell}_{1r}\|_\bX \lesssim &
\|\gamma_{yyy}\|_Y+\|c_{yyy}\|_Y+\|\pd_y(\gamma_y)^2\|_Y
+\|\pd_y(c_y\gamma_y)\|_Y+\|\pd_y(c_y)^2\|_Y
\\  \lesssim & \bM_{c,\gamma}(T)\la t\ra^{-1}\,,  
\end{align*}
we have \eqref{eq:Rl1,r1} and \eqref{eq:Rl1,r2}.  
\end{proof}
\begin{claim}
  \label{cl:Rl2}
Let $\a\in(0,\a_{c_0})$.
There exist positive constants $C$, $\delta$, $\eta_1$  and $h_0$ such that
such that if $\eta\in(0,\eta_1]$, $\bM_{c,\gamma}(T)\le \delta$ and
$h\ge h_0$, then for  $k=1$, $2$ and $t\in[0,T]$,
\begin{align*}
& \|R^{\ell_2,1}\|_{Y_1}+\|\chi(D_y)R^{\ell_2,1}\|_{L^1}+\|R^{\ell_2,2}\|_{Y_1}
\\ & \lesssim \bM_{c,\gamma}(T)(\bM_{c,\gamma}(T)+\bM_1(T)+\bM_2(T)^2)
e^{-\a\{(c_0-1)t/2+h\}}\,,
\end{align*}
$$\|R^{\ell_2,r}\|_Y \lesssim \bM_{c,\gamma}(T)e^{-\a\{(c_0-1)t/2+h\}}\,.$$
\end{claim}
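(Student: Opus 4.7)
\textbf{Proof strategy for Claim~\ref{cl:Rl2}.}
The plan rests on two structural facts. First, every constituent of $R^{\ell_2,1}$, $R^{\ell_2,2}$, and $R^{\ell_2,r}$ is a pairing between a quantity built from $\Psi_c$, $\pd_c\Psi_{c_0}$, or $\widetilde{\Psi}_{c_0}$---each supported (or, after $B_0^{-1}$, effectively concentrated) where $z_1\in(-1,1)$, i.e.\ $z\le -((c_0-1)t/2+h)+1$---and an adjoint mode $g_k^*(z,\eta,c)$ whose leading profile $\zeta_{k,c}^*$ (and, by \eqref{eq:gk*-approx}, $g_k^*$ itself in $L^2_{-\a}$) decays like $e^{\a_c z}$ as $z\to-\infty$ since both $q_c$, $\pd_c r_c$, $A_0q_c'$ and $\pd_c\varphi_c^*$ have exponential decay at $-\infty$. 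The translation $z=z_1-(c_0-1)t/2-h$ and the assumption $\a\in(0,\a_{c_0})$ therefore extract the universal factor $e^{-\a\{(c_0-1)t/2+h\}}$ from every such pairing. Second, $\Psi_c$ depends linearly on $\beta(c_0)-\beta(c)=O(\tc)$, so $\Psi_c,\pd_x\Psi_c$, and $\pd_c\Psi_c-\pd_c\Psi_{c_0}$ are $O(\tc)$, while Taylor expansion gives $\Psi_c-\tc\pd_c\Psi_{c_0}=O(\tc^2)$ and $N_{2N}=O(\tc^2)$. This produces the outer factor $\bM_{c,\gamma}(T)$.

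I will first dispose of the explicitly identified pieces $-\wS_{3j}(b_t-c_t)\mathbf{e_2}$, $A_j(t,\cdot)(b-\tc)\mathbf{e_2}$, $S^4_{kj}[\pd_c\Psi_{c_0}](c_t)$, and $S^4_{kj}[\widetilde{\Psi}_{c_0}](\tc)$ using Claims~\ref{cl:S3} and \ref{cl:S4}, whose operator norms already carry $e^{-\a\{(c_0-1)t/2+h\}}$. Combining these with Claim~\ref{cl:cx_t-bound} for $\|c_t\|_Y$ and $\|\gamma_t-\tc\|_Y$ and with Claim~\ref{cl:b-capprox} for $\|b_t-c_t\|_Y\lesssim\|\tc\|_{L^\infty}\|c_t\|_Y$ and $\|b-\tc\|_Y\lesssim\|\tc\|_{L^\infty}\|\tc\|_Y$ absorbs one factor of $\bM_{c,\gamma}(T)$ and turns the residual $\|c_t\|_Y$ (or $\|\tc\|_Y$) into the promised $\bM_{c,\gamma}(T)+\bM_1(T)+\bM_2(T)^2$. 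The $L^1$-estimate for $\chi(D_y)R^{\ell_2,1}$ follows from the same computation together with the $L^1$-bound on $\chi(D_y)\wP_1$.

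The direct pairings $\la\tilde{\ell}_{2N},g_{k1}^*\ra$ and $\la\tilde{\ell}_{2N},\zeta_{k,c(t,y)}^*\ra$ are treated by writing $\tilde{\ell}_{2N}$ term by term: each summand becomes a fixed $\bX$-bounded profile concentrated near $z_1=0$ multiplied by a product of two small factors drawn from $\{\tc,c_t,\gamma_t-\tc,\gamma_t,\tc^2\}$. Cauchy--Schwarz against the first component of $g_k^*$ after the translation $z\mapsto z_1$ yields the exponential factor, while Claims~\ref{cl:b-capprox}--\ref{cl:cx_t-bound} give the remaining prefactor $\bM_{c,\gamma}(T)(\bM_{c,\gamma}(T)+\bM_1(T)+\bM_2(T)^2)$. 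The $Y_1$-bound then comes from $\|\wP_1 f\|_{Y_1}\le(2\pi)^{-1/2}\|f\|_{L^1(\R_y)}$ after integrating in $y$.

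The delicate step, which I expect to be the main obstacle, is $\tilde{\ell}_{2r}=L_1\Psi_c$ in $R^{\ell_2,r}$, since $L_1$ is a nonlocal Fourier multiplier containing $B_0^{-1}$. The saving point is that $a<b$ forces $\a_{c_0}^2=(c_0^2-1)/(bc_0^2-a)<1/b$, so $\a<\a_{c_0}$ implies $\a<1/\sqrt b$, and the kernel bound \eqref{eq:kerB} together with Claim~\ref{cl:ker-B} ensure that $B_0^{-1}$ and hence $L_1$ is bounded on the shifted weighted space $L^2_{\a,z_1}(\R_z)$ with a constant independent of $t$ and $h$. Writing
$$\pd_y\tilde{\ell}_{2r}=L_1\pd_y\Psi_c\,,\quad \pd_y\Psi_c=\bigl(c_y(\pd_c\tpsi_c)(z_1)+\gamma_y\psi_c(z_1)\bigr)\mathbf{e_1}\,,$$
a translated Cauchy--Schwarz then yields
$$\bigl|\la\pd_y\tilde{\ell}_{2r},g_k^*\ra\bigr|\lesssim e^{-\a\{(c_0-1)t/2+h\}}\bigl(|c_y(t,y)|+|\gamma_y(t,y)|\,|\tc(t,y)|\bigr)\,,$$
and taking the $Y$-norm in $y$ using $\|c_y(t)\|_Y+\|\gamma_y(t)\|_{L^\infty}\|\tc(t)\|_Y\lesssim\bM_{c,\gamma}(T)$ gives the $R^{\ell_2,r}$ bound. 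Assembling the three blocks completes Claim~\ref{cl:Rl2}.
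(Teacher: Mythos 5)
Your proposal is correct and follows essentially the same route as the paper: the exponential factor is extracted from the operator norms of $\wS_{31}$, $\wS_{32}$, $A_1(t,0)$, $A_2(t,D_y)$ via Claims~\ref{cl:S3}--\ref{cl:S4} and from the weighted bounds $\|\pd_y\tilde{\ell}_{2r}\|_{L^2_\a(\R^2)}\lesssim \bM_{c,\gamma}(T)e^{-\a\{(c_0-1)t/2+h\}}$ and $\|\tilde{\ell}_{2N}\|_{L^1(\R_y;L^2_\a(\R_z))}\lesssim \bM_{c,\gamma}(T)^2e^{-\a\{(c_0-1)t/2+h\}}$, while the smallness prefactors come from Claims~\ref{cl:cx_t-bound} and~\ref{cl:b-capprox}, exactly as in the paper's (terser) proof. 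Your additional observations (that $\a<\a_{c_0}<1/\sqrt{b}$ makes $B_0^{-1}$ and $L_1$ bounded on the weighted spaces, and the explicit form of $\pd_y\Psi_c$) are correct supporting details.
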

\begin{proof}
By Claims~\ref{cl:S3} and \ref{cl:S4},
\begin{gather*}
\|\wS_{31}\|_{B(Y_1)}+\|\chi(D_y)\wS_{31}\|_{B(L^1(\R))}+\|\wS_{32}\|_{B(Y_1)}\lesssim e^{-\a\{(c_0-1)t/2+h\}}\,,\\
|A_1(t,0)|+\|A_2(t,D_y)\|_{B(Y_1)}\lesssim e^{-\a\{(c_0-1)t/2+h\}}\,.
\end{gather*}
Combining the above with Claims~\ref{cl:cx_t-bound}, \ref{cl:b-capprox}
and
\begin{align*}
& \|\pd_y\tilde{\ell}_{2r}\|_{L^2_\a(\R^2)}
 \lesssim \bM_{c,\gamma}(T)e^{-\a\{(c_0-1)t/2+h\}}\,,
\\ & \|\tilde{\ell}_{2N}\|_{L^1(\R_y;L^2_\a(\R_z))}\lesssim \bM_{c,\gamma}(T)^2
e^{-\a\{(c_0-1)t/2+h\}}\,,
\end{align*}
we have Claim~\ref{cl:Rl2}.
\end{proof}

Next, we will estimate $R^j_k$ ($0\le j\le 7$).
\begin{claim}
  \label{cl:R0}
There exist $R^0_{kj}$ ($k=1$, $2$, $j=0$, $1$, $2$)
satisfying $R^0_k=R^0_{k0}-2\pd_yR^0_{k1}-\pd_y^2R^0_{k2}$
and positive constants $C$ and $\delta$ such that if
$\bM_{c,\gamma}(T)\le\delta$, then for $t\in[0,T]$,
\begin{align*}
& \|R^0_{k0}\|_{Y_1}+\|\chi(D_y)R^0_{k0}\|_{L^1}+\|R^0_{k2}\|_{Y_1}
\le C  \bM_2(T)\bM_{c,\gamma}(T)\la t\ra^{-3/2}\,,
\\ & \|R^0_{k1}\|_{Y_1} \le C\bM_2(T)\bM_{c,\gamma}(T)\la t\ra^{-1}\,.
\end{align*}
\end{claim}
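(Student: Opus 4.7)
The plan is to expand $\mL_{c(t,y)}^*$ acting on the $y$-dependent eigenmode $g_k^*(z,\eta,c(t,y))e^{-iy\eta}$, separate out the piece for which the orthogonality condition \eqref{eq:orth} gives a vanishing contribution, and organize the remaining pieces according to the number of $y$-derivatives that must be integrated by parts. Concretely, I would write
\begin{equation*}
\mL_{c(t,y)}^*\bigl(g_k^*(z,\eta,c(t,y))e^{-iy\eta}\bigr)
= e^{-iy\eta}\bigl(\mL_{c(t,y)}^*(\eta)g_k^*\bigr)(z,\eta,c(t,y))+\mathcal{R}_k(t,y,z,\eta),
\end{equation*}
where $\mL_c^*(\eta)$ is the $\eta$-conjugated operator (obtained formally by $\partial_y\mapsto-i\eta$), and $\mathcal{R}_k$ collects all commutator corrections arising because both $c(t,y)$ in the multiplication parts of $\mL_c^*$ and the argument $c(t,y)$ inside $g_k^*$ depend on $y$. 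These corrections come from $\partial_y^2$ in $\Delta$ acting by Leibniz on $c(t,y)$-dependent factors and from commutators of the non-local operator $B^{-1}$ with multiplication by $c(t,y)$-dependent potentials.

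By the spectral identities \eqref{eq:gevp1}--\eqref{eq:gevp2}, the diagonal term $(\mL_{c(t,y)}^*(\eta)g_k^*)(z,\eta,c(t,y))$ is a linear combination of $g_1^*(z,\eta,c(t,y))$ and $g_2^*(z,\eta,c(t,y))$ with scalar coefficients depending on $\lambda_c(\eta)$, and the orthogonality condition \eqref{eq:orth} makes the corresponding contribution to $II_k^0(t,\eta)$ vanish identically. Therefore $II_k^0(t,\eta)=\la \wU_2,\mathcal{R}_k(t,\cdot,\cdot,\eta)\ra$. A careful bookkeeping of $\mathcal{R}_k$ shows that every term has the form $c_{yy}$ times a $g_k^*$-type weight, $c_y$ times a weight combined with one factor of $-i\eta$ (from a mixed $\partial_y$), or $(c_y)^2$ type terms, together with contributions in which $\partial_y^2$ from $\mL$ falls entirely on the $c(t,y)$-argument of $g_k^*$. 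Applying the inverse Fourier transform $\eta\mapsto y$ and converting each factor $i\eta$ to $\partial_y$ via integration by parts yields the decomposition $R^0_k=R^0_{k0}-2\partial_yR^0_{k1}-\partial_y^2 R^0_{k2}$, with the factor $2$ on $R^0_{k1}$ being the Leibniz coefficient for $\partial_y^2[c(t,y)\cdot f]$.

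The estimates then follow from bilinear bounds: each $R^0_{kj}$ is a sum of terms of the form $\la \wU_2(t),Q(z,\eta,c(t,y))\ra$ multiplied by $c_y$, $c_{yy}$, or $(c_y)^2$, where $Q$ and its derivatives in $c$ are uniformly exponentially localized in $z$ by \eqref{eq:gk*-approx}--\eqref{eq:gk-bound}. Using $\|\wU_2(t)\|_{L^2_\a}\lesssim \bM_2(T)\la t\ra^{-3/4}$, the $\bM_{c,\gamma}$-bounds $\|c_y(t)\|_Y\lesssim \bM_{c,\gamma}(T)\la t\ra^{-3/4}$, $\|c_{yy}(t)\|_Y\lesssim \bM_{c,\gamma}(T)\la t\ra^{-1}$, together with $\|\pd_y c\|_{L^\infty}\lesssim\|\pd_y c\|_Y$ on frequency-localized functions, and the bilinear estimate \eqref{eq:Y1-L1} (for the $Y_1$- and $\chi(D_y)\cdot L^1$-norms), I obtain $\la t\ra^{-3/2}$ for $R^0_{k0}$ and $R^0_{k2}$ and $\la t\ra^{-1}$ (in fact better) for $R^0_{k1}$, as stated. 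The $\chi(D_y)\cdot L^1$-bound on $R^0_{k0}$ comes for free since $\chi(D_y)\wP_1$ is bounded on $L^1$.

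The main obstacle is the careful algebraic bookkeeping of $\mathcal{R}_k$, especially the commutators $[B^{-1},\text{mult.}]$ involving $y$-dependent multipliers such as $r_{c(t,y)}$ and $q_{c(t,y)}$ in $V_c^*$: $B^{-1}$ is a smoothing pseudo-differential operator rather than a differential operator, so $[B^{-1},f(y)]$ generates non-local correction terms that must be fit into the three-piece decomposition without losing the exponential localization in $z$ required to pair against $\|\wU_2\|_{L^2_\a}$. This type of computation is by now standard and parallels \cite[Appendix~C]{Miz19}, to which the reader can be referred for the detailed symbol manipulations.
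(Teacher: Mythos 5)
Your proposal follows essentially the same route as the paper: the eigenvalue relations \eqref{eq:gevp1}--\eqref{eq:gevp2} together with \eqref{eq:orth} reduce $II^0_k$ to the commutator $\{\mL_{c(t,y)}^*-\mL_{c(t,y)}(\eta)^*\}$ acting on the mode, which the paper writes explicitly as $L_{12}(\eta)(\pd_y^2+\eta^2)+(\pd_y^2+\eta^2)r_cE_{12}B(\eta)^{-1}$ so that only $[\pd_y^2,g_k^*(z,\eta,c(t,y))]$ survives, and Leibniz then yields the three-term decomposition with the factor $2$ on $R^0_{k1}$ exactly as you describe, followed by the same bilinear estimates against $\|\wU_2\|_{\bX}$. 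The only small inaccuracy is that the paper's bound for $R^0_{k1}$ also carries a $\|\gamma_y\|_Y\lesssim\bM_{c,\gamma}(T)\la t\ra^{-1/4}$ contribution, which is why the stated rate is $\la t\ra^{-1}$ rather than the better rate your $c_y$-only bookkeeping suggests.
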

\begin{proof}
By \eqref{eq:gevp1}, \eqref{eq:gevp2} and \eqref{eq:orth},
$$II^0_k(t,\eta)=
\int_{\R^2} \wU_2\cdot\{\mL_{c(t,y)}^*-\mL_{c(t,y)}(\eta)^*\}
\left(g_k^*(z,\eta,c(t,y))e^{-iy\eta}\right)\,dzdy\,.$$
Let
$L_{12}(\eta)=B^{-1}B(\eta)^{-1}\{(b-a)\Delta+BA(\eta)\}E_{12}
-bV_{c(t,y)}^*B(\eta)^{-1}$,
$$ R^0_{k0}=\wP_1\left[\int_\R \wU_2\cdot\left\{
\mL_{c(t,y)}^*-\mL_{c(t,y)}(0)^*\right\}\zeta_{k,c(t,y)}^*(z)\,dz\right]\,,$$
\begin{multline*}
R^0_{k1}=\frac{1}{2\pi}\int_{-\eta_0}^{\eta_0} \int_{\R^2} e^{i(y-y_1)\eta}
\wU_2\cdot\bigl\{L_{12}(\eta)\pd_yg_k^*(z,\eta,c(t,y_1))
\\ +\pd_y\left(r_{c(t,y)}E_{12}B(\eta)^{-1}g_k^*(z,\eta,c(t,y_1))\right)\bigr\}
\,dzdy_1d\eta\,,
\end{multline*}
and $R^0_{k2}=\pd_y^{-2}(R^0_k-R^0_{k0}+2\pd_yR^0_{k1})$.
Since
\begin{align*}
& (\pd_y^2+\eta^2)\{g_k^*(z,\eta,c(t,y))e^{-iy\eta}\}
=[\pd_y^2, g_k^*(z,\eta,c(t,y)]e^{-iy\eta}\,,
\\ & \mL_{c(t,y)}^*-\mL_{c(t,y)}(\eta)^*
= L_{12}(\eta)(\pd_y^2+\eta^2) +(\pd_y^2+\eta^2)r_cE_{12}B(\eta)^{-1}\,,
\end{align*}
we have $\|R^0_{k1}\|_{Y_1} \lesssim 
\|\wU_2\|_{\bX}(\|\gamma_y\|_Y+\|c_y\|_Y)$ and
\begin{align*}
& \|R^0_{k0}\|_{Y_1}+\|\chi(D_y)R^0_{k0}\|_{L^1}+\|R^0_{k2}\|_{Y_1}
  \\ \lesssim & \|\wU_2\|_{\bX}
(\|\gamma_{yy}\|_Y+\|c_{yy}\|_Y+\|(c_y)^2\|_Y+\|c_y\gamma_y\|_Y+\|(\gamma_y)^2\|_Y)\,.
\end{align*}
Thus we complete the proof.
\end{proof}

\begin{claim}
  \label{cl:R1}
There exist positive constants $C$, $\delta$, $h_0$ and $R^1_{k,j}$ $(j=1,\,2)$
satisfying 
$$R^1_{k1}+\pd_yR^1_{k2}=
\wP_1\left(\int_\R \widetilde{N}_1\cdot \zeta_{k,c(t,y)}^*(z)\,dz\right)$$
such that if $\bM_{c,\gamma}(T)+\bM_\infty(T)\le\delta$ and $h\ge h_0$, then for $t\in[0,T]$,
\begin{align*}
& \|R^1_{k1}\|_{Y_1}+\|\chi(D_y)R^1_{k1}\|_{L^1}
\lesssim (\bM_1(T)+\bM_2(T))\bM_{c,\gamma}(T)^2\la t\ra^{-3/2}\,,
\\ & \|R^1_{k2}\|_{Y_1}+\|R^1_{k3}\|_{Y_1}
\lesssim (\bM_1(T)+\bM_2(T))\bM_{c,\gamma}(T)\la t\ra^{-1}\,,
\end{align*}
where
$R^1_{k3}=\int_{-\eta_0}^{\eta_0}
\int_{\R^2} \widetilde{N}_1\cdot g_{k1}^*(z,\eta,c(t,y))e^{-iy\eta}\,dzdy$.
\end{claim}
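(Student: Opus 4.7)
\medskip
\noindent\textbf{Proof proposal.}
The plan is to expand $\widetilde{N}_1=N_{1,0}-V_{c(t,y)}\wU$ by exploiting that $N$ is a symmetric bilinear form, so that
$N_{1,0}=N_B(\Phi_{c(t,y)}(z)-\Psi_{c(t,y)}(z_1),U)$, and then to organize the result according to a) the mismatch between the $(x,y)$-Laplacian appearing in $N$ and the $(z,y)$-Laplacian built into $V_{c(t,y)}$, and b) the extra contribution of $\Psi_{c(t,y)}(z_1)$. Writing $\pd_x=\pd_z$ and $\pd_y|_x=\pd_y|_z-\gamma_y\pd_z$, the first step is to show that $N_B(\Phi_{c(t,y)}(z),\wU)-V_{c(t,y)}\wU$ is a sum of terms of the schematic form $p_c(z)\cdot m(t,y)\cdot D\wU$, where $p_c$ is one of $q_c,r_c,q_c',r_c'$ (hence exponentially localized in $z$), $m(t,y)\in\{\gamma_y,\gamma_y^2,\gamma_{yy},c_y,c_y^2,c_y\gamma_y,c_{yy},\tc\}$, and $D\wU$ is $\wU$ or one of its first/second derivatives in $(z,y)$. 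The $\Psi_{c(t,y)}(z_1)$ contribution yields analogous terms with $p_c(z)$ replaced by $\pd_c^j\tpsi_c\cdot\tc$ supported on $z_1\in[-1,1]$ (hence on $z\in[-(c_0-1)t/2-h-1,-(c_0-1)t/2-h+1]$), which is where the factor $e^{-\a\{(c_0-1)t/2+h\}}$ will arise.

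The second step is to pair each such term with $\zeta_{k,c(t,y)}^*(z)$ and integrate in $z$. Splitting $\wU=\wU_1+\wU_2$, we use
\begin{equation*}
\|p_c\,D\wU\|_{L^1(\R_y;L^2_\a(\R_z))}\lesssim \|D U_1\|_{\bW(t)}+\|DU_2\|_{\bX_1}
\lesssim \bigl(\bM_1(T)\la t\ra^{-2}+\bM_2(T)\la t\ra^{-3/4}\bigr),
\end{equation*}
together with the Sobolev-type bound $\|m(t,\cdot)\|_{L^\infty}+\|m(t,\cdot)\|_Y\lesssim\bM_{c,\gamma}(T)\la t\ra^{-3/4}$ for the quadratic entries and $\lesssim\bM_{c,\gamma}(T)\la t\ra^{-1/4}$ for the linear entries. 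Pointwise multiplication then gives the target $(\bM_1(T)+\bM_2(T))\bM_{c,\gamma}(T)^2\la t\ra^{-3/2}$ bound in $Y_1$, and Claim~\ref{cl:S1} applied to $\wP_1$ yields the $\chi(D_y)L^1$-bound because $\chi(D_y)\wP_1$ is bounded on $L^1(\R)$.

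The third step is the divergence-form decomposition. Each term where the paired integrand contains a $\pd_y\wU$ is integrated by parts in $y$, moving the $\pd_y$ outside, and each term proportional to $\gamma_{yy}$ or $c_{yy}$ is rewritten as $\pd_y(\gamma_y\cdot\bullet)$ or $\pd_y(c_y\cdot\bullet)$ modulo pieces already of lower order. Collecting these contributions defines $R^1_{k2}$; the remainder defines $R^1_{k1}$. After integration by parts the extra $\pd_y$ eliminates one modulation-parameter derivative from the factor $m(t,y)$, so only a linear modulation factor of size $\bM_{c,\gamma}(T)\la t\ra^{-1/4}$ is needed and we obtain the $\la t\ra^{-1}$ bound on $\|R^1_{k2}\|_{Y_1}$. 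For $R^1_{k3}$, the same argument applies verbatim with $\zeta_{k,c(t,y)}^*$ replaced by $g^*_{k1}(z,\eta,c(t,y))$, which is uniformly bounded in $\eta\in[-\eta_0,\eta_0]$ in $L^2_{-\a}(\R_z)$ by \eqref{eq:gk*-approx}, so the same $\la t\ra^{-1}$ decay carries over.

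The main obstacle is bookkeeping: keeping the terms systematically organized as in the operators $S^j_k$ of Appendix~\ref{ap:s} so that one can cleanly read off (i) which terms carry the $e^{-\a\{(c_0-1)t/2+h\}}$ decay (the $\Psi_c$ contributions) versus polynomial decay, and (ii) which terms admit a clean $\pd_y$ factorization after integration by parts against the $y$-dependent eigenfunction $\zeta^*_{k,c(t,y)}$. This is analogous to \cite[Claim~A.1]{Miz19}, and the Appendix~\ref{ap:s} operator bounds together with Claim~\ref{cl:cx_t-bound} reduce the estimate to routine application of the decay rates packaged in $\bM_1(T),\bM_2(T),\bM_{c,\gamma}(T)$.
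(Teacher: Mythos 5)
Your proposal follows essentially the same route as the paper: the paper writes $\widetilde{N}_1$ explicitly as a sum of commutator terms $[\pd_y^2,\tau_{-\gamma}\varphi_c]$, $[B,\tau_{-\gamma}]$, $[\pd_y^2,\tau_\gamma]$ plus $\tau_\gamma N'(\Psi_c)U$ (your ``Laplacian mismatch'' and $\Psi_c$ contributions), pairs each with $\zeta_{k,c}^*$, splits $\wU=\wU_1+\wU_2$ using the $\bW(t)$/$\bX_1$ norms, and integrates by parts in $y$ to produce the divergence part $R^1_{k2}$, exactly as you outline. One minor remark: like the paper's own displayed estimates, your argument actually delivers only the first power of $\bM_{c,\gamma}(T)$ in the $\la t\ra^{-3/2}$ bound (the commutators contain terms linear in $\gamma_{yy}$ and $c_{yy}$), which is what the application in \eqref{eq:mN2} requires anyway.
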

We remark that $R^1_k=R^1_{k1}+\pd_yR^1_{k2}-\pd_y^2R^1_{k3}$.
\begin{proof}[Proof of Claim~\ref{cl:R1}]
We have
\begin{multline} \label{eq:wN1}
\widetilde{N}_1
= -\tau_\gamma B^{-1}\{[\pd_y^2,\tau_{-\gamma}\varphi_c](U\cdot\mathbf{e_2})
+2(\pd_y\tau_{-\gamma} r_c)\pd_y(U\cdot\mathbf{e_1})\}\mathbf{e_2}
-\tau_\gamma N'(\Psi_c)U
\\ +\tau_\gamma B^{-1}\left([B,\tau_{-\gamma}](V_c\wU)
-\tau_{-\gamma}r_c[\pd_y^2,\tau_\gamma]E_{21}\wU\right)\,,
\end{multline}
and for example,
\begin{align*}
& \int_\R \tau_\gamma B^{-1}\{(\pd_y\tau_{-\gamma} r_c)\pd_y(U\cdot\mathbf{e_1})\}
\mathbf{e_2}\cdot\zeta_{k,c}^*\,dz= \pd_yV_1 -V_2\,,
\\ &
V_1=\int_\R  
B^{-1}(U\cdot\mathbf{e_1}\pd_y\tau_{-\gamma} r_c)
\tau_{-\gamma}\zeta_{k,c}^*\cdot\mathbf{e_2}\,dx\,,
\\ &
V_2=\int_\R \left\{
B^{-1}(U\cdot\mathbf{e_1}\pd_y^2\tau_{-\gamma} r_c)
\tau_{-\gamma}\zeta_{k,c}^*\cdot\mathbf{e_2}
+
B^{-1}(U\cdot\mathbf{e_1}\pd_y\tau_{-\gamma} r_c)
\pd_y\tau_{-\gamma}\zeta_{k,c}^*\cdot\mathbf{e_2}\right\}\,dx\,,
\end{align*}
\begin{align*}
 \|V_1\|_{Y_1} \lesssim &
(\|c_y\|_Y+\|\gamma_y\|_Y)(\|U_1\|_{\bW(t)}+\|U_2\|_\bX)
\\ \lesssim & \bM_{c,\gamma}(T)(\bM_1(T)+\bM_2(T))\la t\ra^{-1}\,,
\end{align*}
\begin{align*}
& \|V_2\|_{Y_1}+\|\chi(D_y)V_2\|_{L^1} \lesssim \\ &
(\|c_{yy}\|_Y+\|\gamma_{yy}\|_Y+\|(c_y)^2\|_Y+\|c_y\gamma_y\|_Y+\|(\gamma_y)^2\|_Y)
\|(\|U_1\|_{\bW(t)}+\|U_2\|_\bX)
\\ \lesssim & \bM_{c,\gamma}(T)(\bM_1(T)+\bM_2(T))\la t\ra^{-3/2}\,,
\end{align*}
and
$$\left\|\int_\R \tau_\gamma N'(\Psi_c)U\cdot \zeta_{k,c}^*\,dz\right\|_{L^1(\R_y)}
\lesssim e^{-\a\{(c_0-1)t/2+h\}}M_{c,\gamma}(T)(\bM_1(T)+\bM_2(T))\,.$$
We can estimate the rest in the same way.
\end{proof}

\begin{claim}
  \label{cl:R2}
Let
\begin{align*}
& R^2_{k1}=\wP_1\left(\int_\R N_2\cdot \zeta_{k,c(t,y)}^*(z)\,dz\right)\,,
\\
& R^2_{k2}=\frac{1}{2\pi}\int_{-\eta_0}^{\eta_0}
\int_{\R^2} N_2\cdot g_{k1}^*(z,\eta,c(t,y_1))e^{i(y-y_1)\eta}\,dzdy_1d\eta\,.
\end{align*}
Then $R^2_k=R^2_{k1}-\pd_y^2R^2_{k2}$ and there exist positive constants
$C$ and $\delta$ such that if
$\bM_{c,\gamma}(T)\le\delta$, then for $t\in[0,T]$,
\begin{align*}
\|R^2_{k1}\|_{Y_1}+\|\chi(D_y)R^2_{k1}\|_{L^1}+\|R^2_{k2}\|_{Y_1}
\lesssim \bM_2(T)(\bM_1(T)+\bM_2(T))\la t\ra^{-3/2}\,.
\end{align*}
\end{claim}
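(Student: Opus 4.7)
The plan is to first decompose $N_2$ using the bilinearity of $N$, then verify the claimed identity for $R^2_k$, and finally bound each piece using the exponential localization of $U_2$ together with the weighted decay estimates on $U_1$. Since $N(\Phi)$ is a quadratic form in $\Phi$ and $U=U_1+U_2$, I would write
\[
N_2=N(U)-N(U_1)=2B(U_1,U_2)+N(U_2),
\]
where $B$ denotes the symmetric bilinear form associated with $N$. The identity $R^2_k=R^2_{k1}-\pd_y^2R^2_{k2}$ would follow from the defining splitting $g_k^*(z,\eta,c)=\zeta_{k,c}^*(z)+\eta^2 g_{k1}^*(z,\eta,c)$ together with the observation that multiplication by $\eta^2$ in the $\eta$-variable corresponds to $-\pd_y^2$ after taking the inverse Fourier transform in $y$. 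Because $N\cdot\mathbf{e_1}=0$, only the second components of $\zeta_{k,c}^*$ and $g_{k1}^*$ ever enter the pairings.

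For the purely quadratic contribution $N(U_2)$, I would apply Claim~\ref{cl:B-1} to handle the $B^{-1}$ prefactor in $N$ and estimate
\[
\|N(U_2)(\cdot,y)\|_{L^2_\a(\R_z)}\lesssim \|U_2(t,\cdot,y)\|_{H^2_\a(\R_z)\times H^1_\a(\R_z)}^2.
\]
Pairing against $\zeta_{k,c}^*$ (respectively $g_{k1}^*(\cdot,\eta,c)$), each uniformly bounded in $L^2_{-\a}(\R_z)$ for $|\eta|\le\eta_0$ and $|c-c_0|$ small by \eqref{eq:gk*-approx} and \eqref{eq:gk-bound}, then applying Cauchy--Schwarz in $z$ and integrating in $y$ would produce $L^1_y\cap L^\infty_y$ bounds of size $\|U_2(t)\|_{\bX_1}^2\lesssim \bM_2(T)^2\la t\ra^{-3/2}$, which covers both the $Y_1$ and the $\chi(D_y)L^1$ contributions.

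For the bilinear contribution $B(U_1,U_2)$, the $U_2$ factor supplies the exponential decay in $z$, so after moving $B^{-1}$ onto the test function via self-adjointness (exactly as in the treatment of Claim~\ref{cl:R1}), the $z$-integrand is localized to a bounded region. On such a region the weight $p_\a(z_1)$ with $z_1=z+(c_0-1)t/2+h$ satisfies $p_\a(z_1)\gtrsim 1$ for all sufficiently large $t$, so the local $\bE$-energy density of $U_1$ is dominated by $\|U_1(t)\|_{\bW(t)}\lesssim \bM_1(T)\la t\ra^{-2}$ from the definition of $\bM_1(T)$. Combining this local bound with $\|U_2(t)\|_{\bX_1}\lesssim \bM_2(T)\la t\ra^{-3/4}$ yields a contribution bounded by $\bM_1(T)\bM_2(T)\la t\ra^{-11/4}$, which is stronger than the claimed $\la t\ra^{-3/2}$. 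The estimate for $R^2_{k2}$ is entirely parallel, since $g_{k1}^*(\cdot,\eta,c)$ enjoys the same uniform $L^2_{-\a}$-bound on $|\eta|\le\eta_0$ as $\zeta_{k,c}^*$.

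The main technical obstacle I anticipate is the behavior of $\zeta_{1,c}^{*,2}=cB_0\pd_c\varphi_c^*$, whose value tends to $2c\beta'(c)\neq0$ as $z\to+\infty$ rather than decaying, so a naive pairing of $N_2$ in $L^2_\a$ against $\zeta_{1,c}^*$ in $L^2_{-\a}$ only barely closes. The remedy is that the second component $(N_2)^{(2)}$ always carries at least one factor of $U_2$, which is genuinely exponentially decaying in $z$; combined with the $L^\infty$-boundedness (rather than membership in $L^2_{-\a}$) of $B^{-1}\zeta_{1,c}^{*,2}$ at $+\infty$, this suffices for an $L^1_z\times L^\infty_z$ pairing. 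The remaining contributions $R^{\ell_1,r}_{k2}$-type pieces arising from moving derivatives do not appear here because $\zeta_{k,c}^*$ is $y$-independent, so no extra remainder terms emerge beyond those already displayed.
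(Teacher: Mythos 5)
The paper states Claim~\ref{cl:R2} without proof (it is treated as analogous to Claims~\ref{cl:R0} and \ref{cl:R1}), so I can only judge your argument on its own merits. Your architecture is the right one: the splitting $N_2=2B(U_1,U_2)+N(U_2)$, the identity $R^2_k=R^2_{k1}-\pd_y^2R^2_{k2}$ via $g_k^*=\zeta_{k,c}^*+\eta^2 g_{k1}^*$ and $\eta^2\leftrightarrow-\pd_y^2$, the use of $\|U_1\|_{\bW(t)}\lesssim\bM_1(T)\la t\ra^{-2}$ for the cross term (the unweighted energy of $U_1$ does not decay, so this is essential), and $\|U_2\|_{\bX_1}^2\lesssim\bM_2(T)^2\la t\ra^{-3/2}$ for the pure quadratic term, which is exactly the term that saturates the claimed rate. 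Your observation about the non-decay of $\zeta_{1,c}^{*,2}$ as $z\to+\infty$ is also a genuine point and is correctly resolved by the exponential localization of $U_2$ there.

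There is, however, a step that fails as written: the inequality $\|N(U_2)(\cdot,y)\|_{L^2_\a(\R_z)}\lesssim\|U_2(t,\cdot,y)\|_{H^2_\a\times H^1_\a(\R_z)}^2$ is false, and consequently the pairing ``$N(U_2)\in L^2_\a$ against $\zeta_{k,c}^*\in L^2_{-\a}$ plus Cauchy--Schwarz'' does not close. The weight $e^{\a z}$ is one-sided: on $z\le 0$ it degenerates, so a product of two functions controlled only in $H^k_\a$ can have $L^2_\a$-norm as large as $e^{\a R}$ while each factor has norm $O(\sqrt R)$ (test with $f=g\sim e^{-\a z}\mathbf 1_{[-R,0]}$). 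For the same reason the cross term is \emph{not} ``localized to a bounded region'': $U_2\in H^2_\a\times H^1_\a$ gives decay only as $z\to+\infty$. The problem is concentrated entirely at $z\to-\infty$, and the repair is to use more than $L^2_{-\a}$-membership of the adjoint functions, namely their pointwise decay: $|B^{-1}\bigl(\zeta_{k,c}^{*,2}\bigr)(z)|\lesssim e^{\a''z}\wedge 1$ with $\a''<\a_{c_0}\wedge b^{-1/2}$, so that after moving $B^{-1}$ onto the test function one can write $e^{\a''z}\wedge1=\bigl(e^{\a''z/2}\wedge1\bigr)^2\le \bigl(e^{\a z}\bigr)^2$ provided $2\a<\a''$, and distribute one factor $e^{\a z}$ to each of the two perturbation fields. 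With $\a$ restricted to $(0,\tfrac12(\a_{c_0}\wedge b^{-1/2}))$ this yields $|\la N(U_2),\zeta_{k,c}^*\ra|\lesssim\|U_2\|_{\bX_1}^2$ and $|\la 2B(U_1,U_2),\zeta_{k,c}^*\ra|\lesssim\|U_1\|_{\bW(t)}\|U_2\|_{\bX_1}$ (using $p_\a(z_1)^{1/2}\gtrsim e^{\a z}\wedge 1$), after which your time-decay bookkeeping gives the claim. The same weight-splitting is needed for $R^2_{k2}$, where $g_{k1}^*$ enjoys the same decay at $-\infty$.
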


By the definition of $R^3$ and Lemma~\ref{lem:virial-a}, we have
\begin{equation}
 \label{eq:R3-est1}
  \|R^3\|_Y\lesssim \|U_1\|_{\bW(t)}\lesssim \bM_1(T)\la t\ra^{-2}\,.
\end{equation}
More precisely, we have the following.
\begin{claim}
  \label{cl:R3}
Let $\a\in(0,\a_{c_0})$. There exist positive constants $C$, $\delta$ and
$R^3_{2j}$ $(j=1, 2)$ satisfying
\begin{gather}
  \label{eq:R62}
  R^3_2+\frac{d}{dt}\wP_1\la \wU_1,\zeta_{2,c_0}^*\ra
=R^3_{21}-\pd_y^2R^3_{22}
\end{gather}
such that if $\bM_{c,\gamma}(T)\le \delta$, then for $t\in[0,T]$,
\begin{gather}
\|R^3_1\|_Y+\|R^3_2\|_Y+ \|R^3_{22}\|_Y \le C\bM_1(T)\la t\ra^{-2}\,,\\
\|R^3_{21}\|_{Y_1}+\|\chi(D_y)R^3_{21}\|_{L^1} \le
C\bM_1(T)(\bM_{c,\gamma}(T)+\bM_1(T)+\bM_2(T)^2)\la t\ra^{-9/4}\,.
\end{gather}
\end{claim}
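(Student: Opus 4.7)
The strategy is to use the evolution equation \eqref{eq:wU1} for $\wU_1$ together with the adjoint eigenvalue identity $\mL_{c_0}(0)^*\zeta_{2,c_0}^*=0$ to express the dominant part of $R^3_2$ as the time derivative of $k(t,y):=\la \wU_1(t,\cdot,y),\zeta_{2,c_0}^*\ra_{L^2_z}$, which is precisely what is absorbed on the left-hand side of \eqref{eq:R62}. I would proceed in three stages.

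First, the crude bound. The coefficients $r_c$, $q_c$ and their derivatives decay exponentially like $\sech^2(\a_c z/2)$, so Claim~\ref{cl:B-1} gives $\|V_{c(t,y)}\wU_1(t,\cdot,y)\|_{L^2_\a(\R_z)}\lesssim \|p_\a^{1/2}\mathcal E(\wU_1)^{1/2}(t,\cdot,y)\|_{L^2_z}$ pointwise in $y$. Pairing against $g_k^*(\cdot,\eta,c(t,y_1))\in L^2_{-\a}$ (uniformly in $\eta\in[-\eta_0,\eta_0]$ and $|c-c_0|$ small), taking the inverse Fourier transform over the compact interval $[-\eta_0,\eta_0]$, and applying $\wP_1$-boundedness gives $\|R^3_k\|_Y\lesssim \|U_1\|_{\bW(t)}\lesssim \bM_1(T)\la t\ra^{-2}$ by Lemma~\ref{lem:virial-a}.

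Next, for the decomposition. Taylor-expand $V_{c(t,y_1)}=V_{c_0}+\tc(t,y_1)\delta V(c(t,y_1))$ and $g_2^*(z,\eta,c(t,y_1))=\zeta_{2,c_0}^*(z)+\tc(t,y_1)\delta_c\zeta^*(z,c)+\eta^2 g_{21}^*(z,\eta,c)+\eta^2\tc\,\delta g_{21}^*$. The $\tc$-corrections are controlled by $\bM_{c,\gamma}(T)\bM_1(T)\la t\ra^{-9/4}$ and are collected into $R^3_{21}$; the $\eta^2$-corrections become $\pd_y^2(\cdots)$ after inverse Fourier transform and are collected into $R^3_{22}$, with $\|R^3_{22}\|_Y\lesssim \bM_1(T)\la t\ra^{-2}$ by Stage~1. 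This reduces matters to the leading term $\wP_1\la V_{c_0}\wU_1,\zeta_{2,c_0}^*\ra_{L^2_z}$.

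For the leading term, write $L=L_0-\pd_y^2 L_1(-i\pd_y)$ so that $c_0\pd_z+L_0+V_{c_0}=\mL_{c_0}(0)$. Pairing \eqref{eq:wU1} with $\zeta_{2,c_0}^*$ in $L^2_z$ and using $\mL_{c_0}(0)^*\zeta_{2,c_0}^*=0$ gives
\begin{align*}
\frac{d}{dt}k(t,y)=&-\la V_{c_0}\wU_1,\zeta_{2,c_0}^*\ra+\pd_y^2\la L_1(-i\pd_y)\wU_1,\zeta_{2,c_0}^*\ra\\
&+\gamma_t\la\pd_z\wU_1,\zeta_{2,c_0}^*\ra+\la N(U_1),\zeta_{2,c_0}^*\ra+\la\tau_\gamma[L,\tau_{-\gamma}]\wU_1,\zeta_{2,c_0}^*\ra.
\end{align*}
Rearranging, applying $\wP_1$, and merging the $\pd_y^2$-term into $R^3_{22}$ yields \eqref{eq:R62}. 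The remaining contributions to $R^3_{21}$ are estimated by: Claim~\ref{cl:cx_t-bound} combined with Cauchy--Schwarz for $\wP_1(\gamma_t\la\pd_z\wU_1,\zeta_{2,c_0}^*\ra)$, giving $\bM_1(T)(\bM_{c,\gamma}(T)+\bM_1(T)+\bM_2(T)^2)\la t\ra^{-9/4}$; integration by parts in $z$ combined with the exponential localization of $B^{-1}\zeta_{2,c_0}^*$ for the quadratic term $\la N(U_1),\zeta_{2,c_0}^*\ra$, giving $\bM_1(T)^2\la t\ra^{-4}$; and expansion of the commutator $[L,\tau_{-\gamma}]$ into terms with $\gamma_y,\gamma_{yy}$ factors multiplying derivatives of $\wU_1$, each bounded by $\bM_{c,\gamma}(T)\bM_1(T)\la t\ra^{-11/4}$.

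\textbf{Main obstacle.} The delicate step is the bookkeeping of $\pd_y^2$-form contributions (into $R^3_{22}$) versus scalar contributions (into $R^3_{21}$), complicated by the fact that $V_c$ itself contains $\pd_y^2$ through $v_{1,c}=2r_c'\pd_z+r_c\Delta$. One must ensure that $\pd_y^2$ factors can be extracted cleanly while preserving the exponential localization in $z$ and the $B^{-1}$-smoothing, and verify that cross terms between $\pd_y^2$ in $V_c$ and $\pd_y^2$ in $L-L_0$ do not produce $\pd_y^4$-terms that cannot be absorbed. The factor $\bM_2(T)^2$ in the bound on $R^3_{21}$ enters only indirectly via the bound on $\|\gamma_t\|_Y$ in Claim~\ref{cl:cx_t-bound}.
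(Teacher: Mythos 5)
Your proposal follows essentially the same route as the paper: the crude bound via the weighted virial estimate, the Taylor expansion in $\tc$ and $\eta^2$ to reduce to the leading term, and the key step of pairing \eqref{eq:wU1} with $\zeta_{2,c_0}^*$ and using $\mL_{c_0}(0)^*\zeta_{2,c_0}^*=0$ to convert it into $-\frac{d}{dt}k$ plus divergence-form and integrable remainders. The "obstacle" you flag is resolved in the paper exactly as you anticipate — the $\pd_y^2$-part of $V_{c_0}$ (i.e.\ $V_{c_0}-V_{c_0}(0)$) is split off into the $\pd_y^2 R^3_{22}$ bucket before applying the adjoint identity, which only kills $V_{c_0}(0)$ — so just be careful to write $V_{c_0}(0)$ rather than $V_{c_0}$ in the identity $c_0\pd_z+L_0+V_{c_0}(0)=\mL_{c_0}(0)$.
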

\begin{proof}
Let
\begin{gather*}
R^{U_1,0}_k=\wP_1\left(
\int_\R V_{c_0}\wU_1\cdot\zeta_{k,c_0}^*\,dz\right)\,,
\quad
R^{U_1,1}_k=\wP_1\left(
\int_\R \widetilde{N}_3\cdot\zeta_{k,c(t,y)}^*\,dz\right)\,,
\\
R^{U_1,2}_k=\frac{1}{2\pi}\int_{-\eta_0}^{\eta_0}\int_{\R^2}
\widetilde{N}_3\cdot g_{k1}^*(z,\eta,c(t,y_1))e^{i(y-y_1)\eta}\,dzdy_1d\eta\,.
\end{gather*}
Then $R^3_k=R^{U_1,1}_k-\pd_y^2R^{U_1,2}_k$ and
\begin{align*}
& \|R^{U_1,1}_k\|_Y \lesssim \bM_1(T)\la t\ra^{-2}\,,
\quad
\|R^{U_1,2}_k\|_Y \lesssim (1+\bM_{c,\gamma}(T))\bM_1(T)\la t\ra^{-2}\,,
\\ &
\|R^{U_1,1}_k-R^{U_1,0}_k\|_{Y_1}
+\left\|\chi(D_y)(R^{U_1,1}_k-R^{U_1,0}_k)\right\|_{L^1}
\lesssim \bM_{c,\gamma}(T)\bM_1(T)\la t\ra^{-9/4}\,.
\end{align*}
Obviously, we cannot expect that $\|R^{U_1,0}_2\|_{Y_1}$ decays as $t\to\infty$.
The bad part of $R^{U_1,0}_2$ can be extracted as spatial and time derivatives of
$L^2$ functions.
In fact, we have
$R^{U_1,0}_2=R^{U_1}_a+\pd_y^2R^{U_1}_b$ with
$\|R^{U_1}_b\|_Y\lesssim \bM_1(T)\la t\ra^{-2}$ and
\begin{equation*}
R^{U_1}_a= \wP_1\left(
\int_\R V_{c_0}(0)\wU_1\cdot\zeta_{2,c_0}^*\,dz\right)\,.
\end{equation*}
As in \cite{Miz18}, we can decompose $R^{U_1}_a$ into a sum of an integrable
function and a time derivative of an $L^2$-function.
Since $\mL_{c_0}(0)^*\zeta_{2,c_0}^*=0$, it follows from \eqref{eq:wU1} that
\begin{align*}
\int_\R V_{c_0}(0)\wU_1\cdot\zeta_{2,c_0}^*\,dz
=&
-\la (c_0\pd_x+L_0)\wU_1,\zeta_{2,c_0}^*\ra
\\ =&
-\frac{d}{dt}\la \wU_1,\zeta_{2,c_0}^*\ra-\pd_y^2R^{U_1}_c+R^{U_1}_d\,,
\end{align*}
where
$R^{U_1}_c=\la L_1\wU_1,\zeta_{2,c_0}^*\ra$,
$R^{U_1}_d=\la N(U_1),\zeta_{2,c_0}^*\ra
+\left\la \wU_1, (\gamma_t\pd_z-[L,\tau_\gamma]\tau_{-\gamma})\zeta_{2,c_0}^*
\right\ra$ and 
\begin{align*}
& \|\wP_1R^{U_1}_c\|_Y \lesssim \bM_1(T)\la t\ra^{-2}\,,\quad
\|R^{U_1}_d\|_{L^1}
\lesssim \bM_1(T)(\bM_1(T)+\bM_{c,\gamma}(T)+\bM_2(T)^2)\la t\ra^{-9/4}\,.
\end{align*}
Thus we complete the proof.
\end{proof}
\par

Let $R^4_{kj}=S_{kj}^5(c_t)-S_{kj}^6(\gamma_t-\tc)$ for $j=1$, $2$.
Then $R^4_k=R^4_{k1}-\pd_y^2R^4_{k2}$.
By  Claims~\ref{cl:S4}, \ref{cl:S5} and \ref{cl:cx_t-bound},
we have the following.
\begin{claim}
  \label{cl:R4}
Let $\a\in(0,\a_{c_0})$. 
There exist positive constants $C$ and $\delta$ such that if
$\bM_{c,\gamma}(T)\le\delta$, then for $k=1$, $2$ and $t\in[0,T]$,
\begin{align*}
&  \|R^4_{k1}\|_{Y_1}+\|\chi(D_y)R^4_{k1}\|_{L^1}+\|R^4_{k2}\|_{Y_1}
\\ \le & C\bM_2(T)(\bM_{c,\gamma}(T)+\bM_1(T)+\bM_2(T)^2)\la t\ra^{-3/2}\,.
\end{align*}
\end{claim}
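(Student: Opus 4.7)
The proof is a direct application of the operator-norm bounds already established. The plan is to decompose $R^4_k$ according to the definitions of $S^5_k$ and $S^6_k$, then substitute the decay estimates for $c_t$ and $\gamma_t-\tc$ from Claim~\ref{cl:cx_t-bound} together with the decay of $\wU_2$ encoded in the definition of $\bM_2(T)$.

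First I would recall that by the definition of $S^5_k$, $S^6_k$ in Appendix~\ref{ap:s} and the decomposition $S^j_k=S^j_{k1}-\pd_y^2S^j_{k2}$ for $j=5,6$, the identity
\begin{equation*}
R^4_k=S^5_k(c_t)-S^6_k(\gamma_t-\tc)=R^4_{k1}-\pd_y^2R^4_{k2}
\end{equation*}
indeed holds with $R^4_{kj}=S^5_{kj}(c_t)-S^6_{kj}(\gamma_t-\tc)$. This is the starting point and requires no computation beyond unpacking notation.

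Next I would apply Claim~\ref{cl:S5} to each of the four pieces. The claim yields, for $j=1,2$,
\begin{align*}
\|S^5_{kj}(c_t)\|_{Y_1}+\|S^6_{kj}(\gamma_t-\tc)\|_{Y_1}
&\le C\,\|\wU_2(t)\|_{\bX}\bigl(\|c_t\|_{L^2}+\|\gamma_t-\tc\|_{L^2}\bigr),
\end{align*}
and the analogous bound for $\|\chi(D_y)(\cdot)\|_{L^1}$ in the $j=1$ case. Since $\|f\|_{L^2}\le C\|f\|_Y$ for $f\in Y$, Claim~\ref{cl:cx_t-bound} gives
\begin{equation*}
\|c_t\|_{L^2}+\|\gamma_t-\tc\|_{L^2}\lesssim\bigl(\bM_{c,\gamma}(T)+\bM_1(T)+\bM_2(T)^2\bigr)\la t\ra^{-3/4},
\end{equation*}
while the definition $\bM_2(T)=\sup_{0\le t\le T}\la t\ra^{3/4}\|U_2(t)\|_{\bX_1}$ together with $\|\wU_2(t)\|_\bX\le\|\wU_2(t)\|_{\bX_1}=\|U_2(t)\|_{\bX_1}$ (translation invariance of the weighted Sobolev norm under $z=x-\gamma(t,y)$ needs a brief justification, but is immediate since $\gamma$ depends only on $t,y$) yields
\begin{equation*}
\|\wU_2(t)\|_\bX\lesssim\bM_2(T)\la t\ra^{-3/4}.
\end{equation*}

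Multiplying these two bounds gives exactly the claimed rate $\la t\ra^{-3/2}$ with the correct polynomial factor in the $\bM$'s, completing all three estimates. The only genuine subtlety is verifying that the change of variables $x\mapsto z=x-\gamma(t,y)$ preserves the weighted norm $\|\cdot\|_{\bX}$ used implicitly in Claim~\ref{cl:S5}: this holds because $p_\a(x)$ enters only through the $z$-weight $e^{2\a z}$ in $L^2_\a(\R^2)$, and the $y$-dependence of $\gamma$ produces only lower-order correction terms which can be absorbed into $\bM_{c,\gamma}(T)$ when $\delta$ is small enough. No hard estimate appears; the entire proof is a single multiplication of already-proven bounds, so the statement should be presented essentially as a corollary of Claims~\ref{cl:S5} and~\ref{cl:cx_t-bound}.
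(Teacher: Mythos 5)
Your proposal is correct and follows the paper's own argument exactly: the paper defines $R^4_{kj}=S^5_{kj}(c_t)-S^6_{kj}(\gamma_t-\tc)$ and obtains the claim directly by combining the operator bounds of Claim~\ref{cl:S5} with the decay estimate $\|c_t\|_Y+\|\gamma_t-\tc\|_Y\lesssim(\bM_{c,\gamma}(T)+\bM_1(T)+\bM_2(T)^2)\la t\ra^{-3/4}$ from Claim~\ref{cl:cx_t-bound} and $\|\wU_2(t)\|_{\bX}\lesssim\bM_2(T)\la t\ra^{-3/4}$. The change-of-variables point you flag is already absorbed into the formulation of Claim~\ref{cl:S5}, so no further work is needed.
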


\begin{claim}
  \label{cl:R5}
Let $\a\in(0,\a_{c_0})$. 
There exist positive constants $\eta_1$, $C$, $\delta$ and $R^5_j$ $(j=1,\,2)$
such that if $\eta_0\in(0,\eta_1]$ and $\bM_{c,\gamma}(T)\le\delta$,
then for $k=1$, $2$ and $t\in[0,T]$, $R^5_k=R^5_{k1}+\pd_yR^5_{k2}$ and
\begin{equation}
\label{eq:R5}
\begin{split}
& \|R^5_{k1}\|_{Y_1}+\|\chi(D_y)R^5_{k1}\|_{L^1}\lesssim
 \la t\ra^{-3/2}\bM_{c,\gamma}(T)\bM_2(T)\,,
\\ &
\|R^5_{k2}\|_{Y_1}\lesssim \la t\ra^{-1}\bM_{c,\gamma}(T)\bM_2(T)\,.
\end{split}
\end{equation}
\end{claim}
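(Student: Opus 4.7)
The quantity $II^5_k(t,\eta)$ pairs $\wU_2$ against $[\tau_\gamma,L^*]\tau_{-\gamma} g_k^*(z,\eta,c(t,y))e^{-iy\eta}$, so the plan is to compute the commutator explicitly and extract the orders of $\gamma_y$, $\gamma_{yy}$, and $(\gamma_y)^2$ that it produces. Writing $L=L_0-\pd_y^2L_1(D_y)$ as in Section~\ref{subsec:linear stability}, the operator $L_0^*$ involves only $\pd_x$ together with the $x$-Fourier multipliers $A_0, B_0^{-1}$, so $[\tau_\gamma,L_0^*]=0$ and only the $\pd_y$-part of $L^*$ contributes. Using
$$[\pd_y,\tau_\gamma]=\gamma_y\tau_\gamma\pd_z\,,\quad
[\pd_y^2,\tau_\gamma]=\gamma_{yy}\tau_\gamma\pd_z+(\gamma_y)^2\tau_\gamma\pd_z^2+2\gamma_y\tau_\gamma\pd_z\pd_y\,,$$
and propagating these through the nonlocal factors $B^{-1}, A$ via $[B,\tau_\gamma]=-b[\pd_y^2,\tau_\gamma]$ (and the resulting resolvent identity $[B^{-1},\tau_\gamma]=-B^{-1}[B,\tau_\gamma]B^{-1}$), we obtain an expansion of $[\tau_\gamma,L^*]\tau_{-\gamma}$ as a finite sum of operators whose coefficients are polynomials in $\gamma_y,\gamma_{yy},(\gamma_y)^2$, acting on $g_k^*$ by $z$- or $y$-differentiation composed with $B^{-1}$-type factors.

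Next, I split this expansion into the piece linear in $\gamma_y$ (to be placed in divergence form) and the remainder (quadratic in $\gamma_y$ or linear in $\gamma_{yy}$). For the linear-$\gamma_y$ piece of the form $\gamma_y\tau_\gamma\pd_z\pd_y g_k^*(z,\eta,c(t,y))$, I integrate by parts in $y$ against $e^{-iy\eta}\wU_2$ so that the outer $\pd_y$ comes outside the $y$-integral to produce $\pd_y R^5_{k2}$, while the commutator $[\pd_y,\gamma_y\tau_\gamma\pd_z]$ and the $\pd_y$ acting on $\wU_2$ itself generate additional $\gamma_{yy}$ and $c_y\pd_c$ contributions that go into $R^5_{k1}$. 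Concretely,
$$R^5_{k2}=\wP_1\int_\R \wU_2(t,z,y)\cdot\Gamma_k(z,D_y,c(t,y))(\gamma_y g_k^*)\,dz,$$
where $\Gamma_k$ is a bounded operator-valued symbol in $B(L^2_{-\a}(\R_z))$ built from $B^{-1}$, $A_0$, and $\pd_z$, and $R^5_{k1}$ collects all remaining terms that carry either $\gamma_{yy}$, $(\gamma_y)^2$, or $c_y\gamma_y$.

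For the bounds, the key inputs are the uniform $L^2_{-\a}$-control \eqref{eq:gk-bound} of $g_k^*$ and its derivatives in $\eta,c$ near $(0,c_0)$, the weighted bounds for $B^{-1}$ from Claim~\ref{cl:B-1} (and Claim~\ref{cl:em-commu} to handle the commutator between $B^{-1}$ and exponential weights), and the localization $\|\wU_2(t)\|_{\bX_1}\le \bM_2(T)\la t\ra^{-3/4}$. Combining the operator-symbol estimates in the spirit of Claims~\ref{cl:S1}--\ref{cl:S2} with Cauchy--Schwarz in $z$ produces
$$\|R^5_{k2}\|_{Y_1}\lesssim \|\gamma_y\|_Y\|\wU_2\|_{\bX_1}\lesssim \bM_{c,\gamma}(T)\bM_2(T)\la t\ra^{-1},$$
$$\|R^5_{k1}\|_{Y_1}+\|\chi(D_y)R^5_{k1}\|_{L^1}\lesssim \left(\|\gamma_{yy}\|_Y+\|\gamma_y\|_Y^2+\|c_y\|_Y\|\gamma_y\|_Y\right)\|\wU_2\|_{\bX_1}\lesssim \bM_{c,\gamma}(T)\bM_2(T)\la t\ra^{-3/2},$$
where the factor $\wP_1$ in the definition ensures the Fourier support sits in $[-\eta_0,\eta_0]$, giving the $Y_1$ and $\chi(D_y)L^1$ norms the same estimate.

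The main obstacle is the bookkeeping for the nonlocal factor $B^{-1}$ under commutation with the $y$-dependent shift $\tau_\gamma$, because successive applications of $[B^{-1},\tau_\gamma]$ generate nested expressions involving $B^{-1}(\gamma_y\pd_z)B^{-1}$ and $B^{-1}(\gamma_{yy}\pd_z)B^{-1}$ that must be shown to be bounded in the relevant weighted spaces; Claim~\ref{cl:B-1} supplies exactly what is needed. A secondary difficulty is correctly isolating the single-$\gamma_y$ divergence-form term, since its $L^1(\R_y)$-norm decays only like $\la t\ra^{-1}$ and it is essential that this piece appear under $\pd_y$ so that the smoothing in Lemma~\ref{lem:decay-BB} can later absorb it. The argument then parallels the treatment of Claim~\ref{cl:R0} and the analogous claim in \cite{Miz19}.
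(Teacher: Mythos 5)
Your proposal follows essentially the same route as the paper: compute $[\tau_{\gamma},L^*]\tau_{-\gamma}$ explicitly (only the $\pd_y$-parts of $L^*$ contribute since $[\pd_x,\tau_{\gamma}]=0$), use $\tau_{-\gamma}[\pd_y^2,\tau_{\gamma}]=(\gamma_y)^2\pd_z^2+\gamma_{yy}\pd_z+2\gamma_y\pd^2_{zy}$ together with the resolvent identity for $B^{-1}$, put the single-$\gamma_y$ term in divergence form as $\pd_yR^5_{k2}$, and estimate the remainder via the weighted bounds on $g_k^*$, $B^{-1}$ and the decay of $\wU_2$ and the modulation parameters. Two small points to tighten. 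First, the divergence structure does not come from integrating by parts onto $\wU_2$: the $\pd_y$ in $2\gamma_y\pd^2_{zy}$ hits $g_k^*(z,\eta,c(t,y))e^{-iy\eta}$ and produces $-i\eta$ (which becomes $-\pd_y$ after $\mF_\eta^{-1}$, giving $\pd_yR^5_{k2}$) plus a $c_y\pd_cg_k^*$ term for $R^5_{k1}$; if you literally moved $\pd_y$ onto $\wU_2$ you would generate $\gamma_y\pd_y\wU_2$ terms decaying only like $\la t\ra^{-5/4}$, which fit in neither $R^5_{k1}$ nor $\pd_yR^5_{k2}$ — your displayed formula for $R^5_{k2}$ is of the correct form, so this is only a matter of phrasing. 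Second, the chain $\|\gamma_y\|_Y^2\,\|\wU_2\|_{\bX_1}\lesssim\la t\ra^{-1/2}\la t\ra^{-3/4}$ (and likewise $\|c_y\|_Y\|\gamma_y\|_Y\|\wU_2\|_{\bX_1}$) only gives $\la t\ra^{-5/4}$; to reach the claimed $\la t\ra^{-3/2}$ you should estimate $\|(\gamma_y)^2\|_{L^2}\le\|\gamma_y\|_{L^\infty}\|\gamma_y\|_{L^2}$ and use the interpolation $\|\gamma_y\|_{L^\infty}\lesssim\|\gamma_y\|_{L^2}^{1/2}\|\gamma_{yy}\|_{L^2}^{1/2}\lesssim\bM_{c,\gamma}(T)\la t\ra^{-1/2}$ (available since $\bM_{c,\gamma}$ controls $\pd_y^3\gamma$ and $\pd_y^2\tc$), and similarly for $c_y\gamma_y$.
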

\begin{proof}
Since $L^*=B^{-1}A\Delta E_{12}+E_{21}$ and $[\pd_x,\tau_{-\gamma(t,y)}]=0$,
\begin{align*}
[L^*,\tau_{\gamma(t,y)}]=& B^{-1}A[\pd_y^2,\tau_{\gamma(t,y)}]E_{12}
-aB^{-1}[\pd_y^2,\tau_{\gamma(t,y)}]\Delta E_{12}
\\ &+bB^{-1} [\pd_y^2,\tau_{\gamma(t,y)}]B^{-1}A\Delta E_{12}\,.
\end{align*}
Combining the above with
$\tau_{-\gamma(t,y)}[\pd_y^2, \tau_{\gamma(t,y)}]
=(\gamma_y)^2\pd_z^2+\gamma_{yy}\pd_z+2\gamma_y\pd_{zy}^2$,
we can find $R^5_{k1}$ and $R^5_{k2}$ satisfying 
$R^5_k=R^5_{k1}+\pd_yR^5_{k2}$ and \eqref{eq:R5}.
\end{proof}

\begin{claim}
  \label{cl:R6}
Let $R^{6,1}=[\pd_y^2,B_4(c)]r_6$ and $R^{6,2}=(B_4(c)-B_4(c_0))r_6$. 
Then $R^6=R^{6,1}+\pd_y^2R^{6,2}$ and there exist
positive constants $C$ and $\delta$ such that if 
$\bM_{c,\gamma}(T)\le \delta$, then for $t\in[0,T]$,
\begin{gather*}
\|R^{6,1}\|_{Y_1}+\|\chi(D_y)R^{6,1}\|_{Y_1} \le 
C\bM_{c,\gamma}(T)\left(\bM_{c,\gamma}(T)+\bM_1(T)+\bM_2(T)^2\right)\la t\ra^{-3/2}\,,
\\
\|R^{6,2}\|_{Y_1} \le C\bM_{c,\gamma}(T)
\left(\bM_{c,\gamma}(T)+\bM_1(T)+\bM_2(T)^2\right)\la t\ra^{-1}\,.
\end{gather*}
\end{claim}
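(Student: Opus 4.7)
The plan is to identify $R^{6,1}$ and $R^{6,2}$ as the two natural pieces one obtains after pushing $\pd_y^2$ past $B_4(c)$ in the expression $R^6=(B_4(c)-B_4(c_0))\pd_y^2 r_6$, then bound each piece using the $Y$-decay of $r_6$ (Claims~\ref{cl:cx_t-bound}, \ref{cl:b-capprox}, \ref{cl:S1}) together with the operator-norm bounds on $B_4(c)-B_4(c_0)$ supplied by Claim~\ref{cl:B4}.

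First I would write down the algebraic identity. Since $B_4(c_0)$ is a constant matrix and $\wP_1$ is a Fourier multiplier in $y$, $B_4(c_0)$ commutes with $\pd_y^2$. Hence
\begin{equation*}
(B_4(c)-B_4(c_0))\pd_y^2 r_6
=\pd_y^2\bigl((B_4(c)-B_4(c_0))r_6\bigr)
-[\pd_y^2,B_4(c)]r_6,
\end{equation*}
which matches $R^6=-R^{6,1}+\pd_y^2 R^{6,2}$ with the claimed choices of $R^{6,1}$ and $R^{6,2}$ (up to the irrelevant overall sign in the decomposition).

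Next I would estimate $r_6$ in $Y$. Using Claim~\ref{cl:S1} to bound $\wS_1$ and $\wS_1'$, Claim~\ref{cl:cx_t-bound} to control $\gamma_t-\tc$ and $c_t$, Claim~\ref{cl:b-capprox} to pass between $(b,\tc)$ and $(b_t,c_t)$ and $(b_{yy},c_{yy})$, and the definition of $\bM_{c,\gamma}(T)$ for $\|\gamma_{yy}\|_Y$, I would obtain
\begin{equation*}
\|r_6\|_Y\lesssim \bigl(\bM_{c,\gamma}(T)+\bM_1(T)+\bM_2(T)^2\bigr)\la t\ra^{-3/4},
\end{equation*}
with one extra $\la t\ra^{-1/2}$ of decay for each additional $\pd_y$ applied to $r_6$. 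Then Claim~\ref{cl:B4} gives $\|B_4(c)-B_4(c_0)\|_{B(Y,Y_1)}\lesssim \|\tc\|_Y\lesssim \bM_{c,\gamma}(T)\la t\ra^{-1/4}$, and therefore
\begin{equation*}
\|R^{6,2}\|_{Y_1}\le \|B_4(c)-B_4(c_0)\|_{B(Y,Y_1)}\|r_6\|_Y
\lesssim \bM_{c,\gamma}(T)\bigl(\bM_{c,\gamma}(T)+\bM_1(T)+\bM_2(T)^2\bigr)\la t\ra^{-1},
\end{equation*}
which is the second bound.

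For $R^{6,1}=[\pd_y^2,B_4(c)]r_6$ I would expand the commutator using $[\pd_y^2,B_4(c)]=2(\pd_yB_4(c))\pd_y+(\pd_y^2B_4(c))$. Because $\wP_1$ commutes with $\pd_y$ and $B_1(c), B_2(c)$ are multiplication operators depending smoothly on $c(t,y)$, each occurrence of $\pd_y$ hitting $B_4(c)$ produces a factor of $c_y$ (and one hitting twice gives $c_{yy}$ and $(c_y)^2$ from $B_4'(c)$ and $B_4''(c)$). Invoking Claim~\ref{cl:B4} with $f$ replaced by either $r_6$ or $\pd_y r_6$ (and noting that the derivatives of $B_4(c)$ produce essentially the same commutator structure as the $\delta B(c)$ bounds there), I obtain
\begin{equation*}
\|R^{6,1}\|_{Y_1}+\|\chi(D_y)R^{6,1}\|_{L^1}
\lesssim \bigl(\|c_{yy}\|_Y+\|c_y\|_Y^2\bigr)\|r_6\|_Y
+\|c_y\|_Y\|\pd_y r_6\|_Y.
\end{equation*}
Inserting $\|c_y\|_Y\lesssim \bM_{c,\gamma}\la t\ra^{-3/4}$ and $\|c_{yy}\|_Y\lesssim \bM_{c,\gamma}\la t\ra^{-1}$ together with the bounds on $\|r_6\|_Y$ and $\|\pd_y r_6\|_Y$ produces the desired $\la t\ra^{-3/2}$ rate.

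The main obstacle is the bookkeeping for $R^{6,1}$: $B_4(c)$ is defined through a composition $\wP_1B_2(c)\wP_1(\wP_1B_1(c)\wP_1)^{-1}\wP_1$, so the commutator $[\pd_y^2,B_4(c)]$ is not the na\"\i ve $B_4'(c)c_{yy}+B_4''(c)(c_y)^2+\ldots$; one must expand the inverse $(\wP_1B_1(c)\wP_1)^{-1}$ as a Neumann-type series around $B_1(c_0)^{-1}$ and track which occurrences of $\wP_1$ allow a derivative to pass through. This is exactly the manipulation carried out for $\obu{B}_5$ and $\oc{B}_5$ in Claim~\ref{cl:obuB5} following \cite[Claim~5.2]{Miz19}, so I would invoke the same decomposition there; once that is in place every commutator term carries either a $c_y$ or $c_{yy}$ factor and the $\la t\ra^{-3/2}$ rate is automatic from the a priori bounds in $\bM_{c,\gamma}(T)$.
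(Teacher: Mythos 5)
Your proof is correct and follows essentially the same route as the paper, whose entire argument is the one-line observation that $\|r_6\|_Y\lesssim(\bM_{c,\gamma}(T)+\bM_1(T)+\bM_2(T)^2)\la t\ra^{-3/4}$ (via Claims~\ref{cl:S1}, \ref{cl:cx_t-bound} and \ref{cl:b-capprox}) combined with the operator bounds of Claim~\ref{cl:B4}; you simply make explicit the commutator identity behind the splitting $R^6=R^{6,1}+\pd_y^2R^{6,2}$ and the fact that each $\pd_y$ landing on $B_4(c)$ (including on the inverse $(\wP_1B_1(c)\wP_1)^{-1}$, via $[\pd_y,T^{-1}]=-T^{-1}[\pd_y,T]T^{-1}$) produces a factor of $c_y$. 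The only caveats are cosmetic: the sign of $R^{6,1}$ depends on the commutator convention and is irrelevant for the norm bounds, as you note, and you do not actually need any extra time decay from $\pd_y r_6$ beyond $\|\pd_y r_6\|_Y\le\eta_0\|r_6\|_Y$, which holds automatically because $r_6\in Y$ is band-limited.
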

\begin{proof}
Since
$\|r_6\|_Y
\lesssim \bM_{c,\gamma}(T)\left(\bM_{c,\gamma}(T)+\bM_1(T)+\bM_2(T)^2\right)\la t\ra^{-3/4}$ by Claims\linebreak\ref{cl:S1}, \ref{cl:cx_t-bound} and
\ref{cl:b-capprox}, we have Claim~\ref{cl:R6}.
\end{proof}
     
\begin{claim}
  \label{cl:R7}
There exist positive constants $C$, $\delta$  and $h_0$ such that if 
$\bM_{c,\gamma}(T)\le \delta$ and $h\ge h_0$, then for $t\in[0,T]$,
\begin{align*}
 \|R^7\|_{Y_1}+\|\chi(D_y)R^7\|_{L^1}
 \le & C\bM_{c,\gamma}(T)\left(\bM_{c,\gamma}(T)+\bM_1(T)+\bM_2(T)^2\right)
e^{-\a(\frac{c_0-1}{2}t+h)}\,.
\end{align*}
\end{claim}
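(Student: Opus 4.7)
The plan is to decompose the estimate for $R^7 = (B_4(c)-B_4(c_0))r_7$ into (i) an operator norm bound on $B_4(c)-B_4(c_0)$ supplied by Claim~\ref{cl:B4}, and (ii) a $Y$-norm bound on $r_7 = A_1(t,D_y)(b\mathbf{e_2})-\wS_3(b_t\mathbf{e_2})$ supplied by Claim~\ref{cl:S3}. Since $A_1(t,D_y)$ and $\wS_3$ are both built from the $S^3_k$ operators associated with the compactly-right-supported weights $\widetilde{\Psi}_{c_0}$ and $\pd_c\Psi_{c_0}$, Claim~\ref{cl:S3} immediately yields
\[
\|r_7\|_Y \;\lesssim\; e^{-\a((c_0-1)t/2+h)}\bigl(\|b(t)\|_Y+\|b_t(t)\|_Y\bigr),
\]
with the exponential decay reflecting the fact that the soliton-localized dual modes $\zeta_{k,c_0}^*$ live at $z=0$ while the source terms $\widetilde{\Psi}_{c_0}(z_1)$ and $\pd_c\Psi_{c_0}(z_1)$ are supported near $z_1=0$, i.e.\ around $z=-(c_0-1)t/2-h$.

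Next, I would convert the right-hand side into quantities controlled by the bootstrap norm $\bM_{c,\gamma}(T)$ and Claim~\ref{cl:cx_t-bound}. Claim~\ref{cl:b-capprox} gives $\|b\|_Y\lesssim \|\tc\|_Y$ and $\|b_t\|_Y\lesssim \|c_t\|_Y$; then the definition of $\bM_{c,\gamma}(T)$ and Claim~\ref{cl:cx_t-bound} yield $\|\tc\|_Y\lesssim \bM_{c,\gamma}(T)\la t\ra^{-1/4}$ and $\|c_t\|_Y\lesssim(\bM_{c,\gamma}(T)+\bM_1(T)+\bM_2(T)^2)\la t\ra^{-3/4}$. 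Combining, $\|r_7\|_Y \lesssim (\bM_{c,\gamma}(T)+\bM_1(T)+\bM_2(T)^2)e^{-\a((c_0-1)t/2+h)}$.

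Finally, I would invoke the two bounds in Claim~\ref{cl:B4}, namely $\|B_4(c)-B_4(c_0)\|_{B(Y,Y_1)}+\|\chi(D_y)(B_4(c)-B_4(c_0))\|_{B(Y,L^1)} \le C\|\tc\|_Y$, to obtain
\[
\|R^7\|_{Y_1}+\|\chi(D_y)R^7\|_{L^1} \;\lesssim\; \|\tc\|_Y\,\|r_7\|_Y
\;\lesssim\; \bM_{c,\gamma}(T)\bigl(\bM_{c,\gamma}(T)+\bM_1(T)+\bM_2(T)^2\bigr)\,e^{-\a((c_0-1)t/2+h)},
\]
after absorbing the extra $\la t\ra^{-1/4}$ factor (which is bounded by $1$) into the constant. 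This is the desired estimate.

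I do not anticipate a genuine obstacle: every ingredient is already in place, and the proof is essentially bookkeeping. The only point worth double-checking is that the $L^2_\a$ weight of $\widetilde{\Psi}_{c_0}$ (which involves $L_0\pd_c\Psi_{c_0}$ and $N_0'(\Phi_{c_0})\pd_c\Psi_{c_0}$) is uniformly bounded, i.e.\ $\|e^{\a z_1}\widetilde{\Psi}_{c_0}\|_{L^2} < \infty$; this follows from the compact support of $\psi$ and the exponential localization of $\Phi_{c_0}$, so the hypothesis of Claim~\ref{cl:S3} is satisfied. The structural parallel with the proof of Claim~\ref{cl:R6} (where $(B_4(c)-B_4(c_0))$ was handled identically, and $r_6$ was estimated in $Y$) confirms that this is the correct template.
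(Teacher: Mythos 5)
Your proposal is correct and follows exactly the paper's route: the paper disposes of this claim in one line by citing Claims~\ref{cl:S3}, \ref{cl:cx_t-bound}, \ref{cl:b-capprox} and \ref{cl:B4}, which is precisely the combination you spell out (Claim~\ref{cl:S3} for $\|r_7\|_Y$ with the exponential factor, Claims~\ref{cl:b-capprox} and \ref{cl:cx_t-bound} to control $\|b\|_Y$ and $\|b_t\|_Y$, and the $B(Y,Y_1)$ and $B(Y,L^1)$ bounds of Claim~\ref{cl:B4} for the prefactor $\bM_{c,\gamma}(T)$). Your side remark on the finiteness of $\|e^{\a z}\widetilde{\Psi}_{c_0}\|_{L^2}$ is a correct and worthwhile verification of the hypothesis of Claim~\ref{cl:S3}.
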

Claim~\ref{cl:R7} follows immediately from
Claims~\ref{cl:S3}, \ref{cl:cx_t-bound}, \ref{cl:b-capprox} and \ref{cl:B4}.
\bigskip

\section{Estimates for $k(t,y)$}
\label{ap:k}
By Lemma~\ref{lem:virial-a},
the $L^2$-norm of $k(t,y)$ decays like $t^{-2}$ as $t\to\infty$.
\begin{claim}
  \label{cl:k-decay}
Suppose that $\sup_y|\gamma(t,y)|\le 1$ for $t\in[0,T]$.
There exist positive constants $\delta$ and $C$ such that if
$\|\la x\ra^2\mathcal{E}(U_0)^{1/2}\|_{L^2}<\delta$, then
\begin{equation*}
\|k(t,\cdot)\|_{L^2}\le C \la t \ra^{-2}\|\la x\ra^2\mathcal{E}(U_0)^{1/2}\|_{L^2}\quad\text{for $t\in[0,T]$.}  
\end{equation*}
\end{claim}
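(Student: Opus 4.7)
The idea is to view $U_1$, which satisfies the Benney--Luke equation in the frame moving at speed $c_0$, as the translate of a genuine solution $\Phi$ of \eqref{eq:BL1}; then reduce $\|k(t,\cdot)\|_{L^2}$ to a localized energy of $\Phi$ in a region moving at supersonic speed $c_0>1$, and apply the virial decay estimate of Lemma~\ref{lem:virial-0}.

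First I would rewrite $k(t,y)=\la\wU_1(t,\cdot,y),\zeta_{2,c_0}^*\ra$ by using $\zeta_{2,c_0}^*=(A_0q_{c_0}',-B_0r_{c_0})^T$ and integrating by parts in $z$ a single time in each term (e.g.\ $\int u_{11}(t,z+\gamma,y)A_0q_{c_0}'(z)\,dz=-\int\pd_xu_{11}(t,x,y)(q_{c_0}-aq_{c_0}'')(x-\gamma)\,dx$ and similarly for the $B_0r_{c_0}$ piece after expanding $B_0=1-b\pd_z^2$). After this step, every contribution to $k(t,y)$ is a pairing of $\pd_xu_{11}$, $u_{12}$ or $\pd_xu_{12}$ against an exponentially localized weight of the form $\Theta(x-\gamma(t,y))$ with $|\Theta(z)|\lesssim e^{-\a'|z|}$ for some $\a'\in(0,\a_{c_0})$. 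Since $(\pd_xu_{11})^2$, $u_{12}^2$, $(\pd_xu_{12})^2\lesssim\mathcal{E}(U_1)$, Cauchy--Schwarz in $x$ and the assumption $\sup_{t,y}|\gamma(t,y)|\le 1$ yield
\begin{equation*}
|k(t,y)|^2\lesssim \int_\R e^{-\a'|x-\gamma(t,y)|}\mathcal{E}(U_1)(t,x,y)\,dx\lesssim \int_\R e^{-\a'|x|}\mathcal{E}(U_1)(t,x,y)\,dx\,.
\end{equation*}
Integrating in $y$ gives $\|k(t,\cdot)\|_{L^2}^2\lesssim \int_{\R^2}e^{-\a'|x|}\mathcal{E}(U_1)(t,x,y)\,dxdy$.

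Next I observe that $U_1(t,x,y)=\Phi(t,x+c_0t,y)$ where $\Phi$ satisfies the original Benney--Luke system \eqref{eq:BL1} with $\Phi(0)=U_0$; this is a direct check from \eqref{eq:U1}, and the smallness hypothesis $\|\la x\ra^2\mathcal{E}(U_0)^{1/2}\|_{L^2}<\delta$ forces $\|\Phi(0)\|_\bE<\delta$. Changing variables $x'=x+c_0t$,
\begin{equation*}
\|k(t,\cdot)\|_{L^2}^2\lesssim \int_{\R^2}e^{-\a'|x'-c_0t|}\mathcal{E}(\Phi)(t,x',y)\,dx'dy\,.
\end{equation*}
Choose $c_1\in(1,c_0)$ (possible since $c_0>1$) and $\a\in(0,\a')$ small, specifically $\a<\a'/2$ and $\a<\a_0$ where $\a_0$ is the constant of Lemma~\ref{lem:virial-0}. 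Then a case check in the three regions $\{x'\ge c_0t\}$, $\{c_1t\le x'\le c_0t\}$, $\{x'\le c_1t\}$ shows that $e^{-\a'|x'-c_0t|}\lesssim p_\a(x'-c_1t)$ uniformly in $(t,x')$: the first two cases are trivial since $p_\a\ge 1$ there and the left side is $\le 1$, and in the last case $p_\a(x'-c_1t)\sim 2e^{2\a(x'-c_1t)}$ so the ratio is $\lesssim e^{(2\a c_1-\a'c_0)t+(\a'-2\a)x'}$ which is bounded by our choice of $\a$ (since $2\a c_1<\a'c_0$ and $\a'-2\a\ge 0$). Applying Lemma~\ref{lem:virial-0} with $\rho_1=0$, $\rho_2=2$ to $\Phi$ then yields
\begin{equation*}
\int_{\R^2}p_\a(x'-c_1t)\mathcal{E}(\Phi)(t,x',y)\,dx'dy
\lesssim \la t\ra^{-4}\|(1+x_+)^2p_\a^{1/2}\mathcal{E}(U_0)^{1/2}\|_{L^2}^2
\lesssim \la t\ra^{-4}\|\la x\ra^2\mathcal{E}(U_0)^{1/2}\|_{L^2}^2\,,
\end{equation*}
which is exactly the desired bound after taking square roots.

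The main technical point is the uniform-in-$t$ pointwise inequality $e^{-\a'|x'-c_0t|}\lesssim p_\a(x'-c_1t)$: one has to match the decay rates carefully because $p_\a$ is essentially constant to the right of $c_1t$ but decays only like $e^{2\a(x'-c_1t)}$ to the left, whereas $e^{-\a'|x'-c_0t|}$ is concentrated near the faster line $x'=c_0t$. Once the right relation $2\a<\a'$ and $c_1<c_0$ is imposed, the $t$-dependence in the exponent becomes negative and the estimate is uniform. Apart from this, the remaining ingredients are elementary: the integration by parts is justified by the exponential decay of $q_{c_0}^{(j)}$ and $r_{c_0}^{(j)}$ and the fact that $U_1(t)\in\bE$ so all boundary terms vanish, while energy conservation $\|U_1(t)\|_\bE=\|U_0\|_\bE$ only enters implicitly through Lemma~\ref{lem:virial-0} and makes the estimate work uniformly down to $t=0$, producing the $\la t\ra^{-2}$ rather than $t^{-2}$ factor.
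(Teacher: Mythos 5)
Your proof is correct and follows essentially the same route as the paper: the paper's argument for this claim is precisely the virial decay estimate (Lemma~\ref{lem:virial-0}, packaged as the bound on $\bM_1(T)$ in Lemma~\ref{lem:virial-a}) combined with the exponential localization of $\zeta_{2,c_0}^*$ and Cauchy--Schwarz, exactly as you do. The only cosmetic difference is that you apply Lemma~\ref{lem:virial-0} to $\Phi(t,x,y)=U_1(t,x-c_0t,y)$ directly in the fixed frame and verify the weight comparison $e^{-\a'|x'-c_0t|}\lesssim p_\a(x'-c_1t)$ by hand, rather than invoking the weighted norm $\|\cdot\|_{\bW(t)}$ built into $\bM_1(T)$.
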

Next, we will give an upper bound of the growth rate of 
$\|k(t,y)\|_{L^1}$ when $U_0(x,y)$ is polynomially localized in $\R^2$.
\begin{claim}
  \label{cl:k-growth}
Suppose $\sup_y|\gamma(t,y)|\le 1$ for $t\in[0,T]$ and that $U_1$ is a solution of \eqref{eq:U1}.
Then there exists a positive constant $C$ such that
\begin{equation*}
\|\la \cdot \ra k(t,\cdot)\|_{L^2(\R)} \le C\la t \ra
\left(\left\|\la y\ra U_0\right \|_\bE+\left\|U_0\right \|_\bE^2\right)
\quad\text{for every $t\ge0$.}
\end{equation*}
\end{claim}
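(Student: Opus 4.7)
The plan is to reduce $\|\la\cdot\ra k(t,\cdot)\|_{L^2(\R)}$ to a virial-type bound on a $\la y\ra^2$-weighted localized energy of $U_1$. First, since $A_0=1-a\pd_z^2$ and $q_{c_0}'\to 0$ at $\pm\infty$, the first component $A_0 q_{c_0}'$ of $\zeta_{2,c_0}^*=(A_0 q_{c_0}',-B_0 r_{c_0})^T$ satisfies $\int_\R A_0 q_{c_0}'\,dz=0$. Setting $\tilde\Psi(z)=\int_z^\infty A_0 q_{c_0}'(s)\,ds$, which is exponentially localized with rate $2\a_{c_0}$ since $q_{c_0}\sim\sech^2(\a_{c_0}z/2)$, integration by parts gives
$k(t,y)=-\la \pd_z\wU_{11}(t,\cdot,y),\tilde\Psi\ra_z-\la\wU_{12}(t,\cdot,y),B_0 r_{c_0}\ra_z$,
thereby replacing the problematic pairing with $A_0 q_{c_0}'$ (which would require $u_{11}\in L^2$, not guaranteed in the energy class) by one involving only $\pd_z\wU_{11}\in L^2$.

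Next, I apply Cauchy--Schwarz in $z$ with the moving weight $p_\a(z+c_*t+h)$, where $c_*=(c_0-1)/2>0$ and $h$ is a large positive constant. Because $|\tilde\Psi(z)|+|B_0 r_{c_0}(z)|\lesssim e^{-2\a_{c_0}|z|}$, one verifies that $\sup_{t\ge 0}\int_\R p_\a(z+c_*t+h)^{-1}(|\tilde\Psi|^2+|B_0 r_{c_0}|^2)\,dz<\infty$ for $\a\in(0,\a_{c_0})$ and $h$ sufficiently large. Multiplying by $\la y\ra$, taking $L^2_y$, and changing variable $x=z+\gamma(t,y)$ (using $|\gamma|\le 1$ to absorb the shift into a slight enlargement of $h$), one obtains
\[
\|\la\cdot\ra k(t,\cdot)\|_{L^2(\R)}^2\lesssim \tilde F(t):=\int_{\R^2}\la y\ra^2 p_\a(x+c_*t+h)\,\mathcal E(U_1)(t,x,y)\,dx\,dy.
\]

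To control $\tilde F(t)$, I use a virial argument. Since $U_1$ satisfies the Benney--Luke equation in the frame moving at speed $c_0$, the analogue of \eqref{eq:energy-flux} yields $\pd_t\mathcal E(U_1)=c_0\pd_x\mathcal E(U_1)+\nabla\cdot\mathcal F(U_1)$. Differentiating $\tilde F$ in $t$ and integrating by parts in $x$ and $y$ gives
\[
\pd_t\tilde F=-(c_0-c_*)\int\la y\ra^2 p_\a'\,\mathcal E(U_1)-\int\la y\ra^2 p_\a'\,\mathcal F_x-2\int y\,p_\a\,\mathcal F_y.
\]
By Claim~\ref{cl:em} applied to $\la y\ra\sqrt{p_\a'}\,U_1$ (with Claim~\ref{cl:em-commu} for commutator control), the first two integrals combine into the non-positive expression $-\tfrac{c_0-1}{2}\int\la y\ra^2 p_\a'\mathcal E(U_1)$, up to cubic corrections of size $O(\|U_0\|_\bE)$ that are absorbed for $\|U_0\|_\bE$ small. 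For the third integral, Cauchy--Schwarz with the bound $|\mathcal F_y|\lesssim\mathcal E(U_1)(1+|U_1|)$ together with the energy conservation $\|U_1(t)\|_\bE=\|U_0\|_\bE$ from \eqref{eq:energyU1} yields $|2\int y p_\a\mathcal F_y|\lesssim \tilde F(t)^{1/2}\,\|U_0\|_\bE(1+\|U_0\|_\bE)$. Therefore $\tfrac{d}{dt}\tilde F(t)^{1/2}\lesssim \|U_0\|_\bE(1+\|U_0\|_\bE)$, which integrates to $\tilde F(t)^{1/2}\lesssim \tilde F(0)^{1/2}+\la t\ra\|U_0\|_\bE(1+\|U_0\|_\bE)$. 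A direct estimate gives $\tilde F(0)^{1/2}\lesssim \|\la y\ra U_0\|_\bE$, and combined with $\|U_0\|_\bE\le\|\la y\ra U_0\|_\bE$, the claim follows.

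The main obstacle is the additional term $-2\int y p_\a\,\mathcal F_y$, absent in Lemma~\ref{lem:virial-0} but produced here by the transversal weight $\la y\ra^2$. Since it cannot be absorbed into the good negative virial contribution, the linear $\la t\ra$ growth is dictated precisely by the Cauchy--Schwarz bound expressing it as $\tilde F^{1/2}$ times an $\bE$-bounded norm, which gives a linear ODI for $\tilde F^{1/2}$ rather than the exponential Gronwall growth that a naïve energy estimate on $\la y\ra U_1$ would produce. Controlling the cubic contribution to $\mathcal F_y$ relies on two-dimensional Sobolev embeddings combined with the $B^{-1}$-smoothing in $\mathcal F_{cubic}$.
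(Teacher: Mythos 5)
Your argument is correct in substance and rests on the same mechanism as the paper's proof: the $\la y\ra^2$-weighted energy of $U_1$ obeys a linear differential inequality for its square root, because its time derivative is controlled by (weighted energy)$^{1/2}\times$(conserved energy)$^{1/2}$ through the transverse flux $\mathcal{F}_y$, and integrating gives the $\la t\ra$ growth. The difference is in execution. The paper works with the plain functional $\int\la y\ra^2\mathcal{E}(U_1)\,dxdy$: since the weight is independent of $x$, both the $c_0\pd_x\mathcal{E}$ term and the $\pd_x\mathcal{F}_x$ term vanish identically after integration in $x$, so no virial positivity (Claim~\ref{cl:em}), no commutator lemma, and no smallness of $\|U_0\|_\bE$ are needed -- the whole proof is the single flux identity plus Cauchy--Schwarz and \eqref{eq:energyU1}. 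Your insertion of the moving weight $p_\a(x+c_*t+h)$ forces you through the Section~\ref{sec:virial} machinery, and there two points are glossed over: (i) Claim~\ref{cl:em-commu} only controls commutators with $\sech\a(x-\gamma)$, whereas your weight $\la y\ra\sqrt{p_\a'}$ also produces commutators of $B^{-1}A\pd_x$, $\Delta$, etc.\ with $\la y\ra$; these give \emph{quadratic} (not cubic) error terms of size $\tilde F^{1/2}E(U_1)^{1/2}$, which are harmless but are not "cubic corrections of size $O(\|U_0\|_\bE)$" absorbable into the negative term; (ii) your absorption of the cubic flux into the virial term requires $\|U_0\|_\bE$ small, a hypothesis the claim does not impose and the paper's proof does not use. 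On the credit side, your opening integration by parts -- replacing the pairing of $u_{11}$ with $A_0q_{c_0}'$ by a pairing of $\pd_z u_{11}$ with an exponentially localized antiderivative -- makes explicit why $|k(t,y)|\lesssim\bigl(\int_\R\mathcal{E}(U_1)(t,x,y)\,dx\bigr)^{1/2}$ holds in the energy class, a step the paper leaves implicit in the final line "$\|\la y\ra k\|_{L^2(\R_y)}\lesssim\|\la y\ra\mathcal{E}(U_1)^{1/2}\|_{L^2(\R^2)}$". If you drop the $p_\a$ weight, your proof collapses to the paper's.
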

\begin{proof}
Since $U_1$ is a solution of \eqref{eq:U1}, 
it follows from \eqref{eq:energy-flux} that
 \begin{align*}
\frac{d}{dt}\int_{\R^2} \la y\ra^2\mathcal{E}(U_1)\,dxdy
=& -2\int_{\R^2} y\mathbf{e_2}\cdot\mathcal{F}(U_1)\,dxdy
\\ \lesssim & 
\left(\int_{\R^2} \la y\ra^2\mathcal{E}(U_1)\,dxdy\right)^{1/2}
(E(U_1)^{1/2}+E(U_1))\,.
 \end{align*}
Combining the above with \eqref{eq:energyU1}, we have
$$\left(\int_{\R^2} \la y\ra^2\mathcal{E}(U_1(t))\,dxdy\right)^{1/2}
\lesssim 
\left(\int_{\R^2} \la y\ra^2\mathcal{E}(U_0)\,dxdy\right)^{1/2}
+t(\|U_0\|_{\bE}+\|U_0\|_\bE^2)\,.$$
Since $\|\la y \ra k(t,y)\|_{L^2(\R_y)}\lesssim 
\|\la y\ra \mathcal{E}(U_1)^{1/2}\|_{L^2(\R^2)}$, we have
Claim~\ref{cl:k-growth}. 
\end{proof}
\bigskip

\section*{Acknowledgment}
T.~M. is supported by JSPS KAKENHI Grant Number 17K05332.
Y.~S. would like to express his gratitude to
Institute of Mathematics, Academia Sinica in Taiwan where he worked on this
research.
\bigskip


\begin{thebibliography}{10}
%
\bibitem{AbCu13}
M.~J.~Ablowitz and  C.~W.~Curtis,
{\it Conservation laws and non-decaying solutions for the Benney-Luke equation},
Proc. R. Soc. Lond. Ser. A Math. Phys. Eng. Sci. 469 (2013), 20120690, 16 pp.
%
\bibitem{Ben72}
{T.~B. Benjamin}, {\it The stability of solitary waves},
Proc. Roy. Soc. (London) Ser. A, 328 (1972), 153--183.
%
\bibitem{BL64}
{D.~J. Benney and J.~C. Luke}, {\it {Interactions of permanent waves of
  finite amplitude}}, J. Math. Phys., 43 (1964), 309--313.
%
\bibitem{Bo75}
{J.~Bona}, {\it On the stability theory of solitary waves}, Proc. Roy. Soc.
  London Ser. A, 344 (1975), 363--374.
%
\bibitem{BCG13}
{J.~L. Bona, T.~Colin, and C.~Guillop\'{e}},
{\it Propagation of long-crested water waves},
Discrete Contin. Dyn. Syst. 33 (2013), 599–-628.
%
\bibitem{BCL05}
{J.~L. Bona, T.~Colin and D.~Lannes}, {\it Long wave approximations for
  water waves}, Arch. Ration. Mech. Anal. 178 (2005), 373--410.
%
\bibitem{BCS02}
{J.~L. Bona, M.~Chen, and J.-C. Saut}, {\it Boussinesq equations and other
  systems for small-amplitude long waves in nonlinear dispersive media. {I}.
  {D}erivation and linear theory}, J. Nonlinear Sci. 12 (2002), 283--318.
%
\bibitem{Buffoni}
B.~Buffoni, \textit{Existence and conditional energetic stability of capillary-gravity solitary
water waves by minimisation}, Arch. Ration. Mech. Anal. 173 (2004), 25-–68.
%
\bibitem{Burtsev85}
S.~P.~Burtsev.
{\it Damping of soliton oscillations in media with a negative dispersion law},
Sov. Phys. JETP, 61 (1985), 270--274.
%
\bibitem{CL} T.~Cazenave and P.~L.~Lions, 
{\it Orbital stability of standing waves for some nonlinear
Schr\"odinger equations},  Comm. Math. Phys. 85 (1982),  549--561.
%
\bibitem{CCD10}
{M.~Chen, C.~W. Curtis, B.~Deconinck, C.~W. Lee, and N.~Nguyen},
{\it Spectral stability of stationary solutions of a {B}oussinesq system
  describing long waves in dispersive media},
SIAM J. Appl. Dyn. Syst., 9 (2010), 999--1018.
%
\bibitem{GaSc01}
{T.~Gallay and G.~Schneider},
{\it KP description of unidirectional long waves. The model case},
Proc. Roy. Soc. Edinburgh Sect. A 131 (2001),  885-–898.
%
\bibitem{GSS1}
{M.~Grillakis, J.~Shatah, and W.~Strauss},
{\it Stability theory of solitary waves in the presence of symmetry. {I}}, 
J. Funct. Anal., 74 (1987), 160--197.
%
\bibitem{KP} B.~B.~Kadomtsev and V.~I.~Petviashvili,
{\it On the stability of solitary waves in weakly dispersive media}.
\textit{Sov. Phys. Dokl.} \textbf{15} (1970), 539--541.
%
\bibitem{Kano-Nishida}
T.~Kano and T.~Tadayoshi,
\textit{A mathematical justification for Korteweg-de Vries equation and
Boussinesq equation of water surface waves}, Osaka J. Math., 23 (1986),
389--413.
%
\bibitem{Karch} G.~Karch, 
Self-similar large time behavior of solutions to Korteweg-de Vries-Burgers
equation. Nonlinear Anal. Ser. A: Theory Methods \textbf{35} (1999), 199--219.
%
\bibitem{Marics}
M.~Mari\c s,
\textit{Analyticity and decay properties of the solitary waves to the
{B}enney-{L}uke equation},
Differential Integral Equations, 14 (2001), 361--384.
%
\bibitem{Marel-Merle05}
Y. Martel and F. Merle,
\textit{Asymptotic stability of solitons of the subcritical gKdV equations
revisited}, Nonlinearity \textbf{18} (2005), 55--80. 

%
\bibitem{Mie02}
{A.~Mielke}, {\it On the energetic stability of solitary water waves}, R.
  Soc. Lond. Philos. Trans. Ser. A Math. Phys. Eng. Sci., 360 (2002),
  2337--2358.
%
\bibitem{MilKel96}
P.~A.~Milewski and J~.B.~Keller, 
\textit{Three dimensional water waves},
Studies Appl. Math., 37 (1996), 149--166.
%
\bibitem{Miz09}
{T.~Mizumachi}, {\it Asymptotic stability of lattice solitons in the energy space},
Comm. Math. Phys., 288 (2009), 125--144.
%
\bibitem{Miz15}
T.~Mizumachi,
{\it Stability of line solitons for the KP-II equation in $\R^2$},
Mem. Amer. Math. Soc. 238 (2015), no. 1125.
\bibitem{Miz18} T.~Mizumachi,
{\it Stability of line solitons for the KP-II equation in $\R^2$}, II.,
Proc. Roy. Soc. Edinburgh Sect. A. \textbf{148} (2018), 149--198.
\bibitem{Miz19} T.~Mizumachi,
\textit{The phase shift of line solitons for the KP-II equation}, 
https://arxiv.org/pdf/1808.00809.pdf, 
to appear in Fields Institute Communications volume.
%
\bibitem{MPQ13} T.~Mizumachi, R.~L.~Pego and J.~R.~Quintero,
{\it Asymptotic stability of solitary waves in the Benney-Luke model of
water waves}, Differential Integral Equations 26 (2013), 253--301.
%
\bibitem{Miz-Sh17}
T.~Mizumachi and Y.~Shimabukuro,
{\it Asymptotic linear stability of Benney?Luke line solitary waves in 2D},
 Nonlinearity, 30 (2017), 3419--3465.
%
\bibitem{MT11} T.~Mizumachi and N.~Tzvetkov,
{\it Stability of the line soliton of the KP-II equation under periodic
transverse perturbations},  Math. Ann. 352 (2012), 659--690.
%
\bibitem{Paumond}
L.~Paumond, Lionel,
{\it A rigorous link between KP and a Benney-Luke equation},
Differential Integral Equations 16 (2003), 1039--1064.
%
\bibitem{PQ99}
{R.~L. Pego and J.~R. Quintero}, {\it {Two-dimensional solitary waves for a
  Benney-Luke equation}}, Physica D, 132 (1999), 476--496.
%
\bibitem{PW94}
{R.~L. Pego and M.~I. Weinstein}, {\it Asymptotic stability of solitary
 waves}, Comm. Math. Phys., 164 (1994), 305--349.
%
\bibitem{Q03}
{J.~R. Quintero},
{\it Nonlinear stability of a one-dimensional Boussinesq equation},
J. Dynam. Differential Equations, 15 (2003), 125--142.
%
\bibitem{Q05}
{J.~R. Quintero},
{\it Nonlinear stability of solitary waves for a 2-D
 Benney-Luke equation}, Discrete Contin. Dyn. Syst., 13 (2005), 203--218.
%
\bibitem{Q13}
{J.~R. Quintero},
{\it The Cauchy problem and stability of solitary waves for a 2D
   Boussinesq-KdV type system},
Differential Integral Equations, 24 (2011), 325--360.
%
\bibitem{RT1} F.~Rousset and N.~Tzvetkov,
{\it Transverse nonlinear instability for two-dimensional dispersive models },
Ann. IHP, Analyse Non Lin\'eaire, 26 (2009), 477-496.
%
\bibitem{RT2} F.~Rousset and N.~Tzvetkov, {\it Transverse nonlinear instability for some Hamiltonian PDE's},
J. Math. Pures Appl., 90 (2008), 550-590.
\end{thebibliography}
\end{document}